\documentclass{article}
\usepackage[UKenglish]{babel}
\usepackage[a4paper,top=3cm,bottom=2cm,left=3cm,right=3cm,marginparwidth=1.75cm]{geometry}

\usepackage{amsfonts,amsmath,amsthm,amssymb} 
\usepackage{mathrsfs} 
\usepackage{stmaryrd} 

\RequirePackage[OT1]{fontenc}
\usepackage[square,numbers]{natbib}
\bibliographystyle{apalike}

\RequirePackage[colorlinks,citecolor=blue,urlcolor=blue]{hyperref}

\usepackage{comment} 
\usepackage{graphicx} 

\allowdisplaybreaks[1]

\newcommand{\lm}[2]{\bar{#1}_{#2}}
\newcommand{\dop}{\mathrm{d}}
\renewcommand{\Re}{\mathbf{R}}
\renewcommand{\epsilon}{\varepsilon}
\newcommand{\vech}{\mathrm{vech}}

\newcommand{\E}[1]{\mathbf{E}\left[{#1}\right]}
\newcommand{\CE}[2]{\mathbf{E}\left[\left.{#1}\right|{#2}\right]}

\newcommand{\parens}[1]{\left({#1}\right)}
\newcommand{\lparens}[1]{\left({#1}\right.}
\newcommand{\rparens}[1]{\left.{#1}\right)}
\newcommand{\crotchet}[1]{\left[{#1}\right]}
\newcommand{\lcrotchet}[1]{\left[{#1}\right.}
\newcommand{\rcrotchet}[1]{\left.{#1}\right]}
\newcommand{\tuborg}[1]{\left\{{#1}\right\}}

\newcommand{\norm}[1]{\left\|{#1}\right\|}
\newcommand{\abs}[1]{\left|{#1}\right|}
\newcommand{\labs}[1]{\left|{#1}\right.}
\newcommand{\rabs}[1]{\left.{#1}\right|}

\newcommand{\cp}{\overset{P}{\to}}
\newcommand{\cl}{\overset{\mathcal{L}}{\to}}
\newcommand{\tr}{\mathrm{tr}}
\newcommand{\ip}[2]{#1\left\llbracket #2\right\rrbracket}

\newtheorem{theorem}{Theorem}[subsection]
\newtheorem{proposition}[theorem]{Proposition}
\newtheorem{lemma}[theorem]{Lemma}
\newtheorem{corollary}[theorem]{Corollary}

\newtheorem*{remark}{Remark}

\title{Adaptive estimation and noise detection for an ergodic diffusion with observation noises}
\author{Shogo H. Nakakita\\
	Graduate School of Engineering Science, Osaka University
	\and Masayuki Uchida\\
	Graduate School of Engineering Science and MMDS, Osaka University}
\date{\today}

\begin{document}
\maketitle

\begin{abstract}
	We research adaptive maximum likelihood-type estimation for an ergodic diffusion process where the observation is contaminated by noise. This methodology leads to the asymptotic independence of the estimators for the variance of observation noise, the diffusion parameter and the drift one of the latent diffusion process. 
	Moreover, it can lessen the computational burden compared to simultaneous 
	maximum likelihood-type  estimation. In addition to adaptive estimation, we propose a test to see if noise exists or not, and analyse real data as the example such that data contains observation noise with statistical significance.
\end{abstract}

\section{Introduction}

We consider a $d$-dimensional ergodic diffusion process defined by the following stochastic differential equation
\begin{align}
\dop X_t = b(X_t, \beta)\dop t + a(X_t, \alpha)\dop w_t,\ X_0 = x_0,
\end{align}
where $(w_t)_{t\ge 0}$ is a $r$-dimensional standard Wiener process, $x_0$ is a $\Re^d$-valued random variable independent of $(w_t)$, $\alpha\in\Theta_1\subset \Re ^{m_{1}}$, $\beta\in\Theta_2\subset \Re ^{m_{2}}$ with $\Theta_1$ and $\Theta_2$ being compact and convex. Moreover, $b:\Re^d\times\Theta_2\to \Re ^d$, $a:\Re^d\times \Theta_1\to \Re^d\otimes\Re^r$. We denote $\theta:=(\alpha,\beta)\in\Theta_1\times \Theta_2 =:\Theta$ and $\theta^\star=(\alpha^\star,\beta^\star)$ as the true value of $\theta$ which belongs to $\mathrm{Int}(\Theta)$.

We deal with the problem of parametric inference for $\theta$ with $(Y_{ih_n})_{i=1,\cdots,n}$ defined by the following model
\begin{align}
Y_{ih_n}=X_{ih_n}+\Lambda^{1/2} \epsilon_{ih_n},\ i=0,\cdots,n,
\end{align}
where $h_n>0$ is the discretised step, $\Lambda\in \Re^d\otimes\Re^{d}$ is a positive semi-definite matrix and $(\epsilon_{ih_n})_{i=1,\cdots,n}$ is a sequence of $\Re^d$-valued i.i.d. random variables with $\mathbf{E}[\epsilon_{ih_n}]=\mathbf{0}$ and 
$\mathrm{Var}[ \epsilon_{ih_n} ] =I_k$. Let $\Theta_\epsilon\in\Re^{d(d+1)/2}$ be the convex and compact parameter space such that $\theta_\epsilon:=\mathrm{vech}(\Lambda)\in\Theta_\epsilon$ and $\Lambda_\star$ be the true value of $\Lambda$ such that $\theta_\epsilon^\star:=\mathrm{vech}(\Lambda_\star)\in\mathrm{Int}(\Theta_\epsilon)$. We denote $\vartheta := (\theta, \theta_\epsilon)$ and $\Xi:=\Theta\times\Theta_\epsilon$. With respect to the sampling scheme, we assume that $h_n\to0$ and $nh_n\to\infty$ as $n\to\infty$. 


Our main concern with these settings is the adaptive maximum likelihood (ML)-type estimation scheme in the form of  
\begin{align}
\hat{\Lambda}_n&=\frac{1}{2n}\sum_{i=0}^{n-1}\parens{Y_{(i+1)h_n}-Y_{ih_n}}^{\otimes2},\\
\mathbb{L}_{1,n}(\hat{\alpha}_n|\hat{\Lambda}_n)&=\sup_{\alpha\in\Theta_1}\mathbb{L}_{1,n}(\alpha|\hat{\Lambda}_n),\\
\mathbb{L}_{2,n}(\hat{\beta}_n|\hat{\Lambda}_n,\hat{\alpha}_n)&=\sup_{\beta\in\Theta_2}\mathbb{L}_{2,n}(\beta|\hat{\Lambda}_n,\hat{\alpha}_n),
\end{align}
where $A^{\otimes 2} = AA^T$ for any matrix $A$, $A^T$ indicates the transpose of $A$, $\mathbb{L}_{1,n}$ and $\mathbb{L}_{2,n}$ are quasi-likelihood functions, which are defined in Section 3.


The composition of the model above is quite analogous to that of discrete-time state space models (e.g., see \citep{PPC09}) in terms of expression of endogenous perturbation in the system of interest and exogenous noise attributed to observation separately. As seen in the assumption $h_n\to0$, this model that we consider is for the situation where high-frequency observation holds, and this requirement enhances the flexibility of modelling since our setting includes the models with non-linearity, dependency of the innovation on state space itself. In addition, adaptive estimation which also becomes possible through the high-frequency setting has the advantage in easing computational burden in comparison to simultaneous one. Fortunately, the number of situations where requirements are satisfied has been grown gradually, and will continue to soar because of increase in the amount of real-time data and progress of observation technology these days.

The idea of modelling with diffusion process concerning observational noise is no new phenomenon. For instance, in the context of high-frequency financial data analysis, the researchers have addressed the existence of "microstructure noise" with large variance with respect to time increment jeopardising the premise that what we observe are purely diffusions. The energetic research of the modelling with "diffusion + noise" has been conducted in the decade: some research have examined the asymptotics of this model in the framework of fixed time interval such that $nh_n=T=1$ (e.g., \citep{GlJ01a}, \citep{GlJ01b}, \citep{JLMPV09}, \citep{PV09} and \citep{O17}); and \citep{Fa14} and \citep{Fa16} research the parametric inference of this model with ergodicity and the asymptotic framework $T\to \infty$. For parametric estimation for discrete observed diffusion processes without measurement errors, see \citep{Fl89}, \citep{Y92}, \citep{Y11}, \citep{BS95}, \citep{K97} and references therein.

Our research is focused on the statistical inference for an ergodic diffusion plus noise. 
We give the estimation methodology with adaptive estimation that relaxes computational burden 
and that has been researched for ergodic diffusions so far (see \citep{Y92}, \citep{Y11}, \citep{K95}, \citep{UY12}, \citep{UY14}) in comparison to the simultaneous estimation of \citep{Fa14} and \citep{Fa16}. In previous researches the simultaneous asymptotic normality of $\hat{\Lambda}_n$, $\hat{\alpha}_n$ and $\hat{\beta}_n$ has not been shown, but our method allows us to see asymptotic normality and asymptotic independence of them with the different convergence rates. 


As the real data analysis, we analyse the 2-dimensional wind data \citep{NWTC} and try to model the dynamics with 2-dimensional Ornstein-Uhlenbeck process. We utilise the fitting of our diffusion-plus-noise modelling and that of diffusion modelling with estimation methodology called local Gaussian approximation method (LGA method) which has been investigated for these decades (for instance, see \citep{Y92}, \citep{K95} and \citep{K97}). The results of fitting are as follows: diffusion-plus-noise fitting gives
\begin{align}
\dop \crotchet{\begin{matrix}
	X_t\\
	Y_t
	\end{matrix}}
&=\parens{\crotchet{\begin{matrix}
		-3.77 & -0.32\\
		-0.40 & -5.01
		\end{matrix}}
	\crotchet{\begin{matrix}
		X_t\\
		Y_t
		\end{matrix}}
	+ \crotchet{\begin{matrix}
		3.60\\
		-2.54
		\end{matrix}}}\dop t+\crotchet{\begin{matrix}
	13.41 & -0.29\\
	-0.29 & 12.62
	\end{matrix}}\dop w_t,
\end{align}
with $\parens{X_0,Y_0}=\parens{-2.53,0.36}$ and the estimation of the noise variance
\begin{align}
\hat{\Lambda}_n=\crotchet{\begin{matrix}
	6.67\times 10^{-3} & 3.75\times 10^{-5}\\
	3.75\times 10^{-5} & 6.79\times 10^{-3}\\
	\end{matrix}};
\end{align} and the diffusion fitting with LGA method which is asymptotic efficient if $\Lambda=O$ gives
\begin{align}
\dop \crotchet{\begin{matrix}
	X_t\\
	Y_t
	\end{matrix}}
&=\parens{\crotchet{\begin{matrix}
		-67.53 &  -9.29\\
		-10.37 & -104.45
		\end{matrix}}
	\crotchet{\begin{matrix}
		X_t\\
		Y_t
		\end{matrix}}
	+ \crotchet{\begin{matrix}
		63.27\\
		-50.24
		\end{matrix}}}\dop t+\crotchet{\begin{matrix}
	43.82 & 0.13\\
	0.13 & 44.22
	\end{matrix}}\dop w_t
\end{align}
with the same initial value.
It seems that there is considerable difference between these estimates: however, we cannot evaluate which is the more trustworthy fitting only with these results. It results from the fact that we cannot distinguish a diffusion from a diffusion-plus-noise; if $\Lambda_{\star}=O$, then the observation is not contaminated by noise and the estimation of LGA should be adopted for its asymptotic efficiency; but if $\Lambda_{\star}\neq O$, what we observe is no more a diffusion process and the LGA method loses its theoretical validity. Therefore, it is necessary to compose the statistical hypothesis test with $H_0: \Lambda = O$ and $H_1: \Lambda \neq O$. In addition to estimation methodology, we also research this problem of hypothesis test and propose a test which has the consistency property.



In Section 2, we check the assumption and notation across the paper. Section 3 gives the main 
results 
of this paper. Section 4 examines the result of Section 3 with simulation. In Section 5 we analyse the real data analysis for wind data named MetData with our estimators and LGA as discussed above and test whether noise does exist.

\section{Local means, notations and assumptions}
\subsection{Local means}
We partition the observation into $k_n$ blocks containing $p_n$ samples and examine the property of the following local means such that
\begin{align}
\lm{Z}{j}{}=\frac{1}{p_n}\sum_{i=0}^{p_n-1}Z_{j\Delta_n+ih_n},\ j=0,\cdots,k_n-1,
\end{align}
where $\tuborg{Z_{ih_n}}_{i}$ is an arbitrary sequence of random variables on the mesh $\{ih_n\}_i$ as $\tuborg{Y_{ih_n}}_{i}$, $\tuborg{X_{ih_n}}_{i}$ and $\tuborg{\epsilon_{ih_n}}_{i}$; 
and $\Delta_n=p_nh_n$. 
Note that $k_n p_n =n$ and $k_n \Delta_n =n h_n$.

In the same way as \citep{Fa14} and \citep{Fa16},
our estimation method is based on these local means with respect to the observation $\tuborg{Y_{ih_n}}_{i=1,\cdots,n}$. The idea is so straightforward; taking means of the data $\tuborg{Y_{ih_n}}$ in each partition should reduce the influence of the noise term $\tuborg{\epsilon_{ih_n}}$ because of LLN and then we will obtain the information of the latent process $\tuborg{X_{ih_n}}$. 

\subsection{Notations and assumptions}

We set the following notations.
\begin{enumerate}
	\item For a matrix $A$, $A^T$ denotes the transpose of $A$ and $A^{\otimes 2}:=AA^T$. For same size matrices $A$ and $B$, $\ip{A}{B}:=\tr\parens{AB^T}$. 
	\item For any vector $v$, $v^i$ denotes the $i$-th component of $v$. Similarly, $M^{i,j}$, $M^{i,\cdot}$ and $M^{\cdot,j}$ denote the $(i,j)$-th component, the $i$-th row vector and $j$-th column vector of a matrix $M$ respectively.
	\item $c(x,\alpha):=\parens{a(x,\alpha)}^{\otimes 2}.$
	\item $C$ is a positive generic constant independent of all other variables. If it depends on fixed other variables, e.g. an integer $k$, we will express as $C(k)$.
	\item $a(x):=a(x,\alpha^\star)$ and $b(x):=b(x,\beta^\star)$.
	\item A $\Re$-valued function $f$ on $\Re^d$ is a \textit{polynomial growth function} if for all $x\in \Re^d$,
	\begin{align*}
	\abs{f(x)}\le C\parens{1+\norm{x}}^C.
	\end{align*}
	$g:\Re^d\times \Theta\to \Re$ is a \textit{polynomial growth function uniformly in $\theta\in\Theta$} if for all $x\in \Re^d$,
	\begin{align*}
	\sup_{\theta\in\Theta}\abs{g(x,\theta)}\le C\parens{1+\norm{x}}^C.
	\end{align*}
	Similarly we say $h:\Re^d\times \Xi\to \Re$ is a \textit{polynomial growth function uniformly in $\vartheta\in\Xi$} if for all $x\in \Re^d$,
	\begin{align*}
	\sup_{\vartheta\in\Xi}\abs{h(x,\vartheta)}\le C\parens{1+\norm{x}}^C.
	\end{align*}
	\item For any $\Re$-valued sequence $u_n$, $R:\Theta\times\Re\times \Re^d\to \Re$ denotes a function with a constant $C$ such that
	\begin{align*}
	\abs{R(\theta,u_n,x)}\le Cu_n\parens{1+\norm{x}}^C
	\end{align*}
	for all $x\in\Re^d$ and $\theta\in\Theta$.
	\item Let us define $\vartheta:=\parens{\theta,\theta_\epsilon}\in \Xi$.
	\item Let us denote for any $\mu$-integrable function $f$ on $\Re^d$, $
	\mu(f(\cdot)) := \int f(x)\mu(\dop x).$
	\item 	We set \begin{align*}
	\mathbb{Y}_1(\alpha)&:=-\frac{1}{2}\nu_0\parens{\mathrm{tr}\parens{\parens{c(\cdot,\alpha)}^{-1}c(\cdot,\alpha^\star)-I_d}+\log\frac{\det c(\cdot,\alpha)}{\det c(\cdot,\alpha^\star)}},\\
	\tilde{\mathbb{Y}}_1(\alpha)&:=-\frac{1}{2}\nu_0\parens{\mathrm{tr}\parens{\parens{c^\dagger(\cdot,\alpha,\Lambda_{\star})}^{-1}c^\dagger(\cdot,\alpha^{\star},\Lambda_{\star})-I_d}+\log\frac{\det c^\dagger(\cdot,\alpha,\Lambda_\star)}{\det c^\dagger(\cdot,\alpha^\star,\Lambda_{\star})}},\\
	\mathbb{Y}_2(\beta)&:=-\frac{1}{2}\nu_0\parens{\ip{\parens{c(\cdot,\alpha^\star)}^{-1}}{b(\cdot,\beta)-b(\cdot,\beta^\star)^{\otimes2}}}, \\
	\tilde{\mathbb{Y}}_2(\beta)&:=-\frac{1}{2}\nu_0\parens{\ip{\parens{c^\dagger(\cdot,\alpha^\star,\Lambda_{\star})}^{-1}}{b(\cdot,\beta)-b(\cdot,\beta^\star)^{\otimes2}}}, 
	\end{align*}
	where $
	c^\dagger(\cdot,\alpha,\Lambda):=c(\cdot,\alpha)+3\Lambda.$
	\item 
	Let
	\begin{align*}
	&\left\{A_{\kappa}(x)\left|\kappa=1,\cdots,m_1,\ A_\kappa=(A_\kappa^{j_1,j_2})_{j_1,j_2}\right.\right\},\\
	&\left\{f_{\lambda}(x)\left|\lambda=1,\cdots,m_2,\ f_\lambda=(f^1_\lambda,\cdots,f^d_\lambda)\right.\right\}
	\end{align*}
	be
	sequences of $\Re^d\otimes \Re^d$-valued functions and $\Re^d$-valued ones respectively such that the components of themselves and their derivative with respect to $x$ are polynomial growth functions for all $\kappa$ and $\lambda$. 
	Then we define the following matrix 
	\begin{align*}
	W_1^{(l_1,l_2),(l_3,l_4)}&:=\sum_{k=1}^{d}\parens{\Lambda_{\star}^{1/2}}^{l_1,k}\parens{\Lambda_{\star}^{1/2}}^{l_2,k}\parens{\Lambda_{\star}^{1/2}}^{l_3,k}\parens{\Lambda_{\star}^{1/2}}^{l_4,k}
	\parens{\E{\abs{\epsilon_0^k}^4}-3}\notag\\
	&\qquad+\frac{3}{2}\parens{\Lambda_{\star}^{l_1,l_3}\Lambda_{\star}^{l_2,l_4}+\Lambda_{\star}^{l_1,l_4}\Lambda_{\star}^{l_2,l_3}},
	\end{align*}
	and matrix-valued functionals
	\begin{align*}
	\parens{W_2^{(\tau)}(\tuborg{A_{\kappa}})}^{\kappa_1,\kappa_2}&:=\begin{cases}
	\nu_0\parens{\tr\tuborg{\parens{\bar{A}_{\kappa_1}c\bar{A}_{\kappa_2}c}(\cdot)}} \\
	\qquad\text{ if }\tau\in(1,2),\\
	\nu_0\parens{\tr\tuborg{\parens{\bar{A}_{\kappa_1}c\bar{A}_{\kappa_2}c+4\bar{A}_{\kappa_1}c\bar{A}_{\kappa_2}\Lambda_{\star}+12\bar{A}_{\kappa_1}\Lambda_{\star}\bar{A}_{\kappa_2}\Lambda_{\star}}(\cdot)}}\\
	\qquad\text{ if }\tau=2,
	\end{cases}\\
	\parens{W_3(\tuborg{f_{\lambda}})}^{\lambda_1,\lambda_2}&:=\nu_0\parens{\parens{f_{\lambda_1}c\parens{f_{\lambda_2}}^T}(\cdot)}, 
	\end{align*}
	where $\bar{A}_{\kappa}:=\frac{1}{2}\parens{A_{\kappa}+A_{\kappa}^T}$.
	\item $\cp$ and $\cl$ indicate convergence in probability and convergence in law respectively.
	\item For $f(x)$, $g(x,\theta)$ and $h(x,\vartheta)$, $f'(x):=\frac{\dop }{\dop x}f(x)$, $f''(x):=\frac{\dop^2}{\dop x^2}f(x)$, $\partial_{x}g(x,\theta):=\frac{\partial}{\partial x}g(x,\theta)$, $\partial_{\theta}g(x,\theta):=\frac{\partial}{\partial \theta}g(x,\theta)$, $\partial_{x}h(x,\vartheta):=\frac{\partial}{\partial x}h(x,\vartheta)$ and  $\partial_{\vartheta}h(x,\vartheta):=\frac{\partial}{\partial \vartheta}h(x,\vartheta)$.
\end{enumerate}

We make the following assumptions.
\begin{enumerate}
	\item[(A1)] $b$ and $a$ are continuously differentiable for 4 times,  and the components of themselves as well as their derivatives are polynomial growth functions uniformly in $\theta\in\Theta$. Furthermore, there exists $C>0$ such that for all $x\in\Re^d$,
	\begin{align*}
	&\norm{b(x)}+\norm{b'(x)}+\norm{b''(x)}\le C(1+\norm{x}), \\ 
	&\norm{a(x)}+\norm{a'(x)}+\norm{a''(x)}\le C(1+\norm{x}). 
	\end{align*}
	\item[(A2)] $X$ is ergodic and the invariant measure $\nu_0$ has $k$-th moment for all $k>0$.
	\item[(A3)] For all $k>0$, $\sup_{t\ge0}\E{\norm{X_t}^k}<\infty$.
	\item[(A4)] For any $k>0$, $\epsilon_{ih_n}$ has $k$-th moment and the component of $\epsilon_{ih_n}$ are independent of the other components for all $i$. In addition, the marginal distribution of each component is symmetric.
	\item[(A5)] $\inf_{x,\alpha} \det c(x,\alpha)>0$.
	\item[(A6)] There exist positive constants $\chi$ and $\tilde{\chi}$ such that $\mathbb{Y}_1(\alpha)\le -\chi\norm{\alpha-\alpha^\star}^2$,
	$\tilde{\mathbb{Y}}_1(\alpha|\Lambda_{\star})\le-\chi\norm{\alpha-\alpha^\star}^2$, 
	$\mathbb{Y}_2(\beta)\le -\tilde{\chi}\norm{\beta-\beta^\star}^2$ and $\tilde{\mathbb{Y}}_2(\beta)\le -\tilde{\chi}\norm{\beta-\beta^\star}^2$.
	\item[(A7)] The components of $b$, $a$, $\partial_xb$, $\partial_{\beta}b$, $\partial_xa$, $\partial_{\alpha}a$, $\partial_x^2b$, $\partial_{\beta}^2b$, $\partial_x\partial_{\beta}b$, $\partial_x^2a$, 
	$\partial_{\alpha}^2a$ and $\partial_x\partial_{\alpha}a$ are polynomial growth functions uniformly in $\theta\in\Theta$.
	\item[(AH)] $h_n=p_n^{-\tau},\ \tau\in(1,2]$ and $h_n\to0$, $p_n\to\infty$, $k_n\to\infty$, $\Delta_n=p_nh_n\to0$, $nh_n\to\infty$ as $n\to\infty$.
	\item[(T1)] If the index set $\mathcal{J}:=\tuborg{i\in\tuborg{1,\cdots,d}:\Lambda_{\star}^{i,i}>0}$ is not null, then the submatrix of $\Lambda_{\star}$ such that $\Lambda_{\star,\mathrm{sub}}:=\crotchet{\Lambda_{\star}^{i_1,j_2}}_{i_1,i_2\in \mathcal{J}}$
	is positive definite.
\end{enumerate}

\section{Main results}
\subsection{Adaptive ML-type estimation}
Firstly, we construct an estimator for $\Lambda$ such that
$\hat{\Lambda}_n:=\frac{1}{2n}\sum_{i=0}^{n-1}\parens{Y_{(i+1)h_n}-Y_{ih_n}}^{\otimes2}$.

\begin{lemma}\label{lem311}
	Under (A1)-(A4), $h_n\to0$ and $nh_n\to\infty$ as $n\to\infty$, $\hat{\Lambda}_n$ is consistent.
\end{lemma}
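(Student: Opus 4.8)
The plan is to expand the squared observation increments and isolate the contribution of the noise, which is the only term surviving the normalisation. Writing $\Delta_i^n Y := Y_{(i+1)h_n}-Y_{ih_n}$, $\Delta_i^n X := X_{(i+1)h_n}-X_{ih_n}$ and $\Delta_i^n\epsilon := \epsilon_{(i+1)h_n}-\epsilon_{ih_n}$, the model gives $\Delta_i^n Y = \Delta_i^n X + \Lambda_{\star}^{1/2}\Delta_i^n\epsilon$, so that
\begin{align*}
\parens{\Delta_i^n Y}^{\otimes 2} = \parens{\Delta_i^n X}^{\otimes2} + \Delta_i^n X\parens{\Lambda_{\star}^{1/2}\Delta_i^n\epsilon}^T + \Lambda_{\star}^{1/2}\Delta_i^n\epsilon\parens{\Delta_i^n X}^T + \Lambda_{\star}^{1/2}\parens{\Delta_i^n\epsilon}^{\otimes2}\Lambda_{\star}^{1/2}.
\end{align*}
I would then show that $\hat{\Lambda}_n$ equals the normalised average of the last term plus remainders that vanish in probability.

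For the diffusion term, under (A1)-(A3) the standard moment estimate for SDE increments yields $\E{\norm{\Delta_i^n X}^2}\le Ch_n$ uniformly in $i$ (the drift contributing $O(h_n^2)$ and the martingale part $O(h_n)$ via a Burkholder-Davis-Gundy inequality together with the linear growth bounds and the uniform-in-time moment control of (A3)). Hence $\E{\norm{\frac{1}{2n}\sum_{i=0}^{n-1}\parens{\Delta_i^n X}^{\otimes2}}}\le \frac{1}{2}Ch_n\to0$, and Markov's inequality gives convergence to the zero matrix in probability. For the two cross terms, independence of $\tuborg{\epsilon_{ih_n}}$ and $X$ together with $\E{\norm{\Delta_i^n\epsilon}^2}\le C$ and Cauchy-Schwarz give $\E{\norm{\Delta_i^n X\parens{\Delta_i^n\epsilon}^T}}\le\sqrt{\E{\norm{\Delta_i^n X}^2}\E{\norm{\Delta_i^n\epsilon}^2}}=O(\sqrt{h_n})$, so their normalised sums are $O(\sqrt{h_n})$ in $L^1$ and vanish in probability.

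The main term requires a law of large numbers. Since $\tuborg{\epsilon_{ih_n}}$ are i.i.d. with mean $\mathbf{0}$ and variance $I_d$, one computes $\E{\parens{\Delta_i^n\epsilon}^{\otimes2}}=2I_d$, so the average of the last term has expectation $\frac{1}{2n}\cdot n\cdot 2\Lambda_{\star}=\Lambda_{\star}$. For the variance, note that $\parens{\Delta_i^n\epsilon}^{\otimes2}$ and $\parens{\Delta_j^n\epsilon}^{\otimes2}$ are independent whenever $\abs{i-j}\ge2$ and share a single noise variable when $\abs{i-j}=1$; thus the sequence is $1$-dependent with bounded second moments, the finiteness of the fourth moments of $\epsilon$ being guaranteed by (A4). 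Consequently the variance of the normalised sum is $O(1/n)$, and Chebyshev's inequality yields $\frac{1}{2n}\sum_{i=0}^{n-1}\parens{\Delta_i^n\epsilon}^{\otimes2}\cp I_d$; multiplying by $\Lambda_{\star}^{1/2}$ on both sides gives convergence of the main term to $\Lambda_{\star}$. Collecting the three estimates proves $\hat{\Lambda}_n\cp\Lambda_{\star}$.

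The principal obstacle is the uniform increment bound $\E{\norm{\Delta_i^n X}^2}\le Ch_n$: this is the one place where the structural hypotheses (A1)-(A3) genuinely enter, and it must be derived carefully from the defining SDE (linear growth of $b$ and $a$, the uniform-in-$t$ moment bound of (A3), and a martingale moment inequality) rather than taken for granted. Everything else is bookkeeping, the only mild subtlety being the $1$-dependence of the noise average, which rules out a direct appeal to the classical i.i.d. law of large numbers but is dispatched by the elementary variance estimate above.
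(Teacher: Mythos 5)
Your proof is correct and follows essentially the same route as the paper: the same four-term expansion of $\parens{Y_{(i+1)h_n}-Y_{ih_n}}^{\otimes2}$, the same $O(h_n)$ moment bound killing the diffusion quadratic term, Cauchy--Schwarz for the two cross terms, and an elementary variance argument for the noise term. The only cosmetic difference is that the paper splits the centred noise quadratic $\parens{\epsilon_{(i+1)h_n}-\epsilon_{ih_n}}^{\otimes2}-2I_d$ into an i.i.d.\ average (classical LLN) plus one-lag cross terms controlled in $L^2$, whereas you treat the whole sequence as $1$-dependent and apply Chebyshev --- the same estimate in different packaging.
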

We propose the following quasi-likelihood functions such that
{\small\begin{align}
	&\mathbb{L}_{1,n}(\alpha|\Lambda):=-\frac{1}{2}\sum_{j=1}^{k_n-2}
	\parens{\ip{\parens{\frac{2}{3}\Delta_n c_n^{\tau}(\lm{Y}{j-1},\alpha,\Lambda)}^{-1}}{\parens{\lm{Y}{j+1}-\lm{Y}{j}}^{\otimes 2}}
		+\log\det \parens{ c_n^{\tau}(\lm{Y}{j-1},\alpha,\Lambda)}}, \\
	&\mathbb{L}_{2,n}(\beta|\Lambda,\alpha)
	:=-\frac{1}{2}\sum_{j=1}^{k_n-2}
	\ip{\parens{\Delta_nc_n^\tau(\lm{Y}{j-1},\alpha,\Lambda)}^{-1}}{\parens{\lm{Y}{j+1}-\lm{Y}{j}-\Delta_nb(\lm{Y}{j-1},\beta)}^{\otimes 2}},
	\end{align}}
where $c_n^\tau(x,\alpha,\Lambda):=c(x,\alpha)+3\Delta_n^{\frac{2-\tau}{\tau-1}}\Lambda. $ We define the estimators $\hat{\alpha}_n$ and $\hat{\beta}_n$,  where
\begin{align}
\mathbb{L}_{1,n}(\hat{\alpha}_n|\hat{\Lambda}_n)&=\sup_{\alpha\in\Theta_1}\mathbb{L}_{1,n}(\alpha|\hat{\Lambda}_n),\\
\mathbb{L}_{2,n}(\hat{\beta}_n|\hat{\Lambda}_n,\hat{\alpha}_n)&=\sup_{\beta\in\Theta_2}\mathbb{L}_{2,n}(\beta|\hat{\Lambda}_n,\hat{\alpha}_n).
\end{align}
The consistency of these estimators is given by the next theorem.

\begin{theorem}\label{thm312}
	Under (A1)-(A7) and (AH), $\hat{\alpha}_n$ and $\hat{\beta}_n$ are consistent.
\end{theorem}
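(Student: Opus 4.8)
The plan is to treat this as a two-stage M-estimation (argmax) consistency problem, settling $\hat{\alpha}_n$ first and then $\hat{\beta}_n$, since the drift estimator is constructed from the already-estimated $\hat{\Lambda}_n$ and $\hat{\alpha}_n$. For $\hat{\alpha}_n$ I would introduce the normalised contrast $\mathbb{U}_{1,n}(\alpha):=k_n^{-1}\parens{\mathbb{L}_{1,n}(\alpha|\hat{\Lambda}_n)-\mathbb{L}_{1,n}(\alpha^\star|\hat{\Lambda}_n)}$ and aim to prove the uniform convergence $\sup_{\alpha\in\Theta_1}\abs{\mathbb{U}_{1,n}(\alpha)-\mathbb{Y}_1^{(\tau)}(\alpha)}\cp 0$, where $\mathbb{Y}_1^{(\tau)}$ denotes $\mathbb{Y}_1$ when $\tau\in(1,2)$ and $\tilde{\mathbb{Y}}_1$ when $\tau=2$. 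Granting this, (A6) gives $\mathbb{Y}_1^{(\tau)}(\alpha)\le-\chi\norm{\alpha-\alpha^\star}^2$, so $\alpha^\star$ is the unique maximiser, and the standard argmax argument over the compact set $\Theta_1$ yields $\hat{\alpha}_n\cp\alpha^\star$.

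The heart of the matter is the covariance expansion of the local-mean increment. Writing $\lm{Y}{j+1}-\lm{Y}{j}=\parens{\lm{X}{j+1}-\lm{X}{j}}+\Lambda_\star^{1/2}\parens{\lm{\epsilon}{j+1}-\lm{\epsilon}{j}}$, the key computation is that, conditionally on the information up to block $j-1$, this increment has conditional covariance approximately $\frac{2}{3}\Delta_n c(X_{j\Delta_n},\alpha^\star)+\frac{2}{p_n}\Lambda_\star$, the first term coming from averaging the diffusion over a block and the second from $\mathrm{Var}[\lm{\epsilon}{j}]=p_n^{-1}I_d$ on disjoint blocks. Using $\Delta_n=p_n^{1-\tau}$ from (AH), the noise term rewrites as $\frac{2}{3}\Delta_n\cdot 3\Delta_n^{(2-\tau)/(\tau-1)}\Lambda_\star$, so the whole conditional covariance equals $\frac{2}{3}\Delta_n c_n^\tau(X_{j\Delta_n},\alpha^\star,\Lambda_\star)$ to leading order. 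This is precisely why the contrast is built around $c_n^\tau$, and it explains the dichotomy in the limit: the factor $\Delta_n^{(2-\tau)/(\tau-1)}$ vanishes for $\tau\in(1,2)$ (recovering $\mathbb{Y}_1$) and equals one for $\tau=2$ (recovering $\tilde{\mathbb{Y}}_1$). I would establish this via moment bounds on local means of the diffusion (Taylor-expanding $b,a$ along the block and using (A1), (A3)) and on local means of the noise (using (A4), whose symmetry kills the odd cross terms between $\lm{X}{}$ and $\lm{\epsilon}{}$).

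With the conditional covariance identified, each summand of $\mathbb{L}_{1,n}$ is decomposed into its block-conditional mean plus a martingale-difference remainder. The remainder is shown to be $o_P(1)$ after the $k_n^{-1}$ normalisation by an $L^2$-type law of large numbers for triangular arrays of martingale differences; the predictable part is an ergodic average $k_n^{-1}\sum_j G(X_{j\Delta_n},\alpha)$ which, by ergodicity under (A2) together with the local-mean approximation $\lm{Y}{j-1}\approx X_{j\Delta_n}$ for the latent state, converges to $\nu_0\parens{G(\cdot,\alpha)}$. Collecting the trace and log-determinant contributions reproduces $\mathbb{Y}_1^{(\tau)}(\alpha)$. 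The substitution of the consistent $\hat{\Lambda}_n$ for $\Lambda_\star$ (Lemma \ref{lem311}) is absorbed by a local-Lipschitz estimate in $\Lambda$, uniform over the compact $\Theta_1\times\Theta_\epsilon$ and relying on the uniform invertibility of $c_n^\tau$ from (A5). Uniformity in $\alpha$ follows by controlling $\sup_\alpha\norm{\partial_\alpha\mathbb{U}_{1,n}(\alpha)}$ through the same moment machinery together with (A5) and (A7), giving tightness of the gradient and hence equicontinuity.

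For $\hat{\beta}_n$ the structure is identical: I normalise $\mathbb{L}_{2,n}$ by $(k_n\Delta_n)^{-1}=(nh_n)^{-1}$ (the drift lives on the slow $nh_n\to\infty$ scale), plug in the now-consistent $\hat{\alpha}_n$ and $\hat{\Lambda}_n$, and show convergence to $\mathbb{Y}_2$ (resp.\ $\tilde{\mathbb{Y}}_2$). Here the leading martingale part of $\lm{Y}{j+1}-\lm{Y}{j}-\Delta_nb(\lm{Y}{j-1},\beta)$ contributes a $\beta$-free constant, while the drift-misspecification part leaves the quadratic form $\ip{\parens{c_n^\tau(\cdot,\alpha^\star,\Lambda_\star)}^{-1}}{\parens{b(\cdot,\beta)-b(\cdot,\beta^\star)}^{\otimes 2}}$, which integrates against $\nu_0$ to give $\mathbb{Y}_2^{(\tau)}$; (A6) again forces the unique maximiser at $\beta^\star$. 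The main obstacle throughout is the covariance expansion of the local-mean increment and its uniform-in-parameter martingale law of large numbers: pinning down the leading $\frac{2}{3}\Delta_n c_n^\tau$ term — in particular showing the diffusion--noise cross terms are negligible — and controlling the remainder uniformly is where essentially all the technical effort lies.
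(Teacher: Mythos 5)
Your proposal is correct and takes essentially the same route as the paper: uniform convergence in probability of the normalised contrast differences to $\mathbb{Y}_1$/$\tilde{\mathbb{Y}}_1$ and $\mathbb{Y}_2$/$\tilde{\mathbb{Y}}_2$ (driven by the local-mean expansion with conditional covariance $\tfrac{2}{3}\Delta_n c_n^\tau$, a martingale law of large numbers, ergodic averages, and gradient tightness for uniformity), followed by the compactness/continuity argmax argument with (A6) as identifiability and the plug-in of $\hat{\Lambda}_n$ absorbed by continuity in $\Lambda$. The only difference is organisational: the paper first proves consistency for auxiliary estimators built from $c$ (when $\tau\in(1,2)$) and $c^{\dagger}$ (when $\tau=2$) in Lemma \ref{lem761}, and then transfers it to $\hat{\alpha}_n,\hat{\beta}_n$ via the uniform bound $\norm{\parens{c_n^\tau(x,\alpha,\hat{\Lambda}_n)}^{-1}-\parens{c(x,\alpha)}^{-1}}\le C\Delta_n^{\frac{2-\tau}{\tau-1}}\parens{1+\norm{x}^C}$, whereas you keep $c_n^\tau$ throughout — a bookkeeping difference, not a mathematical one.
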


Let us denote
\begin{align}
I^{\tau}(\vartheta^{\star}):=\crotchet{\begin{matrix}
	W_{1} & O & O\\
	O & I^{(2,2),\tau}(\vartheta^{\star}) & O\\
	O & O & I^{(3,3),\tau}(\vartheta^{\star})
	\end{matrix}}, \ J^{\tau}(\vartheta^{\star}):=\crotchet{\begin{matrix}
	I & O & O\\
	O & J^{(2,2),\tau}(\vartheta^{\star}) & O\\
	O & O & J^{(3,3),\tau}(\vartheta^{\star})
	\end{matrix}},
\end{align}
where for $i_1,i_2\in\tuborg{1,\cdots,m_1}$, $j_1,j_2\in\tuborg{1,\cdots,m_2}$,
\begin{align}
I^{(2,2),\tau}(\vartheta^{\star})&:=\begin{cases}
W_2^{(\tau)}\parens{\tuborg{\frac{3}{4}\parens{c}^{-1}
		\parens{\partial_{\alpha^{k_1}}c}\parens{ c}^{-1}(\cdot,\alpha^\star)}_{k_1}}&\text{ if }\tau\in(1,2),\\
W_2^{(\tau)}\parens{\tuborg{\frac{3}{4}\parens{ c^{\dagger}}^{-1}\parens{\partial_{\alpha^{k_1}}c}\parens{ c^{\dagger}}^{-1}(\cdot,\vartheta^{\star})}_{k_1}}&\text{ if }\tau = 2,
\end{cases}\\
I^{(3,3),\tau}(\vartheta^{\star})&:=\begin{cases}
W_3\parens{\tuborg{\parens{\partial_{\beta^{k_2}}b}^T\parens{c}^{-1}(\cdot,\theta^\star)}_{k_2}} & \text{ if }\tau\in(1,2),\\
W_3\parens{\tuborg{\parens{\partial_{\beta^{k_2}}b}^T\parens{c^{\dagger}}^{-1}(\cdot,\vartheta^\star)}_{k_2}}& \text{ if }\tau=2,
\end{cases}\\
J^{(2,2),\tau}(\vartheta^{\star})&:=\begin{cases}
\crotchet{\frac{1}{2}\nu_0\parens{\tr\tuborg{\parens{c}^{-1}\parens{\partial_{\alpha^{i_1}}c}\parens{c}^{-1}\parens{\partial_{\alpha^{i_2}}c}}(\cdot,\alpha^{\star})}
}_{i_1,i_2} & \text{ if }\tau\in(1,2), \\
\crotchet{\frac{1}{2}\nu_0\parens{\tr\tuborg{\parens{c^{\dagger}}^{-1}\parens{\partial_{\alpha^{i_1}}c}\parens{c^{\dagger}}^{-1}
			\parens{\partial_{\alpha^{i_2}}c}}(\cdot,\vartheta^{\star})}}_{i_1,i_2}&\text{ if }\tau = 2, 
\end{cases}\\
J^{(3,3),\tau}(\vartheta^{\star})&:=\begin{cases}
\crotchet{\nu_0\parens{\ip{\parens{c}^{-1}}{
			\parens{\partial_{\beta^{j_1}}b}\parens{\partial_{\beta^{j_2}}b}^T}(\cdot,\theta^\star)}}_{j_1,j_2} & \text{ if }\tau\in(1,2), \\
\crotchet{\nu_0\parens{\ip{\parens{c^{\dagger}}^{-1}}{\parens{\partial_{\beta^{j_1}}b}\parens{\partial_{\beta^{j_2}}b}^T}(\cdot,\vartheta^\star)}}_{j_1,j_2} & \text{ if }\tau=2.
\end{cases}
\end{align}

In addition, let us denote $\hat{\theta}_{\epsilon,n}:=\vech\hat{\Lambda}_{n}$ and $\theta_{\epsilon}^{\star}:=\vech \Lambda_{\star}$.

\begin{theorem}\label{thm313}
	Under (A1)-(A7), (AH) and $k_n\Delta_n^2\to0$, the following convergence in distribution holds:
	\begin{align*}
	\crotchet{\begin{matrix}
		\sqrt{n}\parens{\hat{\theta}_{\epsilon,n}-\theta_{\epsilon}^{\star}}\\
		\sqrt{k_n}\parens{\hat{\alpha}_{n}-\alpha^{\star}}\\
		\sqrt{k_n\Delta_n}\parens{\hat{\beta}_n-\beta^{\star}}
		\end{matrix}}
	\cl N\parens{\mathbf{0},\parens{J^{\tau}(\vartheta^{\star})}^{-1}I^{\tau}(\vartheta^{\star})\parens{J^{\tau}(\vartheta^{\star})}^{-1}}.
	\end{align*}
\end{theorem}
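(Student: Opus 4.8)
The plan is to treat the three estimators within a single M-estimation framework, exploiting the fact that their natural rates are strictly ordered, $\sqrt{n}\gg\sqrt{k_n}\gg\sqrt{k_n\Delta_n}$ (because $p_n\to\infty$ and $\Delta_n\to0$ give $k_n=n/p_n=o(n)$ and $k_n\Delta_n=nh_n=o(k_n)$). For the two quasi-likelihood estimators I would start from the first-order conditions $\partial_{\alpha}\mathbb{L}_{1,n}(\hat\alpha_n|\hat\Lambda_n)=0$ and $\partial_{\beta}\mathbb{L}_{2,n}(\hat\beta_n|\hat\Lambda_n,\hat\alpha_n)=0$, which hold on an event of probability tending to $1$ by Theorem~\ref{thm312}. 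A mean-value expansion then yields, schematically,
\begin{align*}
\sqrt{k_n}\parens{\hat\alpha_n-\alpha^\star}&=-\parens{\frac{1}{k_n}\partial_\alpha^2\mathbb{L}_{1,n}(\tilde\alpha_n|\hat\Lambda_n)}^{-1}\frac{1}{\sqrt{k_n}}\partial_\alpha\mathbb{L}_{1,n}(\alpha^\star|\hat\Lambda_n),\\
\sqrt{k_n\Delta_n}\parens{\hat\beta_n-\beta^\star}&=-\parens{\frac{1}{k_n\Delta_n}\partial_\beta^2\mathbb{L}_{2,n}(\tilde\beta_n|\hat\Lambda_n,\hat\alpha_n)}^{-1}\frac{1}{\sqrt{k_n\Delta_n}}\partial_\beta\mathbb{L}_{2,n}(\beta^\star|\hat\Lambda_n,\hat\alpha_n),
\end{align*}
with $\tilde\alpha_n,\tilde\beta_n$ on the respective segments. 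The theorem then reduces to (i) a law of large numbers for the normalised Hessians, converging to $-J^{(2,2),\tau}$ and $-J^{(3,3),\tau}$, (ii) a central limit theorem for the normalised scores, with variances $I^{(2,2),\tau}$ and $I^{(3,3),\tau}$, and (iii) the negligibility of replacing $\hat\Lambda_n$ and $\hat\alpha_n$ by their true values inside the scores and Hessians.

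For $\hat\Lambda_n$ I would proceed directly. Writing $Y_{(i+1)h_n}-Y_{ih_n}=\parens{X_{(i+1)h_n}-X_{ih_n}}+\Lambda_\star^{1/2}\parens{\epsilon_{(i+1)h_n}-\epsilon_{ih_n}}$, the diffusion increment is $O_p(\sqrt{h_n})$ while the noise increment is $O_p(1)$, so the dominant contribution to $\sqrt{n}\parens{\hat\theta_{\epsilon,n}-\theta_\epsilon^\star}$ is $\frac{1}{\sqrt n}\sum_{i=0}^{n-1}\vech\parens{\parens{\Lambda_\star^{1/2}(\epsilon_{(i+1)h_n}-\epsilon_{ih_n})}^{\otimes 2}-2\Lambda_\star}$. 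These summands form a stationary, mean-zero, one-dependent array; a CLT for one-dependent sequences gives asymptotic normality, and a fourth-moment computation using the componentwise independence and symmetry in (A4) produces exactly the covariance $W_1$. The cross and diffusion terms are $o_p(1)$ after normalisation, since $k_n\Delta_n^2\to0$ forces $nh_n^2\to0$.

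The heart of the argument is the conditional-moment analysis of the local-mean increments $D_j:=\lm{Y}{j+1}-\lm{Y}{j}$. Decomposing $D_j=\parens{\lm{X}{j+1}-\lm{X}{j}}+\Lambda_\star^{1/2}\parens{\lm{\epsilon}{j+1}-\lm{\epsilon}{j}}$ and expanding the diffusion part locally around $X_{j\Delta_n}$, one obtains for the conditional covariance, to leading order,
\begin{align*}
\CE{D_j^{\otimes 2}}{\text{blocks}\le j-1}&=\tfrac{2}{3}\Delta_n c(X_{j\Delta_n},\alpha^\star)+\tfrac{2}{p_n}\Lambda_\star+\text{(rem.)}\\
&=\tfrac{2}{3}\Delta_n c_n^\tau(X_{j\Delta_n},\alpha^\star,\Lambda_\star)+\text{(rem.)},
\end{align*}
where the $\tfrac23$ stems from averaging Brownian increments over a block, and the identity $\tfrac{2}{p_n}=\tfrac{2}{3}\Delta_n\cdot 3\Delta_n^{(2-\tau)/(\tau-1)}$ (which holds since $\Delta_n=p_n^{1-\tau}$) is exactly the coefficient built into $c_n^\tau$; in particular $3\Delta_n^{(2-\tau)/(\tau-1)}\to0$ for $\tau\in(1,2)$ and $\to3$ for $\tau=2$, which is the origin of the dichotomy between $c$ and $c^\dagger=c+3\Lambda_\star$ in the limiting matrices. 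Substituting these expansions, the normalised Hessians converge by the ergodic theorem under (A2) to $-J^{(2,2),\tau}$ and $-J^{(3,3),\tau}$, while the centred scores, after the lagged coefficient $c_n^\tau(\lm{Y}{j-1},\cdot)$ is frozen at $X_{j\Delta_n}$ (using that $\lm{Y}{j-1}$ approximates $X_{(j-1)\Delta_n}$), become sums of approximate martingale differences to which a martingale CLT applies; the second- and fourth-moment evaluations of $D_j$, including the noise contributions at the critical scale $\tau=2$, yield the quadratic-variation limits $I^{(2,2),\tau}$ (through $W_2^{(\tau)}$) and $I^{(3,3),\tau}$ (through $W_3$). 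Here the condition $k_n\Delta_n^2\to0$ kills the $O(\Delta_n)$ discretisation bias of the Euler-type drift expansion after normalisation by $\sqrt{k_n\Delta_n}$, and the ordering of rates makes $\hat\Lambda_n-\Lambda_\star=O_p(n^{-1/2})$ and $\hat\alpha_n-\alpha^\star=O_p(k_n^{-1/2})$ negligible when inserted into the $\alpha$- and $\beta$-scores.

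Finally, for the joint statement I would stack the three normalised scores and express them, up to $o_p(1)$, as a single martingale-difference array with respect to the block filtration, and invoke a multivariate martingale CLT. The limiting predictable covariation is block diagonal, equal to $I^\tau(\vartheta^\star)$: the $(\alpha,\beta)$ off-diagonal block vanishes because the $\alpha$-score is driven by the centred \emph{quadratic} functional $D_j^{\otimes 2}-\CE{D_j^{\otimes 2}}{\cdot}$ whereas the $\beta$-score is driven by the \emph{linear} functional $D_j-\Delta_n b$, and the relevant odd conditional moments of the diffusion increment vanish to the required order; the blocks involving $\hat\Lambda_n$ decouple because its fluctuation lives at the faster scale $\sqrt n$ and is carried by the raw noise differences, whose correlation with the noise-averaged block statistics is of smaller order after normalisation. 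Combining the joint CLT for the scores with the Hessian limits and Slutsky's lemma gives the stated sandwich covariance $\parens{J^\tau(\vartheta^\star)}^{-1}I^\tau(\vartheta^\star)\parens{J^\tau(\vartheta^\star)}^{-1}$. I expect the main obstacle to be twofold: obtaining the conditional-moment expansions of the local means uniformly enough (with $L^p$-controlled remainders, using (A1), (A3), (A7)) to justify both the bias cancellations and the fourth-moment variance limits, and rigorously establishing that the off-diagonal predictable covariations vanish, which requires carefully handling the one-block overlap between consecutive increments $D_j,D_{j+1}$ and the simultaneous presence of the same noise sequence in all three estimators.
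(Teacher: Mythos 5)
Your plan follows essentially the same route as the paper's own proof: a score/Taylor expansion of the estimating equations in which the plug-in effects of $\hat{\Lambda}_n$ and $\hat{\alpha}_n$ are absorbed into normalised cross-derivative terms that vanish, ergodic limits for the Hessians giving $J^{\tau}(\vartheta^{\star})$, and a joint martingale CLT over the block filtration for the three stacked scores (the paper's Theorem \ref{thm751} and Corollary \ref{cor752}), including the same identity $\frac{2}{p_n}=\frac{2}{3}\Delta_n\cdot 3\Delta_n^{(2-\tau)/(\tau-1)}$ explaining the $\tau\in(1,2)$ versus $\tau=2$ dichotomy and the same vanishing conditional cross-covariances for asymptotic independence. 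The only blemish is the missing factor $\tfrac{1}{2}$ (coming from $\hat{\Lambda}_n=\frac{1}{2n}\sum_i(\cdot)^{\otimes 2}$) in your displayed leading term for $\sqrt{n}\parens{\hat{\theta}_{\epsilon,n}-\theta_{\epsilon}^{\star}}$, which does not affect the validity of the plan.
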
\qquad\\

\noindent \textbf{Remark.} This theorem shows the difference of the convergence rates with respect to $\hat{\theta}_{\epsilon,n}$, $\hat{\alpha}_n$ and $\hat{\beta}_n$ which is essentially significant to construct adaptive estimation approach.

\subsection{Test for noise detection}
We formulate the statistical hypothesis test such that $H_0: \Lambda_{\star}=O$ and $H_1: \Lambda_{\star}\neq O$.
We define $S_{t}:= \sum_{l=1}^{d}X_t^l$ and $\mathscr{S}_{ih_n} := \sum_{l=1}^{d}Y_{ih_n}^l$
and $\crotchet{X_t^1,\cdots,X_t^d,S_{t}}$ is also an ergodic diffusion.
Furthermore, 
{\small \begin{align}
	Z_{n}:=\sqrt{\frac{2p_n}{3\sum_{j=1}^{k_n-2}\parens{\lm{\mathscr{S}}{j+1}-\lm{\mathscr{S}}{j}}^4}}\parens{\sum_{i=0}^{n-1}\parens{\mathscr{S}_{(i+1)h_n}-\mathscr{S}_{ih_n}}^2
		-\sum_{0\le 2i\le n-2}\parens{\mathscr{S}_{(2i+2)h_n}-\mathscr{S}_{2ih_n}}^2},
	\end{align}}
where $\lm{\mathscr{S}}{j}:=\frac{1}{p_n}\sum_{i=0}^{p_n-1}\mathscr{S}_{j\Delta_n+ih_n}$, and consider the hypothesis test with rejection region $Z_{n}\ge z_{\alpha}$ 
where $z_{\alpha}$ is the upper $\alpha$ point of $N(0,1)$.

\begin{theorem}\label{thm321} Under $H_0$, (A1)-(A5), (AH) and $nh_n^2\to0$, 
	\begin{align*}
	Z_{n}\cl N(0,1).
	\end{align*}
\end{theorem}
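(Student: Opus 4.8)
The plan is to exploit that under $H_0$ we have $\Lambda_\star=O$, hence $Y_{ih_n}=X_{ih_n}$ and $\mathscr{S}_{ih_n}=S_{ih_n}$, so every quantity in $Z_n$ is a functional of the single one-dimensional ergodic diffusion $S$, with drift $b_S(x):=\sum_{l=1}^{d}b^l(x,\beta^\star)$ and infinitesimal variance $C_S(x):=\sum_{l_1,l_2=1}^{d}c^{l_1,l_2}(x,\alpha^\star)$, i.e. $\dop S_t=b_S(X_t)\dop t+\sigma_S(X_t)\dop w_t$ with $\sigma_S\sigma_S^T=C_S$. Writing $\xi_i:=S_{(i+1)h_n}-S_{ih_n}$ and pairing consecutive fine increments, the identity $\parens{\xi_{2j}+\xi_{2j+1}}^2=\xi_{2j}^2+\xi_{2j+1}^2+2\xi_{2j}\xi_{2j+1}$ collapses the bracket in $Z_n$ to $-2\sum_{j}\xi_{2j}\xi_{2j+1}$, up to a single negligible boundary term when $n$ is odd. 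Thus the numerator equals $D_n:=-2\sum_{j=0}^{m-1}\eta_j$ with $\eta_j:=\xi_{2j}\xi_{2j+1}$ and $m=\lfloor n/2\rfloor$. I would then prove the theorem by combining (A) a martingale central limit theorem giving $D_n/(\sqrt{n}h_n)\cl N\parens{0,2\nu_0(C_S^2)}$ with (B) a law of large numbers $\frac{3}{2p_n nh_n^2}V_n\cp 2\nu_0(C_S^2)$ for the self-normalising denominator $V_n:=\sum_{j=1}^{k_n-2}\parens{\lm{S}{j+1}-\lm{S}{j}}^4$; Slutsky's lemma then yields $Z_n\cl N(0,1)$, the factor $\nu_0(C_S^2)$ cancelling.

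For part (A), set $\mathcal{G}_j:=\mathcal{F}_{2jh_n}$ and observe that $\eta_j$ is $\mathcal{G}_{j+1}$-measurable, so $\tuborg{\eta_j-\CE{\eta_j}{\mathcal{G}_j}}_j$ is a martingale-difference array. I would split each $\xi_i$ into its drift part $\int_{ih_n}^{(i+1)h_n}b_S(X_s)\dop s=O(h_n)$ and its martingale part $\int_{ih_n}^{(i+1)h_n}\sigma_S(X_s)\dop w_s=O(\sqrt{h_n})$, and use Itô–Taylor expansions with the moment bounds (A3) to replace $\sigma_S(X_s)$ by $\sigma_S(X_{2jh_n})$ inside each pair, with $L^2$-controlled remainders. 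The martingale CLT then requires two checks: the conditional-variance convergence $\frac{4}{nh_n^2}\sum_j\CE{\eta_j^2}{\mathcal{G}_j}\cp 2\nu_0(C_S^2)$, which follows from $\CE{\xi_{2j}^2\xi_{2j+1}^2}{\mathcal{G}_j}\approx C_S^2(X_{2jh_n})h_n^2$ together with the discrete ergodic theorem $\frac{1}{m}\sum_j C_S^2(X_{2jh_n})\cp\nu_0(C_S^2)$ valid under (A1), (A2), (A3) and (AH); and a Lyapunov condition $\frac{1}{n^2h_n^4}\sum_j\CE{\eta_j^4}{\mathcal{G}_j}\cp 0$, which holds since $\CE{\eta_j^4}{\mathcal{G}_j}=O(h_n^4)$ makes the sum of order $1/n$.

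The crucial and most delicate step is controlling the conditional-mean (bias) term $-\frac{2}{\sqrt{n}h_n}\sum_j\CE{\eta_j}{\mathcal{G}_j}$, which is exactly where the hypothesis $nh_n^2\to0$ is consumed. Using $\CE{\xi_{2j+1}}{\mathcal{F}_{(2j+1)h_n}}=b_S(X_{(2j+1)h_n})h_n+O(h_n^2)$ and expanding $b_S(X_{(2j+1)h_n})$ around $X_{2jh_n}$ to capture its correlation with $\xi_{2j}$ through the shared Brownian increment, one finds $\CE{\eta_j}{\mathcal{G}_j}=O(h_n^2)$ termwise; summing the $m\approx n/2$ terms gives $\frac{1}{\sqrt{n}h_n}\sum_j\CE{\eta_j}{\mathcal{G}_j}=O(\sqrt{n}h_n)=O\parens{\sqrt{nh_n^2}}\cp 0$ precisely under $nh_n^2\to0$. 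This is the analytic heart of the argument and the reason that assumption is imposed; I expect it to be the main obstacle.

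Finally, for part (B) I would show that each local-mean difference satisfies $\CE{\parens{\lm{S}{j+1}-\lm{S}{j}}^2}{\mathcal{F}_{j\Delta_n}}\approx\frac{2}{3}\Delta_n C_S(X_{j\Delta_n})$, the factor $2/3$ arising from the tent-shaped weights with which the $p_n$ overlapping block-increments reweight the fine increments (their squared weights summing to $\sim\frac{2}{3}p_n^3$). The Gaussian fourth-moment relation then gives $\CE{\parens{\lm{S}{j+1}-\lm{S}{j}}^4}{\mathcal{F}_{j\Delta_n}}\approx 3\parens{\tfrac{2}{3}\Delta_n C_S(X_{j\Delta_n})}^2=\tfrac{4}{3}\Delta_n^2 C_S^2(X_{j\Delta_n})$, so that summing over blocks and applying the ergodic theorem on the $\Delta_n$-grid yields $V_n\approx\tfrac{4}{3}k_n\Delta_n^2\nu_0(C_S^2)$; since $k_n\Delta_n^2/p_n=nh_n^2$ this gives $\frac{3}{2p_n nh_n^2}V_n\cp 2\nu_0(C_S^2)$, matching the variance in (A). The convergence in probability needs a bound on $\mathrm{Var}(V_n)$ and replacement of the Gaussian moments by their diffusion counterparts, again via (A1), (A3) and Itô–Taylor remainders; these computations are routine but tedious, in contrast to the sharp $O(h_n^2)$ bias control in (A).
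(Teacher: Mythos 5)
Your proposal is correct and follows the paper's own proof essentially step for step: the paper likewise collapses the numerator to $-2\sum_{i}\parens{S_{(2i+2)h_n}-S_{(2i+1)h_n}}\parens{S_{(2i+1)h_n}-S_{2ih_n}}$, applies a martingale limit theorem with exactly your three checks — conditional mean $O(h_n^2)$ per term so that the bias is $O(\sqrt{nh_n^2})\to 0$ (where $nh_n^2\to0$ is consumed), conditional variance converging to $2\nu_0\parens{c_S^2(\cdot)}$, and a fourth-moment Lyapunov bound — and separately proves the law of large numbers $\frac{1}{k_n\Delta_n^2}\sum_{j}\parens{\lm{\mathscr{S}}{j+1}-\lm{\mathscr{S}}{j}}^4\cp\frac{4}{3}\nu_0\parens{c_S^2(\cdot)}$ under $H_0$, concluding by Slutsky (its Proposition 7.7.1 and Lemma 7.7.2). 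Even your constants match the paper's computation: the $2/3$ from the squared tent weights of the local means and the resulting $4/3=3\cdot(2/3)^2$ from the conditional Gaussian fourth moment.
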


\begin{theorem}\label{thm322} Under $H_1$, (A1)-(A5), (AH), (T1) and $nh_n^2\to0$, the test is consistent, i.e., for all $\alpha\in(0,1)$,
	\begin{align*}
	P(Z_{n}\ge z_{\alpha})\to 1.
	\end{align*}
\end{theorem}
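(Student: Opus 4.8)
The plan is to show that under $H_1$ the statistic diverges, $Z_n\cp+\infty$, from which $P(Z_n\ge z_\alpha)\to1$ for every fixed $\alpha\in(0,1)$ is immediate. Write the aggregated observation as $\mathscr{S}_{ih_n}=S_{ih_n}+\eta_{ih_n}$, where $\eta_{ih_n}:=\mathbf{1}^T\Lambda_{\star}^{1/2}\epsilon_{ih_n}$ is an i.i.d.\ scalar sequence (independent of the latent path) with mean zero and variance $\sigma_{\star}^2:=\mathbf{1}^T\Lambda_{\star}\mathbf{1}$, $\mathbf{1}:=(1,\dots,1)^T\in\Re^d$. The first thing to check — and the place where (T1) is genuinely used — is that $\sigma_{\star}^2>0$. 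Since $\Lambda_{\star}$ is positive semi-definite, $\Lambda_{\star}^{i,i}=0$ forces the whole $i$-th row and column to vanish, so $\mathbf{1}^T\Lambda_{\star}\mathbf{1}=\mathbf{1}_{\mathcal J}^T\Lambda_{\star,\mathrm{sub}}\mathbf{1}_{\mathcal J}$ with $\mathcal J$ non-null under $H_1$; positive definiteness of $\Lambda_{\star,\mathrm{sub}}$ from (T1) then gives $\sigma_{\star}^2>0$. This is precisely the structural reason for aggregating the coordinates before testing: it guarantees the noise remains visible after summation.

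I would split $Z_n=\sqrt{2p_n/(3V_n)}\,N_n$ into the numerator $N_n:=\sum_{i=0}^{n-1}\parens{\mathscr{S}_{(i+1)h_n}-\mathscr{S}_{ih_n}}^2-\sum_{0\le2i\le n-2}\parens{\mathscr{S}_{(2i+2)h_n}-\mathscr{S}_{2ih_n}}^2$ and the normaliser $V_n:=\sum_{j=1}^{k_n-2}\parens{\lm{\mathscr{S}}{j+1}-\lm{\mathscr{S}}{j}}^4$, and treat them separately. For the numerator, substitute $\mathscr{S}=S+\eta$ and expand each square into a diffusion term, a noise term and a cross term. The diffusion quadratic variations $\sum\parens{S_{(i+1)h_n}-S_{ih_n}}^2$ and $\sum\parens{S_{(2i+2)h_n}-S_{2ih_n}}^2$ are both $O_P(nh_n)=o_P(n)$ by ergodicity (A2)--(A3) and the usual realised-variance limit, so they are negligible after dividing by $n$. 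The noise terms are the driver: since $\tuborg{\eta_{ih_n}}$ is i.i.d.\ with variance $\sigma_{\star}^2$, a weak law of large numbers for the (one-dependent) sequences $\parens{\eta_{(i+1)h_n}-\eta_{ih_n}}^2$ and $\parens{\eta_{(2i+2)h_n}-\eta_{2ih_n}}^2$ gives $\frac1n\sum_{i=0}^{n-1}\parens{\eta_{(i+1)h_n}-\eta_{ih_n}}^2\cp2\sigma_{\star}^2$ and $\frac1n\sum_{0\le2i\le n-2}\parens{\eta_{(2i+2)h_n}-\eta_{2ih_n}}^2\cp\sigma_{\star}^2$, the factor two coming from the doubled number of increments in the full sum. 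The cross terms, being conditionally centred given the path and of conditional variance $O(nh_n)$, are $o_P(n)$. Collecting these, $N_n/n\cp2\sigma_{\star}^2-\sigma_{\star}^2=\sigma_{\star}^2>0$.

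For the normaliser I only need an upper bound $V_n=O_P(k_n\Delta_n^2)$. Splitting $\lm{\mathscr{S}}{j+1}-\lm{\mathscr{S}}{j}=\parens{\lm{S}{j+1}-\lm{S}{j}}+\parens{\lm{\eta}{j+1}-\lm{\eta}{j}}$ and using $(a+b)^4\le8(a^4+b^4)$, the diffusion local-mean increments obey $\E{\parens{\lm{S}{j+1}-\lm{S}{j}}^4}=O(\Delta_n^2)$ (local means track the latent state over a window of length $\Delta_n$, as in the local-mean estimates used for Theorems \ref{thm312}--\ref{thm313}), summing to $O_P(k_n\Delta_n^2)$, while $\lm{\eta}{j+1}-\lm{\eta}{j}$ has variance $2\sigma_{\star}^2/p_n$ and fourth moment $O(p_n^{-2})$, so its fourth-power sum is $O_P(k_n p_n^{-2})$. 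Since $\tau\le2$ gives $\Delta_n^2=p_n^{2-2\tau}\ge p_n^{-2}$, both contributions are $O_P(k_n\Delta_n^2)$, whence $V_n=O_P(k_n\Delta_n^2)$.

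Finally I would combine: writing $Z_n=\frac{N_n}{n}\sqrt{\frac{2p_n n^2}{3V_n}}$, the first factor converges in probability to $\sigma_{\star}^2>0$, and on the event $\tuborg{V_n\le Ck_n\Delta_n^2}$ (probability $\to1$) the second is bounded below by a multiple of $\sqrt{p_n n^2/(k_n\Delta_n^2)}=\sqrt{n/h_n^2}\to\infty$, after substituting $n=k_np_n$ and $\Delta_n=p_nh_n$. Hence $Z_n\cp+\infty$ and the test is consistent. I expect the main obstacle to be the two moment/LLN estimates underpinning the orders above: rigorously controlling the fourth moments of the diffusion local-mean increments to obtain the $O(\Delta_n^2)$ rate (where the Itô--Taylor expansion of $\lm{S}{j+1}-\lm{S}{j}$, the moment bounds (A1) and (A3), and the approximation of local means by the state all enter), and justifying the weak laws for the one-dependent noise functionals tightly enough to pass from expectations to probabilistic limits; by comparison the cross-term bookkeeping and the verification $\sigma_{\star}^2>0$ are routine.
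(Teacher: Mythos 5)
Your proof is correct and takes essentially the same route as the paper: the paper likewise decomposes the numerator into diffusion, noise and cross parts, kills the diffusion and cross terms in $L^1$/$L^2$, applies the law of large numbers to the noise terms to obtain $\frac{1}{2n}\sum_{i=0}^{n-1}\parens{\mathscr{S}_{(i+1)h_n}-\mathscr{S}_{ih_n}}^2\cp\sum_{l_1,l_2}\Lambda_{\star}^{l_1,l_2}$ and $\frac{1}{n}\sum_{0\le 2i\le n-2}\parens{\mathscr{S}_{(2i+2)h_n}-\mathscr{S}_{2ih_n}}^2\cp\sum_{l_1,l_2}\Lambda_{\star}^{l_1,l_2}$ (your $\sigma_{\star}^2$, with (T1) used exactly as you use it), and then concludes via the same $\sqrt{n/h_n^2}\to\infty$ scaling. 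The one harmless deviation is the normaliser: the paper reuses Lemma \ref{lem772} (already established for Theorem \ref{thm321}, and valid under $H_1$), which gives the exact probability limit of $\frac{1}{k_n\Delta_n^2}\sum_{j}\parens{\lm{\mathscr{S}}{j+1}-\lm{\mathscr{S}}{j}}^4$ for both $\tau\in(1,2)$ and $\tau=2$, whereas you settle for the cruder upper bound $O_P(k_n\Delta_n^2)$, which indeed suffices for consistency.
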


\section{Example and simulation results}
\subsection{Case of small noise}
First of all, we consider the following 2-dimensional Ornstein-Uhlenbeck process
\begin{align}
\dop \crotchet{\begin{matrix}
	X_t^1\\
	X_t^2
	\end{matrix}}=\parens{\crotchet{\begin{matrix}
		\beta_1 & \beta_3\\
		\beta_2 & \beta_4
		\end{matrix}}\crotchet{\begin{matrix}
		X_t^1\\
		X_t^2
		\end{matrix}}+\crotchet{\begin{matrix}
		\beta_5\\
		\beta_6
		\end{matrix}}}\dop t
+ \crotchet{\begin{matrix}
	\alpha_1 & \alpha_2\\
	\alpha_2 & \alpha_3
	\end{matrix}}\dop w_t,\ \crotchet{\begin{matrix}
	X_0^1\\
	X_0^2
	\end{matrix}}=\crotchet{\begin{matrix}
	1\\
	1
	\end{matrix} }, 
\end{align}
where the true values of the diffusion parameters $\parens{\alpha_1^{\star}, \alpha_2^{\star},\alpha_3^{\star}}=\parens{1, 0.1,1}$ and the drift one $\parens{\beta_1^{\star}, \beta_2^{\star},\beta_3^{\star},\beta_4^{\star},\beta_5^{\star},\beta_6^{\star}}=\parens{-1,-0.1,-0.1,-1,1,1}$,
and the multivariate normal noise and the several levels of $\Lambda$ such that $\Lambda_{\star,-\infty}=O, \Lambda_{\star,-i}=10^{-i}I_2$
for all $i=\{4,5,6,7,8\}$. We check the performance of our estimator and the test constructed in Section 3, and compare our estimator (local mean method, LMM) with the estimator by LGA.
We show the setting and result of simulation in the following tables. With respect to the estimator for the noise variance, let us check the case of $\Lambda_{\star,-4}$. The empirical mean and standard deviation of $\hat{\Lambda}_{n}^{1,1}$ with $\Lambda_{\star}^{1,1}=10^{-4}$ are $1.32\times10^{-4}$ and $3.21\times 10^{-5}$; those of $\hat{\Lambda}_{n}^{1,2}$ with $\Lambda_{\star}^{1,2}=0$ are $6.29\times 10^{-6}$ and $6.31\times 10^{-6}$; and those of $\hat{\Lambda}_{n}^{2,2}$ with $\Lambda_{\star}^{1,2}=10^{-4}$ are $1.33\times10^{-4}$ and $3.25\times 10^{-5}$.
\begin{table}[h]
	\begin{center}
		\caption{Setting in Section 4}
		\begin{tabular}{c|c}
			quantity & approximation \\\hline
			$n$ & $10^6$\\
			$h_n$ & $6.309573\times 10^{-5}$\\
			$T_n$ &  $63.09573$\\
			$nh_n^2$ & $0.003981072$\\
			$\tau$ & $2$\\
			$p_n$ & $125$\\
			$k_n$ & $8000$\\
			$\Delta_n$ & $0.007886967$\\
			$k_n\Delta_n^2$ & $0.497634$\\
			iteration & 1000
		\end{tabular}\qquad\\\qquad\\
	\end{center}
\end{table}

In the first place, we examine the performance of the diffusion estimators. It can be seen that neither estimator with our method nor LGA dominates the other in terms of standard deviation where $\Lambda_{\star,-\infty}$, $\Lambda_{\star,-8}$ and $\Lambda_{\star,-7}$. Note that the powers of the test statistics are not large in these settings. It reflects that it is indifferent to choose either our estimators which are consistent even if there is no noise or the estimators with LGA by counting on the result of noise detection test which are asymptotically efficient if observation is not contaminated by noise. In contrast to these sizes of variance of noise, the results of simulation with the setting $\Lambda_{\star,-6}$, $\Lambda_{\star,-5}$ and $\Lambda_{\star,-4}$ shows that our estimators dominate the estimators with LGA in terms of standard deviation, and simultaneously the test for noise detection performs high power. 

We also see the same behaviour in estimation for drift parameters. In this case, our estimators are dominant in all the setting of noise variance, but the performance of LGA estimators are close to them where $\Lambda_{\star,-\infty}$, $\Lambda_{\star,-8}$ and $\Lambda_{\star,-7}$. With the larger variance of noise, the estimators with local means method are far fine compared to the others.

\begin{remark}
	\normalfont With these results,  we can see that the test works well as a criterion to select the estimation methods with local means and LGA: when adopting $H_0:\Lambda_{\star}=O$, we are essentially free to adopt either estimation; if rejecting $H_0$, we are strongly motivated to select our estimator.
\end{remark}

\begin{table}[!ht]
	\caption{test statistics performance with small noise (section 4.1)}
	\centering
	\begin{tabular}{c|ccc}
		& ratio of $Z_n>z_{0.05}$ & ratio of $Z_n>z_{0.01}$ & ratio of $Z_n>z_{0.001}$ \\\hline 
		$\Lambda_\star=O$ & 0.050 & 0.008 & 0.002\\ 
		$\Lambda_\star=10^{-8}I_2$ & 0.065 & 0.010 & 0.002\\ 
		$\Lambda_\star=10^{-7}I_2$ & 0.257 & 0.088 & 0.016\\ 
		$\Lambda_\star=10^{-6}I_2$ & 1.000 & 1.000 & 1.000\\ 
		$\Lambda_\star=10^{-5}I_2$ & 1.000 & 1.000 & 1.000\\ 
		$\Lambda_\star=10^{-4}I_2$ & 1.000 & 1.000 & 1.000\\
	\end{tabular}
\end{table}
\begin{table}
	\caption{comparison of diffusion estimators with small noise (section 4.1)}
	\centering
	\begin{tabular}{c|cc|cc}
		& \multicolumn{2}{c|}{$\hat{\alpha}_{1,LMM}\ (1)$} & \multicolumn{2}{c}{$\hat{\alpha}_{1,LGA}\ (1)$}\\
		& mean & (SD) & mean & (SD) \\\hline
		$\Lambda_{\star}=O$  &  0.99824985 & ( 0.00886499 ) &  1.00386452 & ( 0.00655552 ) \\
		$\Lambda_{\star}=10^{-8}I_2$  &  0.99824991 & ( 0.00886493 ) &  1.00402408 & ( 0.00664984 ) \\
		$\Lambda_{\star}=10^{-7}I_2$  &  0.99825004 & ( 0.0088648 ) &  1.00545845 & ( 0.00759832 ) \\
		$\Lambda_{\star}=10^{-6}I_2$  &  0.9982504 & ( 0.00886444 ) &  1.01968251 & ( 0.02036034 ) \\
		$\Lambda_{\star}=10^{-5}I_2$  &  0.99825186 & ( 0.00886329 ) &  1.15201362 & ( 0.15208385 ) \\
		$\Lambda_{\star}=10^{-4}I_2$  &  0.9982576 & ( 0.00886065 ) &  2.04583439 & ( 1.04583867 )
	\end{tabular}\\
	\begin{tabular}{c|cc|cc}
		& \multicolumn{2}{c|}{$\hat{\alpha}_{2,LMM}\ (0.1)$} & \multicolumn{2}{c}{$\hat{\alpha}_{2,LGA}\ (0.1)$}\\
		& mean & (SD) & mean & (SD) \\\hline
		$\Lambda_{\star}=O$  &  0.09724903 & ( 0.00662529 ) &  0.09886816 & ( 0.00655923 ) \\
		$\Lambda_{\star}=10^{-8}I_2$  &  0.09724902 & ( 0.0066253 ) &  0.09885307 & ( 0.0065606 ) \\
		$\Lambda_{\star}=10^{-7}I_2$  &  0.09724899 & ( 0.00662534 ) &  0.09871406 & ( 0.00657648 ) \\
		$\Lambda_{\star}=10^{-6}I_2$  &  0.09724893 & ( 0.00662545 ) &  0.09734969 & ( 0.00688858 ) \\
		$\Lambda_{\star}=10^{-5}I_2$  &  0.09724854 & ( 0.00662592 ) &  0.08624082 & ( 0.01486974 ) \\
		$\Lambda_{\star}=10^{-4}I_2$  &  0.09724738 & ( 0.00662794 ) &  0.04868686 & ( 0.05142515 )
	\end{tabular}\\
	\begin{tabular}{c|cc|cc}
		& \multicolumn{2}{c|}{$\hat{\alpha}_{3,LMM}\ (1)$} & \multicolumn{2}{c}{$\hat{\alpha}_{3,LGA}\ (1)$}\\
		& mean & (SD) & mean & (SD) \\\hline
		$\Lambda_{\star}=O$  &  0.99852799 & ( 0.00884562 ) &  1.01075847 & ( 0.01572314 ) \\
		$\Lambda_{\star}=10^{-8}I_2$  &  0.99852801 & ( 0.00884561 ) &  1.01091749 & ( 0.01583072 ) \\
		$\Lambda_{\star}=10^{-7}I_2$  &  0.99852807 & ( 0.00884559 ) &  1.01234286 & ( 0.01683396 ) \\
		$\Lambda_{\star}=10^{-6}I_2$  &  0.9985282 & ( 0.00884551 ) &  1.02647333 & ( 0.02877967 ) \\
		$\Lambda_{\star}=10^{-5}I_2$  &  0.99852891 & ( 0.00884529 ) &  1.15802723 & ( 0.15834474 ) \\
		$\Lambda_{\star}=10^{-4}I_2$  &  0.99853216 & ( 0.00884527 ) &  2.04915044 & ( 1.04916696 )
	\end{tabular}\qquad\\\qquad\\
\end{table}
\begin{table}
	\caption{comparison of drift estimators with small noise (section 4.1) (1)}
	\centering
	\begin{tabular}{c|cc|cc}
		& \multicolumn{2}{c|}{$\hat{\beta}_{1,LMM}\ (-1)$} & \multicolumn{2}{c}{$\hat{\beta}_{1,LGA}\ (-1)$}\\
		& mean & (SD) & mean & (SD) \\\hline
		$\Lambda_{\star}=O$  &  -1.07745389 & ( 0.20541201 ) &  -1.09906229 & ( 0.21557911 ) \\
		$\Lambda_{\star}=10^{-8}I_2$  &  -1.07745439 & ( 0.20541243 ) &  -1.09939945 & ( 0.21575873 ) \\
		$\Lambda_{\star}=10^{-7}I_2$  &  -1.07745489 & ( 0.20541362 ) &  -1.10251229 & ( 0.21772954 ) \\
		$\Lambda_{\star}=10^{-6}I_2$  &  -1.07745561 & ( 0.20541561 ) &  -1.13381666 & ( 0.23865148 ) \\
		$\Lambda_{\star}=10^{-5}I_2$  &  -1.07745453 & ( 0.20541629 ) &  -1.44795936 & ( 0.51414853 ) \\
		$\Lambda_{\star}=10^{-4}I_2$  &  -1.07745019 & ( 0.20541843 ) &  -4.5950918 & ( 3.6829753 )
	\end{tabular}
	\begin{tabular}{c|cc|cc}
		& \multicolumn{2}{c|}{$\hat{\beta}_{2,LMM}\ (-0.1)$} & \multicolumn{2}{c}{$\hat{\beta}_{2,LGA}\ (-0.1)$}\\
		& mean & (SD) & mean & (SD) \\\hline
		$\Lambda_{\star}=O$  &  -0.09696664 & ( 0.19396176 ) &  -0.10388415 & ( 0.19861024 ) \\
		$\Lambda_{\star}=10^{-8}I_2$  &  -0.0969669 & ( 0.19396159 ) &  -0.10383271 & ( 0.1986811 ) \\
		$\Lambda_{\star}=10^{-7}I_2$  &  -0.09696699 & ( 0.19396148 ) &  -0.10351819 & ( 0.19892269 ) \\
		$\Lambda_{\star}=10^{-6}I_2$  &  -0.09696965 & ( 0.19396442 ) &  -0.10023654 & ( 0.20165131 ) \\
		$\Lambda_{\star}=10^{-5}I_2$  &  -0.09697097 & ( 0.1939614 ) &  -0.06696848 & ( 0.23358494 ) \\
		$\Lambda_{\star}=10^{-4}I_2$  &  -0.09697883 & ( 0.19395681 ) &  0.25865695 & ( 0.6942442 )
	\end{tabular}
	\begin{tabular}{c|cc|cc}
		& \multicolumn{2}{c|}{$\hat{\beta}_{3,LMM}\ (-0.1)$} & \multicolumn{2}{c}{$\hat{\beta}_{3,LGA}\ (-0.1)$}\\
		& mean & (SD) & mean & (SD) \\\hline
		$\Lambda_{\star}=O$  &  -0.09690757 & ( 0.19419626 ) &  -0.10499729 & ( 0.19540939 ) \\
		$\Lambda_{\star}=10^{-8}I_2$  &  -0.09690763 & ( 0.19419608 ) &  -0.10496151 & ( 0.19544103 ) \\
		$\Lambda_{\star}=10^{-7}I_2$  &  -0.09690675 & ( 0.1941972 ) &  -0.10464195 & ( 0.19574205 ) \\
		$\Lambda_{\star}=10^{-6}I_2$  &  -0.09690773 & ( 0.19419702 ) &  -0.1012991 & ( 0.19850394 ) \\
		$\Lambda_{\star}=10^{-5}I_2$  &  -0.09690361 & ( 0.19419826 ) &  -0.06856264 & ( 0.23062938 ) \\
		$\Lambda_{\star}=10^{-4}I_2$  &  -0.09689737 & ( 0.19419747 ) &  0.25619939 & ( 0.69297105 )
	\end{tabular}
\end{table}

\begin{table}[!ht]
	\caption{comparison of drift estimators with small noise (section 4.1) (2)}
	\centering
	\begin{tabular}{c|cc|cc}
		& \multicolumn{2}{c|}{$\hat{\beta}_{4,LMM}\ (-1)$} & \multicolumn{2}{c}{$\hat{\beta}_{4,LGA}\ (-1)$}\\
		& mean & (SD) & mean & (SD) \\\hline
		$\Lambda_{\star}=O$  &  -1.07104327 & ( 0.20327131 ) &  -1.09418662 & ( 0.21647653 ) \\
		$\Lambda_{\star}=10^{-8}I_2$  &  -1.07104339 & ( 0.20327139 ) &  -1.09452347 & ( 0.21671314 ) \\
		$\Lambda_{\star}=10^{-7}I_2$  &  -1.07104315 & ( 0.20327116 ) &  -1.09769575 & ( 0.21858651 ) \\
		$\Lambda_{\star}=10^{-6}I_2$  &  -1.07104262 & ( 0.20327284 ) &  -1.12882247 & ( 0.23877234 ) \\
		$\Lambda_{\star}=10^{-5}I_2$  &  -1.07104344 & ( 0.20327336 ) &  -1.4409219 & ( 0.50979684 ) \\
		$\Lambda_{\star}=10^{-4}I_2$  &  -1.07104396 & ( 0.20327515 ) &  -4.56995034 & ( 3.66041359 )
	\end{tabular}\\
	\begin{tabular}{c|cc|cc}
		& \multicolumn{2}{c|}{$\hat{\beta}_{5,LMM}\ (1)$} & \multicolumn{2}{c}{$\hat{\beta}_{5,LGA}\ (1)$}\\
		& mean & (SD) & mean & (SD) \\\hline
		$\Lambda_{\star}=O$  &  1.06540697 & ( 0.28270108 ) &  1.09393868 & ( 0.29249589 ) \\
		$\Lambda_{\star}=10^{-8}I_2$  &  1.06540771 & ( 0.28270021 ) &  1.09421427 & ( 0.29263493 ) \\
		$\Lambda_{\star}=10^{-7}I_2$  &  1.06540748 & ( 0.28270061 ) &  1.09676246 & ( 0.29409259 ) \\
		$\Lambda_{\star}=10^{-6}I_2$  &  1.06541101 & ( 0.28270294 ) &  1.12218182 & ( 0.309151 ) \\
		$\Lambda_{\star}=10^{-5}I_2$  &  1.06540409 & ( 0.28270572 ) &  1.37801129 & ( 0.51477587 ) \\
		$\Lambda_{\star}=10^{-4}I_2$  &  1.06539429 & ( 0.28270944 ) &  3.94023887 & ( 3.11535507 )
	\end{tabular}\\
	\begin{tabular}{c|cc|cc}
		& \multicolumn{2}{c|}{$\hat{\beta}_{6,LMM}\ (1)$} & \multicolumn{2}{c}{$\hat{\beta}_{6,LGA}\ (1)$}\\
		& mean & (SD) & mean & (SD) \\\hline
		$\Lambda_{\star}=O$  &  1.05919339 & ( 0.27975853 ) &  1.07133789 & ( 0.28745577 ) \\
		$\Lambda_{\star}=10^{-8}I_2$  &  1.0591935 & ( 0.27975834 ) &  1.07158363 & ( 0.28763788 ) \\
		$\Lambda_{\star}=10^{-7}I_2$  &  1.05919364 & ( 0.27975865 ) &  1.07419761 & ( 0.28889439 ) \\
		$\Lambda_{\star}=10^{-6}I_2$  &  1.05919376 & ( 0.27975905 ) &  1.0993641 & ( 0.30222648 ) \\
		$\Lambda_{\star}=10^{-5}I_2$  &  1.0591954 & ( 0.279757 ) &  1.35275129 & ( 0.49735104 ) \\
		$\Lambda_{\star}=10^{-4}I_2$  &  1.05920135 & ( 0.27974785 ) &  3.89597877 & ( 3.0734647 ) 
	\end{tabular}
\end{table}

\begin{table}
	\caption{Estimators with large noise (section 4.2)}
	\centering
		\begin{tabular}{c|c|cc|cc}
			& & \multicolumn{2}{c|}{LMM} & \multicolumn{2}{c}{LGA}\\
			& true value & mean & (SD) & mean & (SD)\\\hline
			$\mathrm{\hat{\Lambda}^{1,1}}$ & $\mathrm{1}$ & $\mathrm{1.000106}$ & $\mathrm{(0.001678)}$ 
			& - & - \\ 
			$\mathrm{\hat{\Lambda}^{1,2}}$ & $\mathrm{0}$ & $\mathrm{1.796561\times10^{-5}}$ & $\mathrm{(0.001226)}$
			& - & - \\ 
			$\mathrm{\hat{\Lambda}^{2,2}}$ & $\mathrm{1}$ & $\mathrm{1.000030}$ & $\mathrm{(0.001826)}$ 
			& - & - \\
			$\hat{\alpha}_1$ & $\mathrm{1}$ & $\mathrm{1.017632}$ & $\mathrm{(0.03965)}$ 
			& $\mathrm{178.068993}$ & $\mathrm{(177.0733)}$ \\
			$\hat{\alpha}_2$ & $\mathrm{0.1}$ & $\mathrm{0.09688719}$ & $\mathrm{(0.02594)}$ 
			& $\mathrm{0.31344261}$ & $\mathrm{(9.9737)}$\\
			$\hat{\alpha}_3$ & $\mathrm{1}$ & $\mathrm{1.018471}$ & $\mathrm{(0.03846)}$ 
			& $\mathrm{177.962836}$ & $\mathrm{(176.9738)}$\\
			$\hat{\beta}_1$ & $\mathrm{-1}$ & $\mathrm{-1.052772}$ & $\mathrm{(0.1915)}$ 
			& $\mathrm{3.51\times10^7}$ & $\mathrm{(1.11\times 10^9)}$ \\
			$\hat{\beta}_2$ & $\mathrm{-0.1}$ & $\mathrm{-0.1096751}$ & $\mathrm{(0.2008)}$ 
			& $\mathrm{1.37\times10^8}$ & $\mathrm{(4.34\times 10^9)}$\\
			$\hat{\beta}_3$ & $\mathrm{-0.1}$ & $\mathrm{-0.08940294}$ & $\mathrm{(0.1916)}$ 
			& $\mathrm{1.27\times10^8}$ & $\mathrm{(4.03\times 10^9)}$\\
			$\hat{\beta}_4$ & $\mathrm{-1}$ & $\mathrm{-1.051771}$ & $\mathrm{(0.1975)}$ 
			& $\mathrm{-4.57\times10^7}$ & $\mathrm{(1.44\times 10^9)}$\\
			$\hat{\beta}_5$ & $\mathrm{1}$ & $\mathrm{1.037821}$ & $\mathrm{(0.2786)}$ 
			& $\mathrm{3.89\times10^6}$ & $\mathrm{(1.23\times 10^8)}$\\
			$\hat{\beta}_6$ & $\mathrm{1}$ & $\mathrm{1.049327}$ & $\mathrm{(0.2770)}$ 
			& $\mathrm{1.57\times10^7}$ & $\mathrm{(4.96\times 10^8)}$\\
		\end{tabular}
\end{table}

\clearpage

\subsection{Case of large noise}
Secondly we consider the problem with the identical setting as the previous one except for the variance of noise. We set the variance as $\Lambda_{\star}=I_2$ which is much larger than those in the previous subsection. In simulation, the empirical power of the test for noise detection is 1. We compare the estimation with our method (local mean method, LMM) and that with local Gaussian approximation (LGA) again.

Obviously all the estimators with LMM
dominate the others. Moreover, the standard deviations of our estimators are close to those with settings of small noise 
in the subsection above. It shows that our estimator is robust even if the variance of noise is so large that we cannot imagine the undermined diffusion process seemingly.

\section{Real data analysis: Met Data of NWTC}
We analyse the wind data called Met Data provided by National Wind Technology Center in United States. Met Data is the dataset recording several quantities related to wind such as velocity, speed, and temperature at the towers named M2, M4 and M5 with recording facilities in some altitudes. We especially focus the 2-dimensional data with 0.05-second resolution representing wind velocity labelled Sonic x and Sonic y (119M) at the M5 tower, from 00:00:00 on 1st July, 2017 to 20:00:00 on 5th July, 2017. For detail, see \citep{NWTC}. 
We fit the 2-dimensional Ornstein-Uhlenbeck process such that
\begin{align}
\dop \crotchet{\begin{matrix}
	X_t\\
	Y_t
	\end{matrix}}=\parens{\crotchet{\begin{matrix}
		\beta_1 & \beta_3\\
		\beta_2 & \beta_4
		\end{matrix}}\crotchet{\begin{matrix}
		X_t\\
		Y_t
		\end{matrix}}+\crotchet{\begin{matrix}
		\beta_5\\
		\beta_6
		\end{matrix}}}\dop t
+ \crotchet{\begin{matrix}
	\alpha_1 & \alpha_2\\
	\alpha_2 & \alpha_3
	\end{matrix}}\dop w_t,\ \crotchet{\begin{matrix}
	X_0\\
	Y_0
	\end{matrix}}=\crotchet{\begin{matrix}
	-2.53\\
	0.36
	\end{matrix}},
\end{align}
where $\crotchet{X_0,Y_0}^T$ equals to 
the initial 
value of the data. We summarise some relevant quantities as follows.
\begin{table}[h]
	\begin{center}
		\caption{Relevant Quantities in Section 5}
		\begin{tabular}{c|c}
			quantity & approximation \\\hline
			$n$ & 8352000\\
			$h_n$ & $6.944444\times 10^{-6}$\\
			$T_n$ &  58\\
			$nh_n^2$ & $4.027778\times 10^{-4}$\\
			$\tau$ & $2$\\
			$p_n$ & $379$\\
			$k_n$ & $22036$\\
			$\Delta_n$ & $2.631944\times 10^{-3}$\\
			$k_n\Delta_n^2$ & $0.1526463$
		\end{tabular}
	\end{center}
\end{table}
We have taken 2 hours as the time unit and fixed $\tau=2$.

Our test for noise detection results in $Z=441.7846$ and $p<10^{-16}$; therefore, for any $\alpha\ge 10^{-16}$, the alternative hypothesis $\Lambda\neq O$ is adopted. Our estimator gives the fitting such that
\begin{align}
\dop \crotchet{\begin{matrix}
	X_t\\
	Y_t
	\end{matrix}}
&=\parens{\crotchet{\begin{matrix}
		-3.77 & -0.32\\
		-0.40 & -5.01
		\end{matrix}}
	\crotchet{\begin{matrix}
		X_t\\
		Y_t
		\end{matrix}}
	+ \crotchet{\begin{matrix}
		3.60\\
		-2.54
		\end{matrix}}}\dop t+\crotchet{\begin{matrix}
	13.41 & -0.29\\
	-0.29 & 12.62
	\end{matrix}}\dop w_t,
\end{align}
with $\parens{X_0,Y_0}=\parens{-2.53,0.36}$ and the estimation of the noise variance
\begin{align}
\hat{\Lambda}_n=\crotchet{\begin{matrix}
	6.67\times 10^{-3} & 3.75\times 10^{-5}\\
	3.75\times 10^{-5} & 6.79\times 10^{-3}\\
	\end{matrix}};
\end{align} and the diffusion fitting with LGA method which is asymptotic efficient if $\Lambda=O$ gives
\begin{align}
\dop \crotchet{\begin{matrix}
	X_t\\
	Y_t
	\end{matrix}}
&=\parens{\crotchet{\begin{matrix}
		-67.53 &  -9.29\\
		-10.37 & -104.45
		\end{matrix}}
	\crotchet{\begin{matrix}
		X_t\\
		Y_t
		\end{matrix}}
	+ \crotchet{\begin{matrix}
		63.27\\
		-50.24
		\end{matrix}}}\dop t+\crotchet{\begin{matrix}
	43.82 & 0.13\\
	0.13 & 44.22
	\end{matrix}}\dop w_t
\end{align}
with the same initial value.
What we see here is that these estimators give obviously different values with the data. If $\Lambda=O$, then we should have the reasonably similar values to each other. Since we have already obtained the result $\Lambda\neq O$, there is no reason to regard the latter estimate should be adopted.

\section{Conclusion}

Our contribution is composed of three parts:
proofs of the asymptotic properties
for adaptive estimation of diffusion-plus-noise models and noise detection test, the simulation study of the 
asymptotic results
developed above, 
and the real data analysis showing that there exists situation where 
the proposed method
should be adopted. 
The adaptive ML-type estimators 
introduced in Section 3.1 are so simple that it is only necessary for us to optimise the quasi-likelihood functions quite similar to the Gaussian likelihood after we compute the much simpler estimator for the variance of noise. The test for noise detection is nonparametric; therefore, there is no need to set any model structure or quantities other than $\tau$ and time unit. We could check our methodology works well in simulation section regardless of the size of variance of noise : the estimators could perform better than or at least as well as LGA method. The real data analysis shows that our methodology is certainly necessary to analyse some high-frequency data.

As mentioned in the introduction, high-frequency setting of observation can relax some complexness and difficulty of state-space modelling. It results in a simple and unified methodology for both linear and nonlinear models since we can write the quasi-likelihood functions whether the model is linear or not. The innovation in state-space model can be dependent on the latent process itself; therefore, we can let the processes be with fat-tail which has been regarded as a stylised fact in financial econometrics these decades. The increase in amount of real-time data seen today will continue 
at so brisk a pace that diffusion-plus-noise modelling with these desirable properties will gain more usefulness in wide range of situations.

\begin{figure}[h]
	\centering
	\includegraphics[bb=0 0 720 480,width=10cm]{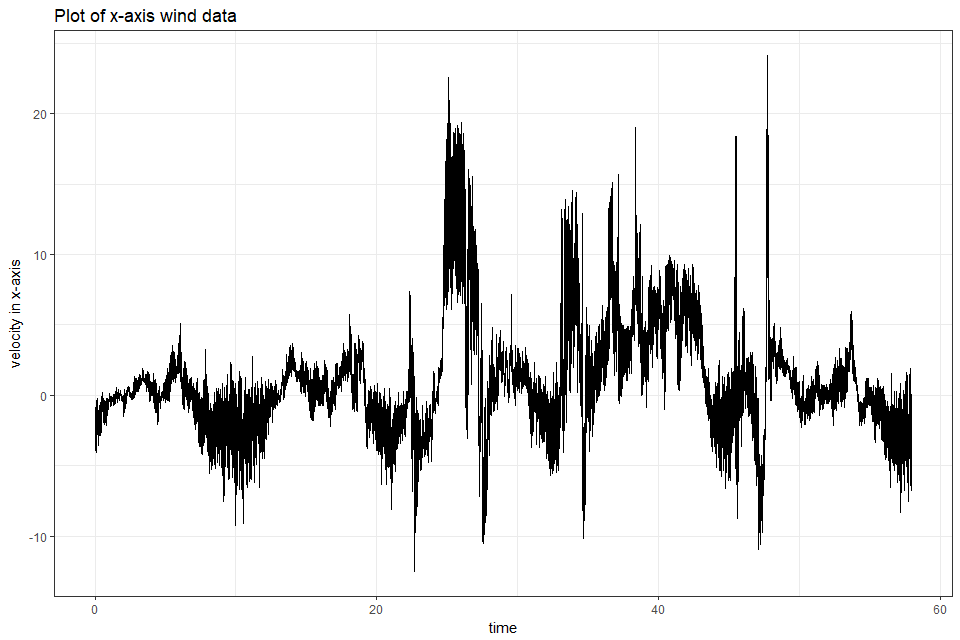}
	\caption{plot of x-axis, Met Data}
	\centering
	\includegraphics[bb=0 0 720 480,width=10cm]{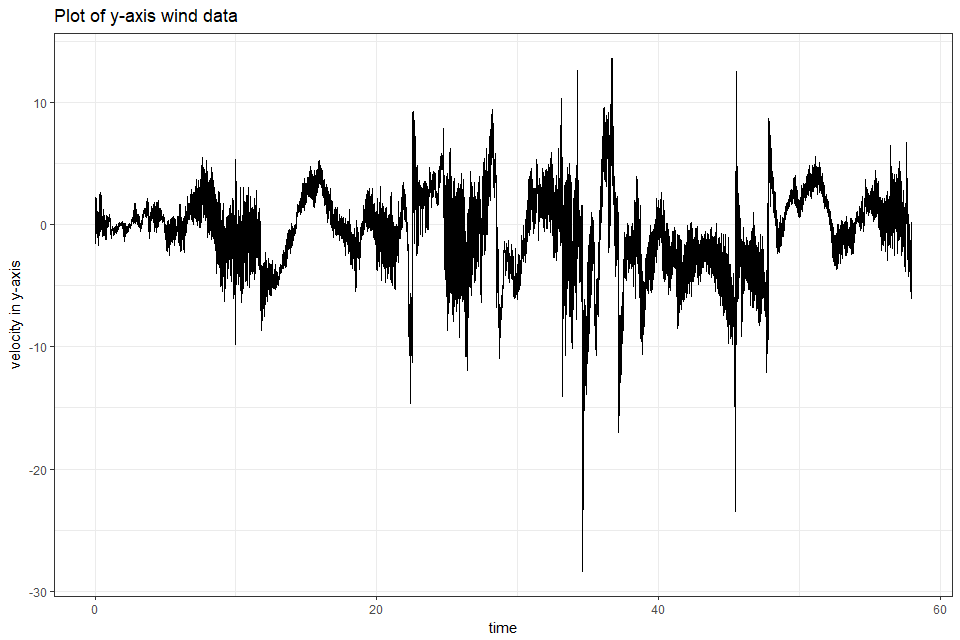}
	\caption{plot of y-axis, Met Data}
\end{figure}

\clearpage

\section{Proofs}

We set some notations which only appear in the proof section.
\begin{enumerate}
	\item Let us denote some $\sigma$-fields such that $\mathcal{G}_t:=\sigma(w_s;s\le t,x_0)$, $\mathcal{G}_j^n:=\mathcal{G}_{j\Delta_n}$, $\mathcal{A}_j^n:=\sigma(\epsilon_{ih_n};i\le jp_n-1)$, $\mathcal{H}_j^n:=\mathcal{G}_j^n\vee \mathcal{A}_j^n$.
	\item We define the following $\Re^r$-valued random variables which appear in the expansion:
	\begin{align*}
	\zeta_{j+1,n}&=\frac{1}{p_n}\sum_{i=0}^{p_n-1}\int_{j\Delta_n+ih_n}^{(j+1)\Delta_n}\dop w_s,\\ 
	\zeta_{j+2,n}'&=\frac{1}{p_n}\sum_{i=0}^{p_n-1}\int_{(j+1)\Delta_n}^{(j+1)\Delta_n+ih_n}\dop w_s,\\
	\xi_{j,n}&=\frac{1}{\Delta_n^{3/2}}\int_{j\Delta_n}^{(j+1)\Delta_n}(s-j\Delta_n)\dop w_s,\\
	\xi_{j+1,n}'&=\frac{1}{\Delta_n^{3/2}}\int_{(j+1)\Delta_n}^{(j+2)\Delta_n}((j+2)\Delta_n-s)\dop w_s,\\
	\xi_{i+1,j,n}'&=\frac{1}{h_n^{3/2}}\int_{j\Delta_n+(i+1)h_n}^{j\Delta_n+(i+2)h_n}(j\Delta_n+(i+2)h_n-s)\dop w_s.
	\end{align*}
	\item $I_{j,k,n}:=I_{j,k}=[j\Delta_n+kh_n,j\Delta_n+(k+1)h_n),\ j=0,\cdots, k_n-1,\ k=0,\cdots,p_n-1$.
	\item We set the following empirical functionals:
	\begin{align*}
	\bar{M}_n(f(\cdot,\vartheta))&:=\frac{1}{k_n}\sum_{j=0}^{k_n-1}f(\lm{Y}{j},\vartheta),\\
	\bar{D}_n(f(\cdot,\vartheta))&:=\frac{1}{k_n\Delta_n}\sum_{j=1}^{k_n-2}f(\lm{Y}{j-1},\vartheta)
	\parens{\lm{Y}{j+1}-\lm{Y}{j}-\Delta_nb(\lm{Y}{j-1})},\\
	\bar{Q}_n(A(\cdot,\vartheta))&=\frac{1}{k_n\Delta_n}\sum_{j=1}^{k_n-2}\ip{A(\lm{Y}{j-1},\vartheta)}{\parens{\lm{Y}{j+1}-\lm{Y}{j}}^{\otimes2}}.
	\end{align*}
	\item 
	Let us define $D_{j,n}:=\frac{1}{2p_n}\sum_{i=0}^{p_n-1}\parens{\parens{Y_{j\Delta_n+(i+1)h_n}-Y_{j\Delta_n+ih_n}}^{\otimes2}-\Lambda_{\star}}$, 
	and for $1\le l_2\le l_1\le d$,$D_n^{l_1,l_2}:= \frac{1}{k_n}\sum_{j=0}^{k_n-1}D_{j,n}^{l_1,l_2}$ and $D_n:=\left[D_n^{1,1},D_n^{2,1},\cdots, D_n^{d,d-1},D_n^{d,d}\right]=\vech\parens{\hat{\Lambda}_n-\Lambda_{\star}}$.
	\item We denote
	\begin{align*}
	&\left\{A_{\kappa}\left|\kappa=1,\cdots,m_1,\ A_\kappa=(A_\kappa^{j_1,j_2})_{j_1,j_2}\right.\right\},\\
	&\left\{f_{\lambda}\left|\lambda=1,\cdots,m_2,\ f_\lambda=(f^1_\lambda,\cdots,f^d_\lambda)\right.\right\},
	\end{align*}
	which are sequences of $\Re^d\otimes \Re^d$-valued functions and $\Re^d$-valued ones such that the components of themselves and their derivatives with respect to $x$ are polynomial growth functions for all $\kappa$ and $\lambda$.
	\item 
	Let us define
	\begin{align*}
	&\left\{A_{\kappa,n}(x)\left|\kappa=1,\cdots,m_1,\ A_{\kappa,n}=(A_{\kappa,n}^{j_1,j_2})_{j_1,j_2}\right.\right\},\\
	&\left\{f_{\lambda,n}(x)\left|\lambda=1,\cdots,m_2,\ f_{\lambda,n}=(f^1_{\lambda,n},\cdots,f^d_{\lambda,n})\right.\right\},
	\end{align*}
	which are sequences of the functions such that the components of the functions and their derivatives with respect to $x$ are polynomial growth functions and there exist a $\Re$-valued sequence $\tuborg{v_n}_n$ s.t. $v_n\to0$ and $C>0$ such that for all $x\in\Re^d$ and for the sequences $\tuborg{A_{\kappa}}$ and $\tuborg{f_{\lambda}}$ discussed above,
	\begin{align*}
	\sum_{\kappa=1}^{m_1}\norm{A_{\kappa,n}(x)-A_{\kappa}(x)}+\sum_{\lambda=1}^{m_2}\norm{f_{\lambda,n}(x)-f_{\lambda}(x)}\le v_n\parens{1+\norm{x}^C}.
	\end{align*}
	\item Denote
	\begin{align*}
	W^{(\tau)}\parens{\tuborg{A_{\kappa}}_{\kappa},\tuborg{f_{\lambda}}_{\lambda}}:=\crotchet{\begin{matrix}
		W_1 & O & O\\
		O & W_2^{\tau}\parens{\tuborg{A_{\kappa}}_{\kappa}} & O \\
		O & O & W_3\parens{\tuborg{f_{\lambda}}_{\lambda}}
		\end{matrix}}.
	\end{align*}
\end{enumerate}

\subsection{Conditional expectation of supremum}
The following two propositions are multidimensional extensions of those in \citep{Gl00}.
\begin{proposition}\label{pro711}
	 Under (A1), for all $k\ge 1$, there exists a constant $C(k)$ such that for all $t\ge 0$,
	\begin{align*}
	\CE{\sup_{s\in[t,t+1]}\norm{X_s}^k}{\mathcal{G}_t}\le C(k)(1+\norm{X_t}^k).
	\end{align*}
\end{proposition}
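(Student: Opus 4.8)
The plan is to first reduce the conditional statement to a deterministic-initial-value moment estimate, and then to run the standard drift/diffusion split combined with the Burkholder--Davis--Gundy inequality and Gr\"onwall's lemma. Since $X_t$ is $\mathcal{G}_t$-measurable and the Wiener increments $\tuborg{w_s-w_t}_{s\ge t}$ are independent of $\mathcal{G}_t$, the Markov property lets me rewrite the left-hand side as $\psi(X_t)$, where $\psi(x):=\E{\sup_{s\in[0,1]}\norm{X_s^x}^k}$ and $X^x$ solves the same stochastic differential equation started deterministically at $X_0^x=x$. Thus it suffices to prove $\psi(x)\le C(k)\parens{1+\norm{x}^k}$ uniformly in $x\in\Re^d$, which removes the conditioning entirely.

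To control $\psi$, I would introduce the stopping times $\tau_N:=\inf\tuborg{s\ge 0:\norm{X_s^x}\ge N}$ and work with the stopped process $X_{s\wedge\tau_N}^x$, so that every expectation below is finite and Gr\"onwall can be applied rigorously; the resulting bound will be independent of $N$, so $N\to\infty$ together with Fatou's lemma recovers the claim. For the stopped process I would start from the integral form
\begin{align*}
X_{s\wedge\tau_N}^x = x + \int_0^{s\wedge\tau_N} b(X_u^x)\dop u + \int_0^{s\wedge\tau_N} a(X_u^x)\dop w_u,
\end{align*}
and apply the elementary inequality $\norm{u+v+w}^k\le 3^{k-1}\parens{\norm{u}^k+\norm{v}^k+\norm{w}^k}$. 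The drift contribution is handled by Jensen's (equivalently H\"older's) inequality applied to the time integral, while the supremum of the stochastic integral is bounded by the Burkholder--Davis--Gundy inequality, which converts it into the expectation of the $k/2$-th power of the quadratic variation $\int_0^{s\wedge\tau_N}\norm{a(X_u^x)}^2\dop u$.

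At this point I would invoke the linear growth bounds from (A1), namely $\norm{b(x)}\le C(1+\norm{x})$ and $\norm{a(x)}\le C(1+\norm{x})$, to dominate both contributions by a constant times $\int_0^s\parens{1+\E{\sup_{r\le u\wedge\tau_N}\norm{X_r^x}^k}}\dop u$. Writing $\phi_N(s):=\E{\sup_{r\in[0,s]}\norm{X_{r\wedge\tau_N}^x}^k}$, this yields the integral inequality
\begin{align*}
\phi_N(s)\le C(k)\parens{1+\norm{x}^k}+C(k)\int_0^s\phi_N(u)\dop u,
\end{align*}
and Gr\"onwall's lemma gives $\phi_N(1)\le C(k)\parens{1+\norm{x}^k}$ with a constant free of $N$. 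Letting $N\to\infty$ and using Fatou completes the estimate for $\psi$, and hence the proposition.

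The main obstacle I anticipate is technical rather than conceptual: ensuring the relevant quantities are finite before applying Gr\"onwall, which is exactly what the localization by $\tau_N$ secures; without it the integral inequality would be vacuous. A secondary point requiring care is the appearance of the exponent $k/2$ in the BDG step and the use of Jensen to re-raise $\parens{\int_0^s\norm{a(X_u^x)}^2\dop u}^{k/2}$ into a time integral of $\norm{a(X_u^x)}^k$ when $k\ge 2$, the case $k<2$ being reduced to it by the embedding of moments over the finite horizon $[0,1]$; these are routine once the localization is in place.
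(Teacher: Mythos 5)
Your proposal is correct and follows essentially the same route as the paper's proof: write $X_s$ in integral form, split drift and stochastic integral, apply H\"older's and the Burkholder--Davis--Gundy inequalities together with the linear-growth bounds of (A1), and close with Gronwall's lemma (the case $k<2$ reduced to $k\ge 2$ exactly as in the paper). Your two extra ingredients --- the Markov-property reduction to a deterministic starting point and the stopping-time localization guaranteeing finiteness before Gronwall is invoked --- are technical refinements of the same argument rather than a different method; they tighten a point the paper's direct conditional-expectation computation leaves implicit, but do not change the substance.
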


\begin{proof}
	It is enough to see the case $k\ge 2$ because of H\"{o}lder's inequality. For $s\in[t,t+1]$,
	\begin{align*}
	X_s=X_t+\int_{t}^{s}b(X_u)\dop u + \int_{t}^{s}a(X_u)\dop w_u.
	\end{align*}
	Therefore, for H\"{o}lder's inequality, Burkholder-Davis-Gundy inequality and (A1), the following evaluation holds:
	\begin{align*}
	\CE{\sup_{u\in[t,s]}\norm{X_u}^k}{\mathcal{G}_t}&\le C(k)\parens{1+\norm{X_t}^k}+C(k)\int_{t}^{s}\CE{\norm{X_u}^k}{\mathcal{G}_t}\dop u.
	\end{align*}
	By putting $\phi(s)=\CE{\sup_{u\in[t,s]}\norm{X_u}^k}{\mathcal{G}_t}$, we obtain
	\begin{align*}
	\phi(s)\le C(k)\parens{1+\norm{X_t}^k}+C(k)\int_{t}^{s}\CE{\norm{X_u}^{k}}{\mathcal{G}_t}\dop u\le C(k)\parens{1+\norm{X_t}^k}+C(k)\int_{t}^{s}\phi(s)\dop u
	\end{align*}
	and Gronwall's inequality leads to
	\begin{align*}
	\phi(s)\le C(k)(1+\norm{X_t}^k)e^{C(k)(s-t)}
	\end{align*}
	and this verifies the statement.
\end{proof}

\begin{proposition}\label{pro712} Under (A1) and for a function $f$ whose components are in $\mathcal{C}^1(\Re^d)$, assume that there exist $C>0$ such that
\begin{align*}
	\norm{f'(x)}&\le C(1+\norm{x})^C.
\end{align*}
Then for any $k\in\mathbf{N}$,
\begin{align*}
	\CE{\sup_{s\in[j\Delta_n,(j+1)\Delta_n]}\norm{f(X_s)-f(X_{j\Delta_n})}^k}{\mathcal{G}_j^n}\le C(k)\Delta_n^{k/2}\parens{1+\norm{X_{j\Delta_n}}^{C(k)}}.
\end{align*}
Especially for $f(x)=x$,
\begin{align*}
	\CE{\sup_{s\in[j\Delta_n,(j+1)\Delta_n]}\norm{X_s-X_{j\Delta_n}}^k}{\mathcal{G}_j^n}\le C(k)\Delta_n^{k/2}\parens{1+\norm{X_{j\Delta_n}}^{k}}.
\end{align*}
\end{proposition}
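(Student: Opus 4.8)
The plan is to establish the special case $f(x)=x$ first by a direct stochastic-calculus estimate in the spirit of Proposition \ref{pro711}, and then to obtain the general statement from it via the mean value theorem combined with the polynomial growth bound on $f'$. Throughout I may assume $\Delta_n\le 1$, which holds for $n$ large since $\Delta_n\to 0$, so that Proposition \ref{pro711} applies on the block $[j\Delta_n,(j+1)\Delta_n]$; as in the proof of Proposition \ref{pro711}, Hölder's inequality reduces everything to the case $k\ge 2$.

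For $f(x)=x$ I would write $X_s-X_{j\Delta_n}=\int_{j\Delta_n}^s b(X_u)\dop u+\int_{j\Delta_n}^s a(X_u)\dop w_u$ and bound the supremum of the two pieces separately. For the drift part, Jensen's inequality gives $\sup_s\norm{\int_{j\Delta_n}^s b(X_u)\dop u}^k\le\Delta_n^{k-1}\int_{j\Delta_n}^{(j+1)\Delta_n}\norm{b(X_u)}^k\dop u$, and the linear growth of $b$ from (A1) together with Proposition \ref{pro711} yields a conditional bound of order $\Delta_n^{k}\parens{1+\norm{X_{j\Delta_n}}^k}$. For the diffusion part, the Burkholder--Davis--Gundy inequality followed by Jensen's inequality gives $\CE{\sup_s\norm{\int_{j\Delta_n}^s a(X_u)\dop w_u}^k}{\mathcal{G}_j^n}\le C(k)\Delta_n^{k/2-1}\int_{j\Delta_n}^{(j+1)\Delta_n}\CE{\norm{a(X_u)}^k}{\mathcal{G}_j^n}\dop u$, and again the linear growth of $a$ and Proposition \ref{pro711} produce a bound of order $\Delta_n^{k/2}\parens{1+\norm{X_{j\Delta_n}}^k}$. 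Since $\Delta_n^{k}\le\Delta_n^{k/2}$, the diffusion term dominates and the special case follows.

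For general $f$ I would use the fundamental theorem of calculus to write $f(X_s)-f(X_{j\Delta_n})=\parens{\int_0^1 f'\parens{(1-r)X_{j\Delta_n}+rX_s}\dop r}\parens{X_s-X_{j\Delta_n}}$, so that $\norm{f(X_s)-f(X_{j\Delta_n})}\le\sup_r\norm{f'\parens{(1-r)X_{j\Delta_n}+rX_s}}\norm{X_s-X_{j\Delta_n}}$. Since any such convex combination $v$ satisfies $\norm{v}\le\sup_{u\in[j\Delta_n,(j+1)\Delta_n]}\norm{X_u}$, the polynomial growth hypothesis on $f'$ gives $\sup_s\norm{f(X_s)-f(X_{j\Delta_n})}^k\le C\parens{1+\sup_{u}\norm{X_u}^{Ck}}\sup_s\norm{X_s-X_{j\Delta_n}}^k$.

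Finally I would take the conditional expectation and decouple the product of the two path-suprema by the conditional Cauchy--Schwarz inequality: the first factor is controlled by Proposition \ref{pro711}, giving $\parens{1+\norm{X_{j\Delta_n}}^{C(k)}}$, and the second by the special case already established at exponent $2k$, giving $\Delta_n^{k/2}\parens{1+\norm{X_{j\Delta_n}}^k}$; their product is of the claimed form. The main obstacle is precisely this last step of book-keeping: one must decouple the two suprema without losing the $\Delta_n^{k/2}$ rate, and Cauchy--Schwarz forces the use of $2k$-th moments in the special case while the growth constant $C$ of $f'$ is absorbed into the final exponent $C(k)$; the rate itself comes entirely from the diffusion part of the increment, so no cancellation is needed.
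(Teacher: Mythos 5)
Your proposal is correct and follows essentially the same route as the paper: the special case $f(x)=x$ via the drift/diffusion decomposition with BDG, Jensen/H\"{o}lder and (A1) (together with Proposition \ref{pro711}), and then the general case by Taylor's theorem with the polynomial growth of $f'$, decoupled by the conditional Cauchy--Schwarz inequality using Proposition \ref{pro711} for the path-supremum factor and the special case at exponent $2k$ for the increment factor. Your write-up simply makes explicit the steps the paper compresses into citations of BDG, H\"{o}lder and Taylor, so there is nothing to correct.
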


\begin{proof}
In the first place, we consider the case $f(x)=x$ and define the following random variable such that
\begin{align*}
	\delta_{j,n}:=\sup_{s\in[j\Delta_n,(j+1)\Delta_n]}\norm{X_s-X_{j\Delta_n}}.
\end{align*}
Then by Proposition \ref*{pro711}, we obtain the following evaluation of the conditional expectation such that
\begin{align*}
	\CE{\delta_{j,n}^k}{\mathcal{G}_j^n}&\le C(k)\Delta_n^{k/2}\parens{1+\norm{X_{j\Delta_n}}^{k}}
\end{align*}
because of BDG inequality, H\"{o}lder inequality and (A1).
Therefore, we could have obtained the case $f(x)=x$. Next, we examine $f$ satisfying the conditions and set the functional
\begin{align*}
	\delta_{j,n}(f)=\sup_{s\in[j\Delta_n,(j+1)\Delta_n]}\norm{f(X_s)-f(X_{j\Delta_n})}.
\end{align*}
Then for Taylor's theorem and the condition for $f'$, we obtain
\begin{align*}
	\delta_{j,n}(f)\le \sup_{s\in[j\Delta_n,(j+1)\Delta_n]}C\parens{1+\norm{X_s}}^C\delta_{j,n}.
\end{align*}
Because of Proposition \ref*{pro711}, H\"{o}lder inequality and Taylor's theorem, we have the following evaluation
\begin{align*}
	\CE{\delta_{j,n}^k(f)}{\mathcal{G}_j^n}\le C(k)\Delta_n^{k/2}\parens{1+\norm{X_{j\Delta_n}}^{C(k)}}
\end{align*}
and we obtain the result.
\end{proof}

\begin{proposition}\label{pro713}
	Under (A1), for all $t_3\ge t_2\ge t_1\ge 0$ where there exists a constant $C$ such that $t_3-t_1\le C$ and $l\ge 2$, we have
	\begin{align*}
	\mathrm{(i)}\ &\sup_{s_1,s_2\in[t_1,t_2]}\norm{\CE{b(X_{s_1})-b(X_{s_2})}{\mathcal{G}_{t_1}}}\le C(t_2-t_1)
	\parens{1+\norm{X_{t_1}}^3},\\
	\mathrm{(ii)}\ &\sup_{s_1,s_2\in[t_1,t_2]}\norm{\CE{a(X_{s_1})-a(X_{s_2})}{\mathcal{G}_{t_1}}}\le C(t_2-t_1)
	\parens{1+\norm{X_{t_1}}^3},\\
	\mathrm{(iii)}\ &\norm{\CE{\int_{t_2}^{t_3}\parens{b(X_s)-b(X_{t_2})}\dop s}{\mathcal{G}_{t_1}}}\le C(t_3-t_2)^2\parens{1+\CE{\norm{X_{t_2}}^6}{\mathcal{G}_{t_1}}}^{1/2},\\
	\mathrm{(iv)}\ &\CE{\norm{\int_{t_2}^{t_3}\parens{b(X_s)-b(X_{t_2})}\dop s}^l}{\mathcal{G}_{t_1}}
	\le C(l)(t_3-t_2)^{3l/2}\parens{1+\CE{\norm{X_{t_2}}^{2l}}{\mathcal{G}_{t_1}}},\\
	\mathrm{(v)}\ &\CE{\norm{\int_{t_1}^{t_2}\parens{\int_{t_1}^{s}\parens{a(X_{u})-a(X_{t_1})}\dop w_u}\dop s}^l
	}{\mathcal{G}_{t_1}}\le C(l)\parens{t_2-t_1}^{2l}\parens{1+\norm{X_{t_1}}^{2l}}.
	\end{align*} 
\end{proposition}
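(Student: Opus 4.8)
The plan is to obtain (i)--(ii) from an It\^o expansion in which the stochastic-integral term drops out under conditioning, and then to bootstrap (iii)--(v) from these estimates together with Propositions \ref{pro711} and \ref{pro712}. For (i), writing the generator $\mathcal{L} := \sum_k b^k(\cdot)\partial_{x^k} + \frac12\sum_{k,l}c^{k,l}(\cdot)\partial_{x^k}\partial_{x^l}$, It\^o's formula applied componentwise to $b$ over $[s_2,s_1]$ (say $s_1\ge s_2$) gives $b(X_{s_1})-b(X_{s_2}) = \int_{s_2}^{s_1}\mathcal{L}b(X_u)\dop u + \int_{s_2}^{s_1}\partial_x b(X_u)a(X_u)\dop w_u$. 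Since $t_1\le s_2$, the stochastic integral is a martingale increment; conditioning first on $\mathcal{G}_{s_2}$ and using the tower property shows $\CE{b(X_{s_1})-b(X_{s_2})}{\mathcal{G}_{t_1}} = \CE{\int_{s_2}^{s_1}\mathcal{L}b(X_u)\dop u}{\mathcal{G}_{t_1}}$ (the integrand is in $L^2$ by (A1) and Proposition \ref{pro711}). Under (A1) the worst term $c\,\partial_x^2 b$ has order $(1+\norm{x})^3$, so $\norm{\mathcal{L}b(x)}\le C(1+\norm{x})^3$, whence $\norm{\CE{b(X_{s_1})-b(X_{s_2})}{\mathcal{G}_{t_1}}}\le C(s_1-s_2)\sup_{u}\CE{(1+\norm{X_u})^3}{\mathcal{G}_{t_1}}$. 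Proposition \ref{pro711}, iterated finitely many times to cover the bounded interval $[t_1,t_2]$, bounds the last conditional moment by $C(1+\norm{X_{t_1}}^3)$, and taking the supremum over $s_1,s_2$ gives (i). Part (ii) is identical, since $a$ obeys the same growth bounds so that $\norm{\mathcal{L}a(x)}\le C(1+\norm{x})^3$.

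For (iii), I would condition on $\mathcal{G}_{t_2}$ and use the tower property. Applying (i) with $(t_1,t_2)$ replaced by $(t_2,s)$ gives $\norm{\CE{b(X_s)-b(X_{t_2})}{\mathcal{G}_{t_2}}}\le C(t_3-t_2)(1+\norm{X_{t_2}}^3)$ for $s\in[t_2,t_3]$, and hence $\norm{\CE{b(X_s)-b(X_{t_2})}{\mathcal{G}_{t_1}}}\le C(t_3-t_2)\CE{1+\norm{X_{t_2}}^3}{\mathcal{G}_{t_1}}$. The elementary inequality $1+\sqrt{y}\le 2\sqrt{1+y}$ converts $\CE{1+\norm{X_{t_2}}^3}{\mathcal{G}_{t_1}}\le 1+(\CE{\norm{X_{t_2}}^6}{\mathcal{G}_{t_1}})^{1/2}$ into the required $C(1+\CE{\norm{X_{t_2}}^6}{\mathcal{G}_{t_1}})^{1/2}$, and integrating over $s\in[t_2,t_3]$ supplies the second factor $(t_3-t_2)$, giving order $(t_3-t_2)^2$.

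Parts (iv) and (v) are power-counting on top of Proposition \ref{pro712}. For (iv), Jensen's inequality for the time integral gives $\norm{\int_{t_2}^{t_3}(b(X_s)-b(X_{t_2}))\dop s}^l\le (t_3-t_2)^l\sup_{s}\norm{b(X_s)-b(X_{t_2})}^l$; conditioning on $\mathcal{G}_{t_2}$, a Cauchy--Schwarz split of the Taylor bound $\norm{b(X_s)-b(X_{t_2})}\le C(1+\norm{X_s})\norm{X_s-X_{t_2}}$ together with Propositions \ref{pro711} and \ref{pro712} yields $\CE{\sup_s\norm{b(X_s)-b(X_{t_2})}^l}{\mathcal{G}_{t_2}}\le C(l)(t_3-t_2)^{l/2}(1+\norm{X_{t_2}}^{2l})$, and the tower property then gives $C(l)(t_3-t_2)^{l/2}\CE{1+\norm{X_{t_2}}^{2l}}{\mathcal{G}_{t_1}}$; multiplying by $(t_3-t_2)^l$ produces the exponent $3l/2$. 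For (v), Jensen on the outer integral costs $(t_2-t_1)^l$, the Burkholder--Davis--Gundy inequality bounds the supremum of the inner martingale by $C(l)\CE{(\int_{t_1}^{t_2}\norm{a(X_u)-a(X_{t_1})}^2\dop u)^{l/2}}{\mathcal{G}_{t_1}}\le C(l)(t_2-t_1)^{l/2}\CE{\sup_u\norm{a(X_u)-a(X_{t_1})}^l}{\mathcal{G}_{t_1}}$, and Proposition \ref{pro712} (with the same Cauchy--Schwarz refinement) contributes a further $(t_2-t_1)^{l/2}(1+\norm{X_{t_1}}^{2l})$; the three powers add to $(t_2-t_1)^{2l}$.

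The routine parts are the It\^o expansion and the growth estimates; the delicate point throughout is the exact bookkeeping of the time powers and of the polynomial moment exponents. In particular the sharp factor $(t_3-t_2)^{l/2}$ coming from Proposition \ref{pro712} and the Cauchy--Schwarz split that trades the unspecified exponent $C(l)$ for the precise $2l$ appearing in the statement are what make the final orders $3l/2$ and $2l$ come out correctly. Justifying the vanishing of each martingale term by the tower property, and the finite iteration of Proposition \ref{pro711} needed to pass from length $1$ to the bounded interval $[t_1,t_2]$, are the remaining points to treat with care.
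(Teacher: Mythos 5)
Your proposal is correct and takes essentially the same route as the paper's proof: an It\^o--Taylor expansion in which the generator term carries the cubic growth bound and the stochastic integral vanishes under conditioning, combined with Proposition \ref{pro711} for (i)--(ii), and then H\"older/Jensen, the tower property, BDG and Proposition \ref{pro712} for (iii)--(v), with the same power bookkeeping $(t_3-t_2)^{3l/2}$ and $(t_2-t_1)^{2l}$. The only cosmetic difference is in (v), where the paper invokes Fubini's theorem on the iterated integral while you apply Jensen to the outer time integral before BDG; both give the same exponent.
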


\begin{proof} (i), (ii): Let $L$ be the infinitesimal generator of the diffusion process. Since Ito-Taylor expansion, for all $s\in [t_1,t_2]$,
	\begin{align*}
	\CE{b(X_s)}{\mathcal{G}_{t_1}}=b(X_{t_1})+\int_{t_1}^{s}\CE{Lb(X_u)}{\mathcal{G}_{t_1}}\dop u
	\end{align*}
	and the second term has the evaluation
	\begin{align*}
	\sup_{s\in[t_1,t_2]}\norm{\int_{t_1}^{s}\CE{Lb(X_u)}{\mathcal{G}_{t_1}}\dop u}\le C\Delta_n\parens{1+\norm{X_{t_1}}^3}.
	\end{align*}
	Therefore, we have
	\begin{align*}
	\sup_{s_1,s_2\in[t_1,t_2]}\norm{\CE{b(X_{s_1})-b(X_{s_2})}{\mathcal{G}_{t_1}}}\le C(t_2-t_1)
	\parens{1+\norm{X_{t_1}}^3}
	\end{align*}
	and identically
	\begin{align*}
	\sup_{s_1,s_2\in[t_1,t_2]}\norm{\CE{a(X_{s_1})-a(X_{s_2})}{\mathcal{G}_{t_1}}}\le C(t_2-t_1)
	\parens{1+\norm{X_{t_1}}^3}.
	\end{align*}
	(iii): Using (i) and H\"{o}lder's inequality, we have the result.\\
	(iv): Because of Proposition \ref*{pro712} and H\"{o}lder's inequality, We obtain the proof.\\
	(v): For convexity, we have
	\begin{align*}
		&\CE{\norm{\int_{t_1}^{t_2}\parens{\int_{t_1}^{s}\parens{a(X_{u})-a(X_{t_1})}\dop w_u}\dop s}^l
		}{\mathcal{G}_{t_1}}\\
		&\le C(l)\sum_{i=1}^{d}\sum_{j=1}^{r}\CE{\abs{\int_{t_1}^{t_2}\parens{\int_{t_1}^{s}\parens{a^{i,j}(X_{u})-a^{i,j}(X_{t_1})}
					\dop w_u^j}\dop s}^{l}}{\mathcal{G}_{t_1}}.
	\end{align*}
	H\"{o}lder's inequality, Fubini's theorem, BDG theorem and Proposition \ref*{pro712} give the result.
\end{proof}

\subsection{Propositions for ergodicity and evaluations of expectation}
\begin{lemma}\label{lem721}
	Assume (A1)-(A3) hold. Let $f$ be a function in $\mathcal{C}^1(\Re^d\times\Xi)$ and assume that $f$, the components of $\partial_{x} f$ and $\nabla_{\vartheta} f$ are polynomial growth functions uniformly in $\vartheta\in\Xi$. 
	Then the following convergence holds:
	\begin{align*}
		\frac{1}{k_n}\sum_{j=0}^{k_n-1}f(X_{j\Delta_n},\vartheta)\cp \nu_0\parens{f(\cdot,\vartheta)}\ \text{uniformly in }\vartheta.
	\end{align*}
\end{lemma}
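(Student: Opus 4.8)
The plan is to prove the result in two stages: first establish the convergence for each fixed $\vartheta$, and then upgrade this pointwise statement to convergence uniform over the compact set $\Xi$ by a stochastic-equicontinuity argument.

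For the pointwise part, fix $\vartheta$ and set $T_n:=k_n\Delta_n=nh_n$, which diverges by (AH). I would compare the discrete average with the continuous time-average
\[
\tilde{S}_n(\vartheta):=\frac{1}{T_n}\int_0^{T_n} f(X_s,\vartheta)\dop s,
\]
by writing
\[
\frac{1}{k_n}\sum_{j=0}^{k_n-1} f(X_{j\Delta_n},\vartheta)-\tilde{S}_n(\vartheta)=\frac{1}{T_n}\sum_{j=0}^{k_n-1}\int_{j\Delta_n}^{(j+1)\Delta_n}\parens{f(X_{j\Delta_n},\vartheta)-f(X_s,\vartheta)}\dop s.
\]
Taking expectations of absolute values and applying Proposition \ref{pro712} to $f(\cdot,\vartheta)$, whose $x$-gradient is of polynomial growth by hypothesis, each summand is bounded by $C\Delta_n^{1/2}\parens{1+\E{\norm{X_{j\Delta_n}}^{C}}}$, and (A3) makes these moments bounded uniformly in $j$. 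Hence the difference is $O(\Delta_n^{1/2})$ in $L^1$ and therefore $\cp 0$. Since ergodicity (A2) gives $\tilde{S}_n(\vartheta)\cp\nu_0\parens{f(\cdot,\vartheta)}$ as $T_n\to\infty$, the pointwise convergence follows.

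For the uniform part, the key estimate is the Lipschitz bound
\[
\abs{\frac{1}{k_n}\sum_{j=0}^{k_n-1}f(X_{j\Delta_n},\vartheta_1)-\frac{1}{k_n}\sum_{j=0}^{k_n-1}f(X_{j\Delta_n},\vartheta_2)}\le\norm{\vartheta_1-\vartheta_2}\cdot\frac{1}{k_n}\sum_{j=0}^{k_n-1}\sup_{\vartheta\in\Xi}\norm{\nabla_\vartheta f(X_{j\Delta_n},\vartheta)},
\]
whose right-hand coefficient is $O_P(1)$ because $\nabla_\vartheta f$ is of polynomial growth uniformly in $\vartheta$ and, by (A3), the empirical average of $\parens{1+\norm{X_{j\Delta_n}}}^{C}$ has bounded expectation. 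This furnishes stochastic equicontinuity of the random fields $\vartheta\mapsto k_n^{-1}\sum_j f(X_{j\Delta_n},\vartheta)$. Because the limit $\vartheta\mapsto\nu_0\parens{f(\cdot,\vartheta)}$ is continuous (dominated convergence, using continuity of $f$ in $\vartheta$, the uniform polynomial bound, and the moment finiteness of $\nu_0$ from (A2)), I would cover $\Xi$ by a finite $\delta$-net, apply the pointwise convergence at the net points, and control the oscillation between net points through the equicontinuity estimate; refining the net then yields convergence uniform in $\vartheta$.

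The moment control and the comparison with the continuous average are routine, being immediate from Propositions \ref{pro711}–\ref{pro712} and (A3). The main obstacle is the uniform upgrade: one must convert the pointwise statement together with the Lipschitz estimate into genuine uniform convergence in probability, which requires either an Arzel\`a--Ascoli--type tightness argument for random fields on $\Xi$ or an appeal to a Sobolev-embedding criterion, and some care is needed because the coefficient $k_n^{-1}\sum_j\sup_\vartheta\norm{\nabla_\vartheta f}$ is random rather than deterministic.
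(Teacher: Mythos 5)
Your proposal is correct and follows essentially the same route as the paper: the pointwise step is the identical decomposition (discrete average versus the continuous time-average, with the $O(\Delta_n^{1/2})$ $L^1$ bound from Proposition \ref{pro712} and (A3), and ergodicity (A2) for the integral), and your equicontinuity/$\delta$-net upgrade is precisely the derivative-bound tightness argument that the paper invokes by citing Lemma 8 of \citep{K97}. The concern you raise about the random Lipschitz coefficient is handled by its tightness ($\sup_n\E{L_n}\le C$ gives $L_n=O_P(1)$), which is exactly how that cited argument proceeds, so there is no gap.
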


\begin{proof}
	(A2) verifies the pointwise convergence in probability such that for all $\vartheta\in\Xi$,
	\begin{align*}
		\frac{1}{k_n\Delta_n}\int_{0}^{k_n\Delta_n}f(X_s,\vartheta)\dop s\cp \nu_0(f(\cdot,\vartheta)).
	\end{align*}
	Let $D_n(\vartheta)$ be a random variable such that
	\begin{align*}
		D_n(\vartheta):=\frac{1}{k_n\Delta_n}\sum_{j=0}^{k_n-1}\int_{j\Delta_n}^{(j+1)\Delta_n}
		\parens{f(X_s,\vartheta)-f(X_{j\Delta_n},\vartheta)}\dop s.
	\end{align*}
	(A3) and Proposition \ref*{pro712} lead to for all $\vartheta\in\Xi$,
	\begin{align*}
		\sup_{j=0,\cdots,k_n-1}\E{\sup_{s\in[j\Delta_n,(j+1)\Delta_n]}\abs{f(X_s,\vartheta)-f(X_{j\Delta_n},\vartheta)}}
		\le C\Delta_n^{1/2}.
	\end{align*}
	It results in for all $\vartheta$,
	\begin{align*}
		\E{\abs{D_n(\vartheta)}}=\E{\abs{\frac{1}{k_n\Delta_n}\sum_{j=0}^{k_n-1}\int_{j\Delta_n}^{(j+1)\Delta_n}
				\parens{f(X_s,\vartheta)-f(X_{j\Delta_n},\vartheta)}\dop s}}\le C\Delta_n^{1/2}\to 0
	\end{align*}
	and hence for all $\vartheta$,
	\begin{align*}
		\frac{1}{k_n}\sum_{j=0}^{k_n-1}f(X_{j\Delta_n},\vartheta)
		=\frac{1}{k_n\Delta_n}\int_{0}^{k_n\Delta_n}f(X_s,\vartheta)\dop s-D_n(\vartheta)\cp \nu_0(f(\cdot,\vartheta)).
	\end{align*}
	With respect to the uniform convergence, the discussion is identical to that in Lemma 8 \citep{K97}.
	
\end{proof}

\subsection{Characteristics of local means}

The following propositions, lemmas and corollary are multidimensional extensions of those in \citep{Gl00} and \citep{Fa14}.

\begin{lemma}\label{lem731}
	$\xi_{j,n}$ and $\xi_{j+1,n}'$ are independent of each other and Gaussian. $\xi_{j,n}$ is $\mathcal{G}_{j+1}^n$-measurable and independent of $\mathcal{G}_{j}^n$, and $\xi_{j+1,n}'$ is $\mathcal{G}_{j+2}^n$-measurable and independent of $\mathcal{G}_{j+1}^n$. Furthermore, the evaluation of following conditional expectations holds:
	\begin{align*}
	\CE{\xi_{j,n}}{\mathcal{G}_j^n}=\CE{\xi_{j+1,n}'}{\mathcal{G}_j^n}&=\mathbf{0},\\
	\CE{\xi_{j,n}\xi_{j,n}^T}{\mathcal{G}_j^n}=\CE{\xi_{j+1,n}'\parens{\xi_{j+1,n}'}^T}{\mathcal{G}_j^n}&=\frac{1}{3}I_r,\\
	\CE{\xi_{j,n}\parens{\xi_{j,n}'}^T}{\mathcal{G}_j^n}&=\frac{1}{6}I_r.
	\end{align*}
\end{lemma}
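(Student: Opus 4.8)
The plan is to treat each of these random vectors as a Wiener integral of a deterministic, continuous, bounded integrand against the $r$-dimensional Wiener process, which makes them centred Gaussian and reduces every covariance to Ito's isometry. Before computing I would fix the index convention: although the definitions only spell out $\xi_{j,n}$ and $\xi_{j+1,n}'$, the last display involves $\xi_{j,n}'$, which by shifting the index in the definition of $\xi_{j+1,n}'$ is
\[
\xi_{j,n}'=\frac{1}{\Delta_n^{3/2}}\int_{j\Delta_n}^{(j+1)\Delta_n}\parens{(j+1)\Delta_n-s}\dop w_s.
\]
Thus $\xi_{j,n}$ and $\xi_{j,n}'$ are both supported on the single block $[j\Delta_n,(j+1)\Delta_n]$ (with increasing and decreasing linear weights, respectively), whereas $\xi_{j+1,n}'$ lives on the disjoint next block $[(j+1)\Delta_n,(j+2)\Delta_n]$; this is exactly what reconciles the asserted independence of $\xi_{j,n}$ and $\xi_{j+1,n}'$ with the nonzero cross-covariance $\tfrac16 I_r$ between $\xi_{j,n}$ and $\xi_{j,n}'$.

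Next I would settle measurability and independence. Since $\xi_{j,n}$ is built only from Wiener increments over $[j\Delta_n,(j+1)\Delta_n]$, it is $\mathcal{G}_{j+1}^n$-measurable, and by the independent-increments property together with the independence of $x_0$ from $(w_t)$ it is independent of $\mathcal{G}_j^n=\sigma(w_s;s\le j\Delta_n,x_0)$. The identical argument on $[(j+1)\Delta_n,(j+2)\Delta_n]$ yields $\mathcal{G}_{j+2}^n$-measurability of $\xi_{j+1,n}'$ and its independence of $\mathcal{G}_{j+1}^n$. Because $\xi_{j,n}$ and $\xi_{j+1,n}'$ are integrals over disjoint intervals they are functions of independent increments, hence independent; being jointly Gaussian, this is equivalently seen from the vanishing of their cross-covariance.

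I would then discharge the conditional moments. Each integrand being deterministic, each integral is centred Gaussian, so $\E{\xi_{j,n}}=\E{\xi_{j+1,n}'}=\mathbf{0}$; and since $\xi_{j,n}$ and, a fortiori (as $\mathcal{G}_j^n\subseteq\mathcal{G}_{j+1}^n$), $\xi_{j+1,n}'$ are independent of $\mathcal{G}_j^n$, every conditional expectation collapses to its unconditional counterpart, so the first moments vanish. For the second moments I would use that for scalar deterministic $f$ the $\Re^r$-valued integral $\int f\,\dop w_s$ has covariance matrix $\parens{\int f(s)^2\,\dop s}I_r$, by Ito's isometry and the mutual independence of the components of $w$. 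Substituting $u=s-j\Delta_n$ turns the three relevant integrals into $\int_0^{\Delta_n}u^2\,\dop u$, $\int_0^{\Delta_n}(\Delta_n-u)^2\,\dop u$ and $\int_0^{\Delta_n}u(\Delta_n-u)\,\dop u$, each divided by $\Delta_n^3$, giving $\tfrac13$, $\tfrac13$ and $\tfrac16$ and hence the stated matrices.

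There is no genuine analytic difficulty here: the only point requiring care is the index and interval bookkeeping of the first step, because a careless identification of $\xi_{j,n}'$ would make the last cross-covariance appear to contradict the independence assertion. Everything else is a routine application of the Gaussian and Ito-isometry toolbox, so I expect the write-up to be short.
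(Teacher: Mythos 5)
Your proposal is correct and follows essentially the same route as the paper's proof: both treat the variables as Wiener integrals of deterministic linear weights, dispose of measurability and independence via the independent-increments property, and reduce the (conditional) covariances to the same three elementary integrals $\int_0^{\Delta_n}u^2\,\dop u/\Delta_n^3=\tfrac13$, $\int_0^{\Delta_n}(\Delta_n-u)^2\,\dop u/\Delta_n^3=\tfrac13$ and $\int_0^{\Delta_n}u(\Delta_n-u)\,\dop u/\Delta_n^3=\tfrac16$. Your explicit identification of $\xi_{j,n}'$ as the index-shifted version of $\xi_{j+1,n}'$ living on the same block as $\xi_{j,n}$ is a point the paper leaves implicit, and it correctly explains why the $\tfrac16 I_r$ cross-covariance is consistent with the independence assertion.
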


\begin{proof}
	Measurability and independence are obvious. Note that the following integral evaluation holds:
	\begin{align*}
	\int_{j\Delta_n}^{(j+1)\Delta_n}\frac{(s-j\Delta_n)^2}{\Delta_n^3}\mathrm{d}s
	&=\int_{(j+1)\Delta_n}^{(j+2)\Delta_n}\frac{((j+2)\Delta_n-s)^2}{\Delta_n^3}\mathrm{d}s
	=\frac{1}{3}.
	\end{align*}
	Therefore, $\xi_{j,n}$ has the following distribution because of
	Wiener integral and the independence between the components of the Wiener process $(w_t)_t$:
	\begin{align*}
	\xi_{j,n}&\sim N_r\parens{\mathbf{0},\frac{1}{3}I_r},\\
	\xi_{j+1,n}'&\sim N_r\parens{\mathbf{0},\frac{1}{3}I_r}.
	\end{align*}
	In addition, the following equality holds:
	\begin{align*}
	\int_{j\Delta_n}^{(j+1)\Delta_n}\frac{(s-j\Delta_n)((j+1)\Delta_n-s)}{\Delta_n^3}\mathrm{d}s&=\frac{1}{6}.
	\end{align*}
	Therefore the independence of the components of the Wiener processes leads to the evaluation.
\end{proof}

\begin{lemma}\label{lem732}
	$\zeta_{j+1,n}$ and $\zeta_{j+1,n}'$ are $\mathcal{G}_{j+1}^n$-measurable, independent of $\mathcal{G}_{j}^n$ and Gaussian. These random variables have the following decomposition:
	\begin{align*}
	\zeta_{j+1,n}&=\frac{1}{p_n}\sum_{k=0}^{p_n-1}(k_n+1)\int_{I_{j,k}}\dop w_t,\\
	\zeta_{j+1,n}'&=\frac{1}{p_n}\sum_{k=0}^{p_n-1}(p_n-1-k)\int_{I_{j,k}}\dop w_t.
	\end{align*}
	In addition, the evaluation of the following conditional expectations holds:
	\begin{align*}
	\CE{\zeta_{j,n}}{\mathcal{G}_j^n}=\CE{\zeta_{j+1,n}'}{\mathcal{G}_j^n}&=\mathbf{0},\\
	\CE{\zeta_{j+1,n}\parens{\zeta_{j+1,n}}^T}{\mathcal{G}_j^n}&=m_n\Delta_nI_r,\\
	\CE{\zeta_{j+1,n}'\parens{\zeta_{j+1,n}'}^T}{\mathcal{G}_j^n}
	&=m_n'\Delta_nI_r,\\
	\CE{\zeta_{j+1,n}\parens{\zeta_{j+1,n}'}^T}{\mathcal{G}_j^n}&=\chi_n\Delta_nI_r,
	\end{align*}
	where $m_n=\parens{\frac{1}{3}+\frac{1}{2p_n}+\frac{1}{6p_n^2}}$, $m_n'=\parens{\frac{1}{3}-\frac{1}{2p_n}+\frac{1}{6p_n^2}}$ and $\chi_n=\frac{1}{6}\parens{1-\frac{1}{p_n^2}}$.
\end{lemma}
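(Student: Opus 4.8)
The plan is to reduce everything to elementary properties of Wiener integrals of deterministic integrands supported inside the single block $[j\Delta_n,(j+1)\Delta_n]$, and then to evaluate sums of squares of integers. The decomposition into the cells $I_{j,k}$ is the device that makes all three parts (measurability/Gaussianity, the stated representation, and the moment formulas) fall out at once.

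First I would prove the decomposition. Since $(j+1)\Delta_n = j\Delta_n + p_n h_n$, the interval $[j\Delta_n+ih_n,(j+1)\Delta_n]$ is the disjoint union of the cells $I_{j,k}$ with $k=i,\dots,p_n-1$, so that $\int_{j\Delta_n+ih_n}^{(j+1)\Delta_n}\dop w_s=\sum_{k=i}^{p_n-1}\int_{I_{j,k}}\dop w_t$. Substituting this into the definition of $\zeta_{j+1,n}$ and swapping the two finite sums (Fubini for finite sums) attaches to $\int_{I_{j,k}}\dop w_t$ the coefficient $(k+1)=\#\{i:0\le i\le k\}$. For the primed variable, which accumulates the "head" intervals $[j\Delta_n,j\Delta_n+ih_n]=\bigcup_{k=0}^{i-1}I_{j,k}$, the same swap gives the coefficient $p_n-1-k$. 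Once both are written in this form, measurability, independence of $\mathcal{G}_j^n$, and joint Gaussianity are immediate: each $\int_{I_{j,k}}\dop w_t$ is a Wiener integral over a subinterval of $[j\Delta_n,(j+1)\Delta_n]$, hence $\mathcal{G}_{j+1}^n$-measurable, independent of $\mathcal{G}_j^n$ and Gaussian, and the $\zeta$'s are deterministic linear combinations of these.

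Next I would compute the conditional moments. The conditional means vanish because $\CE{\int_{I_{j,k}}\dop w_t}{\mathcal{G}_j^n}=\mathbf{0}$ for every $k$. For the covariances, the key fact is orthogonality of the increments: the cells $I_{j,k}$ are disjoint of common length $h_n$, so
\[
\CE{\parens{\int_{I_{j,k}}\dop w_t}\parens{\int_{I_{j,l}}\dop w_t}^T}{\mathcal{G}_j^n}=\delta_{kl}\,h_nI_r.
\]
Inserting the decompositions and using this orthogonality collapses each double sum to a single one, leaving
\[
\CE{\zeta_{j+1,n}\zeta_{j+1,n}^T}{\mathcal{G}_j^n}=\frac{h_n}{p_n^2}\sum_{k=0}^{p_n-1}(k+1)^2\,I_r,
\]
and analogously the weight $\sum_{k}(p_n-1-k)^2$ for the second covariance and $\sum_{k}(k+1)(p_n-1-k)$ for the cross term.

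Finally I would evaluate these elementary sums via $\sum_{m=1}^{p_n}m^2=\tfrac16 p_n(p_n+1)(2p_n+1)$ and $\sum_{m=1}^{p_n}m=\tfrac12 p_n(p_n+1)$, then substitute $h_n=\Delta_n/p_n$ and simplify; this produces exactly $m_n$, $m_n'$ and $\chi_n$. There is no genuine obstacle here, so I would not dwell on it; the only thing that needs care is the index bookkeeping in the swap of summation order and in the re-indexing of the sums of squares. In particular I would flag that for the primed variable the $i=0$ term contributes nothing (its inner range $\sum_{k=0}^{i-1}$ is empty), which is precisely why the coefficient is $p_n-1-k$ rather than $p_n-k$.
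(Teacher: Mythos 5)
Your proposal is correct and follows essentially the same route as the paper: the decomposition into the disjoint cells $I_{j,k}$, Gaussianity and measurability as consequences of that representation, and the moment formulas via the Wiener isometry (your discrete orthogonality relation $\delta_{kl}h_nI_r$ is exactly the paper's computation of $\int(\sum_k \frac{k+1}{p_n}\mathbf{1}_{I_{j,k}}(s))^2\dop s$ for step-function integrands, just written as a sum of squares of integers rather than as an integral). The only difference is one of completeness: you spell out the summation swap behind the decomposition and the explicit evaluation of the cross term, both of which the paper dismisses as "obvious" or leaves to independence of increments.
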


\begin{proof}
	Measurability and independence are trivial. The decomposition is also obvious if we consider the overlapping parts of sum of integral. 
	Note the following integral:
	\begin{align*}
		\int_{j\Delta_n}^{(j+1)\Delta_n}\parens{\sum_{k=0}^{p_n-1}\frac{k_n+1}{p_n}\mathbf{1}_{I_{j,k}}(s)}^2\dop s
		&=\Delta_n\parens{\frac{1}{3}+\frac{1}{2p_n}+\frac{1}{6p_n^2}},\\
		\int_{j\Delta_n}^{(j+1)\Delta_n}\parens{\sum_{k=0}^{p_n-1}\frac{p_n-1-k_n}{p_n}\mathbf{1}_{I_{j,k}}(s)}^2\dop s
		&=\Delta_n\parens{\frac{1}{3}-\frac{1}{2p_n}+\frac{1}{6p_n^2}}.
	\end{align*}
	For the independence of the components of the Wiener process and Wiener integral, we have
	\begin{align*}
		\zeta_{j+1,n}&\sim N\parens{\mathbf{0},\Delta_n\parens{\frac{1}{3}+\frac{1}{2p_n}+\frac{1}{6p_n^2}}I_r},\\
		\zeta_{j+1,n}'&\sim N\parens{\mathbf{0},\Delta_n\parens{\frac{1}{3}-\frac{1}{2p_n}+\frac{1}{6p_n^2}}I_r},
	\end{align*}
	and hence the first, second and third equalities hold.
	With respect to the fourth equality, the independence among the components and the independent increments of the Wiener processes lead to the evaluation.
\end{proof}

\begin{proposition}\label{pro733}
	Under (A1),
	\begin{align*}
		\frac{1}{\Delta_n}\int_{j\Delta_n}^{(j+1)\Delta_n}X_s\dop s-\lm{X}{j}{}=
		\sqrt{h_n}\parens{\frac{1}{p_n}\sum_{i=0}^{p_n-1}a(X_{j\Delta_n+ih_n})\xi_{i,j,n}'}+e_{j,n}^{(1)},
	\end{align*}
	where
	\begin{align*}
		^\exists C>0,\ \norm{\CE{e_{j,n}^{(1)}}{\mathcal{H}_j^n}}&\le Ch_n(1+\norm{X_{j\Delta_n}})\\
		^\forall l\ge 2,\ ^\exists C(l)>0,\ \CE{\norm{e_{j,n}^{(1)}}^l}{\mathcal{H}_j^n}&\le C(l)h_n^l\parens{1+\norm{X_{j\Delta_n}}^{2l}}.
	\end{align*}
\end{proposition}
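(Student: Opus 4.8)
The plan is to expand the block time average over the fine mesh and match it against the local mean interval by interval. Writing $\frac{1}{\Delta_n}\int_{j\Delta_n}^{(j+1)\Delta_n}X_s\dop s = \frac{1}{p_n}\sum_{i=0}^{p_n-1}\frac{1}{h_n}\int_{I_{j,i}}X_s\dop s$ and $\lm{X}{j}=\frac{1}{p_n}\sum_{i=0}^{p_n-1}X_{j\Delta_n+ih_n}$, the left-hand side becomes $\frac{1}{p_n}\sum_{i=0}^{p_n-1}\frac{1}{h_n}\int_{I_{j,i}}\parens{X_s-X_{j\Delta_n+ih_n}}\dop s$. On each fine interval I would insert the integral form of the SDE, $X_s-X_{j\Delta_n+ih_n}=\int_{j\Delta_n+ih_n}^{s}b(X_u)\dop u+\int_{j\Delta_n+ih_n}^{s}a(X_u)\dop w_u$, and freeze the diffusion coefficient at the left endpoint. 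By stochastic Fubini, $\frac{1}{h_n}\int_{I_{j,i}}\int_{j\Delta_n+ih_n}^{s}a(X_{j\Delta_n+ih_n})\dop w_u\,\dop s=\frac{a(X_{j\Delta_n+ih_n})}{h_n}\int_{j\Delta_n+ih_n}^{j\Delta_n+(i+1)h_n}\parens{j\Delta_n+(i+1)h_n-u}\dop w_u=\sqrt{h_n}\,a(X_{j\Delta_n+ih_n})\xi_{i,j,n}'$, which reproduces the announced leading term after averaging over $i$.

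The remainder then splits as $e_{j,n}^{(1)}=e_{j,n}^{\mathrm{drift}}+e_{j,n}^{\mathrm{diff}}$, where $e_{j,n}^{\mathrm{drift}}=\frac{1}{p_n}\sum_i\frac{1}{h_n}\int_{I_{j,i}}\int_{j\Delta_n+ih_n}^{s}b(X_u)\dop u\,\dop s$ and $e_{j,n}^{\mathrm{diff}}=\frac{1}{p_n}\sum_i\frac{1}{h_n}\int_{I_{j,i}}\int_{j\Delta_n+ih_n}^{s}\parens{a(X_u)-a(X_{j\Delta_n+ih_n})}\dop w_u\,\dop s$. Since $e_{j,n}^{(1)}$ is measurable with respect to the Wiener filtration and the noise part of $\mathcal{H}_j^n$ is independent of it, every conditional expectation given $\mathcal{H}_j^n$ reduces to conditioning on $\mathcal{G}_j^n$. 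For the first-moment bound I would note that $e_{j,n}^{\mathrm{diff}}$ has vanishing conditional expectation: each inner stochastic integral, viewed as a process in $s$, is a martingale started at $0$ at time $j\Delta_n+ih_n\ge j\Delta_n$, so by the tower property and conditional Fubini its $\CE{\cdot}{\mathcal{G}_j^n}$ is zero. Hence $\CE{e_{j,n}^{(1)}}{\mathcal{G}_j^n}=\CE{e_{j,n}^{\mathrm{drift}}}{\mathcal{G}_j^n}$, which I would bound by pulling the norm inside the expectation, invoking the linear growth of $b$ in (A1) and $\CE{\norm{X_u}}{\mathcal{G}_j^n}\le C(1+\norm{X_{j\Delta_n}})$ from Proposition~\ref{pro711}; each inner time integral contributes one factor $h_n$, yielding the claimed $Ch_n(1+\norm{X_{j\Delta_n}})$.

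For the $l$-th moment bound I would treat the two pieces separately, in each case distributing $\norm{\cdot}^l$ across the averages $\frac{1}{p_n}\sum_i$ and $\frac{1}{h_n}\int_{I_{j,i}}\dop s$ by Jensen's inequality. The drift piece is routine: $\norm{\int_{j\Delta_n+ih_n}^{s}b(X_u)\dop u}^l\le h_n^{l-1}\int_{j\Delta_n+ih_n}^{s}\norm{b(X_u)}^l\dop u$, and (A1) with Proposition~\ref{pro711} gives $\CE{\norm{e_{j,n}^{\mathrm{drift}}}^l}{\mathcal{G}_j^n}\le C(l)h_n^l(1+\norm{X_{j\Delta_n}}^l)$. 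The diffusion piece is the crux. Conditioning first on $\mathcal{G}_{j\Delta_n+ih_n}$, I would apply the Burkholder--Davis--Gundy inequality to the inner stochastic integral over an interval of length at most $h_n$, then Jensen in time, and then Proposition~\ref{pro712} with $f=a$, whose derivative has linear growth by (A1), to obtain $\CE{\norm{a(X_u)-a(X_{j\Delta_n+ih_n})}^l}{\mathcal{G}_{j\Delta_n+ih_n}}\le C(l)h_n^{l/2}(1+\norm{X_{j\Delta_n+ih_n}}^{2l})$; the three factors combine as $h_n^{l/2-1}\cdot h_n\cdot h_n^{l/2}=h_n^l$ while leaving the polynomial factor $\norm{X_{j\Delta_n+ih_n}}^{2l}$. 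Passing back to conditioning on $\mathcal{G}_j^n$ by the tower property and Proposition~\ref{pro711} replaces $\norm{X_{j\Delta_n+ih_n}}^{2l}$ by $\norm{X_{j\Delta_n}}^{2l}$, so $\CE{\norm{e_{j,n}^{\mathrm{diff}}}^l}{\mathcal{G}_j^n}\le C(l)h_n^l(1+\norm{X_{j\Delta_n}}^{2l})$, and combining the two estimates finishes the proof. The main obstacle is exactly this diffusion term: applying stochastic Fubini cleanly to isolate the $\xi_{i,j,n}'$ leading part, and then tracking the polynomial degree through BDG and Proposition~\ref{pro712} so that the final exponent is precisely $2l$, as asserted.
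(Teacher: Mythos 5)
Your proposal is correct and follows essentially the same route as the paper: the same decomposition of $\frac{1}{\Delta_n}\int_{j\Delta_n}^{(j+1)\Delta_n}X_s\dop s-\lm{X}{j}$ into the frozen-coefficient Gaussian leading term (via the identity $\int_{I_{j,i}}\int_{j\Delta_n+ih_n}^{s}\dop w_u\,\dop s=h_n^{3/2}\xi_{i,j,n}'$, which the paper obtains from It\^{o}'s formula rather than stochastic Fubini) plus a drift remainder and a diffusion-increment remainder, with the vanishing conditional mean of the latter and the moment bounds obtained through BDG, Jensen, and Propositions \ref{pro711}--\ref{pro712}. The only cosmetic difference is that the paper outsources the $l$-th moment bound for the diffusion remainder to Proposition \ref{pro713}(v), whereas you re-derive that estimate inline; the underlying argument and the resulting exponents ($h_n^l$ and $\norm{X_{j\Delta_n}}^{2l}$) are identical.
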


\begin{proof}
	It is enough to consider the conditional expectation with respect to $\mathcal{G}_j^n$. Since $\Delta_n=p_nh_n$, as \citep{Fa14},
	\begin{align*}
		R_{j,n}&:=\frac{1}{\Delta_n}\int_{j\Delta_n}^{(j+1)\Delta_n}X_s\dop s-\lm{X}{j}\\
		&=\sqrt{h_n}\parens{\frac{1}{p_n}\sum_{k=0}^{p_n-1}a(X_{j\Delta_n+kh_n})\frac{1}{h_n^{3/2}}\int_{I_{j,k}}\parens{\int_{j\Delta_n+kh_n}^{s}\dop w_u}\dop s}\\
		&\qquad+\frac{1}{p_n}\sum_{k=0}^{p_n-1}\frac{1}{h_n}\int_{I_{j,k}}\parens{\int_{j\Delta_n+kh_n}^{s}\parens{a(X_{u})-a(X_{j\Delta_n+kh_n})}\dop w_u}\dop s\\
		&\qquad+\frac{1}{p_n}\sum_{k=0}^{p_n-1}\frac{1}{h_n}\int_{I_{j,k}}\parens{\int_{j\Delta_n+kh_n}^{s}b(X_u)\dop u}\dop s.
	\end{align*}
	The fact $tw_t=\int_{0}^{t}w_s\dop s+\int_{0}^{t}s\dop w_s$ easily derived from Ito's formula leads to the next transformation as follows:
	\begin{align*}
		\int_{I_{j,k}}\parens{\int_{j\Delta_n+kh_n}^{s}\dop w_u}\dop s=h_n^{3/2}\xi_{k,j,n}'.
	\end{align*}
	Therefore, we have
	\begin{align*}
		R_{j,n}&=
		\sqrt{h_n}\parens{\frac{1}{p_n}\sum_{k=0}^{p_n-1}a(X_{j\Delta_n+kh_n})\xi_{k,j,n}'}\\
		&\qquad+\frac{1}{p_n}\sum_{k=0}^{p_n-1}\frac{1}{h_n}\int_{I_{j,k}}\parens{\int_{j\Delta_n+kh_n}^{s}\parens{a(X_{u})-a(X_{j\Delta_n+kh_n})}\dop w_u}\dop s\\
		&\qquad+\frac{1}{p_n}\sum_{k=0}^{p_n-1}\frac{1}{h_n}\int_{I_{j,k}}\parens{\int_{j\Delta_n+kh_n}^{s}b(X_u)\dop u}
		\dop s.
	\end{align*}
	Hence it is necessary to evaluate the second term and the third one of this right hand side. Let us denote the second term as $e_{1,j,n}^{(1)}$ and the third one as $e_{2,j,n}^{(1)}$.
	
	Fubini's theorem verifies the following evaluation
	\begin{align*}
		\CE{e_{1,j,n}^{(1)}}{\mathcal{G}_j^n}=\mathbf{0}.
	\end{align*}
	Furthermore, for $l\ge 2$, because of convexity of $\norm{\cdot}^l$, Proposition \ref*{pro712} and Proposition \ref*{pro713}, we have
	\begin{align*}
		\CE{\norm{e_{1,j,n}^{(1)}}^l}{\mathcal{G}_j^n}
		\le C(l)h_n^{l}\parens{1+\norm{X_{j\Delta_n}}^{2l}}.
	\end{align*}
	With respect to $e_{2,j,n}^{(1)}$, we obtain
	\begin{align*}
		\norm{e_{2,j,n}^{(1)}}\le Ch_n\parens{1+\sup_{s\in[j\Delta_n,(j+1)\Delta_n]}\norm{X_s}}.
	\end{align*}
	Hence for $l\ge1$ and Proposition \ref*{pro711}, we have
	\begin{align*}
		\CE{\norm{e_{2,j,n}^{(1)}}^l}{\mathcal{G}_j^n}\le C(l)h_n^l\parens{1+\norm{X_{j\Delta_n}}^l}.
	\end{align*}
	Therefore we obtain the result.
\end{proof}

\begin{proposition}\label{pro734}
	Under (A1),
	\begin{align*}
		\frac{1}{\Delta_n}\int_{j\Delta_n}^{(j+1)\Delta_n}X_s\dop s-X_{j\Delta_n}=a(X_{j\Delta_n})\sqrt{\Delta_n}\xi_{j,n}'
		+e_{j,n}^{(2)},
	\end{align*}
	where
	\begin{align*}
		^\exists C>0,\ \norm{\CE{e_{j,n}^{(2)}}{\mathcal{H}_j^n}}&\le C\Delta_n\parens{1+\norm{X_{j\Delta_n}}},\\
		^\forall l\ge2,\ ^\exists C(l)>0,\ \CE{\norm{e_{j,n}^{(2)}}^l}{\mathcal{H}_j^n}&\le C(l)\Delta_n^l\parens{1+\norm{X_{j\Delta_n}}^{2l}}.
	\end{align*}
\end{proposition}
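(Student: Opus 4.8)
The plan is to expand $X_s$ around $X_{j\Delta_n}$ using the defining SDE and then isolate the Gaussian leading term, in close parallel to the proof of Proposition \ref{pro733}. First I would observe that, since the diffusion depends only on $(w_t)$ and $x_0$ whereas $\mathcal{A}_j^n$ is generated by the independent noise $\{\epsilon_{ih_n}\}$, every quantity below is conditionally independent of $\mathcal{A}_j^n$ given $\mathcal{G}_j^n$; hence it suffices to compute all conditional expectations with respect to $\mathcal{G}_j^n$ in place of $\mathcal{H}_j^n$.

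Next, writing $\frac{1}{\Delta_n}\int_{j\Delta_n}^{(j+1)\Delta_n}X_s\,\dop s - X_{j\Delta_n} = \frac{1}{\Delta_n}\int_{j\Delta_n}^{(j+1)\Delta_n}(X_s - X_{j\Delta_n})\,\dop s$ and substituting $X_s - X_{j\Delta_n} = \int_{j\Delta_n}^s b(X_u)\,\dop u + \int_{j\Delta_n}^s a(X_u)\,\dop w_u$, I would split the stochastic integral as $a(X_{j\Delta_n})(w_s - w_{j\Delta_n}) + \int_{j\Delta_n}^s(a(X_u) - a(X_{j\Delta_n}))\,\dop w_u$. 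The key algebraic step is the stochastic-Fubini identity $\int_{j\Delta_n}^{(j+1)\Delta_n}(w_s - w_{j\Delta_n})\,\dop s = \int_{j\Delta_n}^{(j+1)\Delta_n}((j+1)\Delta_n - u)\,\dop w_u = \Delta_n^{3/2}\xi_{j,n}'$, which reads off the leading term as $a(X_{j\Delta_n})\sqrt{\Delta_n}\,\xi_{j,n}'$ with the correct normalisation $\Delta_n^{-3/2}$ built into $\xi_{j,n}'$. This leaves the residual $e_{j,n}^{(2)} = A + B$, where $A := \frac{1}{\Delta_n}\int_{j\Delta_n}^{(j+1)\Delta_n}(\int_{j\Delta_n}^s b(X_u)\,\dop u)\,\dop s$ is the drift double integral and $B := \frac{1}{\Delta_n}\int_{j\Delta_n}^{(j+1)\Delta_n}(\int_{j\Delta_n}^s(a(X_u) - a(X_{j\Delta_n}))\,\dop w_u)\,\dop s$.

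For the conditional-mean bound I would argue that $B$ has vanishing $\mathcal{G}_j^n$-conditional expectation by Fubini and the martingale property of the inner Wiener integral, while for $A$, pulling the conditional expectation inside, bounding $\norm{b(X_u)}\le C(1+\norm{X_u})$ via (A1), and invoking Proposition \ref{pro711} gives $\norm{\CE{A}{\mathcal{G}_j^n}}\le C\Delta_n(1+\norm{X_{j\Delta_n}})$; this is the first estimate. For the $L^l$ bound I would treat the two pieces separately: pointwise $\norm{A}\le C\Delta_n(1+\sup_{s\in[j\Delta_n,(j+1)\Delta_n]}\norm{X_s})$, so Proposition \ref{pro711} yields $\CE{\norm{A}^l}{\mathcal{G}_j^n}\le C(l)\Delta_n^l(1+\norm{X_{j\Delta_n}}^l)$; and $B$ is, up to the prefactor $\Delta_n^{-1}$, exactly the double integral controlled by Proposition \ref{pro713}(v) with $t_1 = j\Delta_n$ and $t_2 = (j+1)\Delta_n$, giving $\CE{\norm{B}^l}{\mathcal{G}_j^n}\le C(l)\Delta_n^{-l}\Delta_n^{2l}(1+\norm{X_{j\Delta_n}}^{2l}) = C(l)\Delta_n^l(1+\norm{X_{j\Delta_n}}^{2l})$. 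Combining the two via convexity of $\norm{\cdot}^l$ completes the second estimate.

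The argument is essentially routine once the decomposition is in place, because Propositions \ref{pro711}--\ref{pro713} were tailored precisely for these terms. The only points demanding care are the stochastic-Fubini identity and the correct reading of $\xi_{j,n}'$, so that the Gaussian leading term matches the claimed $a(X_{j\Delta_n})\sqrt{\Delta_n}\,\xi_{j,n}'$; and the matching of the polynomial degree $2l$ (rather than a generic $C(l)$) in the residual, which hinges on using Proposition \ref{pro713}(v) for $B$ instead of a cruder Burkholder--Davis--Gundy bound.
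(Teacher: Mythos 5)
Your proposal is correct and follows exactly the route the paper intends: the paper disposes of Proposition \ref{pro734} by remarking that it is essentially identical to Proposition \ref{pro733}, whose proof is precisely your decomposition — extract the Gaussian leading term via the It\^{o}/Fubini identity $\int_{j\Delta_n}^{(j+1)\Delta_n}\parens{w_s-w_{j\Delta_n}}\dop s=\Delta_n^{3/2}\xi_{j,n}'$, leave a drift double integral bounded through Proposition \ref{pro711} and a stochastic double integral with increments of $a$ bounded through Proposition \ref{pro713}(v), and reduce the conditioning from $\mathcal{H}_j^n$ to $\mathcal{G}_j^n$ by independence of the observation noise. Your single-interval bookkeeping (the $\Delta_n^{-1}$ prefactor against the $(t_2-t_1)^{2l}$ bound, and the degree $2l$ in the residual) matches the stated estimates.
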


\begin{proof}
	The proof is essentially identical to the previous one.
\end{proof}

\begin{proposition}\label{pro735}
	Under (A1) and (AH), for all $j\le k_n-1$,
	\begin{align*}
		\lm{Y}{j}{}-X_{j\Delta_n}=a(X_{j\Delta_n})\sqrt{\Delta_n}\xi_{j,n}'+e_{j,n}'+\Lambda_{\star}^{1/2}\lm{\epsilon}{j},
	\end{align*}
	where 
	\begin{align*}
		^\exists C>0,\ \norm{\CE{e_{j,n}'}{\mathcal{G}_j^n}}
		&\le C\Delta_n\parens{1+\norm{X_{j\Delta_n}}},\\
		^\forall l\ge2,\ ^\exists C(l),\ \CE{\norm{e_{j,n}'}^l}{\mathcal{G}_j^n}&\le C(l)\Delta_n^{l}\parens{1+\norm{X_{j\Delta_n}}^{2l}}.
	\end{align*}
	Furthermore, for all $l\ge2$,
	\begin{align*}
		\CE{\norm{\lm{Y}{j}{}-X_{j\Delta_n}}^l}{\mathcal{H}_j^n}
		\le C(l)\Delta_n^{l/2}
			\parens{1+\norm{X_{j\Delta_n}}^{2l}}.
	\end{align*}
\end{proposition}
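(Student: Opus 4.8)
The plan is to begin from the elementary identity $\lm{Y}{j} = \lm{X}{j} + \Lambda_{\star}^{1/2}\lm{\epsilon}{j}$, which is immediate from the observation model and the definition of the local mean. Subtracting $X_{j\Delta_n}$ gives $\lm{Y}{j} - X_{j\Delta_n} = (\lm{X}{j} - X_{j\Delta_n}) + \Lambda_{\star}^{1/2}\lm{\epsilon}{j}$, so everything reduces to controlling the diffusive fluctuation and the averaged noise separately. To extract the leading Gaussian term I would write $\lm{X}{j} - X_{j\Delta_n}$ as the difference of the two quantities already expanded in Propositions \ref{pro733} and \ref{pro734}, namely $(\frac{1}{\Delta_n}\int_{j\Delta_n}^{(j+1)\Delta_n}X_s\,\dop s - X_{j\Delta_n}) - (\frac{1}{\Delta_n}\int_{j\Delta_n}^{(j+1)\Delta_n}X_s\,\dop s - \lm{X}{j})$. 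The first bracket contributes the announced term $a(X_{j\Delta_n})\sqrt{\Delta_n}\xi_{j,n}'$ plus $e_{j,n}^{(2)}$, the second contributes $\sqrt{h_n}\frac{1}{p_n}\sum_{k=0}^{p_n-1}a(X_{j\Delta_n+kh_n})\xi_{k,j,n}'$ plus $e_{j,n}^{(1)}$. Hence I set $e_{j,n}' := e_{j,n}^{(2)} - e_{j,n}^{(1)} - \sqrt{h_n}\frac{1}{p_n}\sum_{k=0}^{p_n-1}a(X_{j\Delta_n+kh_n})\xi_{k,j,n}'$.

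For the conditional-mean bound the only nontrivial piece is the weighted sum of the $\xi_{k,j,n}'$. Since $\xi_{k,j,n}'$ is a Wiener integral over $I_{j,k}$ and $a(X_{j\Delta_n+kh_n})$ is $\mathcal{G}_{j\Delta_n+kh_n}$-measurable, each summand is a martingale difference, $\CE{a(X_{j\Delta_n+kh_n})\xi_{k,j,n}'}{\mathcal{G}_{j\Delta_n+kh_n}}=\mathbf{0}$, so by the tower property its $\mathcal{G}_j^n$-conditional mean vanishes. Combining this with the $\CE{\cdot}{\mathcal{G}_j^n}$ bounds on $e_{j,n}^{(2)}$ and $e_{j,n}^{(1)}$ from Propositions \ref{pro734} and \ref{pro733}, and using $h_n\le\Delta_n$, yields $\norm{\CE{e_{j,n}'}{\mathcal{G}_j^n}}\le C\Delta_n(1+\norm{X_{j\Delta_n}})$.

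For the $l$-th moment of $e_{j,n}'$ I would split by convexity into three pieces, the two error pieces being handled directly by Propositions \ref{pro733} and \ref{pro734}. For the martingale-structured term, Burkholder's inequality followed by the power-mean inequality reduces $\CE{\norm{\frac{1}{p_n}\sum_k a(X_{j\Delta_n+kh_n})\xi_{k,j,n}'}^l}{\mathcal{G}_j^n}$ to $C(l)p_n^{-l/2-1}\sum_k\CE{\norm{a(X_{j\Delta_n+kh_n})\xi_{k,j,n}'}^l}{\mathcal{G}_j^n}$; the Gaussian law of $\xi_{k,j,n}'$ (as in Lemma \ref{lem731}), the growth bound on $a$ from (A1), and the mesh moment control of $X$ from Proposition \ref{pro711} give the factor $p_n^{-l/2}(1+\norm{X_{j\Delta_n}}^l)$, so the prefactor $h_n^{l/2}$ turns this into $(h_n/p_n)^{l/2}(1+\norm{X_{j\Delta_n}}^l)$, which is $\le C\Delta_n^{l}(1+\norm{X_{j\Delta_n}}^l)$ under (AH) because $h_n p_n^3 = p_n^{3-\tau}\to\infty$. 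Collecting the three pieces gives $\CE{\norm{e_{j,n}'}^l}{\mathcal{G}_j^n}\le C(l)\Delta_n^{l}(1+\norm{X_{j\Delta_n}}^{2l})$.

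Finally, for the moment bound on $\lm{Y}{j}-X_{j\Delta_n}$ I would apply convexity to the three-term decomposition, conditioning on $\mathcal{H}_j^n$. The leading term gives $\Delta_n^{l/2}\norm{a(X_{j\Delta_n})}^l\,\E{\norm{\xi_{j,n}'}^l}$, using that $a(X_{j\Delta_n})$ is $\mathcal{H}_j^n$-measurable while $\xi_{j,n}'$, driven by the Wiener increment on $[j\Delta_n,(j+1)\Delta_n]$, is independent of $\mathcal{H}_j^n$; this is $O(\Delta_n^{l/2}(1+\norm{X_{j\Delta_n}}^l))$. The error term is $O(\Delta_n^l)\le O(\Delta_n^{l/2})$ by the previous step, where the $\mathcal{G}_j^n$- and $\mathcal{H}_j^n$-conditional bounds coincide since $e_{j,n}'$ is $\mathcal{G}_{j+1}^n$-measurable and thus independent of the past noise $\mathcal{A}_j^n$. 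The averaged-noise term $\Lambda_{\star}^{1/2}\lm{\epsilon}{j}$ is independent of $\mathcal{H}_j^n$ (the block-$j$ innovations at indices $jp_n,\dots,(j+1)p_n-1$ do not enter $\mathcal{A}_j^n$), so by (A4) and the i.i.d. mean-zero moment inequality it contributes $O(p_n^{-l/2})$. The crux of the estimate — and the step I expect to be the main obstacle — is precisely tying this noise rate $p_n^{-l/2}$ to the diffusive rate $\Delta_n^{l/2}$: since $\Delta_n=p_n^{1-\tau}$, the bound $p_n^{-l/2}\le C\Delta_n^{l/2}$ holds if and only if $\tau\le 2$, which is exactly the range imposed in (AH). This is the quantitative form of local averaging suppressing the noise down to the diffusive scale, and summing the three contributions gives $C(l)\Delta_n^{l/2}(1+\norm{X_{j\Delta_n}}^{2l})$.
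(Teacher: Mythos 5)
Your proposal is correct and takes essentially the same route as the paper's own proof: the identical decomposition of $\lm{Y}{j}-X_{j\Delta_n}$ through Propositions \ref{pro733} and \ref{pro734}, the same martingale/tower argument showing that the $\sqrt{h_n}$-weighted sum of the $\xi_{k,j,n}'$ has vanishing $\mathcal{G}_j^n$-conditional mean, and the same two rate consequences of (AH), namely $h_n=O(\Delta_n^2)$ and $\E{\norm{\lm{\epsilon}{j}}^l}\le C(l)p_n^{-l/2}\le C(l)\Delta_n^{l/2}$ (the latter being exactly Lemma \ref{lem739}(vii), which the paper cites at the end of its proof). The only divergence is local and harmless: where you invoke Burkholder plus the power-mean inequality to gain an extra factor $p_n^{-l/2}$ on the averaged $\xi'$-term (so that only $p_n^3h_n\to\infty$ is needed), the paper bounds that term by plain convexity as $C(l)h_n^{l/2}\parens{1+\norm{X_{j\Delta_n}}^l}$ and concludes via $h_n^{l/2}=O(\Delta_n^l)$, which is where $\tau\le 2$ enters on its side.
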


\begin{proof}
	It is enough to see conditional expectation with respect to $\mathcal{G}_j^n$. Note the following decomposition
	\begin{align*}
		\lm{Y}{j}{}-X_{j\Delta_n}
		&=\lm{X}{j}{}
		-\frac{1}{\Delta_n}\int_{j\Delta_n}^{(j+1)\Delta_n}X_s
		+\frac{1}{\Delta_n}\int_{j\Delta_n}^{(j+1)\Delta_n}X_s
		-X_{j\Delta_n}
		+\Lambda_{\star}^{1/2}\lm{\epsilon}{j}{}\\
		&=-\sqrt{h_n}\parens{\frac{1}{p_n}\sum_{i=0}^{p_n-1}a(X_{j\Delta_n+ih_n})\xi_{i,j,n}'}+e_{j,n}^{(1)}
		+a(X_{j\Delta_n})\sqrt{\Delta_n}\xi_{j,n}'
		+e_{j,n}^{(2)}
		+\Lambda_{\star}^{1/2}\lm{\epsilon}{j}{}.
	\end{align*}
	We define
	\begin{align*}
		r_{j,n}=\frac{1}{p_n}\sum_{i=0}^{p_n-1}a(X_{j\Delta_n+ih_n})\xi_{i,j,n}'.
	\end{align*}
	Obviously the following evaluation holds:
	\begin{align*}
		\CE{r_{j,n}}{\mathcal{G}_j^n}=\CE{\frac{1}{p_n}\sum_{i=0}^{p_n-1}a(X_{j\Delta_n+ih_n})\xi_{i,j,n}'}{\mathcal{G}_j^n}
		=\mathbf{0}.
	\end{align*}
	In addition, we check for any $l\ge 2$
	\begin{align*}
		\CE{\norm{r_{j,n}}^l}{\mathcal{G}_j^n}\le C(l)\parens{1+\norm{X_{j\Delta_n}}^l}.
	\end{align*}
	We have
	\begin{align*}
		\CE{\norm{r_{j,n}}^l}{\mathcal{G}_j^n}
		\le \frac{1}{p_n}\sum_{i=0}^{p_n-1}\CE{
			\norm{a(X_{j\Delta_n+ih_n})}^l\CE{\norm{\xi_{i,j,n}'}^l}{\mathcal{G}_{j\Delta_n+ih_n}}}{\mathcal{G}_j^n}.
	\end{align*}
	Because of Wiener integral and the evaluation
	\begin{align*}
		\int_{j\Delta_n+ih_n}^{j\Delta_n+(i+1)h_n}\frac{\parens{j\Delta_n+(i+1)h_n-s}^2}{h_n^3}=\frac{1}{3},
	\end{align*}
	$\xi_{i,j,n}$ is distributed as
	\begin{align*}
		\xi_{i,j,n}\sim N\parens{0,\frac{1}{3}I_r}.
	\end{align*}
	This result and Proposition \ref*{pro711} lead to
	\begin{align*}
	\CE{\norm{r_{j,n}}^l}{\mathcal{G}_j^n}
	&\le  C(l)\parens{1+\norm{X_{j\Delta_n}}^l}.
	\end{align*}
	Therefore we have
	\begin{align*}
		\CE{\norm{\sqrt{h_n}\parens{\frac{1}{p_n}\sum_{i=0}^{p_n-1}a(X_{j\Delta_n+ih_n})\xi_{i,j,n}'}}^l}{\mathcal{G}_j^n}
		&=\CE{h_n^{l/2}\norm{r_{j,n}}^{l}}{\mathcal{G}_j^n}\\
		&\le C(l)h_n^{l/2}\parens{1+\norm{X_{j\Delta_n}}^l}.
	\end{align*}
	Here we define
	\begin{align*}
		e_{j,n}':=-\sqrt{h_n}\parens{\frac{1}{p_n}\sum_{i=0}^{p_n-1}a(X_{j\Delta_n+ih_n})\xi_{i,j,n}'}+e_{j,n}^{(1)}+e_{j,n}^{(2)}
	\end{align*}
	and see $e_{j,n}'$ satisfies the statement. The evaluation above and Proposition \ref*{pro733}, Proposition \ref*{pro734} verify
	\begin{align*}
		\norm{\CE{e_{j,n}'}{\mathcal{G}_j^n}}
		&=\norm{\CE{-\sqrt{h_n}\parens{\frac{1}{p_n}\sum_{i=0}^{p_n-1}a(X_{j\Delta_n+ih_n})\xi_{i,j,n}'}}{\mathcal{G}_j^n}
			+\CE{e_{j,n}^{(1)}}{\mathcal{G}_j^n}+\CE{e_{j,n}^{(2)}}{\mathcal{G}_j^n}}\\
		&=\norm{\CE{e_{j,n}^{(1)}}{\mathcal{G}_j^n}+\CE{e_{j,n}^{(2)}}{\mathcal{G}_j^n}}\\
		&\le Ch_n\parens{1+\norm{X_{j\Delta_n}}}+C\Delta_n\parens{1+\norm{X_{j\Delta_n}}}.
	\end{align*}
	Note that (AH) leads to
	\begin{align*}
		\frac{h_n}{\Delta_n^2}=\frac{1}{p_n^{2-\tau}}=O(1)\ \Rightarrow\ h_n=O(\Delta_n^2).
	\end{align*}
	With this fact, for all $l\ge2$,
	\begin{align*}
		\CE{\norm{e_{j,n}'}^l}{\mathcal{G}_j^n}
		\le C(l)\Delta_n^{l}\parens{1+\norm{X_{j\Delta_n}}^{2l}}.
	\end{align*}
	In addition, since the sequence of $\epsilon_{ih_n}$ is i.i.d., $\CE{\norm{\lm{\epsilon}{j}{}}^l}{\mathcal{H}_j^n}=\E{\norm{\lm{\epsilon}{j}{}}^l}$ and
	\begin{align*}
		\CE{\norm{\lm{Y}{j}{}-X_{j\Delta_n}}^l}{\mathcal{H}_j^n}
		&\le C(l)\CE{\norm{a(X_{j\Delta_n})\sqrt{\Delta_n}\xi_{j,n}'}^l}{\mathcal{H}_j^n}
		+C(l)\CE{\norm{e_{j,n}'}^l}{\mathcal{H}_j^n}\\
		&\qquad+C(l)\CE{\norm{\Lambda_{\star}^{1/2}\lm{\epsilon}{j}{}}^l}{\mathcal{H}_j^n}\\
		&\le C(l)\parens{\Delta_n^{l/2}\parens{1+\norm{X_{j\Delta_n}}^{2l}}+\norm{\Lambda_{\star}^{1/2}}^l\E{\norm{\lm{\epsilon}{j}{}}^l}}.
	\end{align*}
	Finally Lemma \ref*{lem739} leads to
	\begin{align*}
		\CE{\norm{\lm{Y}{j}{}-X_{j\Delta_n}}^l}{\mathcal{H}_j^n}\le C(l)\Delta_n^{l/2}\parens{1+\norm{X_{j\Delta_n}}^{2l}}.
	\end{align*}It completes the proof.
\end{proof}

\begin{corollary}\label{cor736}
	Under (A1), (AH), assume the component of the function $f$ on $\Re^d\times\Xi$, $\partial_x f$ and $\partial_x^2 f$ are polynomial growth functions uniformly in $\vartheta\in\Xi$. Then there exists $C>0$ such that for all $j\le k_n-1$ and $\vartheta\in\Xi$,
	 \begin{align*}
	 	&\norm{\CE{f(\lm{Y}{j}{},\vartheta)-f(X_{j\Delta_n},\vartheta)}{\mathcal{H}_j^n}}\le C\Delta_n\parens{1+\norm{X_{j\Delta_n}}^{C}}.
	 \end{align*}
	 Moreover, for all $l\ge 2$,
	 \begin{align*}
	 	&\CE{\norm{f(\lm{Y}{j}{},\vartheta)-f(X_{j\Delta_n},\vartheta)}^l}{\mathcal{H}_j^n}\le C(l)\Delta_n^{l/2}\parens{1+\norm{X_{j\Delta_n}}^{C(l)}}.
	 \end{align*}
\end{corollary}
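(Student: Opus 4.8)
The plan is to combine a Taylor expansion of $f$ in its first argument with the decomposition and moment estimates for $\lm{Y}{j}-X_{j\Delta_n}$ furnished by Proposition \ref{pro735}. The two assertions call for different orders of expansion: the $l$-th moment bound follows from a first-order expansion, whereas the sharper $O(\Delta_n)$ control of the conditional expectation requires a second-order expansion together with the fact that the leading stochastic part of $\lm{Y}{j}-X_{j\Delta_n}$ has vanishing conditional mean. Throughout I would use that $X_{j\Delta_n}$ is $\mathcal{G}_j^n$- and hence $\mathcal{H}_j^n$-measurable.

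First I would settle the moment bound. By the mean value theorem and the polynomial growth of $\partial_x f$ uniformly in $\vartheta$,
\[
\norm{f(\lm{Y}{j},\vartheta)-f(X_{j\Delta_n},\vartheta)}\le C\parens{1+\norm{\tilde{X}_j}}^{C}\norm{\lm{Y}{j}-X_{j\Delta_n}},
\]
where $\tilde{X}_j$ lies on the segment joining $X_{j\Delta_n}$ and $\lm{Y}{j}$, so $\norm{\tilde{X}_j}\le\norm{X_{j\Delta_n}}+\norm{\lm{Y}{j}-X_{j\Delta_n}}$. Raising to the power $l$, applying the conditional Cauchy--Schwarz inequality to separate the growth factor from $\norm{\lm{Y}{j}-X_{j\Delta_n}}^l$, and invoking $\CE{\norm{\lm{Y}{j}-X_{j\Delta_n}}^{2l}}{\mathcal{H}_j^n}\le C(l)\Delta_n^{l}(1+\norm{X_{j\Delta_n}}^{4l})$ from Proposition \ref{pro735} produces the factor $\Delta_n^{l/2}$. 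The growth factor is controlled by the same proposition (using $\Delta_n\le 1$ eventually), giving $C(l)\Delta_n^{l/2}(1+\norm{X_{j\Delta_n}}^{C(l)})$.

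For the conditional-expectation bound I would expand to second order,
\begin{align*}
f(\lm{Y}{j},\vartheta)-f(X_{j\Delta_n},\vartheta)
&=\partial_x f(X_{j\Delta_n},\vartheta)\parens{\lm{Y}{j}-X_{j\Delta_n}}\\
&\quad+\tfrac{1}{2}\parens{\lm{Y}{j}-X_{j\Delta_n}}^{T}\partial_x^2 f(\tilde{X}_j,\vartheta)\parens{\lm{Y}{j}-X_{j\Delta_n}},
\end{align*}
and take $\CE{\cdot}{\mathcal{H}_j^n}$. Since $\partial_x f(X_{j\Delta_n},\vartheta)$ is $\mathcal{H}_j^n$-measurable it factors out of the first term, leaving $\CE{\lm{Y}{j}-X_{j\Delta_n}}{\mathcal{H}_j^n}$. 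In the decomposition of Proposition \ref{pro735} the term $a(X_{j\Delta_n})\sqrt{\Delta_n}\xi_{j,n}'$ has zero conditional mean (by Lemma \ref{lem731}, $\xi_{j,n}'$ is independent of $\mathcal{H}_j^n$ with mean zero) and $\Lambda_{\star}^{1/2}\lm{\epsilon}{j}$ vanishes likewise (its averaged noise carries indices $\ge jp_n$, independent of $\mathcal{H}_j^n$, with mean zero), so only $\CE{e_{j,n}'}{\mathcal{H}_j^n}$ survives; the estimate $\norm{\CE{e_{j,n}'}{\mathcal{H}_j^n}}\le C\Delta_n(1+\norm{X_{j\Delta_n}})$ then renders the first term $O(\Delta_n)$ after multiplication by the polynomial-growth factor from $\partial_x f$. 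The quadratic term is dominated in norm by $\CE{\norm{\partial_x^2 f(\tilde{X}_j,\vartheta)}\norm{\lm{Y}{j}-X_{j\Delta_n}}^2}{\mathcal{H}_j^n}$, which is $O(\Delta_n)$ by the $l=2$ moment bound of Proposition \ref{pro735} combined with Cauchy--Schwarz to absorb the growth of $\partial_x^2 f$.

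The main obstacle is the first assertion, specifically obtaining order $\Delta_n$ rather than the naive $\Delta_n^{1/2}$: a first-order expansion alone would feed the leading $\sqrt{\Delta_n}$-sized fluctuation of $\lm{Y}{j}-X_{j\Delta_n}$ directly into the bound. The gain comes entirely from the cancellation of the conditional mean of that fluctuation, which demotes its contribution to the $O(\Delta_n)$ remainder $e_{j,n}'$, together with the fact that the second-order remainder sees only $\norm{\lm{Y}{j}-X_{j\Delta_n}}^2$, whose conditional mean is already $O(\Delta_n)$. Keeping careful track of measurability---so that $\partial_x f(X_{j\Delta_n},\vartheta)$ may be pulled out while $\partial_x^2 f(\tilde{X}_j,\vartheta)$ at the intermediate point must be retained and merely dominated---is the one place where the argument must be handled with some care.
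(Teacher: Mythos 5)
Your proposal is correct and follows essentially the same route as the paper: a second-order Taylor expansion for the conditional-mean bound (exploiting, via Proposition \ref{pro735}, that the $\sqrt{\Delta_n}$-order fluctuation $a(X_{j\Delta_n})\sqrt{\Delta_n}\xi_{j,n}'+\Lambda_{\star}^{1/2}\lm{\epsilon}{j}$ has vanishing $\mathcal{H}_j^n$-conditional mean, so only the $O(\Delta_n)$ term $e_{j,n}'$ survives), and a first-order expansion with Cauchy--Schwarz plus the moment bounds of Propositions \ref{pro711}, \ref{pro735} and Lemma \ref{lem739} for the $l$-th moment estimate. The only difference is presentational: you make explicit the cancellation argument that the paper leaves implicit when it cites those results.
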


\begin{proof}
	It is enough to assume $q=1$. Taylor's theorem verifies the expansion such that
	\begin{align*}
		D_j&:=f(\lm{Y}{j}{},\vartheta)-f(X_{j\Delta_n},\vartheta)\\
		&=\partial_x f(X_{j\Delta_n},\vartheta)\parens{\lm{Y}{j}{}-X_{j\Delta_n}}\\
		&\qquad+\parens{\lm{Y}{j}{}-X_{j\Delta_n}}^T\parens{\int_{0}^{1}(1-u)\partial_x^2
			f\parens{X_{j\Delta_n}+u(\lm{Y}{j}{}-X_{j\Delta_n}),\vartheta}\dop u}\parens{\lm{Y}{j}{}-X_{j\Delta_n}}.
	\end{align*}
	Note the following inequality:
	\begin{align*}
		\norm{\lm{Y}{j}{}}&\le \norm{\lm{X}{j}{}}+\norm{\Lambda_{\star}^{1/2}\lm{\epsilon}{j}{}}\\
		&\le \sup_{s\in[j\Delta_n,(j+1)\Delta_n]}\norm{X_s}+\norm{\Lambda_{\star}^{1/2}}\norm{\lm{\epsilon}{j}{}},\\
		\norm{X_{j\Delta_n}}&\le \sup_{s\in[j\Delta_n,(j+1)\Delta_n]}\norm{X_s}\\
		&\le \sup_{s\in[j\Delta_n,(j+1)\Delta_n]}\norm{X_s}+\norm{\Lambda_{\star}^{1/2}}\norm{\lm{\epsilon}{j}{}}.
	\end{align*}
	Then for all $\vartheta\in\Xi$, Proposition \ref*{pro711}, Proposition \ref*{pro735} and Lemma \ref*{lem739} lead
	\begin{align*}
		\abs{\CE{D_j}{\mathcal{H}_j^n}}
		\le C\Delta_n\parens{1+\norm{X_{j\Delta_n}}^{C}}.
	\end{align*}
	For any $l\ge2$, Taylor's theorem gives the following evaluation:
	\begin{align*}
		\abs{D_j}^{l}&=\abs{\parens{\int_{0}^{1}\partial_xf\parens{X_{j\Delta_n}+u\parens{\lm{Y}{j}{}-X_{j\Delta_n}},\vartheta}\dop u}\parens{\lm{Y}{j}{}-X_{j\Delta_n}}}^l\\
		&\le C(l)\parens{1+\sup_{s\in[j\Delta_n,(j+1)\Delta_n]}\norm{X_s}^{C(l)}
			+\norm{\Lambda_{\star}^{1/2}}^{C(l)}\norm{\lm{\epsilon}{j}{}}^{C(l)}}
		\norm{\lm{Y}{j}{}-X_{j\Delta_n}}^l.
	\end{align*}
	It leads to
	\begin{align*}
		\CE{\abs{D_j}^l}{\mathcal{H}_j^n}\le C(l)\Delta_n^{l/2}\parens{1+\norm{X_{j\Delta_n}}^{C(l)}}
	\end{align*}
	and here we have the proof.
\end{proof}

\begin{proposition}\label{pro737}
	Under (A1) and (AH),
	\begin{align*}
		\lm{Y}{j+1}-\lm{Y}{j}-\Delta_nb(\lm{Y}{j})=a(X_{j\Delta_n})\parens{\zeta_{j+1,n}+\zeta_{j+2,n}'}+e_{j,n}
		+\Lambda_{\star}^{1/2}\parens{\lm{\epsilon}{j+1}-\lm{\epsilon}{j}},
	\end{align*}
	where $e_{j,n}$ is a $\mathcal{H}_{j+2}^n$-measurable random variable such that there exists $C>0$ and $C(l)>0$ for all
	 $l\ge2$ satisfying the inequalities
	\begin{align*}
		&\norm{\CE{e_{j,n}}{\mathcal{H}_j^n}}\le 
		C\Delta_n^2\parens{1+\norm{X_{j\Delta_n}}^{5}},\\
		&\CE{\norm{e_{j,n}}^l}{\mathcal{H}_j^n}\le C(l)\Delta_n^l\parens{1+\norm{X_{j\Delta_n}}^{3l}},\\
		&\norm{\CE{e_{j,n}\parens{\zeta_{j+1,n}}^T}{\mathcal{H}_j^n}}+\norm{\CE{e_{j,n}\parens{\zeta_{j+2,n}'}^T}{\mathcal{H}_j^n}}\le C\Delta_n^2\parens{1+\norm{X_{j\Delta_n}}^{3}}.
	\end{align*}
\end{proposition}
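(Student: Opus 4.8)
The plan is to strip off the observation noise, expand the latent local-mean increment through the dynamics of $X$, and isolate the three genuinely distinct error sources. Writing $\lm{Y}{j}=\lm{X}{j}+\Lambda_{\star}^{1/2}\lm{\epsilon}{j}$, the term $\Lambda_{\star}^{1/2}\parens{\lm{\epsilon}{j+1}-\lm{\epsilon}{j}}$ already matches the statement, so it suffices to analyse $\lm{X}{j+1}-\lm{X}{j}-\Delta_nb(\lm{Y}{j})$. Using $\lm{X}{j+1}-\lm{X}{j}=\frac{1}{p_n}\sum_{i=0}^{p_n-1}\parens{X_{(j+1)\Delta_n+ih_n}-X_{j\Delta_n+ih_n}}$ and the SDE over each window $[j\Delta_n+ih_n,(j+1)\Delta_n+ih_n]$, I would split every increment into a drift integral and a stochastic integral. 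Replacing $a(X_s)$ by $a(X_{j\Delta_n})$ in the diffusion part yields $a(X_{j\Delta_n})\frac{1}{p_n}\sum_i\int_{j\Delta_n+ih_n}^{(j+1)\Delta_n+ih_n}\dop w_s=a(X_{j\Delta_n})\parens{\zeta_{j+1,n}+\zeta_{j+2,n}'}$, using that the two integration windows defining $\zeta_{j+1,n}$ and $\zeta_{j+2,n}'$ join at $(j+1)\Delta_n$ into the single window above; replacing $b(X_s)$ by $b(X_{j\Delta_n})$ in the drift part yields $\Delta_nb(X_{j\Delta_n})$. This forces
\begin{align*}
e_{j,n}=e_{j,n}^{(a)}+e_{j,n}^{(b)}+e_{j,n}^{(c)},
\end{align*}
with $e_{j,n}^{(a)}:=\frac{1}{p_n}\sum_i\int_{j\Delta_n+ih_n}^{(j+1)\Delta_n+ih_n}\parens{a(X_s)-a(X_{j\Delta_n})}\dop w_s$, $e_{j,n}^{(b)}:=\frac{1}{p_n}\sum_i\int_{j\Delta_n+ih_n}^{(j+1)\Delta_n+ih_n}\parens{b(X_s)-b(X_{j\Delta_n})}\dop s$ and $e_{j,n}^{(c)}:=\Delta_n\parens{b(X_{j\Delta_n})-b(\lm{Y}{j})}$. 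Measurability with respect to $\mathcal{H}_{j+2}^n$ is immediate, since every integral ends no later than $(j+2)\Delta_n$ and $\lm{Y}{j}$ is $\mathcal{H}_{j+1}^n$-measurable.

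For the conditional-mean estimate, $e_{j,n}^{(a)}$ is a sum of stochastic integrals over windows starting after $j\Delta_n$, so $\CE{e_{j,n}^{(a)}}{\mathcal{H}_j^n}=\mathbf{0}$ by the martingale property together with independence of the noise. For $e_{j,n}^{(b)}$ I would push the conditional expectation inside the time integral and apply Proposition \ref*{pro713}(i), gaining one factor $\Delta_n$ from the drift estimate and another from the length-$\Delta_n$ integration, hence $C\Delta_n^2\parens{1+\norm{X_{j\Delta_n}}^3}$. For $e_{j,n}^{(c)}$, Corollary \ref*{cor736} with $f=b$ gives $\norm{\CE{b(\lm{Y}{j})-b(X_{j\Delta_n})}{\mathcal{H}_j^n}}\le C\Delta_n\parens{1+\norm{X_{j\Delta_n}}^{5}}$, the power $5$ coming from the quadratic Taylor term (linear $b''$ paired with $\CE{\norm{\lm{Y}{j}-X_{j\Delta_n}}^2}{\mathcal{H}_j^n}$ from Proposition \ref*{pro735}); the extra $\Delta_n$ prefactor then produces $C\Delta_n^2\parens{1+\norm{X_{j\Delta_n}}^{5}}$.

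The $l$-th moment bound follows the same split. For $e_{j,n}^{(a)}$, convexity across the $p_n$ summands, the Burkholder--Davis--Gundy inequality and Proposition \ref*{pro712} applied to $a$ (so that $\CE{\norm{a(X_s)-a(X_{j\Delta_n})}^l}{\mathcal{G}_j^n}\le C(l)\Delta_n^{l/2}(1+\norm{X_{j\Delta_n}}^{C(l)})$ over the length-$\Delta_n$ window) give $\CE{\norm{e_{j,n}^{(a)}}^l}{\mathcal{H}_j^n}\le C(l)\Delta_n^{l}(1+\norm{X_{j\Delta_n}}^{3l})$. The time-integral pieces $e_{j,n}^{(b)}$ and $e_{j,n}^{(c)}$ are in fact of order $\Delta_n^{3l/2}$ (via Proposition \ref*{pro712} and Corollary \ref*{cor736} respectively) and so are absorbed into $C(l)\Delta_n^l(\cdots)$.

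The delicate point is the cross-covariance bound, which must be $O(\Delta_n^2)$ even though $e_{j,n}$ and $\zeta_{j+1,n},\zeta_{j+2,n}'$ are all genuine Wiener functionals over overlapping blocks. For $e_{j,n}^{(b)}$ and $e_{j,n}^{(c)}$ the conditional Cauchy--Schwarz inequality already suffices: $\CE{\norm{e_{j,n}^{(b)}}^2}{\mathcal{H}_j^n}$ and $\CE{\norm{e_{j,n}^{(c)}}^2}{\mathcal{H}_j^n}$ are $O(\Delta_n^3)$, while $\CE{\norm{\zeta_{j+1,n}}^2}{\mathcal{H}_j^n}=O(\Delta_n)$ by Lemma \ref*{lem732}, yielding $O(\Delta_n^2)$. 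The genuine obstacle is $e_{j,n}^{(a)}$, whose second moment is only $O(\Delta_n^2)$, so Cauchy--Schwarz against $\zeta$ would give merely $O(\Delta_n^{3/2})$. To recover the extra half-power I would compute the covariance exactly: by the Ito isometry the cross term reduces to $\frac{1}{p_n^2}\sum_{i,i'}\int_{\mathrm{overlap}}\CE{a(X_s)-a(X_{j\Delta_n})}{\mathcal{G}_j^n}\dop s$ over the intersections of the integration windows, and Proposition \ref*{pro713}(ii) shows the integrand is $O(\Delta_n)\parens{1+\norm{X_{j\Delta_n}}^3}$ in conditional mean, not merely $O(\Delta_n^{1/2})$ in magnitude. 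Combining this with the total overlap mass $O(\Delta_n)$ delivers $C\Delta_n^2\parens{1+\norm{X_{j\Delta_n}}^3}$, and the identical overlap argument on the segments $[(j+1)\Delta_n,(j+1)\Delta_n+ih_n]$ handles $\zeta_{j+2,n}'$. This exact second-moment computation of the stochastic-integral cross term, rather than a crude modulus bound, is the step I expect to be the crux.
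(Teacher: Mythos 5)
Your proof is correct, and it reaches the statement by a genuinely leaner decomposition than the paper's. The paper telescopes $\lm{X}{j+1}-\lm{X}{j}$ into increments over the $h_n$-blocks $I_{j,k}$, $I_{j+1,k}$ and freezes the coefficients at the local gridpoints $X_{j\Delta_n+kh_n}$, which produces eight error terms $r_{j,n}^{(1)},\dots,s_{j,n}^{(4)}$ and relies on the weighted-block representation of $\zeta_{j+1,n}$, $\zeta_{j+2,n}'$ from Lemma \ref{lem732}; you instead keep the length-$\Delta_n$ windows $[j\Delta_n+ih_n,(j+1)\Delta_n+ih_n]$ and freeze both coefficients at $X_{j\Delta_n}$, so only three error terms appear, and your identity $\frac{1}{p_n}\sum_i\int_{j\Delta_n+ih_n}^{(j+1)\Delta_n+ih_n}\dop w_s=\zeta_{j+1,n}+\zeta_{j+2,n}'$ correctly replaces that block representation (your $e_{j,n}^{(a)}$ is exactly the paper's $r^{(2)}+r^{(4)}+s^{(2)}+s^{(4)}$, and $e_{j,n}^{(b)}+e_{j,n}^{(c)}$ regroups $r^{(1)}+r^{(3)}+s^{(1)}+s^{(3)}$). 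The three estimates then rest on the same inputs as in the paper: Proposition \ref{pro713} for conditional means, Corollary \ref{cor736} for the $b(\lm{Y}{j})$ term, BDG plus Proposition \ref{pro712} for the $l$-th moments. On the crux — the cross term with the $\zeta$'s, where Cauchy--Schwarz loses a half power — the two arguments are the same computation in different clothing: the paper conditions block by block, so that off-diagonal blocks vanish by independence of Wiener increments and each diagonal block contributes $h_n$ times $\CE{a(X_{j\Delta_n+kh_n})-a(X_{j\Delta_n})}{\mathcal{H}_j^n}$, which is $O(\Delta_n)$ by Proposition \ref{pro713}; your Itô cross-isometry reduces the covariance to $\int_{\mathrm{overlap}}\CE{a(X_s)-a(X_{j\Delta_n})}{\mathcal{H}_j^n}\dop s$ with overlap mass $O(\Delta_n)$ and integrand $O(\Delta_n)$ by the same proposition. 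Both deliver $O(\Delta_n^2)$ for exactly the reason you isolate — the conditional mean of $a(X_s)-a(X_{j\Delta_n})$ is $O(\Delta_n)$, not $O(\Delta_n^{1/2})$. What each approach buys: your grouping is cleaner and shorter for this proposition; the paper's finer eight-term, block-level bookkeeping (with the constants $m_n$, $m_n'$, $\chi_n$) is reused verbatim in the proof of Theorem \ref{thm751}, which is presumably why it is set up at that granularity here.
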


\begin{proof}
	First of all, note the decomposition
	\begin{align*}
		\lm{Y}{j+1}-\lm{Y}{j}=\lm{X}{j+1}-\lm{X}{j}+\Lambda_{\star}^{1/2}\parens{\lm{\epsilon}{j+1}-\lm{\epsilon}{j}}.
	\end{align*}
	We denote $C_j:=\lm{X}{j+1}-\lm{X}{j}$ and have
	\begin{align*}
		C_j&=\frac{1}{p_n}\sum_{k=0}^{p_n-1}\parens{X_{(j+1)\Delta_n+kh_n}-X_{j\Delta_n+kh_n}}\\
		&=\frac{1}{p_n}\sum_{k=0}^{p_n-1}\parens{X_{(j+1)\Delta_n+kh_n}-X_{(j+1)\Delta_n+(k-1)h_n}+X_{(j+1)\Delta_n+(k-1)h_n}
			-\cdots-X_{j\Delta_n+kh_n}}\\
		&=\frac{1}{p_n}\sum_{k=0}^{p_n-1}\parens{\int_{I_{j,k-1+p_n}}\dop X_s+\cdots+\int_{I_{j,k}}\dop X_s}\\
		&=\frac{1}{p_n}\sum_{k=0}^{p_n-1}\sum_{l=0}^{p_n-1}\int_{I_{j,k+l}}\dop X_s\\
		&=\frac{1}{p_n}\sum_{k=0}^{p_n-1}(k+1)\int_{I_{j,k}}\dop X_s
		+\frac{1}{p_n}\sum_{k=0}^{p_n-1}(p_n-1-k)\int_{I_{j+1,k}}\dop X_s,
	\end{align*}
	and
	\begin{align*}
		\int_{I_{j,k}}\dop X_s&=h_nb(X_{j\Delta_n+kh_n})+\int_{I_{j,k}}\parens{b(X_s)-b(X_{j\Delta_n+kh_n})}\dop s\\
		&\qquad+a(X_{j\Delta_n+kh_n})\int_{I_{j,k}}\dop w_s + \int_{I_{j,k}}\parens{a(X_s)-a(X_{j\Delta_n+kh_n})}\dop w_s.
	\end{align*}
	Using $\Delta_n=(k+1)h_n+(p_n-1-k)h_n$ for all $k$, we have the decomposition
	\begin{align*}
		&\lm{Y}{j+1}-\lm{Y}{j}-\Delta_nb(\lm{Y}{j})\\
		&=C_j-\Delta_nb(\lm{Y}{j})+\Lambda_{\star}^{1/2}\parens{\lm{\epsilon}{j+1}-\lm{\epsilon}{j}}\\
		&=\frac{1}{p_n}\sum_{k=0}^{p_n-1}(k+1)\int_{I_{j,k}}\dop X_s
		+\frac{1}{p_n}\sum_{k=0}^{p_n-1}(p_n-1-k)\int_{I_{j+1,k}}\dop X_s\\
		&\qquad-\frac{1}{p_n}\sum_{k=0}^{p_n-1}\parens{(k+1)h_n+(p_n-1-k)h_n}b(\lm{Y}{j})+\Lambda_{\star}^{1/2}\parens{\lm{\epsilon}{j+1}-\lm{\epsilon}{j}}\\
		&=a(X_{j\Delta_n})\parens{\zeta_{j+1,n}+\zeta_{j+2,n}'}+e_{j\Delta_n}+\Lambda_{\star}^{1/2}\parens{\lm{\epsilon}{j+1}-\lm{\epsilon}{j}},
	\end{align*}
	where $e_{j,n}=\sum_{l=1}^{4}e_{j,n}^{(l)},\ e_{j,n}^{(l)}=r_{j,n}^{(l)}+s_{j,n}^{(l)}$,
	\begin{align*}
		r_{j,n}^{(1)}&:=\frac{1}{p_n}\sum_{k=0}^{p_n-1}(k+1)h_n\parens{b(X_{j\Delta_n+kh_n})-b(\lm{Y}{j})},\\
		r_{j,n}^{(2)}&:=\parens{\frac{1}{p_n}\sum_{k=0}^{p_n-1}(k+1)a(X_{j\Delta_n+kh_n})\int_{I_{j,k}}\dop w_s}
		-a(X_{j\Delta_n})\zeta_{j+1,n},\\
		r_{j,n}^{(3)}&:=\frac{1}{p_n}\sum_{k=0}^{p_n-1}(k+1)\int_{I_{j,k}}\parens{b(X_s)-b(X_{j\Delta_n+kh_n})}\dop s,\\
		r_{j,n}^{(4)}&:=\frac{1}{p_n}\sum_{k=0}^{p_n-1}(k+1)\int_{I_{j,k}}\parens{a(X_s)-a(X_{j\Delta_n+kh_n})}\dop w_s,\\
		s_{j,n}^{(1)}&:=\frac{1}{p_n}\sum_{k=0}^{p_n-1}(p_n-1-k)h_n\parens{b(X_{(j+1)\Delta_n+kh_n})-b(\lm{Y}{j})},\\
		s_{j,n}^{(2)}&:=\parens{\frac{1}{p_n}\sum_{k=0}^{p_n-1}(p_n-1-k)a(X_{(j+1)\Delta_n+kh_n})\int_{I_{j+1,k}}\dop w_s}
		-a(X_{j\Delta_n})\zeta_{j+2,n}',\\
		s_{j,n}^{(3)}&:=\frac{1}{p_n}\sum_{k=0}^{p_n-1}(p_n-1-k)\int_{I_{j+1,k}}\parens{b(X_s)-b(X_{j\Delta_n+kh_n})}\dop s,\\
		s_{j,n}^{(4)}&:=\frac{1}{p_n}\sum_{k=0}^{p_n-1}(p_n-1-k)\int_{I_{j+1,k}}\parens{a(X_s)-a(X_{(j+1)\Delta_n+kh_n})}\dop w_s.
	\end{align*}

	\noindent \textbf{(Step 1):} We evaluate $\norm{\CE{e_{j,n}}{\mathcal{H}_j^n}}$. It is trivial that
	 $\CE{r_{j,n}^{(l)}}{\mathcal{H}_j^n}=\CE{s_{j,n}^{(l)}}{\mathcal{H}_j^n}=\mathbf{0}$ for $l=2,4$.
	 
	 Because the components of $b$, $\partial_xb$ and $\partial_x^2b$ are polynomial growth functions uniformly in $\theta\in\Theta$, Corollary \ref*{cor736} and Proposition \ref*{pro713} verify the evaluation
	 \begin{align*}
	 	\norm{\CE{r_{j,n}^{(1)}}{\mathcal{H}_j^n}}&\le C\Delta_n^2\parens{1+\norm{X_{j\Delta_n}}^{5}}.
	 \end{align*}
	 The next evaluation for $r_{j,n}^{(3)}$ holds because of Proposition \ref*{pro713}:
	 \begin{align*}
	 	\norm{\CE{r_{j,k}^{(3)}}{\mathcal{H}_j^n}}
 		&\le C\Delta_n^2\parens{1+\norm{X_{j\Delta_n}}^3}.
	 \end{align*}
	 Similarly, we can evaluate $s_{j,n}^{(1)}$ such as
	 \begin{align*}
	 	\norm{\CE{s_{j,n}^{(1)}}{\mathcal{H}_j^n}}\le C\Delta_n^2\parens{1+\norm{X_{j\Delta_n}}^{5}}
	 \end{align*}
	 and $s_{j,n}^{(3)}$ such as
	 \begin{align*}
	 	\norm{\CE{s_{j,n}^{(3)}}{\mathcal{H}_j^n}}\le C\Delta_n^2\parens{1+\norm{X_{j\Delta_n}}^3}.
	 \end{align*}
	 In sum up, we have
	 \begin{align*}
	 	\norm{\CE{e_{j,n}}{\mathcal{G}_j^n}}\le C\Delta_n^2\parens{1+\norm{X_{j\Delta_n}}^{5}}.
	 \end{align*}\\
	 
	 \noindent\textbf{(Step 2):} Now we evaluate $\CE{\norm{e_{j,n}}^l}{\mathcal{H}_j^n}$. Corollary \ref*{cor736} and Proposition \ref*{pro712} lead to the evaluations:
	 \begin{align*}
	 	\CE{\norm{r_{j,n}^{(1)}}^l}{\mathcal{H}_j^n}
	 	&\le C(l)\Delta_n^{l+l/2}\parens{1+\norm{X_{j\Delta_n}}^{3l}}
	 	,\\
	 	\CE{\norm{r_{j,n}^{(3)}}^l}{\mathcal{H}_j^n}
	 	&\le C(l)\Delta_n^lh_n^{l/2}\parens{1+\norm{X_{j\Delta_n}}^{2l}},\\
	 	\CE{\norm{s_{j,n}^{(1)}}^l}{\mathcal{H}_j^n}&\le C(l)\Delta_n^{l+l/2}\parens{1+\norm{X_{j\Delta_n}}^{3l}},\\
 		\CE{\norm{s_{j,n}^{(3)}}^l}{\mathcal{H}_j^n}&\le C(l)\Delta_n^{l+l/2}
 		\parens{1+\norm{X_{j\Delta_n}}^{2l}}.
	 \end{align*}
	 Because of Lemma \ref*{lem732}, we have the expression
	 \begin{align*}
	 	r_{j,n}^{(2)}&=\frac{1}{p_n}\sum_{k=0}^{p_n-1}(k+1)\parens{a(X_{j\Delta_n+kh_n})-a(X_{j\Delta_n})}\int_{I_{j,k}}\dop w_s,\\
 		s_{j,n}^{(2)}&=\frac{1}{p_n}\sum_{k=0}^{p_n-1}(p_n-1-k)\parens{a(X_{(j+1)\Delta_n+kh_n})-a(X_{j\Delta_n})}
 		\int_{I_{j+1,k}}\dop w_s,
	 \end{align*}
	 and then when we define
	 \begin{align*}
	 	f_1(s)&:=\frac{1}{p_n}\sum_{k=0}^{p_n-1}(k+1)\parens{a(X_{j\Delta_n+kh_n})-a(X_{j\Delta_n})}\mathbf{1}_{I_{j,k}}(s),\\
	 	f_2(s)&:=\frac{1}{p_n}\sum_{k=0}^{p_n-1}(p_n-1-k)\parens{a(X_{(j+1)\Delta_n+kh_n})-a(X_{j\Delta_n})}\mathbf{1}_{I_{j+1,k}}(s),
	 \end{align*}
	 $r_{j,n}^{(2)}$ and $s_{j,n}^{(2)}$ are the Ito integral of $f_1$ and that of $f_2$ respectively. Hence Proposition \ref*{pro712}, BDG inequality and H\"{o}lder's inequality give
	 \begin{align*}
	 	\CE{\norm{r_{j,n}^{(2)}}^l}{\mathcal{H}_j^n}\le C(l)\Delta_n^{l}\parens{1+\norm{X_{j\Delta_n}}^{2l}},
	 \end{align*}
	 and
	 \begin{align*}
	 	\CE{\norm{s_{j,n}^{(2)}}^l}{\mathcal{H}_j^n}&\le C(l)\Delta_n^{l}\parens{1+\norm{X_{j\Delta_n}}^{2l}}.
	 \end{align*}
	 Identically, when we define
	 \begin{align*}
	 	g_1(s)&:=\frac{1}{p_n}\sum_{k=0}^{p_n-1}(k+1)\parens{a(X_s)-a(X_{j\Delta_n+kh_n})}\mathbf{1}_{I_{j,k}}(s),\\
	 	g_2(s)&:=\frac{1}{p_n}\sum_{k=0}^{p_n-1}(p_n-1-k)\parens{a(X_s)-a(X_{(j+1)\Delta_n+kh_n})}\mathbf{1}_{I_{j+1,k}}(s),
	 \end{align*}
	 then $r_{j,n}^{(4)}$ and $s_{j,k}^{(4)}$ are the Ito integrals with respect to $g_1$ and $g_2$ and
	 \begin{align*}
	 	\CE{\norm{r_{j,n}^{(4)}}^l}{\mathcal{H}_j^n}
 		\le C(l)\Delta_n^{l}\parens{1+\norm{X_{j\Delta_n}}^{2l}},
	 \end{align*}
	 and identically
	 \begin{align*}
	  	\CE{\norm{s_{j,n}^{(4)}}^l}{\mathcal{H}_j^n}
	 	&\le C(l)\Delta_n^{l}\parens{1+\norm{X_{j\Delta_n}}^{2l}}.
	 \end{align*}
	 Then we obtain
	 \begin{align*}
	 	\CE{\norm{e_{j,n}}^l}{\mathcal{H}_j^n}&\le C(l)\Delta_n^l\parens{1+\norm{X_{j\Delta_n}}^{3l}}.
	 \end{align*}\\
	 
	 \noindent \textbf{(Step 3):} For $r_{j,n}^{(1)}\parens{\zeta_{j+1,n}}^T$, H\"{o}lder's inequality leads to
	 \begin{align*}
	 	\norm{\CE{r_{j,n}^{(1)}\parens{\zeta_{j+1,n}}^T}{\mathcal{H}_j^n}}&
	 	\le C\Delta_n^{2}\parens{1+\norm{X_{j\Delta_n}}^{3}},
	 \end{align*}
	 and the same evaluation for $r_{j,n}^{(1)}\parens{\zeta_{j+2,n}'}^T$, $r_{j,n}^{(3)}\parens{\zeta_{j+1,n}}^T$, 
	 $r_{j,n}^{(3)}\parens{\zeta_{j+2,n}'}^T$, $s_{j,n}^{(1)}\parens{\zeta_{j+1,n}}^T$,  
	 $s_{j,n}^{(1)}\parens{\zeta_{j+2,n}'}^T$, $s_{j,n}^{(3)}\parens{\zeta_{j+1,n}}^T$ and 
	 $s_{j,n}^{(3)}\parens{\zeta_{j+2,n}'}^T$ hold. Next, tower property of conditional expectation, independence of increments of
	  the Wiener process and {Proposition \ref*{pro713}} give
	 \begin{align*}
	 	\norm{\CE{r_{j,n}^{(2)}\parens{\zeta_{j+1,n}}^T}{\mathcal{H}_j^n}}
	 	\le C\Delta_n^2\parens{1+\norm{X_{j\Delta_n}}^3}.
	 \end{align*}
	 The same inequality holds for $s_{j,n}^{(2)}\parens{\zeta_{j+2,n}'}^T$. 
	 It is obvious that
	 \begin{align*}
	 	&\CE{r_{j,n}^{(2)}\parens{\zeta_{j+2,n}'}^T}{\mathcal{G}_j^n}=\CE{r_{j,n}^{(4)}\parens{\zeta_{j+2,n}'}^T}{\mathcal{G}_j^n}=\CE{s_{j,n}^{(2)}\parens{\zeta_{j+1,n}}^T}{\mathcal{G}_j^n}=\CE{s_{j,n}^{(4)}\parens{\zeta_{j+1,n}}^T}{\mathcal{G}_j^n}\\
	 	&=\mathbf{0}
	 \end{align*}
	 because of independence of increments of the Wiener process. Finally with the same argument as $\norm{\CE{r_{j,n}^{(2)}\parens{\zeta_{j+1,n}}^T}{\mathcal{H}_j^n}}$, we have
	 \begin{align*}
	 \norm{\CE{r_{j,n}^{(4)}\parens{\zeta_{j+1,n}}^T}{\mathcal{H}_j^n}}
	 \le C\Delta_n^2\parens{1+\norm{X_{j\Delta_n}}^2}
	 \end{align*} 
	 because $h_n^{1/2}=\Delta_n/\parens{h_n^{1/2}p_n}=\Delta_n/p_n^{1-\tau/2}\le C\Delta_n$. The same evaluation for 
	 $s_{j,n}^{(4)}\parens{\zeta_{j+2,n}'}^T$. Hence we obtain the result.
\end{proof}

\begin{corollary}\label{cor738}
	Under (A1) and (AH),
	\begin{align*}
	\lm{Y}{j+1}-\lm{Y}{j}-\Delta_nb(X_{j\Delta_n})=a(X_{j\Delta_n})\parens{\zeta_{j+1,n}+\zeta_{j+2,n}'}+e_{j,n}
	+\Lambda_{\star}^{1/2}\parens{\lm{\epsilon}{j+1}-\lm{\epsilon}{j}},
	\end{align*}
	where $e_{j,n}$ is a $\mathcal{H}_{j+2}^n$-measurable random variable such that there exists $C>0$ and $C(l)>0$ for all
	$l\ge2$ satisfying the inequalities
	\begin{align*}
	&\norm{\CE{e_{j,n}}{\mathcal{H}_j^n}}\le 
	C\Delta_n^2\parens{1+\norm{X_{j\Delta_n}}^{5}},\\
	&\CE{\norm{e_{j,n}}^l}{\mathcal{H}_j^n}\le C(l)\Delta_n^l\parens{1+\norm{X_{j\Delta_n}}^{3l}},\\
	&\norm{\CE{e_{j,n}\parens{\zeta_{j+1,n}}^T}{\mathcal{H}_j^n}}+\norm{\CE{e_{j,n}\parens{\zeta_{j+2,n}'}^T}{\mathcal{H}_j^n}}\le C\Delta_n^2\parens{1+\norm{X_{j\Delta_n}}^{3}}.
	\end{align*}
\end{corollary}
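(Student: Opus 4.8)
The plan is to obtain the corollary directly from Proposition \ref{pro737}, since the two statements differ only in that the subtracted drift is evaluated at $X_{j\Delta_n}$ rather than at $\lm{Y}{j}{}$. Denoting by $e_{j,n}^{\mathrm{P}}$ the remainder furnished by Proposition \ref{pro737}, subtraction of the two left-hand sides yields
\begin{align*}
\parens{\lm{Y}{j+1}-\lm{Y}{j}-\Delta_nb(X_{j\Delta_n})}-\parens{\lm{Y}{j+1}-\lm{Y}{j}-\Delta_nb(\lm{Y}{j}{})}=\Delta_n\parens{b(\lm{Y}{j}{})-b(X_{j\Delta_n})}.
\end{align*}
As the Gaussian term $a(X_{j\Delta_n})\parens{\zeta_{j+1,n}+\zeta_{j+2,n}'}$ and the noise term are unchanged, it suffices to set $e_{j,n}:=e_{j,n}^{\mathrm{P}}+T_{j}$ with $T_{j}:=\Delta_n\parens{b(\lm{Y}{j}{})-b(X_{j\Delta_n})}$ and to check that $T_j$ obeys each of the three bounds; the conclusion then follows by the triangle inequality, and $e_{j,n}$ inherits $\mathcal{H}_{j+2}^n$-measurability because $T_j$ is $\mathcal{H}_{j+1}^n$-measurable.

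For the first two estimates I would apply Corollary \ref{cor736} with $f=b$, which is legitimate because (A1) guarantees that the components of $b$, $\partial_x b$ and $\partial_x^2 b$ are polynomial growth functions. Its conditional-mean bound gives $\norm{\CE{b(\lm{Y}{j}{})-b(X_{j\Delta_n})}{\mathcal{H}_j^n}}\le C\Delta_n\parens{1+\norm{X_{j\Delta_n}}^{C}}$, whence $\norm{\CE{T_j}{\mathcal{H}_j^n}}\le C\Delta_n^2\parens{1+\norm{X_{j\Delta_n}}^{C}}$, of the required form. Its $L^l$ bound gives $\CE{\norm{b(\lm{Y}{j}{})-b(X_{j\Delta_n})}^l}{\mathcal{H}_j^n}\le C(l)\Delta_n^{l/2}\parens{1+\norm{X_{j\Delta_n}}^{C(l)}}$, so that $\CE{\norm{T_j}^l}{\mathcal{H}_j^n}\le C(l)\Delta_n^{3l/2}\parens{1+\norm{X_{j\Delta_n}}^{C(l)}}$; since $\Delta_n\to0$ forces $\Delta_n^{3l/2}\le\Delta_n^{l}$, this is dominated by the advertised bound $C(l)\Delta_n^{l}\parens{1+\norm{X_{j\Delta_n}}^{3l}}$.

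The cross terms I would handle by a componentwise conditional Cauchy--Schwarz inequality, giving $\norm{\CE{T_j\parens{\zeta_{j+1,n}}^T}{\mathcal{H}_j^n}}\le\parens{\CE{\norm{T_j}^2}{\mathcal{H}_j^n}}^{1/2}\parens{\CE{\norm{\zeta_{j+1,n}}^2}{\mathcal{H}_j^n}}^{1/2}$. The first factor is at most $C\Delta_n^{3/2}\parens{1+\norm{X_{j\Delta_n}}^{C}}$ by the $l=2$ instance of the previous step, while the second is at most $C\Delta_n^{1/2}$: since $\zeta_{j+1,n}$ is a functional of the Wiener increments alone and hence independent of $\mathcal{A}_j^n$, Lemma \ref{lem732} yields $\CE{\zeta_{j+1,n}\parens{\zeta_{j+1,n}}^T}{\mathcal{H}_j^n}=m_n\Delta_nI_r$ with $m_n$ bounded. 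The product is $C\Delta_n^2\parens{1+\norm{X_{j\Delta_n}}^{C}}$, and the same reasoning covers $\zeta_{j+2,n}'$. The only point requiring care is the bookkeeping of the polynomial-growth exponents, so that the sums $e_{j,n}^{\mathrm{P}}+T_j$ remain within the exponents $5$, $3l$ and $3$ recorded in the statement; here I would lean on the linear growth of $b$ and its first two derivatives imposed by (A1) together with the generic-constant convention, rather than tracking the exponents precisely.
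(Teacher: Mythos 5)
Your proposal is correct and follows essentially the same route as the paper: reduce to showing that $T_j=\Delta_n\parens{b(\lm{Y}{j})-b(X_{j\Delta_n})}$ itself satisfies the three bounds, obtain the first two from Corollary \ref{cor736} applied to $b$ (with the $\Delta_n^{3l/2}\le\Delta_n^l$ absorption), and handle the cross terms with $\zeta_{j+1,n}$ and $\zeta_{j+2,n}'$ by conditional Cauchy--Schwarz, which is exactly the H\"older argument the paper invokes. The measurability and exponent-bookkeeping remarks match the paper's generic-constant conventions, so nothing further is needed.
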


\begin{proof}
	It is enough to see $\Delta_nb(\lm{Y}{j})-\Delta_nb(X_{j\Delta_n})$ satisfies the evaluation for $e_{j,n}$. Corollary \ref*{cor736} and
	Proposition \ref*{pro737} give
	\begin{align*}
		&\norm{\CE{\Delta_nb(\lm{Y}{j})-\Delta_nb(X_{j\Delta_n})}{\mathcal{H}_j^n}}\le 
		C\Delta_n^2\parens{1+\norm{X_{j\Delta_n}}^{5}},\\
		&\CE{\norm{\Delta_nb(\lm{Y}{j})-\Delta_nb(X_{j\Delta_n})}^l}{\mathcal{H}_j^n}
		\le C(l)\Delta_n^l\parens{1+\norm{X_{j\Delta_n}}^{3l}}.
	\end{align*}
	With respect to the third evaluation, H\"{o}lder's inequality verifies the result.
\end{proof}

The following lemma summarises some useful evaluations for computation.
\begin{lemma}\label{lem739}
	Assume $f$ is a function whose components are in $\mathcal{C}^2(\Re^d\times\Xi)$ and the components of $f$ and $\partial_xf$ are polynomial growth functions in $\vartheta\in\Xi$. In addition, $g$ denotes a function whose components are in $\mathcal{C}^2(\Re^d)$ and that the components of $g$ and $\partial_xg$ are polynomial growth functions.  Under (A1), (A3), (A4) and (AH), the following uniform evaluation holds:
	\begin{align*}
	\mathrm{(i) }&^\forall l_1,l_2\in\mathbf{N}_0,\ 
	\sup_{j,n}\E{\sup_{\vartheta\in\Xi}\norm{f(\lm{Y}{j-1},\vartheta)}^{l_1}\parens{1+\norm{X_{j\Delta_n}}}^{l_2}}\le C(l_1,l_2),\\
	\mathrm{(ii) }&^\forall l\in \mathbf{N},\ ^\forall j\le k_n-2,\ \E{\norm{\zeta_{j+1,n}+\zeta_{j+2,n}'}^{l}}\le C(l)\Delta_n^{l/2},\\
	\mathrm{(iii) }&^\forall l\in\mathbf{N},\ ^\forall j\le k_n-1,\ \E{\norm{g(X_{(j+1)\Delta_n})-g(X_{j\Delta_n})}^{l}}\le C(l)\Delta_n^{l/2},\\
	\mathrm{(iv) }&^\forall l\in\mathbf{N},\ ^\forall j\le k_n-1,\ \E{\norm{g(\lm{Y}{j})-g(X_{j\Delta_n})}^{l}}\le C(l)\Delta_n^{l/2},\\
	\mathrm{(v) }&^\forall l\in\mathbf{N},\ ^\forall j\le k_n-2,\  \E{\norm{g(\lm{Y}{j+1})-g(\lm{Y}{j})}^{l}}\le C(l)\Delta_n^{l/2},\\
	\mathrm{(vi) }&^\forall l\in\mathbf{N},\ ^\forall j\le k_n-2,\ \E{\norm{e_{j,n}}^l}\le C(l)\Delta_n^l,\\
	\mathrm{(vii) }&^\forall l\in\mathbf{N},\ 
	\sup_{j,n}\parens{\frac{\mathbf{E}\left[\norm{\lm{\epsilon}{j}}^l\right]}{\Delta_n^{l/2}}}\le C(l).
	\end{align*}
\end{lemma}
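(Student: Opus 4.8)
The plan is to deduce each of the seven estimates from bounds already in hand, the governing principle being the tower property combined with the uniform-in-time moment bound (A3). Whenever a conditional estimate of the form $\CE{\norm{\cdot}^{l}}{\mathcal{H}_j^n}\le C(l)\Delta_n^{s}\parens{1+\norm{X_{j\Delta_n}}^{C(l)}}$ has been established, taking total expectations and invoking (A3), which guarantees $\sup_{t\ge0}\E{\norm{X_t}^{k}}<\infty$ for every $k$, absorbs the polynomial factor into a constant and yields a bound that is uniform in $j$ and $n$. Since (vii) is already used inside the proofs of Proposition \ref{pro735} and Corollary \ref{cor736}, I would prove it first, directly from (A4) and (AH), so that the argument stays free of circular reasoning; (ii) and (iii) are likewise self-contained and can be placed early.

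For (vii), write $\lm{\epsilon}{j}=\frac{1}{p_n}\sum_{i=0}^{p_n-1}\epsilon_{j\Delta_n+ih_n}$; by (A4) the summands are i.i.d., centred and have all moments, so Rosenthal's inequality gives $\E{\norm{\lm{\epsilon}{j}}^{l}}\le C(l)p_n^{-l/2}$. The sampling design (AH), $h_n=p_n^{-\tau}$ with $\tau\in(1,2]$, gives $\Delta_n=p_nh_n=p_n^{1-\tau}$, whence $p_n^{-l/2}/\Delta_n^{l/2}=p_n^{(\tau-2)l/2}\le1$ precisely because $\tau\le2$; this is exactly where the right endpoint of the range of $\tau$ is needed. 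For (i), polynomial growth of $f$ uniformly in $\vartheta$ yields $\sup_{\vartheta\in\Xi}\norm{f(\lm{Y}{j-1},\vartheta)}\le C\parens{1+\norm{\lm{Y}{j-1}}}^{C}$, and $\norm{\lm{Y}{j-1}}\le\sup_{s\in[(j-1)\Delta_n,j\Delta_n]}\norm{X_s}+\norm{\Lambda_{\star}^{1/2}}\norm{\lm{\epsilon}{j-1}}$; one application of Hölder's inequality separates the $\lm{Y}{j-1}$ and $X_{j\Delta_n}$ factors, after which Proposition \ref{pro711} and (A3) control the $X$-part and (vii) (with $\Delta_n\le C$) the noise part. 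For (ii), Lemma \ref{lem732} shows that $\zeta_{j+1,n}+\zeta_{j+2,n}'$ is conditionally Gaussian with covariance of order $\Delta_n$, so its $l$-th moment is $O(\Delta_n^{l/2})$; for (iii), Proposition \ref{pro712} applied to $g$ gives $\CE{\norm{g(X_{(j+1)\Delta_n})-g(X_{j\Delta_n})}^{l}}{\mathcal{G}_j^n}\le C(l)\Delta_n^{l/2}\parens{1+\norm{X_{j\Delta_n}}^{C(l)}}$, and (A3) removes the $X$-factor.

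The last three items rest on the local-mean expansions. For (iv) a first-order Taylor expansion gives $\norm{g(\lm{Y}{j})-g(X_{j\Delta_n})}\le C\parens{1+\sup_{s\in[j\Delta_n,(j+1)\Delta_n]}\norm{X_s}+\norm{\lm{\epsilon}{j}}}^{C}\norm{\lm{Y}{j}-X_{j\Delta_n}}$, and Proposition \ref{pro735}'s moment bound $\CE{\norm{\lm{Y}{j}-X_{j\Delta_n}}^{l}}{\mathcal{H}_j^n}\le C(l)\Delta_n^{l/2}\parens{1+\norm{X_{j\Delta_n}}^{2l}}$ together with Hölder's inequality, (A3) and (vii) closes it (note that only $\partial_xg$, hence the stated regularity of $g$, is used). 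Item (v) follows by inserting the telescoping middle term and using convexity of $\norm{\cdot}^{l}$, $g(\lm{Y}{j+1})-g(\lm{Y}{j})=\bigl(g(\lm{Y}{j+1})-g(X_{(j+1)\Delta_n})\bigr)+\bigl(g(X_{(j+1)\Delta_n})-g(X_{j\Delta_n})\bigr)+\bigl(g(X_{j\Delta_n})-g(\lm{Y}{j})\bigr)$, each summand being controlled by (iii) or (iv). Item (vi) is the total-expectation version of the second estimate of Proposition \ref{pro737} (equivalently Corollary \ref{cor738}), namely $\CE{\norm{e_{j,n}}^{l}}{\mathcal{H}_j^n}\le C(l)\Delta_n^{l}\parens{1+\norm{X_{j\Delta_n}}^{3l}}$, again closed off by (A3).

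I expect that the only genuine subtlety, as opposed to a computational difficulty, is the bookkeeping of the dependency order, since several statements here are used to prove the very propositions they now rely on. Concretely, (vii) must be obtained directly from (A4) and (AH) so that Proposition \ref{pro735} and Corollary \ref{cor736} may legitimately invoke it; only afterwards may (iv), (v), (vi) appeal to those results. Isolating (vii), together with the self-contained (ii) and (iii), at the front is what keeps the chain acyclic, and everything beyond that is a routine combination of Hölder's inequality, convexity of $\norm{\cdot}^{l}$, the conditional estimates listed above, and the uniform moment bound (A3).
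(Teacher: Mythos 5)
Your proof is correct and is, in substance, the proof the paper intends: the paper's own proof of this lemma consists of the single sentence ``Simple computations and the results above lead to the proof,'' and your argument---taking total expectations of the conditional bounds in Propositions \ref{pro711}, \ref{pro712}, \ref{pro735}, \ref{pro737} and Lemma \ref{lem732}, then absorbing the polynomial factors in $X_{j\Delta_n}$ via (A3), with H\"{o}lder's inequality and convexity of $\norm{\cdot}^l$ handling the cross terms---is exactly those computations made explicit. Your one genuine addition, proving (vii) first and directly from (A4)/(AH) (Rosenthal's inequality plus $p_n^{-l/2}/\Delta_n^{l/2}=p_n^{(\tau-2)l/2}\le 1$, which is precisely where $\tau\le 2$ enters) so that Proposition \ref{pro735} and Corollary \ref{cor736} may legitimately cite it before the remaining items (iv)--(vi) are derived from them, is the correct way to keep the dependency chain acyclic, a bookkeeping point the paper leaves entirely implicit.
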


\begin{proof}Simple computations and the results above lead to the proof.
\end{proof}

\subsection{Uniform law of large numbers}

The following propositions and theorems are multidimensional version of \citep{Fa14}.

\begin{proposition}\label{pro741} Assume $f$ is a function in $\mathcal{C}^2(\Re^d\times\Xi)$ and $f$, the components of $\partial_xf$, $\partial_x^2f$ and $\partial_{\vartheta} f$ are polynomial growth functions uniformly in $\vartheta\in\Xi$. Under (A1)-(A4), (AH),
	\begin{align*}
		\bar{M}_n(f(\cdot,\vartheta)) \cp \nu_0(f(\cdot,\vartheta))
		\text{ uniformly in }\vartheta.
	\end{align*}
\end{proposition}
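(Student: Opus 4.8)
The plan is to separate the averaged functional into a part built from the latent states and a noise-correction part, dispose of the first by the law of large numbers already available for the latent chain, and prove that the second is uniformly negligible. Concretely, I would write
\begin{align*}
\bar{M}_n(f(\cdot,\vartheta))=\frac{1}{k_n}\sum_{j=0}^{k_n-1}f(X_{j\Delta_n},\vartheta)+\frac{1}{k_n}\sum_{j=0}^{k_n-1}\parens{f(\lm{Y}{j},\vartheta)-f(X_{j\Delta_n},\vartheta)}=:A_n(\vartheta)+B_n(\vartheta).
\end{align*}
The conditions imposed on $f$ --- that $f$ and the components of $\partial_x f$ and $\partial_\vartheta f$ are polynomial growth functions uniformly in $\vartheta$ --- are precisely the hypotheses of Lemma \ref{lem721}, and (A1)--(A3) are in force, so $A_n(\vartheta)\cp\nu_0(f(\cdot,\vartheta))$ uniformly in $\vartheta$.

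It then remains to show $\sup_{\vartheta\in\Xi}\abs{B_n(\vartheta)}\cp0$, and I would do so by proving the stronger $\E{\sup_{\vartheta\in\Xi}\abs{B_n(\vartheta)}}\to0$ and appealing to Markov's inequality. The decisive point is that the supremum over $\vartheta$ may be taken inside the expectation essentially for free, because the polynomial-growth bound on $\partial_x f$ is uniform in $\vartheta$. By Taylor's theorem,
\begin{align*}
f(\lm{Y}{j},\vartheta)-f(X_{j\Delta_n},\vartheta)=\parens{\int_{0}^{1}\partial_x f\parens{X_{j\Delta_n}+u\parens{\lm{Y}{j}-X_{j\Delta_n}},\vartheta}\dop u}\parens{\lm{Y}{j}-X_{j\Delta_n}},
\end{align*}
and, since the argument of $\partial_x f$ has norm at most $\norm{X_{j\Delta_n}}+\norm{\lm{Y}{j}-X_{j\Delta_n}}$, the uniform polynomial growth of $\partial_x f$ gives
\begin{align*}
\sup_{\vartheta\in\Xi}\abs{f(\lm{Y}{j},\vartheta)-f(X_{j\Delta_n},\vartheta)}\le C\parens{1+\norm{X_{j\Delta_n}}^{C}+\norm{\lm{Y}{j}-X_{j\Delta_n}}^{C}}\norm{\lm{Y}{j}-X_{j\Delta_n}}.
\end{align*}

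Cauchy--Schwarz combined with the conditional moment bound $\CE{\norm{\lm{Y}{j}-X_{j\Delta_n}}^l}{\mathcal{H}_j^n}\le C(l)\Delta_n^{l/2}\parens{1+\norm{X_{j\Delta_n}}^{2l}}$ from Proposition \ref{pro735} and the uniform moment estimate (A3) then yield $\E{\sup_{\vartheta\in\Xi}\abs{f(\lm{Y}{j},\vartheta)-f(X_{j\Delta_n},\vartheta)}}\le C\Delta_n^{1/2}$ uniformly in $j$; averaging over $j$ preserves this bound, so $\E{\sup_{\vartheta\in\Xi}\abs{B_n(\vartheta)}}\le C\Delta_n^{1/2}\to0$ by (AH). The main obstacle is the uniformity in $\vartheta$: for $A_n$ it is inherited from Lemma \ref{lem721} (which itself rests on the control of $\partial_\vartheta f$), while for $B_n$ the crux is exactly that the uniform-in-$\vartheta$ polynomial growth of $\partial_x f$ collapses uniform negligibility to the single pointwise $L^1$ estimate above, after which the moment bookkeeping through Proposition \ref{pro735} and (A3) is routine.
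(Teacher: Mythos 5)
Your proposal is correct and follows essentially the same route as the paper's proof: the same decomposition into the latent-state average (handled by Lemma \ref{lem721}) plus a correction term, the same Taylor-expansion bound with the uniform-in-$\vartheta$ polynomial growth of $\partial_x f$, and the same $L^1$ estimate of order $\Delta_n^{1/2}$ via Proposition \ref{pro735} and (A3). The only cosmetic difference is that you bound the intermediate point by $\norm{X_{j\Delta_n}}+\norm{\lm{Y}{j}-X_{j\Delta_n}}$ where the paper uses $\sup_{s\in[j\Delta_n,(j+1)\Delta_n]}\norm{X_s}+\norm{\Lambda_{\star}^{1/2}}\norm{\lm{\epsilon}{j}}$; both work with the same moment bookkeeping.
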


\begin{proof}
	Lemma \ref*{lem721} leads to
	\begin{align*}
		\sup_{\vartheta\in\Xi}\abs{\frac{1}{k_n}\sum_{j=0}^{k_n-1}f(X_{j\Delta_n},\vartheta)-\nu_0(f(\cdot,\vartheta))}\cp 0.
	\end{align*}
	Hence it is enough to see $L^1$ convergence to $0$ of the following random variable
	\begin{align*}
		\sup_{\vartheta\in\Xi}\abs{\frac{1}{k_n}\sum_{j=0}^{k_n-1}f(\lm{Y}{j},\vartheta)-f(X_{j\Delta_n},\vartheta)}
	\end{align*}
	since it will lead to
	\begin{align*}
		\sup_{\vartheta\in\Xi}\abs{\frac{1}{k_n}\sum_{j=0}^{k_n-1}f(\lm{Y}{j},\vartheta)-\nu_0(f(\cdot,\vartheta))}& \le
		 \sup_{\vartheta\in\Xi}\abs{\frac{1}{k_n}\sum_{j=0}^{k_n-1}\parens{f(\lm{Y}{j},\vartheta)-f(X_{j\Delta_n},\vartheta)}}\\
		 &\qquad+\sup_{\vartheta\in\Xi}\abs{\frac{1}{k_n}\sum_{j=0}^{k_n-1}f(X_{j\Delta_n},\vartheta)-\nu_0(f(\cdot,\vartheta))}\\
		&\cp 0.
	\end{align*}
	We can use Taylor's theorem
	\begin{align*}
		A_j&:=\sup_{\vartheta\in\Xi}\abs{f(\lm{Y}{j},\vartheta)-f(X_{j\Delta_n},\vartheta)}\\
		&= \sup_{\vartheta\in\Xi}\abs{\parens{\int_{0}^{1}\partial_xf(X_{j\Delta_n}+u\parens{\lm{Y}{j}-X_{j\Delta_n}},\vartheta)\dop u}
			\parens{\lm{Y}{j}-X_{j\Delta_n}}}\\
		&\le C\parens{1+\sup_{s\in[j\Delta_n,(j+1)\Delta_n]}\norm{X_{s}}^C
			+\norm{\Lambda_{\star}^{1/2}}^C\norm{\lm{\epsilon}{j}}^C}
		\norm{\lm{Y}{j}-X_{j\Delta_n}}.
	\end{align*}
	Therefore because of (A4), H\"{o}lder's inequality, Proposition \ref*{pro735} and Lemma \ref*{lem739}, the next evaluation holds:
	\begin{align*}
		\CE{A_j}{\mathcal{H}_j^n}
		\le C\Delta_n^{1/2}\parens{1+\norm{X_{j\Delta_n}}^{C}}.
	\end{align*}
	(A3) leads to
	\begin{align*}
		\sup_{j}\E{A_j}\le C\Delta_n^{1/2};
	\end{align*}
	therefore, we obtain
	\begin{align*}
		\E{\sup_{\vartheta\in\Xi}\abs{\frac{1}{k_n}\sum_{j=0}^{k_n-1}\parens{f(\lm{Y}{j},\vartheta)-f(X_{j\Delta_n},\vartheta)}}}
		\to 0.
	\end{align*}
\end{proof}

\begin{theorem}\label{thm742}
	Assume $f=\parens{f^1,\cdots,f^d}$ is a function in $\mathcal{C}^2(\Re^d\times\Xi; \Re^d)$ and the components of $f$, $\partial_xf$, $\partial_x^2f$ and $\partial_{\vartheta} f$ are polynomial growth functions uniformly in $\vartheta\in\Xi$. Under (A1)-(A4), (AH),
	\begin{align*}
		\bar{D}_n(f(\cdot,\vartheta)) \cp 0\text{ uniformly in }\vartheta.
	\end{align*}
\end{theorem}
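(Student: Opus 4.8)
The plan is to substitute the increment decomposition of Corollary~\ref{cor738} into $\bar{D}_n$, split off the conditionally centred (martingale-type) contributions from the genuinely small (drift-type) ones, bound the former in $L^2$ using the near-orthogonality that comes from the block structure and the latter in $L^1$, and finally upgrade pointwise convergence to uniform convergence via a Sobolev inequality on $\Xi$. Concretely, since $\bar D_n$ uses $b(\lm{Y}{j-1})$ I would first write, for every $j$,
\begin{align*}
\lm{Y}{j+1}-\lm{Y}{j}-\Delta_n b(\lm{Y}{j-1})
&=a(X_{j\Delta_n})\parens{\zeta_{j+1,n}+\zeta_{j+2,n}'}+e_{j,n}+\Lambda_{\star}^{1/2}\parens{\lm{\epsilon}{j+1}-\lm{\epsilon}{j}}\\
&\qquad+\Delta_n\parens{b(X_{j\Delta_n})-b(\lm{Y}{j-1})},
\end{align*}
multiply by $f(\lm{Y}{j-1},\vartheta)$ and divide by $k_n\Delta_n$, producing four sums $T_1,\dots,T_4$. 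The key structural fact is that $f(\lm{Y}{j-1},\vartheta)$ and $a(X_{j\Delta_n})$ are $\mathcal{H}_j^n$-measurable, whereas $\zeta_{j+1,n}+\zeta_{j+2,n}'$ and $\lm{\epsilon}{j+1}-\lm{\epsilon}{j}$ are centred given $\mathcal{H}_j^n$ (Lemma~\ref{lem732} together with the independence of the noise) and $\mathcal{H}_{j+2}^n$-measurable.

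For the two martingale-type sums $T_1$ (with the $\zeta$'s) and $T_2$ (with the $\lm{\epsilon}$'s) I would compute the second moment of the corresponding sums directly. Each summand has zero $\mathcal{H}_j^n$-conditional mean and is $\mathcal{H}_{j+2}^n$-measurable, so the expectation of a product of two summands vanishes whenever their indices differ by at least two; only the diagonal and the lag-one terms survive. By Lemma~\ref{lem732} one has $\CE{\parens{\zeta_{j+1,n}+\zeta_{j+2,n}'}\parens{\zeta_{j+1,n}+\zeta_{j+2,n}'}^T}{\mathcal{G}_j^n}=\Delta_n I_r$, and by Lemma~\ref{lem739} both $\E{\|\lm{\epsilon}{j}\|^2}$ and the relevant moments of $f$ are controlled, so each surviving term is $O(\Delta_n)$. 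Hence $\E{\norm{\sum_j(\cdot)}^2}\le Ck_n\Delta_n$, and dividing by $(k_n\Delta_n)^2$ gives a bound $C/(k_n\Delta_n)=C/(nh_n)\to0$; thus $T_1,T_2\to0$ in $L^2$.

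The drift discrepancy $T_4$ I would dispatch in $L^1$: writing $b(X_{j\Delta_n})-b(\lm{Y}{j-1})$ in terms of $X_{j\Delta_n}-X_{(j-1)\Delta_n}$ and $\lm{Y}{j-1}-X_{(j-1)\Delta_n}$, both of size $O(\Delta_n^{1/2})$ by Lemma~\ref{lem739} and Proposition~\ref{pro735}, gives $\E{\abs{T_4}}\le C\Delta_n^{1/2}\to0$. The remainder $T_3$ is the delicate one, because a naive $L^1$ estimate using $\E{\norm{e_{j,n}}}\le C\Delta_n$ only yields $O(1)$. The remedy is to decompose $e_{j,n}=\CE{e_{j,n}}{\mathcal{H}_j^n}+\tilde e_{j,n}$: the conditional mean is $O(\Delta_n^2)$ by Corollary~\ref{cor738}, so its contribution is $O(\Delta_n)$ in $L^1$, while $\tilde e_{j,n}$ is conditionally centred, $\mathcal{H}_{j+2}^n$-measurable, with $\E{\norm{\tilde e_{j,n}}^2}\le C\Delta_n^2$, so the same orthogonality argument as for $T_1,T_2$ gives an $L^2$ bound of order $1/k_n$. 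Hence $T_3\to0$ and $\bar D_n(f(\cdot,\vartheta))\cp0$ pointwise in $\vartheta$.

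Finally, for uniformity I would observe $\partial_\vartheta\bar D_n(f(\cdot,\vartheta))=\bar D_n(\partial_\vartheta f(\cdot,\vartheta))$, where $\partial_\vartheta f$ obeys the same polynomial-growth hypotheses, so the preceding estimates — strengthened to a higher even order $m>\dim\Xi$ via a Rosenthal/Burkholder inequality applied to the one-dependent centred arrays — hold simultaneously for $\bar D_n(f)$ and $\bar D_n(\partial_\vartheta f)$; a Sobolev inequality over the compact convex $\Xi$ then turns the pointwise $L^m$-bounds into $\E{\sup_{\vartheta}\abs{\bar D_n(f(\cdot,\vartheta))}^m}\to0$, which yields the uniform convergence. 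The main obstacle is exactly the remainder $T_3$: the centering bound $\norm{\CE{e_{j,n}}{\mathcal{H}_j^n}}\le C\Delta_n^2(1+\norm{X_{j\Delta_n}}^5)$ of Corollary~\ref{cor738} is indispensable, and throughout one must track the single-block overlap carefully so that cross-covariances really do vanish beyond lag one and so that the higher-moment bounds needed for the Sobolev step retain the variance-type scaling $(k_n\Delta_n)^{m/2}$.
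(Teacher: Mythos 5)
Your proposal is correct, and it reaches the conclusion by a partly different route than the paper. The decomposition itself is essentially the paper's: you substitute Corollary \ref{cor738} (the $b(X_{j\Delta_n})$ version) and carry the drift discrepancy $\Delta_n\parens{b(X_{j\Delta_n})-b(\lm{Y}{j-1})}$ as a separate term, where the paper uses Proposition \ref{pro737} (the $b(\lm{Y}{j})$ version) and carries $\frac{1}{k_n}\sum_j f(\lm{Y}{j-1},\vartheta)\parens{b(\lm{Y}{j})-b(\lm{Y}{j-1})}$; these are interchangeable. The genuine differences are in the two convergence mechanisms. For pointwise convergence the paper splits the sum into the three arithmetic subsequences $j\equiv 0,1,2 \pmod 3$, so that each subsequence is a martingale-type array, and invokes Lemma 9 of Genon-Catalot and Jacod (conditional mean sum and conditional variance sum both tend to $0$); you instead compute unconditional second moments directly, using that summands are $\mathcal{H}_{j+2}^n$-measurable and $\mathcal{H}_j^n$-centred so that cross terms vanish beyond lag one and the surviving lag-one terms are controlled by Cauchy--Schwarz. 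Your explicit splitting $e_{j,n}=\CE{e_{j,n}}{\mathcal{H}_j^n}+\tilde e_{j,n}$ is precisely the centering that Lemma 9 performs implicitly in the paper, and you correctly identified that the bound $\norm{\CE{e_{j,n}}{\mathcal{H}_j^n}}\le C\Delta_n^2\parens{1+\norm{X_{j\Delta_n}}^{5}}$ is the indispensable ingredient, since a naive $L^1$ estimate on $e_{j,n}$ only gives $O(1)$. For uniformity the paper uses two separate devices --- the Ibragimov--Has'minskii moment/H\"{o}lder criterion with $\kappa>\dim\Xi$ (via Burkholder on the subsequences) for the martingale parts, and the derivative-boundedness criterion of \citep{K97}/\citep{Gl06} for the remainder part --- whereas you use a single Sobolev-embedding argument on the compact convex $\Xi$, applied simultaneously to $\bar D_n(f)$ and $\partial_\vartheta\bar D_n(f)=\bar D_n(\partial_\vartheta f)$, with $L^m$-bounds, $m>\dim\Xi$, obtained by Rosenthal/Burkholder after an even/odd (or mod-3) split to restore the martingale structure. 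Your route is more unified and self-contained (one criterion handles all four terms, and Chebyshev replaces the external lemma), at the cost of requiring the higher-moment bookkeeping for every term including the remainder; the paper's route localises the higher-moment work to the two martingale terms and dispatches the remainder with only a first-derivative bound in $\vartheta$. Both sets of estimates close under the stated hypotheses, since all moments used ($\E{\norm{\zeta_{j+1,n}+\zeta_{j+2,n}'}^{l}}\le C\Delta_n^{l/2}$, $\E{\norm{\lm{\epsilon}{j}}^{l}}\le C\Delta_n^{l/2}$, $\E{\norm{e_{j,n}}^l}\le C\Delta_n^l$) are supplied by Lemma \ref{lem739}, and $k_n\Delta_n=nh_n\to\infty$ makes your variance bounds vanish.
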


\begin{proof}
	We define the following random variables:
	\begin{align*}
		V_j^n(\vartheta)&:=f(\lm{Y}{j-1},\vartheta)\parens{\lm{Y}{j+1}-\lm{Y}{j}-\Delta_nb(\lm{Y}{j})},\\
		\tilde{D}_n(f(\cdot,\vartheta))&:=\frac{1}{k_n\Delta_n}\sum_{j=1}^{k_n-2}V_j^n(\vartheta)
	\end{align*}
	and then
	\begin{align*}
		\bar{D}_n(f(\cdot,\vartheta))
		&=\tilde{D}_n(\cdot,\vartheta)+\frac{1}{k_n}\sum_{j=1}^{k_n-2}f(\lm{Y}{j-1},\vartheta)
		\parens{b(\lm{Y}{j})-b(\lm{Y}{j-1})}.
	\end{align*}
	Hence it is enough to see the uniform convergences in probability of the first term and the second one in the right hand side. \\
	
	\noindent \textbf{(Step 1): }We can decompose the sum of $V_j^n(\vartheta)$ as follows:
	\begin{align*}
		\sum_{j=1}^{k_n-2}V_j^n(\vartheta)=\sum_{1\le 3j\le k_n-2}V_{3j}^n(\vartheta)+\sum_{1\le 3j+1\le k_n-2}V_{3j+1}^n(\vartheta)
		+\sum_{1\le 3j+2\le k_n-2}V_{3j+2}^n(\vartheta).
	\end{align*}
	To simplify notations, we only consider the first term of the right hand side and the other terms have the identical evaluation.
	Let us define the following random variables:
	\begin{align*}
		v_{3j,n}^{(1)}(\vartheta)&:=f(\lm{Y}{3j-1},\vartheta)a(X_{3j\Delta_n})\parens{\zeta_{3j+1,n}+\zeta_{3j+2,n}'},\\
		v_{3j,n}^{(2)}(\vartheta)&:=f(\lm{Y}{3j-1},\vartheta)\Lambda_{\star}^{1/2}\parens{\lm{\epsilon}{3j+1}-\lm{\epsilon}{3j}},\\
		v_{3j,n}^{(3)}(\vartheta)&:=f(\lm{Y}{3j-1},\vartheta)e_{3j,n},
	\end{align*}
	and recall Proposition \ref*{pro737} which states
	\begin{align*}
		\lm{Y}{j+1}-\lm{Y}{j}-\Delta_nb(\lm{Y}{j})=a(X_{j\Delta_n})\parens{\zeta_{j+1,n}+\zeta_{j+2,n}'}+e_{j,n}
		+\Lambda_{\star}^{1/2}\parens{\lm{\epsilon}{j+1}-\lm{\epsilon}{j}}.
	\end{align*}
	Therefore we have
	\begin{align*}
		V_{3j}^{n}(\vartheta)=v_{3j,n}^{(1)}(\vartheta)+v_{3j,n}^{(2)}(\vartheta)+v_{3j,n}^{(3)}(\vartheta).
	\end{align*}
	First of all, the pointwise convergence to 0 for all $\vartheta$ and we abbreviate $f(\cdot,\vartheta)$ as $f(\cdot)$. Since $V_{3j}^n$ is $\mathcal{H}_{3j+2}^{n}$-measurable and 
	hence $\mathcal{H}_{3j+3}^n$-measurable, the sequence of random variables $\tuborg{V_{3j}}_{1\le 3j\le k_n-2}$ are
	 $\tuborg{\mathcal{H}_{3j+3}^{n}}_{1\le 3j\le k_n-2}$-adopted, and hence it is enough to see
	\begin{align*}
		\frac{1}{k_n\Delta_n}\sum_{1\le 3j\le k_n-2}\CE{V_{3j}^{n}}{\mathcal{H}_{3(j-1)+3}^n}=\frac{1}{k_n\Delta_n}\sum_{1\le 3j\le k_n-2}\CE{V_{3j}^{n}}{\mathcal{H}_{3j}^n}&\cp 0,\tag{cp.1}\\
		\frac{1}{k_n^2\Delta_n^2}\sum_{1\le 3j\le k_n-2}\CE{\parens{V_{3j}^{n}}^2}{\mathcal{H}_{3j}^n}&\cp 0\tag{cp.2}
	\end{align*}
	because of Lemma 9 in \citep{GeJ93}. With respect to the conditional first moment of $v_{3j,n}^{(1)}$ and $v_{3j,n}^{(2)}$, it holds
	\begin{align*}
		\CE{v_{3j,n}^{(1)}}{\mathcal{H}_{3j}^n}
		&=\CE{f(\lm{Y}{3j-1})a(X_{3j\Delta_n})\parens{\zeta_{3j+1,n}+\zeta_{3j+2,n}'}}{\mathcal{H}_{3j}^n}=0,\\
		\CE{v_{3j,n}^{(2)}}{\mathcal{H}_{3j}^n}
		&=\CE{f(\lm{Y}{3j-1})\Lambda_{\star}^{1/2}\parens{\lm{\epsilon}{3j+1}-\lm{\epsilon}{3j}}}{\mathcal{H}_{3j}^n}=0.
	\end{align*}
	As for $v_{3j,n}^{(3)}$ we have
	\begin{align*}
		\abs{\CE{v_{3j,n}^{(3)}}{\mathcal{H}_{3j}^n}}\le \norm{f(\lm{Y}{3j-1})}C\Delta_n^2\parens{1+\norm{X_{3j\Delta_n}}^5}
	\end{align*}
	and Lemma \ref*{lem739} verifies
	\begin{align*}
		\E{\abs{\CE{v_{3j,n}^{(3)}}{\mathcal{H}_{3j}^n}}}
		\le C\Delta_n^2.
	\end{align*}
	As a result, we can see the following $L^1$ convergence and hence (cp.1):
	\begin{align*}
		\E{\abs{\frac{1}{k_n\Delta_n}\sum_{1\le 3j\le k_n-2}\CE{V_{3j}^n}{\mathcal{H}_{3j}^n}}}
		\to0.
	\end{align*}
	The conditional square moment of $v_{3j,n}^{(1)}$ can be evaluated as
	\begin{align*}
		\CE{\abs{v_{3j,n}^{(1)}}^2}{\mathcal{H}_{3j}^n}
		\le C\Delta_n\norm{f(\lm{Y}{3j-1})}^2\norm{a(X_{3j\Delta_n})}^2
	\end{align*}
	because of Lemma \ref*{lem739}; therefore Lemma \ref*{lem739} again gives
	\begin{align*}
		\E{\abs{\frac{1}{k_n^2\Delta_n^2}\sum_{1\le 3j\le k_n-2}\CE{\abs{v_{3j,n}^{(1)}}^2}{\mathcal{H}_{3j}^n}}}
		\to0.
	\end{align*}
	Lemma \ref*{lem739} gives the following evaluation for $v_{3j,n}^{(2)}$:
	\begin{align*}
		\CE{\abs{v_{3j,n}^{(2)}}^2}{\mathcal{H}_{3j}^n}
		\le C\Delta_n\norm{f(\lm{Y}{3j-1})}^2
	\end{align*}
	and then by Lemma \ref*{lem739}, 
	\begin{align*}
		\E{\abs{\frac{1}{k_n^2\Delta_n^2}\sum_{1\le 3j\le k_n-2}\CE{\abs{v_{3j,n}^{(2)}}^2}{\mathcal{H}_{3j}^n}}}
		\to0.
	\end{align*}
	With respect to $v_{3j,n}^{(3)}$,
	\begin{align*}
		\CE{\abs{v_{3j,n}^{(3)}}^2}{\mathcal{H}_{3j}^n}
		\le \norm{f(\lm{Y}{3j-1})}^2C\Delta_n^2\parens{1+\norm{X_{j\Delta_n}}^{6}}
	\end{align*}
	and hence by Lemma \ref*{lem739},
	\begin{align*}
		\E{\abs{\frac{1}{k_n^2\Delta_n^2}\sum_{1\le 3j\le k_n-2}\CE{\abs{v_{3j,n}^{(3)}}^2}{\mathcal{H}_{3j}^n}}}\to0.
	\end{align*}
	Therefore we obtain
	\begin{align*}
		\frac{1}{k_n^2\Delta_n^2}\sum_{1\le 3j\le k_n-2}\CE{\parens{V_{3j}^{n}}^2}{\mathcal{H}_{3j}^n}&\le \frac{1}{k_n^2\Delta_n^2}\sum_{1\le 3j\le k_n-2}C\CE{\parens{v_{3j,n}^{(1)}}^2+\parens{v_{3j,n}^{(2)}}^2+\parens{v_{3j,n}^{(3)}}^2}{\mathcal{H}_{3j}^n}\\
		&\cp 0.
	\end{align*}
	Here we have the pointwise convergence in probability of $\tilde{D}_n(f(\cdot,\vartheta))$ for all $\vartheta$ because of Lemma 9 in \citep{GeJ93}.
	
	Next, we consider the uniform convergence in probability of $\tilde{D}_n(f(\cdot,\vartheta))$. Let us define
	\begin{align*}
		S_n^{(l)}(\vartheta):=\frac{1}{k_n\Delta_n}\sum_{1\le 3j\le k_n-2}v_{3j,n}^{(l)}(\vartheta),\ l=1,2,3.
	\end{align*}
	We will see for all $l$, $S_n^{(l)}(\vartheta)$ uniformly converges to 0 in probability. Firstly we examine $S_n^{(3)}$: Lemma \ref*{lem739} gives
	\begin{align*}
		\E{\sup_{\vartheta\in\Xi}\abs{\nabla_\vartheta v_{3j,n}^{(3)}(\vartheta)}}
		\le C\Delta_n.
	\end{align*}
	Hence we obtain
	\begin{align*}
		\sup_{n\in\mathbf{N}}\E{\sup_{\vartheta\in\Xi}\abs{\nabla_\vartheta S_{n}^{(3)}(\vartheta)}}
		\le C
		<\infty.
	\end{align*}
	Therefore it holds
	\begin{align*}
		S_n^{(3)}(\vartheta)\cp0\text{ uniformly in }\vartheta
	\end{align*}
	as the discussion in \citep{K97} or Proposition A1 in \citep{Gl06}.
	
	For $S_n^{(l)},\ l=1,2$ we see the following inequalities hold \citep{IH81}: there exist $C>0$ and $\kappa>\dim \Xi$ such that
	\begin{align*}
		\mathrm{(1) }&^\forall \vartheta\in\Xi,\ ^\forall n\in\mathbf{N},\ \E{\abs{S_n^{(l)}(\vartheta)}^{\kappa}}\le C,\\
		\mathrm{(2) }&^\forall \vartheta,\vartheta'\in\Xi,\ ^\forall n\in\mathbf{N},\ 
		\E{\abs{S_n^{(l)}(\vartheta)-S_n^{(l)}(\vartheta')}^\kappa}\le C\norm{\vartheta-\vartheta'}^{\kappa}.
	\end{align*}
	Assume $\kappa=2k,\ k\in\mathbf{N}$ in the following discussion.
	
	For $l=1$, Burkholder's inequality gives that for all $\kappa$, there exists $C=C(\kappa)$ such that
	\begin{align*}
		\E{\abs{S_n^{(1)}(\vartheta)}^{\kappa}}
		\le \frac{C(\kappa)}{\parens{k_n\Delta_n}^{\kappa}}k_n^{\kappa/2-1}
		\sum_{1\le 3j\le k_n-2}\E{\abs{v_{3j,n}^{(1)}(\vartheta)}^{\kappa}}.
	\end{align*}
	Lemma \ref*{lem739} verifies
	\begin{align*}
		\E{\abs{v_{3j,n}^{(1)}(\vartheta)}^{\kappa}}
		\le C(\kappa)\Delta_n^{\kappa/2},
	\end{align*}
	and therefore
	\begin{align*}
		\E{\abs{S_n^{(1)}(\vartheta)}^{\kappa}}
		\le \frac{C(\kappa)}{\parens{k_n\Delta_n}^{\kappa/2}}.
	\end{align*}
	With respect to $\E{\abs{S_n^{(1)}(\vartheta)-S_n^{(1)}(\vartheta')}^\kappa}$, identically
	\begin{align*}
		\E{\abs{S_n^{(1)}(\vartheta)-S_n^{(1)}(\vartheta')}^\kappa}
		\le \frac{C(\kappa)}{\parens{k_n\Delta_n}^{\kappa}}k_n^{\kappa/2-1}
		\sum_{1\le 3j\le k_n-2}\E{\abs{v_{3j,n}^{(1)}(\vartheta)-v_{3j,n}^{(1)}(\vartheta')}^{\kappa}}
	\end{align*}
	and
	\begin{align*}
		\E{\abs{v_{3j,n}^{(1)}(\vartheta)-v_{3j,n}^{(1)}(\vartheta')}^{\kappa}}\le C(\kappa)\Delta_n^{\kappa/2}\norm{\vartheta-\vartheta'}^{\kappa},
	\end{align*}
	hence
	\begin{align*}
		\E{\abs{S_n^{(1)}(\vartheta)-S_n^{(1)}(\vartheta')}^\kappa}\le \frac{C(\kappa)}{\parens{k_n\Delta_n}^{\kappa/2}}\norm{\vartheta-\vartheta'}^{\kappa}.
	\end{align*}
	This result gives uniform convergence in probability of $S_n^{(1)}$. For $S_n^{(2)}$, as same as $S_n^{(1)}$, Burkholder's inequality gives
	\begin{align*}
		\E{\abs{S_n^{(2)}(\vartheta)}^{\kappa}}&\le \frac{C(\kappa)}{\parens{k_n\Delta_n}^{\kappa}}k_n^{\kappa/2-1}
		\sum_{1\le 3j\le k_n-2}\E{\abs{v_{3j,n}^{(2)}(\vartheta)}^{\kappa}}
	\end{align*}
	and Lemma \ref*{lem739} verifies
	\begin{align*}
		\E{\abs{v_{3j,n}^{(2)}(\vartheta)}^{\kappa}}
		\le C(\kappa)\Delta_n^{\kappa/2};
	\end{align*}
	therefore, the identical evaluation holds. The discussion for $\E{\abs{S_n^{(2)}(\vartheta)-S_n^{(2)}(\vartheta')}^\kappa}$ is also identical to $\E{\abs{S_n^{(1)}(\vartheta)-S_n^{(1)}(\vartheta')}^\kappa}$. It leads to uniform convergences in probability of $S_n^{(2)}$ and as a result $\tilde{D}_n(\vartheta)$.\\
	
	\noindent \textbf{(Step 2): } We see the uniform convergence in probability of the following random variable,
	\begin{align*}
		\frac{1}{k_n}\sum_{1\le 3j\le k_n-2}f(\lm{Y}{j-1},\vartheta)\parens{b(\lm{Y}{j})-b(\lm{Y}{j-1})}.
	\end{align*}
	By Lemma \ref*{lem739}, it is easily shown that
	\begin{align*}
		\E{\sup_{\vartheta\in\Xi}\abs{\frac{1}{k_n}\sum_{1\le 3j\le k_n-2}
				f(\lm{Y}{j-1},\vartheta)\parens{b(\lm{Y}{j})-b(\lm{Y}{j-1})}}}\to0.
	\end{align*}
	It completes the proof.
\end{proof}

\begin{theorem}\label{thm743}
	Assume $A=\parens{A^{i,j}}_{i,j}$ is a function in $\mathcal{C}^2(\Re^d\times\Xi;\Re^d\otimes\Re^d)$ and the components of $f$, $\partial_xf$, $\partial_x^2f$ and $\partial_{\vartheta} f$ are polynomial growth functions uniformly in $\vartheta\in\Xi$. Under (A1)-(A4), (AH), if $\tau\in(1,2)$,
	\begin{align*}
	\bar{Q}_n(A(\cdot,\vartheta))
	\cp \frac{2}{3}\nu_0\parens{\ip{A(\cdot,\vartheta)}{c(\cdot,\alpha^\star)}}\text{ uniformly in }\vartheta,
	\end{align*}
	and if $\tau=2$,
	\begin{align*}
	\bar{Q}_n(A(\cdot,\vartheta))
	\cp \frac{2}{3}\nu_0\parens{\ip{A(\cdot,\vartheta)}{c^\dagger(\cdot,\alpha^\star,\Lambda_{\star})}}
	\text{ uniformly in }\vartheta.
	\end{align*}
\end{theorem}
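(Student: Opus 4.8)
The plan is to begin from the one-step increment expansion of Proposition \ref{pro737},
\[
\lm{Y}{j+1}-\lm{Y}{j}=\Delta_n b(\lm{Y}{j})+a(X_{j\Delta_n})\parens{\zeta_{j+1,n}+\zeta_{j+2,n}'}+e_{j,n}+\Lambda_{\star}^{1/2}\parens{\lm{\epsilon}{j+1}-\lm{\epsilon}{j}},
\]
and to expand $\parens{\lm{Y}{j+1}-\lm{Y}{j}}^{\otimes2}$ into the four self-products and their cross terms. I expect only the diffusion self-product $\parens{a(X_{j\Delta_n})\parens{\zeta_{j+1,n}+\zeta_{j+2,n}'}}^{\otimes2}$ and the noise self-product $\parens{\Lambda_{\star}^{1/2}\parens{\lm{\epsilon}{j+1}-\lm{\epsilon}{j}}}^{\otimes2}$ to survive; every term containing $\Delta_n b(\lm{Y}{j})$ or $e_{j,n}$, as well as the diffusion--noise cross term, will be shown negligible once contracted against $A(\lm{Y}{j-1},\vartheta)$, summed, and divided by $k_n\Delta_n$. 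The moment estimates of Lemma \ref{lem739} together with the bounds of Proposition \ref{pro737} handle the $b$- and $e_{j,n}$-contributions (each is $O(\Delta_n^{3/2})$ or smaller per summand, hence $o(1)$ after normalisation), while the diffusion--noise cross term has vanishing $\mathcal{H}_j^n$-conditional expectation since the Wiener and $\epsilon$ families are independent and centred.

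For the two surviving products I would compute the conditional second moments. Because $\zeta_{j+1,n}$ and $\zeta_{j+2,n}'$ are supported on the disjoint blocks $j$ and $j+1$, they are independent and centred given $\mathcal{H}_j^n$, so Lemma \ref{lem732} yields
\[
\CE{\parens{a(X_{j\Delta_n})\parens{\zeta_{j+1,n}+\zeta_{j+2,n}'}}^{\otimes2}}{\mathcal{H}_j^n}=\parens{m_n+m_n'}\Delta_n\,c(X_{j\Delta_n},\alpha^\star),
\]
with $m_n+m_n'=\tfrac23+\tfrac{1}{3p_n^2}\to\tfrac23$; and since $\lm{\epsilon}{j},\lm{\epsilon}{j+1}$ are independent, centred and independent of $\mathcal{H}_j^n$, each with covariance $p_n^{-1}I_d$,
\[
\CE{\parens{\Lambda_{\star}^{1/2}\parens{\lm{\epsilon}{j+1}-\lm{\epsilon}{j}}}^{\otimes2}}{\mathcal{H}_j^n}=\frac{2}{p_n}\Lambda_{\star}.
\]
The scaling dichotomy then emerges from (AH): dividing by $\Delta_n$ leaves the diffusion coefficient at $\tfrac23+o(1)$, whereas the noise coefficient becomes $2/(p_n\Delta_n)=2p_n^{\tau-2}$, which tends to $0$ when $\tau\in(1,2)$ and equals $2$ when $\tau=2$, producing $c$ in the first case and $c+3\Lambda_\star=c^\dagger$ in the second.

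It then remains to pass from conditional moments to the empirical sum. Writing $W_j:=\ip{A(\lm{Y}{j-1},\vartheta)}{\parens{\lm{Y}{j+1}-\lm{Y}{j}}^{\otimes2}}$, I would decompose $W_j=\CE{W_j}{\mathcal{H}_j^n}+\parens{W_j-\CE{W_j}{\mathcal{H}_j^n}}$. Splitting the centred remainder over the residue classes $j\equiv0,1,2\pmod 3$---so that within a class the summands have disjoint block support---and applying Lemma 9 of \citep{GeJ93} exactly as in Theorem \ref{thm742} (the conditional first moment vanishes by construction, and the conditional second moment is $O(\Delta_n^2)$, whence the normalised sum is $O(1/k_n)$), the remainder is $o_P(1)$. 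The predictable part equals $\tfrac23\cdot\frac{1}{k_n}\sum_j\ip{A(\lm{Y}{j-1},\vartheta)}{c(X_{j\Delta_n})}+2p_n^{\tau-2}\cdot\frac{1}{k_n}\sum_j\ip{A(\lm{Y}{j-1},\vartheta)}{\Lambda_{\star}}+o_P(1)$. After replacing $\lm{Y}{j-1}$ by $X_{j\Delta_n}$ inside $A$ (the $L^1$ cost is $O(\Delta_n^{1/2})$ by Proposition \ref{pro712}, Proposition \ref{pro735} and Corollary \ref{cor736}), Lemma \ref{lem721}/Proposition \ref{pro741} send the first sum to $\tfrac23\nu_0\parens{\ip{A(\cdot,\vartheta)}{c(\cdot,\alpha^\star)}}$ and the second to $\nu_0\parens{\ip{A(\cdot,\vartheta)}{\Lambda_{\star}}}$; the prefactor $2p_n^{\tau-2}$ then yields $c$ for $\tau\in(1,2)$ and $c^\dagger$ for $\tau=2$.

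Uniformity in $\vartheta$ would follow as for $S_n^{(1)}$ and $S_n^{(2)}$ in Theorem \ref{thm742}: having the pointwise limit, I would verify the Ibragimov--Has'minskii tightness criteria \citep{IH81}, namely a uniform bound $\E{\abs{\bar{Q}_n(A(\cdot,\vartheta))}^{\kappa}}\le C$ and a H\"older bound $\E{\abs{\bar{Q}_n(A(\cdot,\vartheta))-\bar{Q}_n(A(\cdot,\vartheta'))}^{\kappa}}\le C\norm{\vartheta-\vartheta'}^{\kappa}$ for some $\kappa>\dim\Xi$, using the polynomial-growth bounds on $\partial_{\vartheta}A$ and Lemma \ref{lem739}. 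The main obstacle is the bookkeeping in the expansion and the conditional-moment computation: one must isolate exactly the two surviving quadratic variations, check that the inter-block diffusion cross term drops out (so that the coefficient is $m_n+m_n'\to\tfrac23$, not $m_n$ alone), confirm that every drift, $e_{j,n}$ and diffusion--noise cross contribution is genuinely $o(1)$ after the $k_n\Delta_n$ normalisation, and---most delicately---track the factor $2p_n^{\tau-2}$ so that the observation noise survives in the limit precisely at the threshold $\tau=2$ and is washed out for $\tau\in(1,2)$.
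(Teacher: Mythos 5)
Your proposal is correct and takes essentially the same approach as the paper's proof: the expansion from Proposition \ref{pro737}, the conditional-moment computations giving $m_n+m_n'\to\tfrac{2}{3}$ and $2/(p_n\Delta_n)=2p_n^{\tau-2}$ (hence the $\tau$-dichotomy), the mod-3 splitting combined with Lemma 9 of \citep{GeJ93}, and the ergodic replacement step via Corollary \ref{cor736}, Lemma \ref{lem721} and Lemma \ref{lem739}. The only differences are organisational---the paper centres the six self/cross terms $q_{j,n}^{(i)}$ separately rather than centring $W_j$ wholesale, and it obtains uniformity in $\vartheta$ from the bound $\sup_{n}\E{\sup_{\vartheta\in\Xi}\abs{\nabla_\vartheta\bar{Q}_n(A(\cdot,\vartheta))}}\le C$ rather than from the Ibragimov--Has'minskii moment criterion, both being devices the paper itself employs in Theorem \ref{thm742}.
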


\begin{proof}
	Note that Proposition \ref*{pro737} gives
	\begin{align*}
		\parens{\lm{Y}{j+1}-\lm{Y}{j}}^{\otimes2}&=\parens{\lm{Y}{j+1}-\lm{Y}{j}}\parens{\lm{Y}{j+1}-\lm{Y}{j}}^T\\
		&=\parens{\Delta_nb(\lm{Y}{j})+a(X_{j\Delta_n})\parens{\zeta_{j+1,n}+\zeta_{j+2,n}'}+e_{j,n}+\Lambda_{\star}^{1/2}\parens{\lm{\epsilon}{j+1}-\lm{\epsilon}{j}}}\\
		&\qquad\times\parens{\Delta_nb(\lm{Y}{j})+a(X_{j\Delta_n})\parens{\zeta_{j+1,n}+\zeta_{j+2,n}'}+e_{j,n}+\Lambda_{\star}^{1/2}\parens{\lm{\epsilon}{j+1}-\lm{\epsilon}{j}}}^T\\
		&=\parens{a(X_{j\Delta_n})\parens{\zeta_{j+1,n}+\zeta_{j+2,n}'}}^{\otimes 2}
		+\parens{\Lambda_{\star}^{1/2}\parens{\lm{\epsilon}{j+1}-\lm{\epsilon}{j}}}^{\otimes2}+\parens{\Delta_nb(\lm{Y}{j})+e_{j,n}}^{\otimes2}\\
		&\qquad+a(X_{j\Delta_n})\parens{\zeta_{j+1,n}+\zeta_{j+2,n}'}\parens{\Lambda_{\star}^{1/2}\parens{\lm{\epsilon}{j+1}-\lm{\epsilon}{j}}}^T\\
		&\qquad+\parens{\Lambda_{\star}^{1/2}\parens{\lm{\epsilon}{j+1}-\lm{\epsilon}{j}}}\parens{\zeta_{j+1,n}+\zeta_{j+2,n}'}^Ta(X_{j\Delta_n})^T\\
		&\qquad+a(X_{j\Delta_n})\parens{\zeta_{j+1,n}+\zeta_{j+2,n}'}\parens{\Delta_nb(\lm{Y}{j})+e_{j,n}}^T\\
		&\qquad+\parens{\Delta_nb(\lm{Y}{j})+e_{j,n}}\parens{\zeta_{j+1,n}+\zeta_{j+2,n}'}^Ta(X_{j\Delta_n})^T\\
		&\qquad+\parens{\Delta_nb(\lm{Y}{j})+e_{j,n}}\parens{\Lambda_{\star}^{1/2}\parens{\lm{\epsilon}{j+1}-\lm{\epsilon}{j}}}^T\\
		&\qquad+\parens{\Lambda_{\star}^{1/2}\parens{\lm{\epsilon}{j+1}-\lm{\epsilon}{j}}}\parens{\Delta_nb(\lm{Y}{j})+e_{j,n}}^T.
	\end{align*}
	We define
	\begin{align*}
		q_{j,n}^{(1)}(\vartheta)&:=\ip{A(\lm{Y}{j-1},\vartheta)}{\parens{a(X_{j\Delta_n})\parens{\zeta_{j+1,n}+\zeta_{j+2,n}'}}^{\otimes 2}},\\
		q_{j,n}^{(2)}(\vartheta)&:=\ip{A(\lm{Y}{j-1},\vartheta)}{\parens{\Lambda_{\star}^{1/2}\parens{\lm{\epsilon}{j+1}-\lm{\epsilon}{j}}}^{\otimes2}},\\
		q_{j,n}^{(3)}(\vartheta)&:=\ip{A(\lm{Y}{j-1},\vartheta)}{\parens{\Delta_nb(\lm{Y}{j})+e_{j,n}}^{\otimes2}},\\
		q_{j,n}^{(4)}(\vartheta)&:=\ip{A(\lm{Y}{j-1},\vartheta)}{a(X_{j\Delta_n})\parens{\zeta_{j+1,n}+\zeta_{j+2,n}'}\parens{\Lambda_{\star}^{1/2}\parens{\lm{\epsilon}{j+1}-\lm{\epsilon}{j}}}^T},\\
		&\qquad+\ip{A(\lm{Y}{j-1},\vartheta)}{\parens{\Lambda_{\star}^{1/2}\parens{\lm{\epsilon}{j+1}-\lm{\epsilon}{j}}}\parens{\zeta_{j+1,n}+\zeta_{j+2,n}'}^Ta(X_{j\Delta_n})^T},\\
		q_{j,n}^{(5)}(\vartheta)&:=\ip{A(\lm{Y}{j-1},\vartheta)}{a(X_{j\Delta_n})\parens{\zeta_{j+1,n}+\zeta_{j+2,n}'}\parens{\Delta_nb(\lm{Y}{j})+e_{j,n}}^T}\\
		&\qquad+\ip{A(\lm{Y}{j-1},\vartheta)}{\parens{\Delta_nb(\lm{Y}{j})+e_{j,n}}
			\parens{\zeta_{j+1,n}+\zeta_{j+2,n}'}^Ta(X_{j\Delta_n})^T},\\
		q_{j,n}^{(6)}(\vartheta)&:=\ip{A(\lm{Y}{j-1},\vartheta)}{\parens{\Delta_nb(\lm{Y}{j})+e_{j,n}}\parens{\Lambda_{\star}^{1/2}\parens{\lm{\epsilon}{j+1}-\lm{\epsilon}{j}}}^T},\\
		&\qquad+\ip{A(\lm{Y}{j-1},\vartheta)}{\parens{\Lambda_{\star}^{1/2}\parens{\lm{\epsilon}{j+1}-\lm{\epsilon}{j}}}\parens{\Delta_nb(\lm{Y}{j})+e_{j,n}}^T},
	\end{align*}
	and
	\begin{align*}
		q_{j,n}(\vartheta)&:=\sum_{i=1}^{6}q_{j,n}^{(i)}(\vartheta),
	\end{align*}
	then we obtain
	\begin{align*}
		\bar{Q}_{n}(A(\cdot,\vartheta))=\frac{1}{k_n\Delta_n}\sum_{j=1}^{k_n-2}q_{j,n}(\vartheta).
	\end{align*}
	We examine the following quantities which divide the summation into three parts: for $l=0,1,2$,
	\begin{align*}
		T_{l,n}^{(i)}(\vartheta):=\frac{1}{k_n\Delta_n}\sum_{1\le 3j+l\le k_n-2}q_{3j+l,n}^{(i)}(\vartheta)\text{ for }i=1,\cdots,6.
	\end{align*}
	Firstly we see the pointwise-convergence in probability with respect to $\vartheta$.\\
	
	\noindent\textbf{(Step 1): } We examine $T_{0,n}^{(1)}(\vartheta)$ and consider to show convergence in probability with Lemma 9 in \citep{GeJ93}. Lemma \ref*{lem732} gives
	\begin{align*}
		\CE{q_{3j,n}^{(1)}(\vartheta)}{\mathcal{H}_{3j}^{n}}
		=\parens{\frac{2}{3}+\frac{1}{3p_n^2}}\Delta_n\ip{A(\lm{Y}{3j-1},\vartheta)}{c(X_{3j\Delta_n})}.
	\end{align*}
	Note that Lemma \ref*{lem721} gives
	\begin{align*}
		&\frac{1}{k_n\Delta_n}\sum_{1\le 3j\le k_n-2}\parens{\frac{2}{3}+\frac{1}{3p_n^2}}
		\Delta_n\ip{A(X_{3j\Delta_n},\vartheta)}{c(X_{3j\Delta_n})}
		\cp\frac{1}{3}\times\frac{2}{3}\times\nu_0\parens{\ip{A(\cdot,\vartheta)}{c(\cdot)}},
	\end{align*}
	and we can obtain
	\begin{align*}
		\frac{1}{k_n}\sum_{1\le 3j\le k_n-2}\parens{\frac{2}{3}+\frac{1}{3p_n^2}}
		\ip{\parens{A(X_{3j\Delta_n},\vartheta)-A(X_{(3j-1)\Delta_n},\vartheta)}}{c(X_{3j\Delta_n})}&\cp0\\
		\frac{1}{k_n}\sum_{1\le 3j\le k_n-2}\parens{\frac{2}{3}+\frac{1}{3p_n^2}}
		\ip{\parens{A(\lm{Y}{3j-1},\vartheta)-A(X_{(3j-1)\Delta_n},\vartheta)}}{c(X_{3j\Delta_n})}&\cp0
	\end{align*}
	because of Lemma \ref*{lem739}; hence we have
	\begin{align*}
		\frac{1}{k_n\Delta_n}\sum_{1\le 3j\le k_n-2}\CE{q_{3j,n}^{(1)}(\vartheta)}{\mathcal{H}_{3j}^{n}}\cp \frac{2}{9}\nu_0\parens{\ip{A(\cdot,\vartheta)}{c(\cdot)}}.
	\end{align*}
	Next we have
	\begin{align*}
		\CE{\abs{q_{3j,n}^{(1)}(\vartheta)}^2}{\mathcal{H}_{3j}^{n}}
		\le C\Delta_n^2\norm{A(\lm{Y}{3j-1},\vartheta)}^2\norm{a(X_{3j\Delta_n})}^4
	\end{align*}
	because of Lemma \ref*{lem739}, and then we obtain
	\begin{align*}
		\E{\abs{\frac{1}{k_n^2\Delta_n^2}\sum_{1\le 3j\le k_n-2}\CE{\abs{q_{3j,n}^{(1)}(\vartheta)}^2}{\mathcal{H}_{3j}^{n}}}}
		\to0
	\end{align*}
	also because of Lemma \ref*{lem739}. Therefore, Lemma 9 in \citep{GeJ93} gives
	\begin{align*}
		T_{0,n}^{(1)}(\vartheta)\cp\frac{2}{9}\nu_0\parens{\ip{A(\cdot,\vartheta)}{c(\cdot)}}
	\end{align*}
	and identical convergences for $T_{1,n}^{(1)}(\vartheta)$ and $T_{2,n}^{(1)}(\vartheta)$ can be given. Hence
	\begin{align*}
		T_{0,n}^{(1)}(\vartheta)+T_{1,n}^{(1)}(\vartheta)+T_{2,n}^{(1)}(\vartheta)
		\cp\frac{2}{3}\nu_0\parens{\ip{A(\cdot,\vartheta)}{c(\cdot)}}.
	\end{align*}
	
	\noindent\textbf{(Step 2): } For $T_{0,n}^{(2)}(\vartheta)$, we also see the pointwise convergence in probability with \citep{GeJ93}. Firstly,
	\begin{align*}
		\CE{q_{3j,n}^{(2)}(\vartheta)}{\mathcal{H}_{3j}^n}
		=\frac{2}{p_n}\ip{A(\lm{Y}{j-1},\vartheta)}{\Lambda_{\star}}
	\end{align*}
	and then Proposition \ref*{lem721} leads to
	\begin{align*}
		\frac{1}{k_n\Delta_n}\sum_{1\le 3j\le k_n-2}\CE{q_{3j,n}^{(2)}(\vartheta)}{\mathcal{H}_{3j}^n}
		\cp\begin{cases}
		0 & \text{ if }\tau\in(1,2)\\
		\frac{2}{3}\nu_0\parens{\ip{A(\cdot,\vartheta)}{\Lambda_{\star}}} & \text{ if }\tau=2
		\end{cases}
	\end{align*}
	because
	\begin{align*}
		\frac{1}{p_n\Delta_n}=\frac{1}{p_n^2h_n}=\frac{1}{p_n^{2-\tau}}\to\begin{cases}
			0 & \text{ if }\tau\in(1,2)\\
			1 & \text{ if }\tau=2.
		\end{cases}
	\end{align*}
	Because of Lemma \ref*{lem739}, we also easily have the conditional second moment evaluation such that
	\begin{align*}
		\E{\abs{\frac{1}{k_n^2\Delta_n^2}\sum_{1\le 3j\le k_n-2}\CE{\abs{q_{3j,n}^{(2)}(\vartheta)}^2}{\mathcal{H}_{3j}^n}}}
		\to0,
	\end{align*}
	therefore this $L^1$ convergence verifies convergence in probability and by Lemma 9 in \citep{GeJ93},
	\begin{align*}
		T_{0,n}^{(2)}(\vartheta)\cp\begin{cases}
		0 & \text{ if }\tau\in(1,2)\\
		\frac{2}{3}\nu_0\parens{\ip{A(\cdot,\vartheta)}{\Lambda_{\star}}} & \text{ if }\tau=2
		\end{cases}
	\end{align*}
	and then
	\begin{align*}
		T_{0,n}^{(2)}(\vartheta)+T_{1,n}^{(2)}(\vartheta)+T_{2,n}^{(2)}(\vartheta)\cp\begin{cases}
		0 & \text{ if }\tau\in(1,2)\\
		2\nu_0\parens{\ip{A(\cdot,\vartheta)}{\Lambda_{\star}}} & \text{ if }\tau=2.
		\end{cases}
	\end{align*}
	
	\noindent\textbf{(Step 3):} For $T_{0,n}^{(3)}(\vartheta)$, we can obtain the following $L^1$ convergence:
	\begin{align*}
		\E{\abs{\frac{1}{k_n\Delta_n}\sum_{1\le 3j\le k_n-2}q_{3j,n}^{(3)}(\vartheta)}}
		\to0.
	\end{align*}
	Hence it leads to
	\begin{align*}
		T_{0,n}^{(3)}(\vartheta)\cp0
	\end{align*}
	and
	\begin{align*}
		T_{0,n}^{(3)}(\vartheta)+T_{1,n}^{(3)}(\vartheta)+T_{2,n}^{(3)}(\vartheta)\cp 0.
	\end{align*}
	
	\noindent\textbf{(Step 4): } We show pointwise convergence in probability of $T_{0,n}^{(4)}(\vartheta)$ with Lemma 9 in \citep{GeJ93}. First of all,
	\begin{align*}
		\CE{q_{3j,n}^{(4)}(\vartheta)}{\mathcal{H}_{3j}^n}
		=0
	\end{align*}
	since $w$ and $\epsilon$ are independent. Secondly, for conditional second moment, we have
	\begin{align*}
		\CE{\abs{q_{3j,n}^{(4)}(\vartheta)}^2}{\mathcal{H}_{3j}^n}\le C\Delta_n^2\norm{A(\lm{Y}{j-1},\vartheta)}^2\norm{a(X_{j\Delta_n})}^2,
	\end{align*}
	because of Lemma \ref*{lem739}; therefore, 
	\begin{align*}
		\E{\abs{\frac{1}{k_n^2\Delta_n^2}\sum_{1\le 3j\le k_n-2}\CE{\abs{q_{3j,n}^{(4)}(\vartheta)}^2}{\mathcal{H}_{3j}^n}}}\to0
	\end{align*}
	by Lemma \ref*{lem739}. Then Lemma 9 in \citep{GeJ93} verifies
	\begin{align*}
		T_{0,n}^{(4)}\cp0
	\end{align*}
	and
	\begin{align*}
		T_{0,n}^{(4)}+T_{1,n}^{(4)}+T_{2,n}^{(4)}\cp0.
	\end{align*}
	
	\noindent\textbf{(Step 5): } We can evaluate $L^1$ convergence of $T_{0,n}^{(5)}(\vartheta)$ and $T_{0,n}^{(6)}(\vartheta)$ with Lemma \ref*{lem739} such that
	\begin{align*}
		\E{\abs{\frac{1}{k_n\Delta_n}\sum_{1\le 3j\le k_n-2}q_{3j,n}^{(5)}}}
		&\to0,\\
		\E{\abs{\frac{1}{k_n\Delta_n}\sum_{1\le 3j\le k_n-2}q_{3j,n}^{(6)}(\vartheta)}}
		&\to0.
	\end{align*}
	This leads to
	\begin{align*}
		T_{0,n}^{(5)}(\vartheta)&\cp0\\
		T_{0,n}^{(6)}(\vartheta)&\cp0
	\end{align*}
	and
	\begin{align*}
		T_{0,n}^{(5)}(\vartheta)+T_{1,n}^{(5)}(\vartheta)+T_{2,n}^{(5)}(\vartheta)&\cp0\\
		T_{0,n}^{(6)}(\vartheta)+T_{1,n}^{(6)}(\vartheta)+T_{2,n}^{(6)}(\vartheta)&\cp0.
	\end{align*}Here we have pointwise convergence in probability of $\bar{Q}_n(A(\cdot,\vartheta))$ for all $\vartheta$.\\
	
	\noindent\textbf{(Step 6): }Finally we see the uniform convergence. It can be obtained as
	\begin{align*}
		\sup_{n\in\mathbf{N}}\E{\sup_{\vartheta\in\Xi}\abs{\nabla\bar{Q}_n(A(\cdot,\vartheta))}}\le C
	\end{align*}
	whose computation is verified by Lemma \ref*{lem739}. Therefore uniform convergence in probability is obtained.
\end{proof}

\subsection{Asymptotic normality}

\begin{theorem}\label{thm751}
	Under (A1)-(A5), (AH) and $k_n\Delta_n^2\to0$,
	\begin{align*}
		\crotchet{\begin{matrix}
			\sqrt{n}D_n\\
			\sqrt{k_n}\crotchet{\bar{Q}_n\parens{A_\kappa(\cdot)}
				-\frac{2}{3}\bar{M}_n\parens{\ip{A_\kappa(\cdot)}{c_n^{\tau}\parens{\cdot,\alpha^{\star},\Lambda_{\star}}}}}_{\kappa}\\
			\sqrt{k_n\Delta_n}\crotchet{\bar{D}_n\parens{f_\lambda(\cdot)}}_\lambda
			\end{matrix}}\cl N(\mathbf{0},W^{(\tau)}(\tuborg{A_{\kappa}},\tuborg{f_{\lambda}})),
	\end{align*}
	where
	\begin{align*}
		c_n^{\tau}(x,\alpha,\Lambda)&:=c(x,\alpha)+3\Delta_n^{\frac{2-\tau}{\tau-1}}\Lambda.
	\end{align*}
\end{theorem}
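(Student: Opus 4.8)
The plan is to realise all three coordinate blocks as one vector-valued triangular array indexed by the block number $j$ and to apply the multivariate martingale central limit theorem for triangular arrays (Lemma~9 of \citep{GeJ93}), exactly as in Theorems~\ref{thm742} and \ref{thm743}. First I would bring the noise-variance block onto the coarse scale: since $n=k_np_n$,
\begin{align*}
\sqrt{n}D_n=\frac{1}{\sqrt{k_n}}\sum_{j}\sqrt{p_n}\,D_{j,n},
\end{align*}
so that $\sqrt{n}D_n$, $\sqrt{k_n}\bigl[\bar{Q}_n(A_\kappa)-\tfrac{2}{3}\bar{M}_n(\ip{A_\kappa}{c_n^\tau})\bigr]_\kappa$ and $\sqrt{k_n\Delta_n}\bigl[\bar{D}_n(f_\lambda)\bigr]_\lambda$ are all sums over $j$, which I stack into a single summand $\chi_{j,n}$. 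Using Proposition~\ref{pro737} and Corollary~\ref{cor738} I would replace $\bar{Y}_{j+1}-\bar{Y}_j$ by its leading term $a(X_{j\Delta_n})(\zeta_{j+1,n}+\zeta_{j+2,n}')+\Lambda_\star^{1/2}(\bar\epsilon_{j+1}-\bar\epsilon_j)$, expand the squared increments, and discard $e_{j,n}$, the drift contributions and the gap between $\bar{Y}$ and $X$; all of these are negligible after rescaling by the moment bounds of Lemma~\ref{lem739}. The empirical centering $\tfrac23\bar{M}_n(\ip{A_\kappa}{c_n^\tau})$ is chosen so that it coincides with $\frac{1}{k_n\Delta_n}\sum_j\CE{q_{j,n}}{\mathcal{H}_j^n}$ up to a negligible error, which turns the diffusion block into a genuinely centred sum.

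Then I would check the three hypotheses of the array CLT. For the \emph{predictable first moments}: the drift block has conditional mean $O(\Delta_n^2)$ by Proposition~\ref{pro737}, the diffusion block is centred by the empirical recentring, and the noise block satisfies $\CE{\sqrt{p_n}D_{j,n}}{\mathcal{H}_j^n}=O(\sqrt{p_n}h_n)$, whose rescaled sum is $O(\sqrt{nh_n^2})\to0$; all of these vanish under $k_n\Delta_n^2\to0$. For the \emph{predictable quadratic variation}: using the conditional-moment identities for $\xi_{j,n}'$, $\zeta_{j+1,n}$ and $\zeta_{j+2,n}'$ (Lemmas~\ref{lem731} and \ref{lem732}), and for the noise block the fourth-moment structure of $\epsilon$ under (A4), the conditional covariance of $\chi_{j,n}$ averages, via the ergodic law of large numbers (Lemma~\ref{lem721} and Proposition~\ref{pro741}), to the three diagonal blocks $W_1$, $W_2^{(\tau)}(\{A_\kappa\})$ and $W_3(\{f_\lambda\})$. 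For the \emph{Lindeberg condition}: each $\chi_{j,n}$ is of order $k_n^{-1/2}$, so the conditional fourth moments sum to $0$ by Lemma~\ref{lem739}. As in Theorems~\ref{thm742} and \ref{thm743}, the two- to three-step dependence created by $\zeta_{j+2,n}'$ forces this verification to be carried out along the residue classes $j\equiv0,1,2\pmod 3$ and then recombined.

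The block-diagonal form of $W^{(\tau)}$ is precisely the asymptotic independence of the three estimators, which I would obtain by showing that the off-diagonal blocks of the predictable quadratic variation vanish. The drift block is linear in $\zeta_{j+1,n}+\zeta_{j+2,n}'$ and in $\bar\epsilon_{j+1}-\bar\epsilon_j$, whereas the diffusion and noise blocks are quadratic in them, so the cross terms involve odd conditional moments of the Gaussian vector $\zeta_{j+1,n}+\zeta_{j+2,n}'$ and of $\epsilon$; these vanish by Gaussianity and by the symmetry in (A4). The remaining coupling, between the noise block (built from fine-scale squared increments) and the diffusion block at $\tau=2$ (which contains the block-averaged noise $\Lambda_\star^{1/2}(\bar\epsilon_{j+1}-\bar\epsilon_j)$), survives only through shared noise, but that contribution is diluted by a factor $p_n^{-1}$ from the averaging and disappears after rescaling.

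I expect the main obstacle to be the exact evaluation of the predictable quadratic variation in the critical regime $\tau=2$. There the diffusion increment $a(X_{j\Delta_n})(\zeta_{j+1,n}+\zeta_{j+2,n}')$ and the noise increment $\Lambda_\star^{1/2}(\bar\epsilon_{j+1}-\bar\epsilon_j)$ are of the same order $\Delta_n\asymp p_n^{-1}$, so both feed into the limit; this is the origin of the extra terms $4\bar{A}_{\kappa_1}c\bar{A}_{\kappa_2}\Lambda_\star+12\bar{A}_{\kappa_1}\Lambda_\star\bar{A}_{\kappa_2}\Lambda_\star$ in $W_2^{(\tau)}$ and of the excess-kurtosis term $\E{\abs{\epsilon_0^k}^4}-3$ in $W_1$, whose isolation needs the componentwise independence and symmetry of $\epsilon$ in (A4) to cancel the odd cross-moments. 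A related delicate point is that the noise does \emph{not} contribute to $W_3$ even at $\tau=2$, although it is of the same order as the diffusion there: this cancellation stems from the one-dependence of the increments $\bar\epsilon_{j+1}-\bar\epsilon_j$ (a telescoping effect), and keeping track of the cross-covariances across the residue classes so that this cancellation is neither lost nor double-counted is the most error-prone part of the computation.
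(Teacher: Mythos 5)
Your overall architecture (stack the three blocks into one array indexed by $j$, apply a martingale CLT for triangular arrays, compute the predictable covariance via the ergodic LLN, check a fourth-moment Lindeberg bound, and show the off-diagonal blocks vanish) is the same as the paper's, and you correctly identify the delicate phenomena: the $\tau=2$ contributions $4\bar{A}_{\kappa_1}c\bar{A}_{\kappa_2}\Lambda_{\star}+12\bar{A}_{\kappa_1}\Lambda_{\star}\bar{A}_{\kappa_2}\Lambda_{\star}$, the excess-kurtosis term in $W_1$, and the telescoping reason why the noise never enters $W_3$. However, there is a genuine gap in your treatment of the overlap created by $\zeta_{j+2,n}'$ and $\lm{\epsilon}{j+1}$: you propose to carry out the CLT verification ``along the residue classes $j\equiv 0,1,2 \pmod 3$ and then recombine,'' as in Theorems \ref{thm742} and \ref{thm743}. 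That device is only valid for the LLN-type statements, where each residue-class subsum converges in probability to a constant and the three limits simply add. For convergence in law it fails: the three subsums are mutually dependent, a CLT for each one separately says nothing about their joint law, and there is no legitimate recombination step; grouping three consecutive indices does not repair adaptedness either, because the $j$-th summand involves $\zeta_{j+2,n}'$ and hence is only $\mathcal{H}_{j+2}^n$-measurable. The paper resolves this by \emph{reindexing} rather than splitting: it regroups the overlapping pieces into new summands --- e.g.\ $s_{j,n}^{(1)}$ collects the $\zeta_{j+1,n}$-part of index $j$ together with the $\zeta_{j+1,n}'$- and $\zeta_{j,n}\parens{\zeta_{j+1,n}'}^T$-parts shifted down from index $j-1$, and similarly for $s_{j,n}^{(3)}$, $s_{j,n}^{(4)}$, $r_{j,n}^{(1)}$ and $D_{j,n}'$ --- so that every summand is $\mathcal{H}_{j+1}^n$-measurable and exactly conditionally centred given $\mathcal{H}_j^n$. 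Only after this reindexing is the stacked array a genuine martingale difference array to which the CLT applies in a single pass (residue-class splitting is used in the paper only for terms that are $o_P(1)$). Without this step your plan does not go through.

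A second, smaller gap concerns the block-diagonal limit. You claim the off-diagonal blocks of the predictable quadratic variation vanish ``by Gaussianity and by the symmetry in (A4).'' That is true for some cross terms (for instance, those that factor through the independence of $\tuborg{w_t}$ and $\tuborg{\epsilon_{ih_n}}$, or through $\CE{\parens{D_{j,n}'}^{l_1,l_2}\parens{\lm{\epsilon}{j}}^T}{\mathcal{H}_j^n}=0$), but it is false as stated for others: after reindexing, $\CE{s_{j,n}^{(1)}(\kappa)\,r_{j,n}^{(1)}(\lambda)}{\mathcal{H}_j^n}$ and $\CE{s_{j,n}^{(4)}(\kappa)\,r_{j,n}^{(1)}(\lambda)}{\mathcal{H}_j^n}$ are \emph{not} zero --- they are linear in the $\mathcal{H}_j^n$-measurable quantities $\zeta_{j,n}$ and $\lm{\epsilon}{j-1}$ respectively. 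The paper therefore runs a second martingale argument one step back (conditioning on $\mathcal{H}_{j-1}^n$ and applying Lemma 9 of \citep{GeJ93} again) to show that the normalised sums of these conditional covariances converge to $0$ in probability. Your framework needs this extra layer as well; odd-moment symmetry alone does not deliver the block-diagonal covariance $W^{(\tau)}$.
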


\begin{proof}\textbf{(Step 1): } As the proof for Lemma 3.1.1, we can decompose
	\begin{align*}
		\hat{\Lambda}_n-\Lambda_{\star}
		&=\frac{1}{2n}\sum_{i=0}^{n-1}\parens{Y_{(i+1)h_n}-Y_{ih_n}}^{\otimes 2}-\Lambda_{\star}\\
		&=\frac{1}{2n}\sum_{i=0}^{n-1}\parens{X_{(i+1)h_n}+\Lambda_{\star}^{1/2}\epsilon_{(i+1)h_n}
			-X_{ih_n}-\Lambda_{\star}^{1/2}\epsilon_{ih_n}}^{\otimes 2}
		-\Lambda_{\star}\\
		&=\frac{1}{2n}\sum_{i=0}^{n-1}\parens{X_{(i+1)h_n}-X_{ih_n}}^{\otimes 2}
		+\frac{1}{2n}\sum_{i=0}^{n-1}\Lambda_{\star}^{1/2}\parens{\parens{\epsilon_{(i+1)h_n}-\epsilon_{ih_n}}^{\otimes 2}-2I_d}\Lambda_{\star}^{1/2}\\
		&\qquad+\frac{1}{2n}\sum_{i=0}^{n-1}\parens{X_{(i+1)h_n}-X_{ih_n}}\parens{\epsilon_{(i+1)h_n}-\epsilon_{ih_n}}^T\Lambda_{\star}^{1/2}\\
		&\qquad+\frac{1}{2n}\sum_{i=0}^{n-1}\Lambda_{\star}^{1/2}\parens{\epsilon_{(i+1)h_n}-\epsilon_{ih_n}}\parens{X_{(i+1)h_n}-X_{ih_n}}^T.
	\end{align*}
	Hence
	\begin{align*}
		\sqrt{n}\parens{\hat{\Lambda}_n-\Lambda_{\star}}&=\frac{1}{2\sqrt{n}}\sum_{i=0}^{n-1}\parens{X_{(i+1)h_n}-X_{ih_n}}^{\otimes 2}
		\\
		&\qquad+\frac{1}{2\sqrt{n}}\sum_{i=0}^{n-1}\Lambda_{\star}^{1/2}\parens{\parens{\epsilon_{(i+1)h_n}-\epsilon_{ih_n}}^{\otimes 2}-I_d}\Lambda_{\star}^{1/2}\\
		&\qquad+\frac{1}{2\sqrt{n}}\sum_{i=0}^{n-1}\parens{X_{(i+1)h_n}-X_{ih_n}}\parens{\epsilon_{(i+1)h_n}-\epsilon_{ih_n}}^T\Lambda_{\star}^{1/2}\\
		&\qquad+\frac{1}{2\sqrt{n}}\sum_{i=0}^{n-1}\Lambda_{\star}^{1/2}\parens{\epsilon_{(i+1)h_n}-\epsilon_{ih_n}}\parens{X_{(i+1)h_n}-X_{ih_n}}^T.
	\end{align*}
	The first term of right hand side shows the following moment convergence:
	\begin{align*}
		\E{\norm{\frac{1}{2\sqrt{n}}\sum_{i=0}^{n-1}\parens{X_{(i+1)h_n}-X_{ih_n}}^{\otimes 2}}}
		\to0.
	\end{align*}
	The conditional moment evaluation for the third term can be given as follows: for all $j_1$ and $j_2$ in $\tuborg{1,\cdots,d}$,
	\begin{align*}
		\E{\abs{\frac{1}{2\sqrt{n}}\sum_{i=0}^{n-1}\CE{\parens{X_{(i+1)h_n}-X_{ih_n}}^{j_1}\parens{\epsilon_{(i+1)h_n}-\epsilon_{ih_n}}^{j_2}}{\mathcal{H}_{ih_n}^n}}}\to 0
	\end{align*}
	and
	\begin{align*}
		\E{\abs{\frac{1}{4n}\sum_{i=0}^{n-1}\CE{\abs{\parens{X_{(i+1)h_n}-X_{ih_n}}^{j_1}\parens{\epsilon_{(i+1)h_n}-\epsilon_{ih_n}}^{j_2}}^2}{\mathcal{H}_{ih_n}^n}}}\to 0.
	\end{align*}
	Therefore Lemma 9 in \citep{GeJ93} leads to the third and the fourth term are $o_P(1)$. Then we obtain
	\begin{align*}
		\sqrt{n}\parens{\hat{\Lambda}_n-\Lambda_{\star}}&=\frac{1}{2\sqrt{n}}\sum_{i=0}^{n-1}\Lambda_{\star}^{1/2}\parens{\parens{\epsilon_{(i+1)h_n}-\epsilon_{ih_n}}^{\otimes 2}-2I_d}\Lambda_{\star}^{1/2}+o_P(1)
	\end{align*}
	and 
	\begin{align*}
		\sqrt{n}D_n&=\frac{1}{2\sqrt{n}}\sum_{i=0}^{n-1}\vech\parens{\Lambda_{\star}^{1/2}\parens{\parens{\epsilon_{(i+1)h_n}-\epsilon_{ih_n}}^{\otimes 2}-2I_d}\Lambda_{\star}^{1/2}}+o_P(1).
	\end{align*}
	We can rewrite the summation as
	\begin{align*}
		&\frac{1}{2\sqrt{n}}\sum_{i=0}^{n-1}\Lambda_{\star}^{1/2}\parens{\parens{\epsilon_{(i+1)h_n}-\epsilon_{ih_n}}^{\otimes 2}-2I_d}\Lambda_{\star}^{1/2}\\
		&=\frac{1}{2\sqrt{n}}\sum_{i=0}^{n-1}\Lambda_{\star}^{1/2}\parens{\parens{\epsilon_{(i+1)h_n}}^{\otimes 2}+\parens{\epsilon_{ih_n}}^{\otimes 2}+\parens{\epsilon_{(i+1)h_n}}\parens{\epsilon_{ih_n}}^T+\parens{\epsilon_{ih_n}}\parens{\epsilon_{(i+1)h_n}}^T-2I_d}\Lambda_{\star}^{1/2}\\
		&=\frac{1}{2\sqrt{n}}\sum_{i=1}^{n-1}\Lambda_{\star}^{1/2}\parens{2\parens{\epsilon_{ih_n}}^{\otimes 2}+\parens{\epsilon_{ih_n}}\parens{\epsilon_{(i-1)h_n}}^T+\parens{\epsilon_{(i-1)h_n}}\parens{\epsilon_{ih_n}}^T-2I_d}\Lambda_{\star}^{1/2}+o_P(1).
	\end{align*}
	Since
	\begin{align*}
		\E{\norm{\frac{1}{\sqrt{n}}\sum_{i=1}^{p_n}\CE{2\parens{\epsilon_{ih_n}}^{\otimes 2}+\parens{\epsilon_{ih_n}}\parens{\epsilon_{(i-1)h_n}}^T+\parens{\epsilon_{(i-1)h_n}}\parens{\epsilon_{ih_n}}^T-2I_d}{\mathcal{H}_{(i-1)h_n}}}}
		&=0
	\end{align*}
	and
	\begin{align*}
		\E{\frac{1}{n}\sum_{i=1}^{p_n}\CE{\norm{2\parens{\epsilon_{ih_n}}^{\otimes 2}+\parens{\epsilon_{ih_n}}\parens{\epsilon_{(i-1)h_n}}^T+\parens{\epsilon_{(i-1)h_n}}\parens{\epsilon_{ih_n}}^T-2I_d}^2}{\mathcal{H}_{(i-1)h_n}}}\to 0,
	\end{align*}
	we have
	\begin{align*}
		&\frac{1}{2\sqrt{n}}\sum_{i=0}^{n-1}\Lambda_{\star}^{1/2}\parens{\parens{\epsilon_{(i+1)h_n}-\epsilon_{ih_n}}^{\otimes 2}-2I_d}\Lambda_{\star}^{1/2}\\
		&=\frac{1}{2\sqrt{n}}\sum_{i=p_n}^{n-p_n-1}\Lambda_{\star}^{1/2}\parens{2\parens{\epsilon_{ih_n}}^{\otimes 2}+\parens{\epsilon_{ih_n}}\parens{\epsilon_{(i-1)h_n}}^T+\parens{\epsilon_{(i-1)h_n}}\parens{\epsilon_{ih_n}}^T-2I_d}\Lambda_{\star}^{1/2}+o_P(1)
	\end{align*}
	and
	\begin{align*}
		\sqrt{n}D_n&=\frac{\sqrt{n}}{k_n}\sum_{j=1}^{k_n-2}D_{j,n}'+o_P(1),
	\end{align*}
	where
	\begin{align*}
		D_{j,n}'&=\frac{1}{2p_n}\sum_{i=0}^{p_n-1}\vech\lparens{\Lambda_{\star}^{1/2}\lparens{2\parens{\epsilon_{j\Delta_n+ih_n}}^{\otimes 2}+\parens{\epsilon_{j\Delta_n+ih_n}}\parens{\epsilon_{j\Delta_n+(i-1)h_n}}^T}}\\
			&\qquad\qquad\qquad\qquad\qquad\rparens{\rparens{+\parens{\epsilon_{j\Delta_n+(i-1)h_n}}\parens{\epsilon_{j\Delta_n+ih_n}}^T-2I_d}\Lambda_{\star}^{1/2}},\\
		\parens{D_{j,n}'}^{l_1,l_2}&=\frac{1}{2p_n}\sum_{i=0}^{p_n-1}\lparens{\Lambda_{\star}^{1/2}\lparens{2\parens{\epsilon_{j\Delta_n+ih_n}}^{\otimes 2}+\parens{\epsilon_{j\Delta_n+ih_n}}\parens{\epsilon_{j\Delta_n+(i-1)h_n}}^T}}\\
		&\qquad\qquad\qquad\qquad\qquad\rparens{\rparens{+\parens{\epsilon_{j\Delta_n+(i-1)h_n}}\parens{\epsilon_{j\Delta_n+ih_n}}^T-2I_d}\Lambda_{\star}^{1/2}}^{l_1,l_2}.
	\end{align*}
	The conditional moment of $u_{i}$ is given as
	\begin{align*}
	\CE{D_{j,n}'}{\mathcal{H}_{j}^n}=\mathbf{0}.
	\end{align*}
	Note that
	\begin{align*}
		&\parens{\Lambda_{\star}^{1/2}\parens{2\parens{\epsilon_{ih_n}}^{\otimes 2}+\parens{\epsilon_{ih_n}}\parens{\epsilon_{(i-1)h_n}}^T+\parens{\epsilon_{(i-1)h_n}}\parens{\epsilon_{ih_n}}^T-2I_d}\Lambda_{\star}^{1/2}}^{l_1,l_2}\\
		&=2\parens{\sum_{k_1=1}^{d}\parens{\Lambda_{\star}^{1/2}}^{l_1,k_1}\parens{\epsilon_{ih_n}^{k_1}}}
		\parens{\sum_{k_2=1}^{d}\parens{\epsilon_{ih_n}^{k_2}}\parens{\Lambda_{\star}^{1/2}}^{k_2,l_2}}\\
		&\qquad+\parens{\sum_{k_1=1}^{d}\parens{\Lambda_{\star}^{1/2}}^{l_1,k_1}\parens{\epsilon_{ih_n}^{k_1}}}
		\parens{\sum_{k_2=1}^{d}\parens{\epsilon_{(i-1)h_n}^{k_2}}\parens{\Lambda_{\star}^{1/2}}^{k_2,l_2}}\\
		&\qquad+\parens{\sum_{k_1=1}^{d}\parens{\Lambda_{\star}^{1/2}}^{l_1,k_1}\parens{\epsilon_{(i-1)h_n}^{k_1}}}
		\parens{\sum_{k_2=1}^{d}\parens{\epsilon_{ih_n}^{k_2}}\parens{\Lambda_{\star}^{1/2}}^{k_2,l_2}}\\
		&\qquad-2\Lambda_{\star}^{l_1,l_2}
	\end{align*}
	and hence
	\begin{align*}
		&\tilde{D}_{ih_n,n}\parens{(l_1,l_2),(l_3,l_4)}\\
		&:=\mathbf{E}\lcrotchet{\parens{\Lambda_{\star}^{1/2}\parens{2\parens{\epsilon_{ih_n}}^{\otimes 2}+\parens{\epsilon_{ih_n}}\parens{\epsilon_{(i-1)h_n}}^T+\parens{\epsilon_{(i-1)h_n}}\parens{\epsilon_{ih_n}}^T-2I_d}\Lambda_{\star}^{1/2}}^{l_1,l_2}}\\
		&\qquad\times\rcrotchet{\left.\parens{\Lambda_{\star}^{1/2}\parens{2\parens{\epsilon_{ih_n}}^{\otimes 2}+\parens{\epsilon_{ih_n}}\parens{\epsilon_{(i-1)h_n}}^T+\parens{\epsilon_{(i-1)h_n}}\parens{\epsilon_{ih_n}}^T-2I_d}\Lambda_{\star}^{1/2}}^{l_3,l_4}\right|\mathcal{H}_{(i-1)h_n}^n}\\
		&=4\sum_{k=1}^{d}\parens{\Lambda_{\star}^{1/2}}^{l_1,k}\parens{\Lambda_{\star}^{1/2}}^{l_2,k}\parens{\Lambda_{\star}^{1/2}}^{l_3,k}\parens{\Lambda_{\star}^{1/2}}^{l_4,k}
		\parens{\E{\abs{\epsilon_0^k}^4}-3}\\
		&\quad+4\parens{\Lambda_{\star}^{l_1,l_3}\Lambda_{\star}^{l_2,l_4}+\Lambda_{\star}^{l_1,l_4}\Lambda_{\star}^{l_2,l_3}}\\
		&\quad+\Lambda_{\star}^{l_1,l_3}\parens{\sum_{k_2=1}^{d}\parens{\epsilon_{(i-1)h_n}^{k_2}}\parens{\Lambda_{\star}^{1/2}}^{k_2,l_2}}\parens{\sum_{k_2=1}^{d}\parens{\epsilon_{(i-1)h_n}^{k_2}}\parens{\Lambda_{\star}^{1/2}}^{k_2,l_4}}\\
		&\quad+\Lambda_{\star}^{l_1,l_4}\parens{\sum_{k_2=1}^{d}\parens{\epsilon_{(i-1)h_n}^{k_2}}\parens{\Lambda_{\star}^{1/2}}^{k_2,l_2}}
		\parens{\sum_{k_1=1}^{d}\parens{\Lambda_{\star}^{1/2}}^{l_3,k_1}\parens{\epsilon_{(i-1)h_n}^{k_1}}}\\
		&\quad+\Lambda_{\star}^{l_2,l_3}\parens{\sum_{k_1=1}^{d}\parens{\Lambda_{\star}^{1/2}}^{l_1,k_1}\parens{\epsilon_{(i-1)h_n}^{k_1}}}\parens{\sum_{k_2=1}^{d}\parens{\epsilon_{(i-1)h_n}^{k_2}}\parens{\Lambda_{\star}^{1/2}}^{k_2,l_4}}\\
		&\quad+\Lambda_{\star}^{l_2,l_4}\parens{\sum_{k_1=1}^{d}\parens{\Lambda_{\star}^{1/2}}^{l_1,k_1}\parens{\epsilon_{(i-1)h_n}^{k_1}}}\parens{\sum_{k_1=1}^{d}\parens{\Lambda_{\star}^{1/2}}^{l_3,k_1}\parens{\epsilon_{(i-1)h_n}^{k_1}}}
	\end{align*}
	and
	\begin{align*}
		\CE{\parens{\sum_{k_2=1}^{d}\parens{\epsilon_{(i-1)h_n}^{k_2}}\parens{\Lambda_{\star}^{1/2}}^{k_2,l_2}}\parens{\sum_{k_2=1}^{d}\parens{\epsilon_{(i-1)h_n}^{k_2}}\parens{\Lambda_{\star}^{1/2}}^{k_2,l_4}}}{\mathcal{H}_{(i-2)h_n}^n}
		= \Lambda_{\star}^{l_2,l_4}.
	\end{align*}
	These lead to
	\begin{align*}
		\frac{n}{k_n^2}\sum_{j=1}^{k_n-2}\CE{\parens{D_{j,n}'}^{l_1,l_2}\parens{D_{j,n}'}^{l_3,l_4}}{\mathcal{H}_j^n}
		&=\frac{n}{4n^2}\sum_{j=1}^{k_n-2}\CE{\sum_{i=0}^{p_n-1}\tilde{D}_{j\Delta_n+ih_n,n}\parens{(l_1,l_2),(l_3,l_4)}}{\mathcal{H}_j^n}\\
		&\cp W_1^{(l_1,l_2),(l_3,l_4)}.
	\end{align*}
	Then
	\begin{align*}
		\frac{n}{k_n^2}\sum_{j=1}^{k_n-2}\CE{\parens{D_{j,n}'}^{\otimes2}}{\mathcal{H}_{j}^n}\cp W_1.
	\end{align*}
	Finally we check
	\begin{align*}
		\E{\abs{\frac{n^2}{k_n^4}\sum_{j=1}^{k_n-2}\CE{\norm{D_{j,n}'}^4}{\mathcal{H}_{j}^n}}}\to 0.
	\end{align*}
	Note that $\tuborg{\epsilon_{ih_n}}$ are i.i.d. and when we denote
	\begin{align*}
		M_i=2\parens{\epsilon_{ih_n}}^{\otimes 2}+\parens{\epsilon_{ih_n}}\parens{\epsilon_{(i-1)h_n}}^T+\parens{\epsilon_{(i-1)h_n}}\parens{\epsilon_{ih_n}}^T-2I_d,
	\end{align*}
	then
	\begin{align*}
		\E{\norm{\sum_{i=1}^{p_n}M_i}^4}&=\E{\sum_{i_1}\sum_{i_2}\parens{\tr\parens{M_{i_1}M_{i_2}}}^2+\sum_{i_1}\sum_{i_2}\sum_{i_3}\sum_{i_4}\tr\parens{M_{i_1}M_{i_2}M_{i_3}M_{i_4}}}\\
		&\le Cp_n^2;
	\end{align*}
	They verify the result.\\
	
	\noindent\textbf{(Step 2): } Corollary \ref*{cor738} gives
	\begin{align*}
		\parens{\lm{Y}{j+1}-\lm{Y}{j}}^{\otimes 2}
		&=\parens{\Delta_nb(X_{j\Delta_n})+e_{j,n}}^{\otimes 2}
		+\parens{a(X_{j\Delta_n})\parens{\zeta_{j+1,n}+\zeta_{j+2,n}'}}^{\otimes 2}\\
		&\qquad+\parens{\Lambda_{\star}^{1/2}\parens{\lm{\epsilon}{j+1}-\lm{\epsilon}{j}}}^{\otimes2}\\
		&\qquad+\parens{\Delta_nb(X_{j\Delta_n})+e_{j,n}}\parens{a(X_{j\Delta_n})\parens{\zeta_{j+1,n}+\zeta_{j+2,n}'}}^T\\
		&\qquad+\parens{a(X_{j\Delta_n})\parens{\zeta_{j+1,n}+\zeta_{j+2,n}'}}\parens{\Delta_nb(X_{j\Delta_n})+e_{j,n}}^T\\
		&\qquad+\parens{\Delta_nb(X_{j\Delta_n})+e_{j,n}}
		\parens{\Lambda_{\star}^{1/2}\parens{\lm{\epsilon}{j+1}-\lm{\epsilon}{j}}}^T\\
		&\qquad+\parens{\Lambda_{\star}^{1/2}\parens{\lm{\epsilon}{j+1}-\lm{\epsilon}{j}}}
		\parens{\Delta_nb(X_{j\Delta_n})+e_{j,n}}^T\\
		&\qquad+\parens{a(X_{j\Delta_n})\parens{\zeta_{j+1,n}+\zeta_{j+2,n}'}}
		\parens{\Lambda_{\star}^{1/2}\parens{\lm{\epsilon}{j+1}-\lm{\epsilon}{j}}}^T\\
		&\qquad+\parens{\Lambda_{\star}^{1/2}\parens{\lm{\epsilon}{j+1}-\lm{\epsilon}{j}}}
		\parens{a(X_{j\Delta_n})\parens{\zeta_{j+1,n}+\zeta_{j+2,n}'}}^T.
	\end{align*}
	We define the random variable $U_n(\lambda)$ such that
	\begin{align*}
		U_n(\kappa)&:=\sqrt{k_n}\parens{\bar{Q}_n(A_{\kappa}(\cdot))-\frac{2}{3}\bar{M}_n\parens{\ip{A_\kappa(\cdot)}{
			c_n^{\tau}(\cdot,\alpha^{\star},\Lambda_{\star})}}}\\
		&=\frac{1}{\sqrt{k_n}\Delta_n}\sum_{j=1}^{k_n-2}\ip{A_{\kappa}(\lm{Y}{j-1})}{\parens{\lm{Y}{j+1}-\lm{Y}{j}}^{\otimes 2}}
		-\frac{2}{3\sqrt{k_n}}\sum_{j=1}^{k_n-2}\ip{A_{\kappa}(\lm{Y}{j-1})}{\parens{c_n^{\tau}(\lm{Y}{j-1},\alpha^{\star},\Lambda_{\star})}}
\\
		&=\frac{1}{\sqrt{k_n}\Delta_n}\sum_{j=1}^{k_n-2}
		\ip{A_{\kappa}(\lm{Y}{j-1})}{\parens{\Delta_nb(X_{j\Delta_n})+e_{j,n}}^{\otimes 2}}\\
		&\qquad+\frac{1}{\sqrt{k_n}\Delta_n}\sum_{j=1}^{k_n-2}
		\ip{A_{\kappa}(\lm{Y}{j-1})}{\parens{a(X_{j\Delta_n})\parens{\zeta_{j+1,n}+\zeta_{j+2,n}'}}^{\otimes 2}}\\
		&\qquad+\frac{1}{\sqrt{k_n}\Delta_n}\sum_{j=1}^{k_n-2}
		\ip{A_{\kappa}(\lm{Y}{j-1})}{\parens{\Lambda_{\star}^{1/2}\parens{\lm{\epsilon}{j+1}-\lm{\epsilon}{j}}}^{\otimes2}}\\
		&\qquad+\frac{2}{\sqrt{k_n}\Delta_n}\sum_{j=1}^{k_n-2}
		\ip{\bar{A}_{\kappa}(\lm{Y}{j-1})}{
			\parens{\Delta_nb(X_{j\Delta_n})+e_{j,n}}\parens{a(X_{j\Delta_n})\parens{\zeta_{j+1,n}+\zeta_{j+2,n}'}}^T}\\
		&\qquad+\frac{2}{\sqrt{k_n}\Delta_n}\sum_{j=1}^{k_n-2}
		\ip{\bar{A}_{\kappa}(\lm{Y}{j-1})}{
			\parens{\Delta_nb(X_{j\Delta_n})+e_{j,n}}
			\parens{\Lambda_{\star}^{1/2}\parens{\lm{\epsilon}{j+1}-\lm{\epsilon}{j}}}^T}\\
		&\qquad+\frac{2}{\sqrt{k_n}\Delta_n}\sum_{j=1}^{k_n-2}
		\ip{\bar{A}_{\kappa}(\lm{Y}{j-1})}{
			\parens{a(X_{j\Delta_n})\parens{\zeta_{j+1,n}+\zeta_{j+2,n}'}}
			\parens{\Lambda_{\star}^{1/2}\parens{\lm{\epsilon}{j+1}-\lm{\epsilon}{j}}}^T}\\
		&\qquad-\frac{2}{3\sqrt{k_n}}\sum_{j=1}^{k_n-2}\ip{A_\kappa(\lm{Y}{j-1})}
			{c(\lm{Y}{j-1},\alpha^{\star})+3\Delta_n^{\frac{2-\tau}{\tau-1}}\Lambda_{\star}},
	\end{align*}
	where $\bar{A}_{\kappa}:=\frac{1}{2}\parens{A_{\kappa}+A_{\kappa}^T}$. Related to this decomposition, let us $u_{j,n}^{(l)}(\kappa),\ l=1,\cdots,7$ such that
	\begin{align*}
		u_{j,n}^{(1)}(\kappa)
		&:=\ip{A_{\kappa}(\lm{Y}{j-1})}{a(X_{j\Delta_n})\parens{\frac{1}{\Delta_n}\parens{\zeta_{j+1,n}+\zeta_{j+2,n}'}^{\otimes 2}
				-\frac{2}{3}I_r}a(X_{j\Delta_n})^{T}},\\
		u_{j,n}^{(2)}(\kappa)
		&:=\frac{2}{3}\ip{A_{\kappa}(\lm{Y}{j-1})}{c(X_{j\Delta_n})-c(\lm{Y}{j-1})},\\
		u_{j,n}^{(3)}(\kappa)
		&:=\ip{A_{\kappa}(\lm{Y}{j-1})}{\Lambda_{\star}^{1/2}
			\parens{\frac{1}{\Delta_n}\parens{\lm{\epsilon}{j+1}-\lm{\epsilon}{j}}^{\otimes 2}-2\Delta_n^{\frac{2-\tau}{\tau-1}}I_d}\Lambda_{\star}^{1/2}}\\
		u_{j,n}^{(4)}(\kappa)
		&:=\frac{2}{\Delta_n}\ip{\bar{A}_{\kappa}(\lm{Y}{j-1})}{
			\parens{a(X_{j\Delta_n})\parens{\zeta_{j+1,n}+\zeta_{j+2,n}'}}
			\parens{\Lambda_{\star}^{1/2}\parens{\lm{\epsilon}{j+1}-\lm{\epsilon}{j}}}^T},\\
		u_{j,n}^{(5)}(\kappa)
		&:=\frac{1}{\Delta_n}\ip{A_{\kappa}(\lm{Y}{j-1})}{\parens{\Delta_nb(X_{j\Delta_n})+e_{j,n}}^{\otimes 2}},\\
		u_{j,n}^{(6)}(\kappa)
		&:=\frac{2}{\Delta_n}\ip{\bar{A}_{\kappa}(\lm{Y}{j-1})}{
			\parens{\Delta_nb(X_{j\Delta_n})+e_{j,n}}\parens{a(X_{j\Delta_n})\parens{\zeta_{j+1,n}+\zeta_{j+2,n}'}}^T},\\
		u_{j,n}^{(7)}(\kappa)
		&:=\frac{2}{\Delta_n}
		\ip{\bar{A}_{\kappa}(\lm{Y}{j-1})}{
			\parens{\Delta_nb(X_{j\Delta_n})+e_{j,n}}
			\parens{\Lambda_{\star}^{1/2}\parens{\lm{\epsilon}{j+1}-\lm{\epsilon}{j}}}^T}.
	\end{align*}
	Then we obtain
	\begin{align*}
		U_n(\kappa)=\sum_{l=1}^{7}U_n^{(l)}(\kappa),\ 
		U_n^{(l)}(\kappa)=\frac{1}{\sqrt{k_n}}\sum_{j=1}^{k_n-2}u_{j,n}^{(l)}(\kappa).
	\end{align*}
	With respect to $U_n^{(1)}(\kappa)$, we have
	\begin{align*}
		U_n^{(1)}(\kappa)=\frac{1}{\sqrt{k_n}}\sum_{j=1}^{k_n-2}u_{j,n}^{(1)}(\kappa)
		=\frac{1}{\sqrt{k_n}}\sum_{j=2}^{k_n-2}s_{j,n}^{(1)}(\kappa)+
		\frac{1}{\sqrt{k_n}}\sum_{j=2}^{k_n-2}\tilde{s}_{j,n}^{(1)}(\kappa)+o_P(1),
	\end{align*}
	where
	\begin{align*}
		s_{j,n}^{(1)}(\kappa)&=\ip{A_{\kappa}(\lm{Y}{j-1})}{
			a(X_{j\Delta_n})\parens{\frac{1}{\Delta_n}\parens{\zeta_{j+1,n}}^{\otimes 2}-m_nI_r}a(X_{j\Delta_n})^{T}}\\
		&\qquad+\ip{A_{\kappa}(\lm{Y}{j-2})}{
			a(X_{(j-1)\Delta_n})\parens{\frac{1}{\Delta_n}\parens{\zeta_{j+1,n}'}^{\otimes 2}-m_n'I_r}a(X_{(j-1)\Delta_n})^{T}}\\
		&\qquad+2\ip{\bar{A}_{\kappa}(\lm{Y}{j-2})}{a(X_{(j-1)\Delta_n})
			\parens{\frac{1}{\Delta_n}\parens{\zeta_{j,n}\parens{\zeta_{j+1,n}'}^T}}a(X_{(j-1)\Delta_n})^{T}},\\
		\tilde{s}_{j,n}^{(1)}(\kappa)&=\parens{\frac{1}{2p_n}+\frac{1}{6p_n^2}}
		\ip{A_{\kappa}(\lm{Y}{j-1})}{c(X_{j\Delta_n})}\\
		&\qquad+\parens{-\frac{1}{2p_n}+\frac{1}{6p_n^2}}\ip{A_{\kappa}(\lm{Y}{j-2})}{c(X_{(j-1)\Delta_n})},\\
		m_n&=\frac{1}{3}+\frac{1}{2p_n}+\frac{1}{6p_n^2},\\
		m_n'&=\frac{1}{3}-\frac{1}{2p_n}+\frac{1}{6p_n^2}.
	\end{align*}
	Note the following $L^1$ convergence
	\begin{align*}
		\E{\abs{\frac{1}{\sqrt{k_n}}\sum_{j=2}^{k_n-2}\tilde{s}_{j,n}^{(1)}(\kappa)}}
		\to0.
	\end{align*}
	Hence
	\begin{align*}
		U_n^{(1)}(\kappa)=\frac{1}{\sqrt{k_n}}\sum_{j=2}^{k_n-2}s_{j,n}^{(1)}(\kappa)+o_P(1)
	\end{align*}
	and it is enough to examine the first term of the right hand side.
	Firstly, Lemma \ref*{lem732} leads to
	\begin{align*}
		\CE{s_{j,n}^{(1)}(\kappa)}{\mathcal{H}_j^n}=0.
	\end{align*}
	Note the fact that for $\Re^r$-valued random vectors $\mathbf{x}$ and $\mathbf{y}$ such that
	\begin{align*}
		\crotchet{\begin{matrix}
			\mathbf{x}\\
			\mathbf{y}
			\end{matrix}}\sim N\parens{\mathbf{0}, \crotchet{\begin{matrix}
				\sigma_{11}I_r & \sigma_{12}I_r\\
				\sigma_{12}I_r & \sigma_{22}I_r
				\end{matrix}}},
	\end{align*}
	where $\sigma_{11}>0$, $\sigma_{22}>0$ and $\abs{\sigma_{12}}^2\le \sigma_{11}\sigma_{22}$, it holds for any $\Re^r\times\Re^r$-valued matrix $M$,
	\begin{align*}
		\E{\mathbf{y}\mathbf{x}^TM\mathbf{x}\mathbf{y}^T}=\sigma_{12}^2\parens{M+M^T}+\sigma_{11}\sigma_{22}\tr\parens{M}I_r
	\end{align*}
	and also the fact that for any square matrices $A$ and $B$ whose dimensions coincide,
	\begin{align*}
		\tr\parens{AB}+\tr\parens{AB^T}=2\tr\parens{\bar{A}\bar{B}},
	\end{align*}
	where $\bar{A}=\parens{A+A^T}/2$ and $\bar{B}=\parens{B+B^T}/2$.
	For all $\kappa_1,\ \kappa_2$
	\begin{align*}
		&\mathbf{E}\left[\ip{A_{\kappa_1}(\lm{Y}{j-1})}{
				a(X_{j\Delta_n})\parens{\frac{1}{\Delta_n}\parens{\zeta_{j+1,n}}^{\otimes 2}-m_nI_r}a(X_{j\Delta_n})^{T}}\right.\\
		&\qquad\left.\left.\times\ip{A_{\kappa_2}(\lm{Y}{j-1})}{
				a(X_{j\Delta_n})\parens{\frac{1}{\Delta_n}\parens{\zeta_{j+1,n}}^{\otimes 2}-m_nI_r}a(X_{j\Delta_n})^{T}}\right|\mathcal{H}_j^n\right]\\
		&=m_n^2\tr\parens{A_{\kappa_1}(\lm{Y}{j-1})c(X_{j\Delta_n})A_{\kappa_2}(\lm{Y}{j-1})c(X_{j\Delta_n})}\\
		&\qquad+m_n^2\tr\parens{A_{\kappa_1}(\lm{Y}{j-1})^Tc(X_{j\Delta_n})A_{\kappa_2}(\lm{Y}{j-1})c(X_{j\Delta_n})}
	\end{align*}
	and
	\begin{align*}
		&\mathbf{E}\left[\ip{A_{\kappa_1}(\lm{Y}{j-2})}{
			a(X_{(j-1)\Delta_n})\parens{\frac{1}{\Delta_n}\parens{\zeta_{j+1,n}'}^{\otimes 2}-m_n'I_r}a(X_{(j-1)\Delta_n})^{T}}\right.\\
		&\qquad\left.\left.\times\ip{A_{\kappa_2}(\lm{Y}{j-2})}{
			a(X_{(j-1)\Delta_n})\parens{\frac{1}{\Delta_n}\parens{\zeta_{j+1,n}'}^{\otimes 2}-m_n'I_r}a(X_{(j-1)\Delta_n})^{T}}\right|\mathcal{H}_j^n\right]\\
		&=\parens{m_n'}^2\tr\parens{A_{\kappa_1}(\lm{Y}{j-2})c(X_{(j-1)\Delta_n})A_{\kappa_2}(\lm{Y}{j-2})c(X_{(j-1)\Delta_n})}\\
		&\qquad+\parens{m_n'}^2\tr\parens{A_{\kappa_1}(\lm{Y}{j-2})^Tc(X_{(j-1)\Delta_n})A_{\kappa_2}(\lm{Y}{j-2})c(X_{(j-1)\Delta_n})}
	\end{align*}
	and
	\begin{align*}
		&\mathbf{E}\left[2\ip{\bar{A}_{\kappa_1}(\lm{Y}{j-2})}{a(X_{(j-1)\Delta_n})
			\parens{\frac{1}{\Delta_n}\parens{\zeta_{j,n}\parens{\zeta_{j+1,n}'}^T}}a(X_{(j-1)\Delta_n})^{T}}\right.\\
		&\qquad\left.\left.\times2\ip{\bar{A}_{\kappa_2}(\lm{Y}{j-2})}{a(X_{(j-1)\Delta_n})
			\parens{\frac{1}{\Delta_n}\parens{\zeta_{j,n}\parens{\zeta_{j+1,n}'}^T}}a(X_{(j-1)\Delta_n})^{T}}\right|\mathcal{H}_j^n\right]\\
		&=\frac{4m_n'}{\Delta_n}\parens{\zeta_{j,n}}^Ta(X_{(j-1)\Delta_n})^{T}\bar{A}_{\kappa_1}(\lm{Y}{j-2})c(X_{(j-1)\Delta_n})
		\bar{A}_{\kappa_2}(\lm{Y}{j-2})a(X_{(j-1)\Delta_n})
		\parens{\zeta_{j,n}}
	\end{align*}
	and
	\begin{align*}
		&\mathbf{E}\left[\ip{A_{\kappa_1}(\lm{Y}{j-1})}{
			a(X_{j\Delta_n})\parens{\frac{1}{\Delta_n}\parens{\zeta_{j+1,n}}^{\otimes 2}-m_nI_r}a(X_{j\Delta_n})^{T}}\right.\\
		&\qquad\left.\left.\times\ip{A_{\kappa_2}(\lm{Y}{j-2})}{
			a(X_{(j-1)\Delta_n})\parens{\frac{1}{\Delta_n}\parens{\zeta_{j+1,n}'}^{\otimes 2}-m_n'I_r}a(X_{(j-1)\Delta_n})^{T}}\right|\mathcal{H}_j^n\right]\\
		&=\chi_n^2 \tr\left\{a(X_{j\Delta_n})^{T}A_{\kappa_1}(\lm{Y}{j-1})
		a(X_{j\Delta_n})a(X_{(j-1)\Delta_n})^{T}A_{\kappa_2}(\lm{Y}{j-2})a(X_{(j-1)\Delta_n})\right\}\\
		&\qquad+\chi_n^2 \tr\left\{a(X_{j\Delta_n})^{T}A_{\kappa_1}(\lm{Y}{j-1})^T
		a(X_{j\Delta_n})a(X_{(j-1)\Delta_n})^{T}A_{\kappa_2}(\lm{Y}{j-2})a(X_{(j-1)\Delta_n})\right\}
	\end{align*}
	and
	\begin{align*}
		&\mathbf{E}\left[\ip{A_{\kappa_1}(\lm{Y}{j-1})}{
			a(X_{j\Delta_n})\parens{\frac{1}{\Delta_n}\parens{\zeta_{j+1,n}}^{\otimes 2}-m_nI_r}a(X_{j\Delta_n})^{T}}\right.\\
		&\qquad\times\left.\left.2\ip{\bar{A}_{\kappa_2}(\lm{Y}{j-2})}{a(X_{(j-1)\Delta_n})
			\parens{\frac{1}{\Delta_n}\parens{\zeta_{j,n}\parens{\zeta_{j+1,n}'}^T}}a(X_{(j-1)\Delta_n})^{T}}\right|\mathcal{H}_j^n\right]\\
		&=0
	\end{align*}
	and
	\begin{align*}
		&\mathbf{E}\left[\ip{A_{\kappa_1}(\lm{Y}{j-2})}{
			a(X_{(j-1)\Delta_n})\parens{\frac{1}{\Delta_n}\parens{\zeta_{j+1,n}'}^{\otimes 2}-m_n'I_r}a(X_{(j-1)\Delta_n})^{T}}\right.\\
		&\qquad\times\left.\left.2\ip{\bar{A}_{\kappa_2}(\lm{Y}{j-2})}{a(X_{(j-1)\Delta_n})
			\parens{\frac{1}{\Delta_n}\parens{\zeta_{j,n}\parens{\zeta_{j+1,n}'}^T}}a(X_{(j-1)\Delta_n})^{T}}\right|\mathcal{H}_j^n\right]\\
		&=0.
	\end{align*}
	Hence we obtain
	\begin{align*}
		&\CE{\parens{s_{j,n}^{(1)}(\kappa_1)}\parens{s_{j,n}^{(1)}(\kappa_2)}}{\mathcal{H}_j^n}\\
		&=m_n^2\tr\parens{A_{\kappa_1}(\lm{Y}{j-1})c(X_{j\Delta_n})A_{\kappa_2}(\lm{Y}{j-1})c(X_{j\Delta_n})}\\
		&\qquad+m_n^2\tr\parens{A_{\kappa_1}(\lm{Y}{j-1})^Tc(X_{j\Delta_n})A_{\kappa_2}(\lm{Y}{j-1})c(X_{j\Delta_n})}\\
		&\qquad+\parens{m_n'}^2\tr\parens{A_{\kappa_1}(\lm{Y}{j-2})c(X_{(j-1)\Delta_n})A_{\kappa_2}(\lm{Y}{j-2})c(X_{(j-1)\Delta_n})}\\
		&\qquad+\parens{m_n'}^2\tr\parens{A_{\kappa_1}(\lm{Y}{j-2})^Tc(X_{(j-1)\Delta_n})A_{\kappa_2}(\lm{Y}{j-2})c(X_{(j-1)\Delta_n})}\\
		&\qquad+\frac{4m_n'}{\Delta_n}\parens{\zeta_{j,n}}^Ta(X_{(j-1)\Delta_n})^{T}\bar{A}_{\kappa_1}(\lm{Y}{j-2})c(X_{(j-1)\Delta_n})
		\bar{A}_{\kappa_2}(\lm{Y}{j-2})a(X_{(j-1)\Delta_n})
		\parens{\zeta_{j,n}}\\
		&\qquad+\chi_n^2 \tr\left\{a(X_{j\Delta_n})^{T}A_{\kappa_1}(\lm{Y}{j-1})
		a(X_{j\Delta_n})a(X_{(j-1)\Delta_n})^{T}A_{\kappa_2}(\lm{Y}{j-2})a(X_{(j-1)\Delta_n})\right\}\\
		&\qquad+\chi_n^2 \tr\left\{a(X_{j\Delta_n})^{T}A_{\kappa_1}(\lm{Y}{j-1})^T
		a(X_{j\Delta_n})a(X_{(j-1)\Delta_n})^{T}A_{\kappa_2}(\lm{Y}{j-2})a(X_{(j-1)\Delta_n})\right\}\\
		&\qquad+\chi_n^2 \tr\left\{a(X_{j\Delta_n})^{T}A_{\kappa_2}(\lm{Y}{j-1})
		a(X_{j\Delta_n})a(X_{(j-1)\Delta_n})^{T}A_{\kappa_1}(\lm{Y}{j-2})a(X_{(j-1)\Delta_n})\right\}\\
		&\qquad+\chi_n^2 \tr\left\{a(X_{j\Delta_n})^{T}A_{\kappa_2}(\lm{Y}{j-1})^T
		a(X_{j\Delta_n})a(X_{(j-1)\Delta_n})^{T}A_{\kappa_1}(\lm{Y}{j-2})a(X_{(j-1)\Delta_n})\right\}.
	\end{align*}
	We have the following evaluation
	\begin{align*}
		&\frac{1}{k_n}\sum_{j=2}^{k_n}\CE{\frac{4m_n'}{\Delta_n}\parens{\zeta_{j,n}}^Ta(X_{(j-1)\Delta_n})^{T}\bar{A}_{\kappa_1}(\lm{Y}{j-2})c(X_{(j-1)\Delta_n})
			\bar{A}_{\kappa_2}(\lm{Y}{j-2})a(X_{(j-1)\Delta_n})
			\parens{\zeta_{j,n}}}{\mathcal{H}_j^n}\\
		&\cp \frac{4}{9}\nu_0\parens{\tr\parens{\bar{A}_{\kappa_1}(\cdot)c(\cdot)\bar{A}_{\kappa_2}(\cdot)c(\cdot)}}
	\end{align*}
	and
	\begin{align*}
		&\frac{1}{k_n^2}\sum_{j=2}^{k_n}\CE{\abs{\frac{4m_n'}{\Delta_n}\parens{\zeta_{j,n}}^Ta(X_{(j-1)\Delta_n})^{T}\bar{A}_{\kappa_1}(\lm{Y}{j-2})c(X_{(j-1)\Delta_n})
			\bar{A}_{\kappa_2}(\lm{Y}{j-2})a(X_{(j-1)\Delta_n})
			\parens{\zeta_{j,n}}}^2}{\mathcal{H}_j^n}\\
		&=o_P(1).
	\end{align*}
	Then we have
	\begin{align*}
		\frac{1}{k_n}\sum_{j=2}^{k_n-2}\CE{\parens{s_{j,n}^{(1)}(\kappa_1)}\parens{s_{j,n}^{(1)}(\kappa_2)}}{\mathcal{H}_j^n}
		\cp\nu_0\parens{\tr\parens{\bar{A}_{\kappa_1}(\cdot)c(\cdot)\bar{A}_{\kappa_2}(\cdot)c(\cdot)}}.
	\end{align*}
	Now let us consider the fourth conditional expectation. It can be evaluated such that
	\begin{align*}
		\CE{\parens{s_{j,n}^{(1)}(\kappa)}^4}{\mathcal{H}_j^n}
		&\le C\norm{a(X_{j\Delta_n})^{T}A_{\kappa}(\lm{Y}{j-1})a(X_{j\Delta_n})}^4\\
		&\qquad+C\norm{a(X_{(j-1)\Delta_n})^{T}A_{\kappa}(\lm{Y}{j-2})a(X_{(j-1)\Delta_n})}^4\\
		&\qquad +C\norm{a(X_{(j-1)\Delta_n})^{T}\bar{A}_{\kappa}(\lm{Y}{j-2})a(X_{(j-1)\Delta_n})}^4\frac{1}{\Delta_n^2}\norm{\zeta_{j,n}}^4
	\end{align*}
	and then
	\begin{align*}
		\E{\abs{\CE{\parens{s_{j,n}^{(1)}(\kappa)}^4}{\mathcal{H}_j^n}}}\le C.
	\end{align*}
	Therefore,
	\begin{align*}
		\E{\abs{\frac{1}{k_n^2}\sum_{j=2}^{k_n-2}\CE{\parens{s_{j,n}^{(1)}(\kappa)}^4}{\mathcal{H}_j^n}}}\to0.
	\end{align*}
	Next we consider $U_n^{(2)}(\kappa)=o_P(1)$. Because of Corollary \ref*{cor736},
	\begin{align*}
		\E{\abs{\frac{1}{\sqrt{k_n}}\sum_{j=1}^{k_n-2}\CE{u_{j,k}^{(2)}(\kappa)}{\mathcal{H}_j^n}}}
		\to0
	\end{align*}
	and
	\begin{align*}
		\E{\abs{\frac{1}{k_n}\sum_{j=1}^{k_n-2}\CE{\abs{u_{j,k}^{(2)}(\kappa)}^2}{\mathcal{H}_j^n}}}\to0.
	\end{align*}
	Hence $U_n^{(2)}(\kappa)=o_P(1)$ because of Lemma 9 in \citep{GeJ93}.We will see the asymptotic behaviour of $U_n^{(3)}(\kappa)$ in the next place. As $u_n^{(1)}(\kappa)$, $u_{j,n}^{(3)}(\kappa)$ contains $\mathcal{H}_{j+1}^{n}$-measurable $\lm{\epsilon}{j}$
	 and $\mathcal{H}_{j+2}^{n}$-measurable $\lm{\epsilon}{j+1}$. Hence we rewrite the summation as follows:
	\begin{align*}
		U_n^{(3)}(\kappa)=\frac{1}{\sqrt{k_n}}\sum_{j=1}^{k_n-2}u_{j,n}^{(3)}(\kappa)
		=\frac{1}{\sqrt{k_n}}\sum_{j=2}^{k_n-2}s_{j,n}^{(3)}(\kappa)+o_P(1),
	\end{align*}
	where
	\begin{align*}
		s_{j,n}^{(3)}(\kappa)&:=\ip{\parens{A_{\kappa}(\lm{Y}{j-2})+A_{\kappa}(\lm{Y}{j-1})}}{\Lambda_{\star}^{1/2}
			\parens{\frac{1}{\Delta_n}\parens{\lm{\epsilon}{j}}^{\otimes 2}-\Delta_n^{\frac{2-\tau}{\tau-1}}I_d}\Lambda_{\star}^{1/2}}\\
		&\qquad+\ip{A_{\kappa}(\lm{Y}{j-2})}{\Lambda_{\star}^{1/2}
			\parens{-\frac{2}{\Delta_n}\parens{\lm{\epsilon}{j-1}}\parens{\lm{\epsilon}{j}}^T}\Lambda_{\star}^{1/2}}.
	\end{align*}
	We can obtain
	\begin{align*}
		\CE{s_{j,n}^{(3)}(\kappa)}{\mathcal{H}_j^n}
		=0.
	\end{align*}
	For all $\kappa_1$ and $\kappa_2$, 
	\begin{align*}
		&s_{j,n}^{(3)}(\kappa_1)s_{j,n}^{(3)}(\kappa_2)\\
		&=\ip{\parens{A_{\kappa_1}(\lm{Y}{j-2})+A_{\kappa_1}(\lm{Y}{j-1})}}{\Lambda_{\star}^{1/2}
			\parens{\frac{1}{\Delta_n}\parens{\lm{\epsilon}{j}}^{\otimes 2}}\Lambda_{\star}^{1/2}}\\
		&\qquad\times\ip{\parens{A_{\kappa_2}(\lm{Y}{j-2})+A_{\kappa_2}(\lm{Y}{j-1})}}{\Lambda_{\star}^{1/2}
			\parens{\frac{1}{\Delta_n}\parens{\lm{\epsilon}{j}}^{\otimes 2}}\Lambda_{\star}^{1/2}}\\
		&\quad-\Delta_n^{\frac{2-\tau}{\tau-1}}\ip{\parens{A_{\kappa_1}(\lm{Y}{j-2})+A_{\kappa_1}(\lm{Y}{j-1})}}{\Lambda_{\star}^{1/2}
			\parens{\frac{1}{\Delta_n}\parens{\lm{\epsilon}{j}}^{\otimes 2}}\Lambda_{\star}^{1/2}}
		\ip{\parens{A_{\kappa_2}(\lm{Y}{j-2})+A_{\kappa_2}(\lm{Y}{j-1})}}{\Lambda_{\star}}\\
		&\quad-\Delta_n^{\frac{2-\tau}{\tau-1}}\ip{\parens{A_{\kappa_1}(\lm{Y}{j-2})+A_{\kappa_1}(\lm{Y}{j-1})}}{\Lambda_{\star}}\\
		&\qquad\times\ip{\parens{A_{\kappa_2}(\lm{Y}{j-2})+A_{\kappa_2}(\lm{Y}{j-1})}}{\Lambda_{\star}^{1/2}
			\parens{\frac{1}{\Delta_n}\parens{\lm{\epsilon}{j}}^{\otimes 2}-\Delta_n^{\frac{2-\tau}{\tau-1}}I_d}\Lambda_{\star}^{1/2}}\\
		&\quad+\ip{\parens{A_{\kappa_1}(\lm{Y}{j-2})+A_{\kappa_1}(\lm{Y}{j-1})}}{\Lambda_{\star}^{1/2}
			\parens{\frac{1}{\Delta_n}\parens{\lm{\epsilon}{j}}^{\otimes 2}}\Lambda_{\star}^{1/2}}\\
		&\quad\qquad\times\ip{A_{\kappa_2}(\lm{Y}{j-2})}{\Lambda_{\star}^{1/2}
			\parens{-\frac{2}{\Delta_n}\parens{\lm{\epsilon}{j-1}}\parens{\lm{\epsilon}{j}}^T}\Lambda_{\star}^{1/2}}\\
		&\quad-\Delta_n^{\frac{2-\tau}{\tau-1}}\ip{\parens{A_{\kappa_1}(\lm{Y}{j-2})+A_{\kappa_1}(\lm{Y}{j-1})}}{\Lambda_{\star}}
		\ip{A_{\kappa_2}(\lm{Y}{j-2})}{\Lambda_{\star}^{1/2}
			\parens{-\frac{2}{\Delta_n}\parens{\lm{\epsilon}{j-1}}\parens{\lm{\epsilon}{j}}^T}\Lambda_{\star}^{1/2}}\\
		&\quad+\ip{A_{\kappa_1}(\lm{Y}{j-2})}{\Lambda_{\star}^{1/2}
			\parens{-\frac{2}{\Delta_n}\parens{\lm{\epsilon}{j-1}}\parens{\lm{\epsilon}{j}}^T}\Lambda_{\star}^{1/2}}\\
		&\quad\qquad\times\ip{\parens{A_{\kappa_2}(\lm{Y}{j-2})+A_{\kappa_2}(\lm{Y}{j-1})}}{\Lambda_{\star}^{1/2}
			\parens{\frac{1}{\Delta_n}\parens{\lm{\epsilon}{j}}^{\otimes 2}}\Lambda_{\star}^{1/2}}\\
		&\quad-\Delta_n^{\frac{2-\tau}{\tau-1}}\ip{A_{\kappa_1}(\lm{Y}{j-2})}{\Lambda_{\star}^{1/2}
			\parens{-\frac{2}{\Delta_n}\parens{\lm{\epsilon}{j-1}}\parens{\lm{\epsilon}{j}}^T}\Lambda_{\star}^{1/2}}
		\ip{\parens{A_{\kappa_2}(\lm{Y}{j-2})+A_{\kappa_2}(\lm{Y}{j-1})}}{\Lambda_{\star}}\\
		&\quad+\frac{4}{\Delta_n^2}\parens{\lm{\epsilon}{j-1}}^T\Lambda_{\star}^{1/2}A_{\kappa_1}(\lm{Y}{j-2})\Lambda_{\star}^{1/2}\parens{\lm{\epsilon}{j}}
		\parens{\lm{\epsilon}{j}}^T\Lambda_{\star}^{1/2}\parens{A_{\kappa_2}(\lm{Y}{j-2})}^T\Lambda_{\star}^{1/2}\parens{\lm{\epsilon}{j-1}}.
	\end{align*}
	Hence we can evaluate
	\begin{align*}
		&\CE{s_{j,n}^{(3)}(\kappa_1)s_{j,n}^{(3)}(\kappa_2)}{\mathcal{H}_j^n}\\
		&=\frac{1}{\Delta_n^2}\mathbf{E}\left[\parens{\lm{\epsilon}{j}}^T\Lambda_{\star}^{1/2}\parens{A_{\kappa_1}(\lm{Y}{j-2})+A_{\kappa_1}(\lm{Y}{j-1})}\Lambda_{\star}^{1/2}
			\parens{\lm{\epsilon}{j}}^{\otimes 2}\right.\\
		&\qquad\qquad\left.\left.\times\Lambda_{\star}^{1/2}\parens{A_{\kappa_2}(\lm{Y}{j-2})+A_{\kappa_2}(\lm{Y}{j-1})}\Lambda_{\star}^{1/2}
		\parens{\lm{\epsilon}{j}}\right|\mathcal{H}_j^n\right]\\
		&\qquad-\parens{\Delta_n^{\frac{2-\tau}{\tau-1}}}^2\ip{\parens{A_{\kappa_1}(\lm{Y}{j-2})+A_{\kappa_1}(\lm{Y}{j-1})}}{\Lambda_{\star}}
		\ip{\parens{A_{\kappa_2}(\lm{Y}{j-2})+A_{\kappa_2}(\lm{Y}{j-1})}}{\Lambda_{\star}}\\
		&\qquad+\frac{4\Delta_n^{\frac{2-\tau}{\tau-1}}}{\Delta_n}\parens{\lm{\epsilon}{j-1}}^T\Lambda_{\star}^{1/2}A_{\kappa_1}(\lm{Y}{j-2})\Lambda_{\star}\parens{A_{\kappa_2}(\lm{Y}{j-2})}^T\Lambda_{\star}^{1/2}\parens{\lm{\epsilon}{j-1}}.
	\end{align*}
	Note the fact that for any $\Re^d\times\Re^d$-valued matrix $A$
	\begin{align*}
		\E{\parens{\lm{\epsilon}{j}}^{\otimes2}A\parens{\lm{\epsilon}{j}}^{\otimes2}}
		&=\frac{1}{p_n^2}\parens{2\bar{A}}+\frac{1}{p_n^2}\tr\parens{A}I_d
		+\crotchet{\parens{\frac{\E{\parens{\epsilon_{0}^i}^4}-3}{p_n^3}}A^{i,i}}_{i,i},
	\end{align*}
	where $\bar{A}=\parens{A+A^T}/2$. Therefore,
	\begin{align*}
		&\mathbf{E}\left[\parens{\lm{\epsilon}{j}}^T\Lambda_{\star}^{1/2}\parens{A_{\kappa_1}(\lm{Y}{j-2})+A_{\kappa_1}(\lm{Y}{j-1})}\Lambda_{\star}^{1/2}
		\parens{\lm{\epsilon}{j}}^{\otimes 2}\right.\\
		&\qquad\qquad\left.\left.\times\Lambda_{\star}^{1/2}\parens{A_{\kappa_2}(\lm{Y}{j-2})+A_{\kappa_2}(\lm{Y}{j-1})}\Lambda_{\star}^{1/2}
		\parens{\lm{\epsilon}{j}}\right|\mathcal{H}_j^n\right]\\
		&=\frac{2}{p_n^2}\tr\tuborg{\Lambda_{\star}^{1/2}\parens{\bar{A}_{\kappa_1}(\lm{Y}{j-2})+\bar{A}_{\kappa_1}(\lm{Y}{j-1})}\Lambda_{\star}^{1/2}\Lambda_{\star}^{1/2}\parens{A_{\kappa_2}(\lm{Y}{j-2})+A_{\kappa_2}(\lm{Y}{j-1})}\Lambda_{\star}^{1/2}}\\
		&\qquad+\frac{1}{p_n^2}\tr\tuborg{\Lambda_{\star}^{1/2}\parens{A_{\kappa_1}(\lm{Y}{j-2})+A_{\kappa_1}(\lm{Y}{j-1})}\Lambda_{\star}^{1/2}}
		\tr\tuborg{\Lambda_{\star}^{1/2}\parens{A_{\kappa_2}(\lm{Y}{j-2})+A_{\kappa_2}(\lm{Y}{j-1})}\Lambda_{\star}^{1/2}}\\
		&\qquad+\sum_{i=1}^{d}\parens{\frac{\E{\parens{\epsilon_{0}^i}^4}-3}{p_n^3}}\parens{\Lambda_{\star}^{1/2}\parens{A_{\kappa_1}(\lm{Y}{j-2})+A_{\kappa_1}(\lm{Y}{j-1})}\Lambda_{\star}^{1/2}}^{i,i}\\
		&\qquad\qquad\qquad\times\parens{\Lambda_{\star}^{1/2}\parens{A_{\kappa_2}(\lm{Y}{j-2})+A_{\kappa_2}(\lm{Y}{j-1})}\Lambda_{\star}^{1/2}}^{i,i}.
	\end{align*}
	Hence
	\begin{align*}
		&\frac{1}{k_n}\sum_{j=2}^{k_n-2}\CE{s_{j,n}^{(3)}(\kappa_1)s_{j,n}^{(3)}(\kappa_2)}{\mathcal{H}_j^n}\\
		&=\frac{1}{k_n}\sum_{j=2}^{k_n-2}\frac{2}{p_n^2\Delta_n^2}\tr\tuborg{\parens{\bar{A}_{\kappa_1}(\lm{Y}{j-2})+\bar{A}_{\kappa_1}(\lm{Y}{j-1})}\Lambda_{\star}\parens{A_{\kappa_2}(\lm{Y}{j-2})+A_{\kappa_2}(\lm{Y}{j-1})}\Lambda_{\star}}\\
		&\qquad+\frac{1}{k_n}\sum_{j=2}^{k_n-2}\frac{4\Delta_n^{\frac{2-\tau}{\tau-1}}}{\Delta_n}\parens{\lm{\epsilon}{j-1}}^T\Lambda_{\star}^{1/2}A_{\kappa_1}(\lm{Y}{j-2})\Lambda_{\star}\parens{A_{\kappa_1}(\lm{Y}{j-2})}^T\Lambda_{\star}^{1/2}\parens{\lm{\epsilon}{j-1}}\\
		&\qquad+o_P(1).
	\end{align*}
	We have
	\begin{align*}
		&\frac{1}{k_n}\sum_{j=2}^{k_n-2}\CE{\frac{4\Delta_n^{\frac{2-\tau}{\tau-1}}}{\Delta_n}\parens{\lm{\epsilon}{j-1}}^T\Lambda_{\star}^{1/2}A_{\kappa_1}(\lm{Y}{j-2})\Lambda_{\star}\parens{A_{\kappa_2}(\lm{Y}{j-2})}^T\Lambda_{\star}^{1/2}\parens{\lm{\epsilon}{j-1}}}
		{\mathcal{H}_j^n}\\
		&\cp\begin{cases}
		0 & \text{ if }\tau\in(1,2)\\
		4\tr\tuborg{A_{\kappa_1}(\cdot)\Lambda_{\star}A_{\kappa_2}(\cdot)\Lambda_{\star}} &\text{ if }\tau=2
		\end{cases}
	\end{align*}
	and
	\begin{align*}
		\frac{1}{k_n^2}\sum_{j=2}^{k_n-2}\frac{16\parens{\Delta_n^{\frac{2-\tau}{\tau-1}}}^2}{\Delta_n^2}\CE{\parens{\parens{\lm{\epsilon}{j-1}}^T\Lambda_{\star}^{1/2}A_{\kappa_1}(\lm{Y}{j-2})\Lambda_{\star}\parens{A_{\kappa_2}(\lm{Y}{j-2})}^T\Lambda_{\star}^{1/2}\parens{\lm{\epsilon}{j-1}}}^2}
		{\mathcal{H}_j^n}=o_P(1).
	\end{align*}
	To sum up if $\tau\in(1,2)$ we obtain
	\begin{align*}
		&\frac{1}{k_n}\sum_{j=2}^{k_n-2}\CE{s_{j,n}^{(3)}(\kappa_1)s_{j,n}^{(3)}(\kappa_2)}{\mathcal{H}_j^n}\cp 0
	\end{align*}
	and if $\tau=2$
	\begin{align*}
		&\frac{1}{k_n}\sum_{j=2}^{k_n-2}\CE{s_{j,n}^{(3)}(\kappa_1)s_{j,n}^{(3)}(\kappa_2)}{\mathcal{H}_j^n}\\
		&\cp {2}\nu_0\parens{\tr\tuborg{\parens{\bar{A}_{\kappa_1}(\cdot)+\bar{A}_{\kappa_1}(\cdot)}\Lambda_{\star}\parens{A_{\kappa_2}(\cdot)
					+A_{\kappa_2}(\cdot)}\Lambda_{\star}}}\\
		&\qquad+4\nu_0\parens{\tr\tuborg{A_{\kappa_1}(\cdot)\Lambda_{\star}A_{\kappa_2}(\cdot)\Lambda_{\star}}}\\
		&=12\nu_0\parens{\tr\tuborg{\bar{A}_{\kappa_1}(\cdot)\Lambda_{\star}\bar{A}_{\kappa_2}(\cdot)\Lambda_{\star}}}.
	\end{align*}
	Therefore, $U_n^{(3)}(\kappa)=o_P(1)$ if $\tau\in(1,2)$. The conditional fourth expectation of $s_{j,n}^{(3)}$ can be evaluated as
	\begin{align*}
		\CE{\parens{s_{j,n}^{(3)}(\kappa)}^4}{\mathcal{H}_j^n}
		\le C\norm{\parens{A_{\kappa}(\lm{Y}{j-2})+A_{\kappa}(\lm{Y}{j-1})}}^4+\frac{C}{\Delta_n^2}\norm{A_{\kappa}(\lm{Y}{j-2})}^4\norm{\lm{\epsilon}{j-1}}^4
	\end{align*}
	and hence
	\begin{align*}
		\E{\abs{\frac{1}{k_n^2}\sum_{j=2}^{k_n-2}\CE{\parens{s_{j,n}^{(3)}(\kappa)}^4}{\mathcal{H}_j^n}}}
		\to0.
	\end{align*}
	Next, we see the asymptotic behaviour of $U_{n}^{(4)}(\kappa)$. We again rewrite the summation as follows:
	\begin{align*}
		U_{n}^{(4)}(\kappa)&:=\frac{1}{\sqrt{k_n}}\sum_{j=1}^{k_n-2}\frac{2}{\Delta_n}\ip{\bar{A}_{\kappa}(\lm{Y}{j-1})}{
			a(X_{j\Delta_n})\parens{\zeta_{j+1,n}+\zeta_{j+2,n}'}
			\parens{\lm{\epsilon}{j+1}-\lm{\epsilon}{j}}^T\Lambda_{\star}^{1/2}}\\
		&=\frac{1}{\sqrt{k_n}}\sum_{j=2}^{k_n-2}s_{j,n}^{(4)}(\kappa)+o_P(1),
	\end{align*}
	where
	\begin{align*}
		s_{j,n}^{(4)}(\kappa)&:=\frac{2}{\Delta_n}\ip{\bar{A}_{\kappa}(\lm{Y}{j-2})}{
		a(X_{(j-1)\Delta_n})\parens{\zeta_{j,n}}
		\parens{\lm{\epsilon}{j}}^T\Lambda_{\star}^{1/2}}\\
		&\qquad-\frac{2}{\Delta_n}\ip{\bar{A}_{\kappa}(\lm{Y}{j-1})}{
			a(X_{j\Delta_n})\parens{\zeta_{j+1,n}}
			\parens{\lm{\epsilon}{j}}^T\Lambda_{\star}^{1/2}}\\
		&\qquad+\frac{2}{\Delta_n}\ip{\bar{A}_{\kappa}(\lm{Y}{j-2})}{
			a(X_{(j-1)\Delta_n})\parens{\zeta_{j+1,n}'}
			\parens{\lm{\epsilon}{j}}^T\Lambda_{\star}^{1/2}}\\
		&\qquad-\frac{2}{\Delta_n}\ip{\bar{A}_{\kappa}(\lm{Y}{j-2})}{
			a(X_{(j-1)\Delta_n})\parens{\zeta_{j+1,n}'}
			\parens{\lm{\epsilon}{j-1}}^T\Lambda_{\star}^{1/2}}.
	\end{align*}
	Hence it is enough to examine $\frac{1}{\sqrt{k_n}}\sum_{j=2}^{k_n-2}s_{j,n}^{(4)}(\kappa)$. It is obvious that
	\begin{align*}
		\CE{s_{j,n}^{(4)}(\kappa)}{\mathcal{H}_j^n}=0.
	\end{align*}
	For all $\kappa_1$ and $\kappa_2$,
	\begin{align*}
		&\parens{\frac{2}{\Delta_n}}^{-2}\CE{s_{j,n}^{(4)}(\kappa_1)s_{j,n}^{(4)}(\kappa_2)}{\mathcal{H}_j^n}\\
		&=\frac{1}{p_n}\parens{\zeta_{j,n}}^Ta(X_{(j-1)\Delta_n})^T\bar{A}_{\kappa_1}(\lm{Y}{j-2})\Lambda_{\star}\bar{A}_{\kappa_2}(\lm{Y}{j-2})a(X_{(j-1)\Delta_n})\parens{\zeta_{j,n}}\\
		&\qquad+\frac{m_n\Delta_n}{p_n}\tr\tuborg{\bar{A}_{\kappa_1}(\lm{Y}{j-1})\Lambda_{\star}\bar{A}_{\kappa_2}(\lm{Y}{j-1})c(X_{j\Delta_n})}\\
		&\qquad+\frac{m_n'\Delta_n}{p_n}\tr\tuborg{\bar{A}_{\kappa_1}(\lm{Y}{j-2})\Lambda_{\star}\bar{A}_{\kappa_2}(\lm{Y}{j-2})c(X_{(j-1)\Delta_n})}\\
		&\qquad+m_n'\Delta_n\parens{\lm{\epsilon}{j-1}}^T\Lambda_{\star}^{1/2}\bar{A}_{\kappa_1}(\lm{Y}{j-2})
		c(X_{(j-1)\Delta_n})\bar{A}_{\kappa_2}(\lm{Y}{j-2})\Lambda_{\star}^{1/2}\parens{\lm{\epsilon}{j-1}}\\
		&\qquad-\frac{\chi_n\Delta_n}{p_n}\tr\tuborg{\bar{A}_{\kappa_1}(\lm{Y}{j-1})\Lambda_{\star}\bar{A}_{\kappa_2}(\lm{Y}{j-2})a(X_{(j-1)\Delta_n})a(X_{j\Delta_n})^T}\\
		&\qquad-\frac{\chi_n\Delta_n}{p_n}\tr\tuborg{\bar{A}_{\kappa_2}(\lm{Y}{j-1})\Lambda_{\star}\bar{A}_{\kappa_1}(\lm{Y}{j-2})a(X_{(j-1)\Delta_n})a(X_{j\Delta_n})^T}
	\end{align*}
	and then
	\begin{align*}
		&\frac{1}{k_n}\sum_{j=2}^{k_n-2}\CE{s_{j,n}^{(4)}(\kappa_1)s_{j,n}^{(4)}(\kappa_2)}{\mathcal{H}_j^n}\\
		&=\frac{1}{k_n}\sum_{j=2}^{k_n-2}\parens{\frac{2}{\Delta_n}}^{2}\frac{1}{p_n}\parens{\zeta_{j,n}}^Ta(X_{(j-1)\Delta_n})^T\bar{A}_{\kappa_1}(\lm{Y}{j-2})\Lambda_{\star}\bar{A}_{\kappa_2}(\lm{Y}{j-2})a(X_{(j-1)\Delta_n})\parens{\zeta_{j,n}}\\
		&\qquad+\frac{1}{k_n}\sum_{j=2}^{k_n-2}\parens{\frac{2}{\Delta_n}}^{2}\frac{m_n\Delta_n}{p_n}\tr\tuborg{\bar{A}_{\kappa_1}(\lm{Y}{j-1})\Lambda_{\star}\bar{A}_{\kappa_2}(\lm{Y}{j-1})c(X_{j\Delta_n})}\\
		&\qquad+\frac{1}{k_n}\sum_{j=2}^{k_n-2}\parens{\frac{2}{\Delta_n}}^{2}\frac{m_n'\Delta_n}{p_n}\tr\tuborg{\bar{A}_{\kappa_1}(\lm{Y}{j-2})\Lambda_{\star}\bar{A}_{\kappa_2}(\lm{Y}{j-2})c(X_{(j-1)\Delta_n})}\\
		&\qquad+\frac{1}{k_n}\sum_{j=2}^{k_n-2}\parens{\frac{2}{\Delta_n}}^{2}m_n'\Delta_n\parens{\lm{\epsilon}{j-1}}^T\Lambda_{\star}^{1/2}\bar{A}_{\kappa_1}(\lm{Y}{j-2})
		c(X_{(j-1)\Delta_n})\bar{A}_{\kappa_2}(\lm{Y}{j-2})\Lambda_{\star}^{1/2}\parens{\lm{\epsilon}{j-1}}\\
		&\qquad-\frac{1}{k_n}\sum_{j=2}^{k_n-2}\parens{\frac{2}{\Delta_n}}^{2}\frac{\chi_n\Delta_n}{p_n}\tr\tuborg{\bar{A}_{\kappa_1}(\lm{Y}{j-1})\Lambda_{\star}\bar{A}_{\kappa_2}(\lm{Y}{j-2})a(X_{(j-1)\Delta_n})a(X_{j\Delta_n})^T}\\
		&\qquad-\frac{1}{k_n}\sum_{j=2}^{k_n-2}\parens{\frac{2}{\Delta_n}}^{2}\frac{\chi_n\Delta_n}{p_n}\tr\tuborg{\bar{A}_{\kappa_2}(\lm{Y}{j-1})\Lambda_{\star}\bar{A}_{\kappa_1}(\lm{Y}{j-2})a(X_{(j-1)\Delta_n})a(X_{j\Delta_n})^T}.
	\end{align*}
	We examine the terms in right hand side respectively. With respect to the first term,
	\begin{align*}
		&\frac{1}{k_n}\sum_{j=2}^{k_n-2}\parens{\frac{2}{\Delta_n}}^{2}\frac{1}{p_n}\CE{\parens{\zeta_{j,n}}^Ta(X_{(j-1)\Delta_n})^T\bar{A}_{\kappa_1}(\lm{Y}{j-2})\Lambda_{\star}\bar{A}_{\kappa_2}(\lm{Y}{j-2})a(X_{(j-1)\Delta_n})\parens{\zeta_{j,n}}}{\mathcal{H}_{j-1}^n}\\
		&\cp \begin{cases}
		0 & \text{ if }\tau\in(1,2)\\
		\frac{4}{3}\nu_0\parens{\tr\tuborg{\bar{A}_{\kappa_1}(\cdot)\Lambda_{\star}\bar{A}_{\kappa_2}(\cdot)c(\cdot)}} & \text{ if }\tau=2,
		\end{cases}
	\end{align*}
	and
	\begin{align*}
		&\frac{1}{k_n^2}\sum_{j=2}^{k_n-2}\parens{\frac{2}{\Delta_n}}^{4}\frac{1}{p_n^2}\CE{\abs{\parens{\zeta_{j,n}}^Ta(X_{(j-1)\Delta_n})^T\bar{A}_{\kappa_1}(\lm{Y}{j-2})\Lambda_{\star}\bar{A}_{\kappa_2}(\lm{Y}{j-2})a(X_{(j-1)\Delta_n})\parens{\zeta_{j,n}}}^2}{\mathcal{H}_{j-1}^n}\\
		&\cp0;
	\end{align*}
	therefore
	\begin{align*}
		&\frac{1}{k_n}\sum_{j=2}^{k_n-2}\parens{\frac{2}{\Delta_n}}^{2}\frac{1}{p_n}\parens{\zeta_{j,n}}^Ta(X_{(j-1)\Delta_n})^T\bar{A}_{\kappa_1}(\lm{Y}{j-2})\Lambda_{\star}\bar{A}_{\kappa_2}(\lm{Y}{j-2})a(X_{(j-1)\Delta_n})\parens{\zeta_{j,n}}\\
		&\cp \begin{cases}
		0 & \text{ if }\tau\in(1,2)\\
		\frac{4}{3}\nu_0\parens{\tr\tuborg{\bar{A}_{\kappa_1}(\cdot)\Lambda_{\star}\bar{A}_{\kappa_2}(\cdot)c(\cdot)}} & \text{ if }\tau=2
		\end{cases}
	\end{align*}
	because of Lemma 9 in \citep{GeJ93}. The fourth term can be evaluated as follows:
	\begin{align*}
		&\frac{1}{k_n}\sum_{j=2}^{k_n-2}\CE{\parens{\frac{2}{\Delta_n}}^{2}m_n'\Delta_n\parens{\lm{\epsilon}{j-1}}^T\Lambda_{\star}^{1/2}\bar{A}_{\kappa_1}(\lm{Y}{j-2})
		c(X_{(j-1)\Delta_n})\bar{A}_{\kappa_2}(\lm{Y}{j-2})\Lambda_{\star}^{1/2}\parens{\lm{\epsilon}{j-1}}}{\mathcal{H}_{j-1}^n}\\
		&\cp \begin{cases}
		0 & \text{ if }\tau\in(1,2)\\
		\frac{4}{3}\nu_0\parens{\tr\tuborg{\bar{A}_{\kappa_1}(\cdot)\Lambda_{\star}\bar{A}_{\kappa_2}(\cdot)c(\cdot)}} & \text{ if }\tau=2,
		\end{cases}
	\end{align*}
	and
	\begin{align*}
		&\frac{1}{k_n^2}\sum_{j=2}^{k_n-2}\CE{\parens{\frac{2}{\Delta_n}}^{4}\parens{m_n'\Delta_n}^2\abs{\parens{\lm{\epsilon}{j-1}}^T\Lambda_{\star}^{1/2}\bar{A}_{\kappa_1}(\lm{Y}{j-2})
			c(X_{(j-1)\Delta_n})\bar{A}_{\kappa_2}(\lm{Y}{j-2})\Lambda_{\star}^{1/2}\parens{\lm{\epsilon}{j-1}}}^2}{\mathcal{H}_{j-1}^n}\\
		&\cp 0;
	\end{align*}
	then as the first term we obtain
	\begin{align*}
		&\frac{1}{k_n}\sum_{j=2}^{k_n-2}\parens{\frac{2}{\Delta_n}}^{2}m_n'\Delta_n\parens{\lm{\epsilon}{j-1}}^T\Lambda_{\star}^{1/2}\bar{A}_{\kappa_1}(\lm{Y}{j-2})
		c(X_{(j-1)\Delta_n})\bar{A}_{\kappa_2}(\lm{Y}{j-2})\Lambda_{\star}^{1/2}\parens{\lm{\epsilon}{j-1}}\\
		&\cp \begin{cases}
		0 & \text{ if }\tau\in(1,2)\\
		\frac{4}{3}\nu_0\parens{\tr\tuborg{\bar{A}_{\kappa_1}(\cdot)\Lambda_{\star}\bar{A}_{\kappa_2}(\cdot)c(\cdot)}} & \text{ if }\tau=2.
		\end{cases}
	\end{align*}
	As for the other terms, we have
	\begin{align*}
		&\frac{1}{k_n}\sum_{j=2}^{k_n-2}\parens{\frac{2}{\Delta_n}}^{2}\frac{m_n\Delta_n}{p_n}\tr
		\tuborg{\bar{A}_{\kappa_1}(\lm{Y}{j-1})\Lambda_{\star}\bar{A}_{\kappa_2}(\lm{Y}{j-1})c(X_{j\Delta_n})}\\
		&\quad\cp \begin{cases}
		0 & \text{ if }\tau\in(1,2)\\
		\frac{4}{3}\nu_0\parens{\tr\tuborg{\bar{A}_{\kappa_1}(\cdot)\Lambda_{\star}\bar{A}_{\kappa_2}(\cdot)c(\cdot)}} & \text{ if }\tau=2,
		\end{cases}\\
		&\frac{1}{k_n}\sum_{j=2}^{k_n-2}\parens{\frac{2}{\Delta_n}}^{2}\frac{m_n'\Delta_n}{p_n}\tr
		\tuborg{\bar{A}_{\kappa_1}(\lm{Y}{j-2})\Lambda_{\star}\bar{A}_{\kappa_2}(\lm{Y}{j-2})c(X_{(j-1)\Delta_n})}\\
		&\quad\cp \begin{cases}
		0 & \text{ if }\tau\in(1,2)\\
		\frac{4}{3}\nu_0\parens{\tr\tuborg{\bar{A}_{\kappa_1}(\cdot)\Lambda_{\star}\bar{A}_{\kappa_2}(\cdot)c(\cdot)}} & \text{ if }\tau=2,
		\end{cases}\\
		&\frac{1}{k_n}\sum_{j=2}^{k_n-2}\parens{\frac{2}{\Delta_n}}^{2}\frac{\chi_n\Delta_n}{p_n}\tr\tuborg{\bar{A}_{\kappa_1}(\lm{Y}{j-1})\Lambda_{\star}\bar{A}_{\kappa_2}(\lm{Y}{j-2})a(X_{(j-1)\Delta_n})a(X_{j\Delta_n})^T}\\
		&\quad\cp \begin{cases}
		0 & \text{ if }\tau\in(1,2)\\
		\frac{2}{3}\nu_0\parens{\tr\tuborg{\bar{A}_{\kappa_1}(\cdot)\Lambda_{\star}\bar{A}_{\kappa_2}(\cdot)c(\cdot)}} & \text{ if }\tau=2,
		\end{cases}\\
		&\frac{1}{k_n}\sum_{j=2}^{k_n-2}\parens{\frac{2}{\Delta_n}}^{2}\frac{\chi_n\Delta_n}{p_n}\tr\tuborg{\bar{A}_{\kappa_2}(\lm{Y}{j-1})\Lambda_{\star}\bar{A}_{\kappa_1}(\lm{Y}{j-2})a(X_{(j-1)\Delta_n})a(X_{j\Delta_n})^T}\\
		&\quad\cp \begin{cases}
		0 & \text{ if }\tau\in(1,2)\\
		\frac{2}{3}\nu_0\parens{\tr\tuborg{\bar{A}_{\kappa_2}(\cdot)\Lambda_{\star}\bar{A}_{\kappa_1}(\cdot)c(\cdot)}} & \text{ if }\tau=2.
		\end{cases}
	\end{align*}
	Note the following fact that
	\begin{align*}
		\tr\tuborg{\bar{A}_{\kappa_2}(\cdot)\Lambda_{\star}\bar{A}_{\kappa_1}(\cdot)c(\cdot)}=\tr\tuborg{\bar{A}_{\kappa_1}(\cdot)\Lambda_{\star}\bar{A}_{\kappa_2}(\cdot)c(\cdot)}.
	\end{align*}
	In summary we obtain
	\begin{align*}
		&\frac{1}{k_n}\sum_{j=2}^{k_n-2}\CE{s_{j,n}^{(4)}(\kappa_1)s_{j,n}^{(4)}(\kappa_2)}{\mathcal{H}_j^n}\\
		&\quad\cp \begin{cases}
		0 & \text{ if }\tau\in(1,2)\\
		4\nu_0\parens{\tr\tuborg{\bar{A}_{\kappa_2}(\cdot)\Lambda_{\star}\bar{A}_{\kappa_1}(\cdot)c(\cdot)}} & \text{ if }\tau=2.
		\end{cases}
	\end{align*}
	Hence $U_n^{(4)}(\kappa)=o_P(1)$ if $\tau\in(1,2)$. The conditional fourth moment can be evaluated as
	\begin{align*}
		\E{\abs{\frac{1}{k_n^2}\sum_{j=2}^{k_n-2}\CE{\abs{s_{j,n}^{(4)}(\kappa)}^4}{\mathcal{H}_j^n}}}\to 0.
	\end{align*}
	In the next place, we can see the $L^1$ convergence of $U_n^{(5)}(\kappa)$ such that
	\begin{align*}
		\E{\abs{U_n^{(5)}(\kappa)}}
		&\to0.
	\end{align*}
	To show $U_n^{(6)}(\kappa)=o_P(1)$ and $U_n^{(7)}(\kappa)=o_P(1)$, we use Lemma 9 in \citep{GeJ93}. We have
	\begin{align*}
		\E{\abs{\frac{1}{\sqrt{k_n}}\sum_{j=1}^{k_n-2}\CE{u_{j,n}^{(6)}(\kappa)}{\mathcal{H}_j^n}}}\to 0
	\end{align*}
	because of Proposition \ref*{pro737}, and
	\begin{align*}
		\E{\abs{\frac{1}{k_n}\sum_{j=1}^{k_n-2}\CE{\abs{u_{j,n}^{(6)}(\kappa)}^2}{\mathcal{H}_j^n}}}\to0.
	\end{align*}
	Therefore, $U_n^{(6)}(\kappa)=o_P(1)$. We also obtain
	\begin{align*}
		\CE{u_{j,n}^{(7)}(\kappa)}{\mathcal{H}_j^n}=0
	\end{align*}
	and
	\begin{align*}
		\E{\abs{\frac{1}{k_n}\sum_{j=1}^{k_n-2}\CE{\abs{u_{j,n}^{(7)}(\kappa)}^2}{\mathcal{H}_j^n}}}\to 0. 
	\end{align*}
	Hence $U_n^{(7)}(\kappa)=o_P(1)$.
	
	Finally we see the covariance structure among $U_n^{(1)}$, $U_n^{(3)}$ and $U_n^{(4)}$ when $\tau=2$. Because of the independence of $\tuborg{w_t}$ and $\tuborg{\epsilon_{ih_n}}$, for all $\kappa_1$ and $\kappa_2$,
	\begin{align*}
		\CE{s_{j,n}^{(1)}(\kappa_1)s_{j,n}^{(3)}(\kappa_2)}{\mathcal{H}_j^n}=\CE{s_{j,n}^{(1)}(\kappa_1)}{\mathcal{H}_j^n}\CE{s_{j,n}^{(3)}(\kappa_2)}{\mathcal{H}_j^n}=0.
	\end{align*}
	With respect to the covariance between $U_n^{(1)}$ and $U_n^{(4)}$, the independence of $\tuborg{w_t}$ and $\tuborg{\epsilon_{ih_n}}$ leads to
	\begin{align*}
		\CE{s_{j,n}^{(1)}(\kappa_1)s_{j,n}^{(4)}(\kappa_2)}{\mathcal{H}_j^n}
		=-\frac{4m_n'}{\Delta_n}\parens{\zeta_{j,n}}^{T}a(X_{(j-1)\Delta_n})^T\bar{A}_{\kappa_1}(\lm{Y}{j-2})c(X_{(j-1)\Delta_n})\bar{A}_{\kappa_2}(\lm{Y}{j-2})
		\Lambda_{\star}^{1/2}\parens{\lm{\epsilon}{j-1}}.
	\end{align*}
	Hence
	\begin{align*}
		&\frac{1}{k_n}\sum_{j=2}^{k_n-2}\CE{s_{j,n}^{(1)}(\kappa_1)s_{j,n}^{(4)}(\kappa_2)}{\mathcal{H}_j^n}\\
		&=-\frac{1}{k_n}\sum_{j=2}^{k_n-2}\frac{4m_n'}{\Delta_n}\parens{\zeta_{j,n}}^{T}a(X_{(j-1)\Delta_n})^T\bar{A}_{\kappa_1}(\lm{Y}{j-2})c(X_{(j-1)\Delta_n})\bar{A}_{\kappa_2}(\lm{Y}{j-2})
		\Lambda_{\star}^{1/2}\parens{\lm{\epsilon}{j-1}}.
	\end{align*}
	We have
	\begin{align*}
		\CE{-\frac{4m_n'}{\Delta_n}\parens{\zeta_{j,n}}^{T}a(X_{(j-1)\Delta_n})^T\bar{A}_{\kappa_1}(\lm{Y}{j-2})c(X_{(j-1)\Delta_n})\bar{A}_{\kappa_2}(\lm{Y}{j-2})
		\Lambda_{\star}^{1/2}\parens{\lm{\epsilon}{j-1}}}{\mathcal{H}_{j-1}^n}
		=0
	\end{align*}
	and
	\begin{align*}
		&\CE{\abs{-\frac{4m_n'}{\Delta_n}\parens{\zeta_{j,n}}^{T}a(X_{(j-1)\Delta_n})^T\bar{A}_{\kappa_1}(\lm{Y}{j-2})c(X_{(j-1)\Delta_n})\bar{A}_{\kappa_2}(\lm{Y}{j-2})
			\Lambda_{\star}^{1/2}\parens{\lm{\epsilon}{j-1}}}^2}{\mathcal{H}_{j-1}^n}\\
		&\le C\CE{\norm{a(X_{(j-1)\Delta_n})^T\bar{A}_{\kappa_1}(\lm{Y}{j-2})c(X_{(j-1)\Delta_n})\bar{A}_{\kappa_2}(\lm{Y}{j-2})
				\Lambda_{\star}^{1/2}}^2}{\mathcal{H}_{j-1}^n}.
	\end{align*}
	They verify
	\begin{align*}
		&\frac{1}{k_n^2}\sum_{j=2}^{k_n-2}\CE{\CE{s_{j,n}^{(1)}(\kappa_1)s_{j,n}^{(4)}(\kappa_2)}{\mathcal{H}_j^n}}{\mathcal{H}_{j-1}^n}=0
	\end{align*}
	and
	\begin{align*}
		\E{\abs{\frac{1}{k_n}\sum_{j=2}^{k_n-2}\CE{\abs{\CE{s_{j,n}^{(1)}(\kappa_1)s_{j,n}^{(4)}(\kappa_2)}{\mathcal{H}_j^n}}^2}{
					\mathcal{H}_{j-1}^n}}}
		\to 0.
	\end{align*}
	Therefore,
	\begin{align*}
		\frac{1}{k_n}\sum_{j=2}^{k_n-2}\CE{s_{j,n}^{(1)}(\kappa_1)s_{j,n}^{(4)}(\kappa_2)}{\mathcal{H}_j^n}=o_P(1)
	\end{align*}
	because of Lemma 9 in \citep{GeJ93}.
	Now we examine the covariance structure between $U_n^{(3)}(\kappa_1)$ and $U_n^{(4)}(\kappa_2)$. Again 
	\begin{align*}
		\CE{s_{j,n}^{(3)}(\kappa_1)s_{j,n}^{(4)}(\kappa_2)}{\mathcal{H}_j^n}
		=-\frac{4}{p_n\Delta_n^2}\parens{\lm{\epsilon}{j-1}}^T\Lambda_{\star}^{1/2}A_{\kappa_1}(\lm{Y}{j-2})\Lambda_{\star}\bar{A}_{\kappa_2}(\lm{Y}{j-2})a(X_{(j-1)\Delta_n})\parens{\zeta_{j,n}}.
	\end{align*}
	We also the conditional expectation with respect to $\mathcal{H}_{j-1}^n$. We have
	\begin{align*}
		\CE{\CE{s_{j,n}^{(3)}(\kappa_1)s_{j,n}^{(4)}(\kappa_2)}{\mathcal{H}_j^n}}{\mathcal{H}_{j-1}^n}
		=0
	\end{align*}
	and
	\begin{align*}
	\CE{\abs{\CE{s_{j,n}^{(3)}(\kappa_1)s_{j,n}^{(4)}(\kappa_2)}{\mathcal{H}_j^n}}^2}{\mathcal{H}_{j-1}^n}\le \frac{C}{p_n^3\Delta_n^3}\norm{\Lambda_{\star}^{1/2}A_{\kappa_1}(\lm{Y}{j-2})\Lambda_{\star}\bar{A}_{\kappa_2}(\lm{Y}{j-2})a(X_{(j-1)\Delta_n})}^2.
	\end{align*}
	Hence
	\begin{align*}
		\frac{1}{k_n}\sum_{j=2}^{k_n-2}\CE{\CE{s_{j,n}^{(3)}(\kappa_1)s_{j,n}^{(4)}(\kappa_2)}{\mathcal{H}_j^n}}{\mathcal{H}_{j-1}^n}=0
	\end{align*}
	and
	\begin{align*}
		\E{\abs{\frac{1}{k_n^2}\sum_{j=2}^{k_n-2}\CE{\abs{\CE{s_{j,n}^{(3)}(\kappa_1)s_{j,n}^{(4)}(\kappa_2)}{\mathcal{H}_j^n}}^2
				}{\mathcal{H}_{j-1}^n}}}\to 0.
	\end{align*}
	They lead to
	\begin{align*}
		\frac{1}{k_n}\sum_{j=2}^{k_n-2}\CE{s_{j,n}^{(3)}(\kappa_1)s_{j,n}^{(4)}(\kappa_2)}{\mathcal{H}_j^n}=o_P(1)
	\end{align*}
	by Lemma 9 in \citep{GeJ93}.\\

	\noindent \textbf{(Step 3): }We check the following decomposition
	\begin{align*}
		f_\lambda(\lm{Y}{j-1})\parens{\lm{Y}{j+1}-\lm{Y}{j}-\Delta_nb(\lm{Y}{j-1})}
		&=f_\lambda(\lm{Y}{j-1})\parens{\lm{Y}{j+1}-\lm{Y}{j}-\Delta_nb(X_{j\Delta_n})}\\
		&\qquad+f_\lambda(\lm{Y}{j-1})\Delta_n\parens{b(X_{j\Delta_n})-b(\lm{Y}{j-1})},
	\end{align*}
	and because of Corollary \ref*{cor738} we also have
	\begin{align*}
		f_\lambda(\lm{Y}{j-1})\parens{\lm{Y}{j+1}-\lm{Y}{j}-\Delta_nb(X_{j\Delta_n})}
		&=f_\lambda(\lm{Y}{j-1})a(X_{j\Delta_n})\parens{\zeta_{j+1,n}+\zeta_{j+2,n}'}\\
		&\qquad+f_\lambda(\lm{Y}{j-1})\Lambda_{\star}^{1/2}\parens{\lm{\epsilon}{j+1}-\lm{\epsilon}{j}}+f_\lambda(\lm{Y}{j-1})e_{j,n},
	\end{align*}
	and then
	\begin{align*}
		f_\lambda(\lm{Y}{j-1})\parens{\lm{Y}{j+1}-\lm{Y}{j}-\Delta_nb(\lm{Y}{j-1})}
		&=f_\lambda(\lm{Y}{j-1})a(X_{j\Delta_n})\parens{\zeta_{j+1,n}+\zeta_{j+2,n}'}\\
		&\qquad+f_\lambda(\lm{Y}{j-1})\Lambda_{\star}^{1/2}\parens{\lm{\epsilon}{j+1}-\lm{\epsilon}{j}}+f_\lambda(\lm{Y}{j-1})e_{j,n}\\
		&\qquad+f_\lambda(\lm{Y}{j-1})\Delta_n\parens{b(X_{j\Delta_n})-b(\lm{Y}{j-1})}.
	\end{align*}
	Here we can rewrite $\sqrt{k_n\Delta_n} \bar{D}_n\parens{f_\lambda(\cdot)}$ as
	\begin{align*}
		\sqrt{k_n\Delta_n} \bar{D}_n\parens{f_\lambda(\cdot)}=\bar{R}_n^{(1)}(\lambda)+\bar{R}_n^{(2)}(\lambda)+\bar{R}_n^{(3)}(\lambda)+\bar{R}_n^{(4)}(\lambda),
	\end{align*}
	where
	\begin{align*}
		\bar{R}_n^{(1)}(\lambda)&:=\frac{1}{\sqrt{k_n\Delta_n}}\sum_{j=1}^{k_n-2}f_\lambda(\lm{Y}{j-1})a(X_{j\Delta_n})\parens{\zeta_{j+1,n}+\zeta_{j+2,n}'},\\
		\bar{R}_n^{(2)}(\lambda)&:=\frac{1}{\sqrt{k_n\Delta_n}}\sum_{j=1}^{k_n-2}f_\lambda(\lm{Y}{j-1})\Lambda_{\star}^{1/2}\parens{\lm{\epsilon}{j+1}-\lm{\epsilon}{j}},\\
		\bar{R}_n^{(3)}(\lambda)&:=\frac{1}{\sqrt{k_n\Delta_n}}\sum_{j=1}^{k_n-2}f_\lambda(\lm{Y}{j-1})e_{j,n},\\
		\bar{R}_n^{(4)}(\lambda)&:=\frac{1}{\sqrt{k_n\Delta_n}}\sum_{j=1}^{k_n-2}f_\lambda(\lm{Y}{j-1})
		\Delta_n\parens{b(X_{j\Delta_n})-b(\lm{Y}{j-1})}.
	\end{align*}
	Hence it is enough to see asymptotic behaviour of $\bar{R}$'s and firstly we examine that of $R_{n}^{(1)}$. We define the $\mathcal{H}_{j+1}^n$-measurable random variable 
	\begin{align*}
		r_{j,n}^{(1)}(\lambda)&:=\frac{1}{\sqrt{k_n\Delta_n}}f_\lambda(\lm{Y}{j-1})a(X_{j\Delta_n})\zeta_{j+1,n}+\frac{1}{\sqrt{k_n\Delta_n}}f_\lambda(\lm{Y}{j-2})a(X_{(j-1)\Delta_n})\zeta_{j+1,n}'
	\end{align*}
	and then
	\begin{align*}
		\bar{R}_n^{(1)}(\lambda)&=\frac{1}{\sqrt{k_n\Delta_n}}\sum_{j=1}^{k_n-2}f_\lambda(\lm{Y}{j-1})a(X_{j\Delta_n})\parens{\zeta_{j+1,n}+\zeta_{j+2,n}'}\\
		&=\sum_{j=2}^{k_n-2}r_{j,n}^{(1)}(\lambda)+o_P(1).
	\end{align*}
	Obviously
	\begin{align*}
		\CE{r_{j,n}^{(1)}(\lambda)}{\mathcal{H}_j^n}=0.
	\end{align*}
	With respect to the second moment, for all $\lambda_1,\lambda_2\in\{1,\cdots,m_2\}$, by Lemma \ref*{lem732}, 
	\begin{align*}
		&\CE{\parens{r_{j,n}^{(1)}(\lambda_1)}\parens{r_{j,n}^{(1)}(\lambda_2)}}{\mathcal{H}_j^n}\\
		&=\frac{1}{k_n\Delta_n}\Delta_n\parens{\frac{1}{3}+\frac{1}{2p_n}+\frac{1}{6p_n^2}}f_{\lambda_1}(\lm{Y}{j-1})c(X_{j\Delta_n})\parens{f_{\lambda_2}(\lm{Y}{j-1})}^T\\
		&\qquad+\frac{1}{k_n\Delta_n}\Delta_n\parens{\frac{1}{3}-\frac{1}{2p_n}+\frac{1}{6p_n^2}}f_{\lambda_1}(\lm{Y}{j-2})c(X_{(j-1)\Delta_n})\parens{f_{\lambda_2}(\lm{Y}{j-2})}^T\\
		&\qquad+\frac{2}{k_n\Delta_n}\frac{\Delta_n}{6}\parens{1-\frac{1}{p_n^2}}f_{\lambda_1}(\lm{Y}{j-1})a(X_{j\Delta_n})\parens{a(X_{(j-1)\Delta_n})}^T\parens{f_{\lambda_2}(\lm{Y}{j-2})}^T.
	\end{align*}
	Therefore,
	\begin{align*}
		\sum_{j=2}^{k_n-2}\CE{\parens{r_{j,n}^{(1)}(\lambda_1)}\parens{r_{j,n}^{(1)}(\lambda_2)}}{\mathcal{H}_j^n}
		\cp\nu_0\parens{\parens{f_{\lambda_1}}\parens{c}\parens{f_{\lambda_2}}^T(\cdot)}
	\end{align*}
	because of Lemma \ref*{lem721} and Lemma \ref*{lem739}. Lindeberg condition can be satisfied since
	\begin{align*}
		\CE{\parens{r_{j,n}^{(1)}(\lambda)}^4}{\mathcal{H}_j^n}
		&\le \frac{C}{k_n^2}\norm{f_\lambda(\lm{Y}{j-1})a(X_{j\Delta_n})}^4+\frac{C}{k_n^2}\norm{f_\lambda(\lm{Y}{j-2})a(X_{(j-1)\Delta_n})}
	\end{align*}
	and
	\begin{align*}
		\E{\abs{\sum_{j=2}^{k_n-2}\CE{\parens{r_{j,n}^{(1)}(\lambda)}^4}{\mathcal{H}_j^n}}}\to0
	\end{align*}
	by Lemma \ref*{lem739}.
	Now we show $\bar{R}^{(2)}$ is $o_P(1)$. Let us define the following $\mathcal{H}_{j+1}^{n}$-measurable random variable
	\begin{align*}
		r_{j,n}^{(2)}(\lambda)
		&:=\frac{1}{\sqrt{k_n\Delta_n}}\parens{f_{\lambda}(\lm{Y}{j-2})-f_{\lambda}(\lm{Y}{j-1})}\Lambda_{\star}^{1/2}\lm{\epsilon}{j},
	\end{align*}
	and then we have
	\begin{align*}
		\bar{R}_{n}^{(2)}(\lambda)=\frac{1}{\sqrt{k_n\Delta_n}}\sum_{j=1}^{k_n-2}f_\lambda(\lm{Y}{j-1})\Lambda_{\star}^{1/2}\parens{\lm{\epsilon}{j+1}-\lm{\epsilon}{j}}=\sum_{j=2}^{k_n-2}r_{j,n}^{(2)}(\lambda)+o_P(1).
	\end{align*}
	We prove $\bar{R}^{(2)}=o_P(1)$ with Lemma 9 in \citep{GeJ93}. It is obvious
	\begin{align*}
		\CE{r_{j,n}^{(2)}(\lambda)}{\mathcal{H}_j^n}=0.
	\end{align*}
	For the second moment, with Lemma \ref*{lem739},
	\begin{align*}
		\CE{\parens{r_{j,n}^{(2)}(\lambda)}^2}{\mathcal{H}_j^n}
		\le \frac{C}{k_n}\norm{f_{\lambda}(\lm{Y}{j-2})-f_{\lambda}(\lm{Y}{j-1})}^2
	\end{align*}
	and therefore, by Lemma \ref*{lem739}, 
	\begin{align*}
		\E{\abs{\sum_{j=2}^{k_n-2}\CE{\parens{r_{j,n}^{(2)}(\lambda)}^2}{\mathcal{H}_j^n}}}\to 0.
	\end{align*}
	Hence $\bar{R}_{n}^{(2)}(\lambda)=o_P(1)$. With respect to $\bar{R}_{n}^{(3)}(\lambda)$, we again use Lemma 9 in \citep{GeJ93} to show convergence to zero in probability. To ease notation, we separate the summation into three parts as same as Theorem \ref*{thm742} and Theorem \ref*{thm743} such that
	\begin{align*}
		\bar{R}_{n}^{(3)}(\lambda)&=\bar{R}_{0,n}^{(3)}(\lambda)+\bar{R}_{1,n}^{(3)}(\lambda)+\bar{R}_{2,n}^{(3)}(\lambda),
	\end{align*}
	where for $l=0,1,2$, 
	\begin{align*}
		\bar{R}_{l,n}^{(3)}(\lambda)=\frac{1}{\sqrt{k_n\Delta_n}}\sum_{1\le 3j+l\le k_n-2}f_{\lambda}(\lm{Y}{3j+l-1})e_{3j+l,n},
	\end{align*}
	and it is enough to examine if $\bar{R}_{0,n}^{(3)}(\lambda)=o_P(1)$. Let $r_{3j,n}^{(3)}(\lambda)$ be a random variable defined as
	\begin{align*}
		r_{3j,n}^{(3)}(\lambda)&:=\frac{1}{\sqrt{k_n\Delta_n}}f_{\lambda}(\lm{Y}{3j-1})e_{3j,n}
	\end{align*}
	and then $r_{3j,n}^{(3)}(\lambda)$ is $\mathcal{H}_{3j+2}^{n}$-measurable and $\mathcal{H}_{3(j+1)}^{n}$-measurable. Furthermore,
	\begin{align*}
		\bar{R}_{0,n}^{(3)}(\lambda)=\sum_{1\le 3j\le k_n-2}r_{3j,n}^{(3)}(\lambda).
	\end{align*}
	Therefore, the conditional expectation with respect to $\mathcal{H}_{3j}^{n}$ can be evaluated as
	\begin{align*}
		\CE{r_{3j,n}^{(3)}(\lambda)}{\mathcal{H}_{3j}^n}
		=\frac{1}{\sqrt{k_n\Delta_n}}f_{\lambda}(\lm{Y}{3j-1})\CE{e_{3j,n}}{\mathcal{H}_{3j}^n}
	\end{align*}
	and hence with Proposition \ref*{pro737} and Lemma \ref*{lem739}, 
	\begin{align*}
		\E{\abs{\sum_{1\le 3j\le k_n-2}\CE{r_{3j,n}^{(3)}(\lambda)}{\mathcal{H}_{3j}^n}}}
		\to 0.
	\end{align*}
	With respect to the second moment,
	\begin{align*}
		\CE{\abs{r_{3j,n}^{(3)}(\lambda)}^2}{\mathcal{H}_{3j}^n}
		\le \frac{1}{k_n\Delta_n}\norm{f_{\lambda}(\lm{Y}{3j-1})}^2C\Delta_n^2\parens{1+\norm{X_{3j\Delta_n}}^6}
	\end{align*}
	and hence Lemma \ref*{lem739} leads to
	\begin{align*}
		\E{\abs{\sum_{1\le 3j\le k_n-2}\CE{\abs{r_{3j,n}^{(3)}(\lambda)}^2}{\mathcal{H}_{3j}^n}}}\to0.
	\end{align*}
	As a result we obtained $\bar{R}_{n}^{(3)}(\lambda)=o_P(1)$. We can evaluated $L^1$ norm of $\bar{R}_{n}^{(4)}(\lambda)$ such that
	\begin{align*}
		\E{\abs{\bar{R}_{n}^{(4)}(\lambda)}}
			&\to0
	\end{align*}
	and it verifies $\bar{R}_n^{(4)}(\lambda)=o_P(1)$.\\
	
	\noindent\textbf{(Step 4): } We check the covariance structures among $\sqrt{n}D_{n}$, $U_n^{(1)}$, $U_n^{(3)}$, $U_n^{(4)}$ $\bar{R}_n^{(1)}$ which have not been shown. It is easy to see
	\begin{align*}
		\CE{\parens{D_{j,n}'}^{l_1,l_2}s_{j,n}^{(1)}(\kappa)}{\mathcal{H}_j^n}&=0,\\
		\CE{\parens{D_{j,n}'}^{l_1,l_2}r_{j,n}^{(1)}(\lambda)}{\mathcal{H}_j^n}&=0.
	\end{align*}
	We also have
	\begin{align*}
		&\frac{\sqrt{n}}{k_n^{3/2}}\sum_{j=1}^{k_n-2}\CE{\parens{D_{j,n}'}^{l_1,l_2}s_{j,n}^{(3)}(\kappa)}{\mathcal{H}_j^n}\\
		&=\frac{1}{\sqrt{p_n}k_n}\sum_{j=1}^{k_n-2}\ip{\Lambda_{\star}^{1/2}\parens{A_{\kappa}(\lm{Y}{j-2})+A_{\kappa}(\lm{Y}{j-1})}\Lambda_{\star}^{1/2}}{\frac{1}{\Delta_n}\CE{\parens{D_{j,n}'}^{l_1,l_2}
				\parens{\parens{\lm{\epsilon}{j}}^{\otimes 2}}}{\mathcal{H}_j^n}}\\
		&\qquad+\frac{1}{\sqrt{p_n}k_n}\sum_{j=1}^{k_n-2}\ip{\Lambda_{\star}^{1/2}A_{\kappa}(\lm{Y}{j-2})\Lambda_{\star}^{1/2}}{
			\parens{-\frac{2}{\Delta_n}\parens{\lm{\epsilon}{j-1}}}\CE{\parens{D_{j,n}'}^{l_1,l_2}
				\parens{\lm{\epsilon}{j}}^T}{\mathcal{H}_j^n}}.
	\end{align*}
	Because
	\begin{align*}
		&\CE{\parens{D_{j,n}'}^{l_1,l_2}
			\parens{\parens{\lm{\epsilon}{j}}^{\otimes 2}}}{\mathcal{H}_j^n}\\
		&=\frac{1}{2p_n^3}\sum_{i_1,i_2}\E{\parens{\Lambda_{\star}^{1/2}\parens{2\parens{\epsilon_{j\Delta_n+i_1h_n}}^{\otimes 2}}\Lambda_{\star}^{1/2}}^{l_1,l_2}\parens{\epsilon_{j\Delta_n+i_2h_n}}^{\otimes 2}}\\
		&\qquad+\frac{1}{2p_n^3}\sum_{i_1}\E{\parens{\Lambda_{\star}^{1/2}\parens{\parens{\epsilon_{j\Delta_n+i_1h_n}}\parens{\epsilon_{j\Delta_n+(i_1-1)h_n}}^T}\Lambda_{\star}^{1/2}}^{l_1,l_2}\parens{\epsilon_{j\Delta_n+i_1h_n}}\parens{\epsilon_{j\Delta_n+(i_1-1)h_n}}^T}\\
		&\qquad+\frac{1}{2p_n^3}\sum_{i_1}\E{\parens{\Lambda_{\star}^{1/2}\parens{\parens{\epsilon_{j\Delta_n+(i_1-1)h_n}}\parens{\epsilon_{j\Delta_n+i_1h_n}}^T}\Lambda_{\star}^{1/2}}^{l_1,l_2}\parens{\epsilon_{j\Delta_n+i_1h_n}}\parens{\epsilon_{j\Delta_n+(i_1-1)h_n}}^T}\\
		&\qquad-\frac{1}{p_n}\parens{\Lambda_{\star}}^{l_1,l_2}I_d
	\end{align*}
	and
	\begin{align*}
		\CE{\parens{D_{j,n}'}^{l_1,l_2}\parens{\lm{\epsilon}{j}}^T}{\mathcal{H}_j^n}
		=0,
	\end{align*}
	we obtain
	\begin{align*}
		\E{\abs{\frac{1}{\sqrt{p_n}k_n}\sum_{j=1}^{k_n-2}\ip{\Lambda_{\star}^{1/2}\parens{A_{\kappa}(\lm{Y}{j-2})+A_{\kappa}(\lm{Y}{j-1})}\Lambda_{\star}^{1/2}}{\frac{1}{\Delta_n}\CE{\parens{D_{j,n}'}^{l_1,l_2}
						\parens{\parens{\lm{\epsilon}{j}}^{\otimes 2}}}{\mathcal{H}_j^n}}}}\to 0
	\end{align*}
	and
	\begin{align*}
		\frac{1}{\sqrt{p_n}k_n}\sum_{j=1}^{k_n-2}\ip{\Lambda_{\star}^{1/2}A_{\kappa}(\lm{Y}{j-2})\Lambda_{\star}^{1/2}}{
			\parens{-\frac{2}{\Delta_n}\parens{\lm{\epsilon}{j-1}}}\CE{\parens{D_{j,n}'}^{l_1,l_2}
				\parens{\lm{\epsilon}{j}}^T}{\mathcal{H}_j^n}}=0.
	\end{align*}
	Hence $U_n^{(3)}$ and $\sqrt{n}D_n$ are asymptotically independent. Furthermore, using the evaluation
	\begin{align*}
		\CE{\parens{D_{j,n}'}^{l_1,l_2}\parens{\lm{\epsilon}{j}}^T}{\mathcal{H}_j^n}=0
	\end{align*} we have
	\begin{align*}
		\CE{\parens{D_{j,n}'}^{l_1,l_2}s_{j,n}^{(4)}(\kappa)}{\mathcal{H}_j^n}=0.
	\end{align*}
	Next we evaluate the asymptotics of
	\begin{align*}
		\frac{1}{\sqrt{k_n}}\sum_{j=1}^{k_n-2}\CE{s_{j,n}^{(1)}(\kappa)r_{j,n}^{(1)}(\lambda)}{\mathcal{H}_j^n}.
	\end{align*}
	We have
	\begin{align*}
		&\CE{s_{j,n}^{(1)}(\kappa)r_{j,n}^{(1)}(\lambda)}{\mathcal{H}_j^n}\\
		&=\frac{2\chi_n}{\sqrt{k_n\Delta_n}}\parens{\zeta_{j,n}}^Ta(X_{(j-1)\Delta_n})^T\bar{A}_{\kappa}(\lm{Y}{j-2})a(X_{(j-1)\Delta_n})
			a(X_{j\Delta_n})^Tf_\lambda(\lm{Y}{j-1})^T\\
		&\qquad+\frac{2m_n'}{\sqrt{k_n\Delta_n}}\parens{\zeta_{j,n}}^Ta(X_{(j-1)\Delta_n})^T\bar{A}_{\kappa}(\lm{Y}{j-2})c(X_{(j-1)\Delta_n})f_\lambda(\lm{Y}{j-2})^T,
	\end{align*}
	and then
	\begin{align*}
		&\frac{1}{\sqrt{k_n}}\sum_{j=1}^{k_n-2}\CE{s_{j,n}^{(1)}(\kappa)r_{j,n}^{(1)}(\lambda)}{\mathcal{H}_j^n}\\
		&=\frac{2\chi_n}{k_n\sqrt{\Delta_n}}\sum_{j=1}^{k_n-2}\parens{\zeta_{j,n}}^Ta(X_{(j-1)\Delta_n})^T\bar{A}_{\kappa}(\lm{Y}{j-2})a(X_{(j-1)\Delta_n})
		a(X_{j\Delta_n})^Tf_\lambda(\lm{Y}{j-1})^T\\
		&\qquad+\frac{2m_n'}{k_n\sqrt{\Delta_n}}\sum_{j=1}^{k_n-2}\parens{\zeta_{j,n}}^Ta(X_{(j-1)\Delta_n})^T\bar{A}_{\kappa}(\lm{Y}{j-2})c(X_{(j-1)\Delta_n})f_\lambda(\lm{Y}{j-2})^T.
	\end{align*}
	Note the $L^1$ convergence
	\begin{align*}
		&\mathbf{E}\lcrotchet{\labs{\frac{2\chi_n}{k_n\sqrt{\Delta_n}}\sum_{j=1}^{k_n-2}\parens{\zeta_{j,n}}^Ta(X_{(j-1)\Delta_n})^T\bar{A}_{\kappa}(\lm{Y}{j-2})a(X_{(j-1)\Delta_n})
		a(X_{j\Delta_n})^Tf_\lambda(\lm{Y}{j-1})^T}}\\
		&\qquad\rcrotchet{\rabs{-\frac{2\chi_n}{k_n\sqrt{\Delta_n}}\sum_{j=1}^{k_n-2}\parens{\zeta_{j,n}}^Ta(X_{(j-1)\Delta_n})^T\bar{A}_{\kappa}(\lm{Y}{j-2})a(X_{(j-1)\Delta_n})
				a(X_{(j-1)\Delta_n})^Tf_\lambda(\lm{Y}{j-2})^T}}\\
		&\to 0.
	\end{align*}
	Hence we obtain
	\begin{align*}
		&\frac{1}{\sqrt{k_n}}\sum_{j=1}^{k_n-2}\CE{s_{j,n}^{(1)}(\kappa)r_{j,n}^{(1)}(\lambda)}{\mathcal{H}_j^n}\\
		&=\frac{2\parens{\chi_n+m_n'}}{k_n\sqrt{\Delta_n}}\sum_{j=1}^{k_n-2}\parens{\zeta_{j,n}}^Ta(X_{(j-1)\Delta_n})^T\bar{A}_{\kappa}(\lm{Y}{j-2})c(X_{(j-1)\Delta_n})f_\lambda(\lm{Y}{j-2})^T\\
		&\qquad+o_P(1).
	\end{align*}
	We are also able to evaluate
	\begin{align*}
		&\E{\abs{\frac{2\parens{\chi_n+m_n'}}{k_n\sqrt{\Delta_n}}\sum_{j=1}^{k_n-2}\CE{\parens{\zeta_{j,n}}^Ta(X_{(j-1)\Delta_n})^T\bar{A}_{\kappa}(\lm{Y}{j-2})c(X_{(j-1)\Delta_n})f_\lambda(\lm{Y}{j-2})^T}{\mathcal{H}_{j-1}^n}}}\\
		&=0
	\end{align*}
	and
	\begin{align*}
		&\E{\abs{\frac{4\parens{\chi_n+m_n'}^2}{k_n^2\Delta_n}\sum_{j=1}^{k_n-2}\CE{\abs{\parens{\zeta_{j,n}}^Ta(X_{(j-1)\Delta_n})^T\bar{A}_{\kappa}(\lm{Y}{j-2})c(X_{(j-1)\Delta_n})f_\lambda(\lm{Y}{j-2})^T}^2}{\mathcal{H}_{j-1}^n}}}\\
		&\to 0.
	\end{align*}
	Hence Lemma 9 in \citep{GeJ93} leads to
	\begin{align*}
		\frac{1}{\sqrt{k_n}}\sum_{j=1}^{k_n-2}\CE{s_{j,n}^{(1)}(\kappa)r_{j,n}^{(1)}(\lambda)}{\mathcal{H}_j^n}\cp 0.
	\end{align*}
	Obviously we obtain
	\begin{align*}
		\CE{s_{j,n}^{(3)}(\kappa)r_{j,n}^{(1)}(\lambda)}{\mathcal{H}_j^n}=0.
	\end{align*}
	Finally, we examine the asymptotics of
	\begin{align*}
		\frac{1}{\sqrt{k_n}}\sum_{j=1}^{k_n-2}\CE{s_{j,n}^{(4)}(\kappa)r_{j,n}^{(1)}(\lambda)}{\mathcal{H}_j^n}.
	\end{align*}
	We have
	\begin{align*}
		&\CE{s_{j,n}^{(4)}(\kappa)r_{j,n}^{(1)}(\lambda)}{\mathcal{H}_j^n}\\
		&=-\frac{2\chi_n}{\sqrt{k_n\Delta_n}}\parens{\lm{\epsilon}{j-1}}^T\Lambda_{\star}^{1/2}\bar{A}_{\kappa}(\lm{Y}{j-2})a(X_{(j-1)\Delta_n})a(X_{j\Delta_n})^Tf_\lambda(\lm{Y}{j-1})^T\\
		&\qquad-\frac{2m_n'}{\sqrt{k_n\Delta_n}}\parens{\lm{\epsilon}{j-1}}^T\Lambda_{\star}^{1/2}\bar{A}_{\kappa}(\lm{Y}{j-2})c(X_{(j-1)\Delta_n})f_\lambda(\lm{Y}{j-2})^T
	\end{align*}
	and the $L^1$ convergence
	\begin{align*}
		&\mathbf{E}\lcrotchet{\labs{\frac{2\chi_n}{k_n\sqrt{\Delta_n}}\sum_{j=1}^{k_n-2}\parens{\lm{\epsilon}{j-1}}^T\Lambda_{\star}^{1/2}\bar{A}_{\kappa}(\lm{Y}{j-2})a(X_{(j-1)\Delta_n})a(X_{j\Delta_n})^Tf_\lambda(\lm{Y}{j-1})^T}}\\
		&\qquad\rcrotchet{\rabs{-\frac{2\chi_n}{k_n\sqrt{\Delta_n}}\sum_{j=1}^{k_n-2}\parens{\lm{\epsilon}{j-1}}^T\Lambda_{\star}^{1/2}\bar{A}_{\kappa}(\lm{Y}{j-2})a(X_{(j-1)\Delta_n})a(X_{(j-1)\Delta_n})^Tf_\lambda(\lm{Y}{j-2})^T}}\\
		&\to 0.
	\end{align*}
	Hence
	\begin{align*}
		&\frac{1}{\sqrt{k_n}}\CE{s_{j,n}^{(4)}(\kappa)r_{j,n}^{(1)}(\lambda)}{\mathcal{H}_j^n}\\
		&=-\frac{2\parens{\chi_n+m_n'}}{k_n\sqrt{\Delta_n}}\sum_{j=1}^{k_n-2}\parens{\lm{\epsilon}{j-1}}^T\Lambda_{\star}^{1/2}\bar{A}_{\kappa}(\lm{Y}{j-2})c(X_{(j-1)\Delta_n})f_\lambda(\lm{Y}{j-2})^T\\
		&\qquad+o_P(1).
	\end{align*}
	Again we can evaluate
	\begin{align*}
		\E{\abs{\frac{2\parens{\chi_n+m_n'}}{k_n\sqrt{\Delta_n}}\sum_{j=1}^{k_n-2}\CE{\parens{\lm{\epsilon}{j-1}}^T\Lambda_{\star}^{1/2}\bar{A}_{\kappa}(\lm{Y}{j-2})c(X_{(j-1)\Delta_n})f_\lambda(\lm{Y}{j-2})^T}{\mathcal{H}_{j-1}^n}}}
		=0
	\end{align*}
	and
	\begin{align*}
		\E{\abs{\frac{4\parens{\chi_n+m_n'}^2}{k_n^2\Delta_n}\sum_{j=1}^{k_n-2}\CE{\abs{\parens{\lm{\epsilon}{j-1}}^T\Lambda_{\star}^{1/2}\bar{A}_{\kappa}(\lm{Y}{j-2})c(X_{(j-1)\Delta_n})f_\lambda(\lm{Y}{j-2})^T}^2}{\mathcal{H}_{j-1}^n}}}\to 0.
	\end{align*}
	Therefore,
	\begin{align*}
		\frac{1}{\sqrt{k_n}}\CE{s_{j,n}^{(4)}(\kappa)r_{j,n}^{(1)}(\lambda)}{\mathcal{H}_j^n}\cp 0.
	\end{align*}
	Then we obtain the proof.
\end{proof}

\begin{corollary}\label{cor752}
	With the same assumption as Theorem \ref*{thm751}, we have
	\begin{align*}
	\crotchet{\begin{matrix}
		\sqrt{n}D_n\\
		\sqrt{k_n}\crotchet{\bar{Q}_n\parens{A_{\kappa,n}(\cdot)}
			-\frac{2}{3}\bar{M}_n\parens{\ip{A_{\kappa,n}(\cdot)}{c_n^{\tau}\parens{\cdot,\alpha^{\star},\Lambda_{\star}}}}}_{\kappa}\\
		\sqrt{k_n\Delta_n}\crotchet{\bar{D}_n\parens{f_{\lambda,n}(\cdot)}}_\lambda
		\end{matrix}}\cl N(\mathbf{0},W(\tuborg{A_{\kappa}},\tuborg{f_{\lambda}})).
	\end{align*}
\end{corollary}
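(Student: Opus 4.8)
The plan is to reduce the statement to Theorem~\ref{thm751} by an approximation argument. The first coordinate $\sqrt{n}D_n$ does not involve $\tuborg{A_\kappa}$ or $\tuborg{f_\lambda}$, and both maps
\begin{align*}
A\mapsto\sqrt{k_n}\crotchet{\bar{Q}_n(A)-\tfrac{2}{3}\bar{M}_n\parens{\ip{A}{c_n^{\tau}(\cdot,\alpha^\star,\Lambda_\star)}}},\qquad f\mapsto\sqrt{k_n\Delta_n}\bar{D}_n(f)
\end{align*}
are linear in their arguments. Writing $B_{\kappa,n}:=A_{\kappa,n}-A_{\kappa}$ and $g_{\lambda,n}:=f_{\lambda,n}-f_{\lambda}$, the statistic in the corollary equals the statistic built from $\tuborg{A_\kappa},\tuborg{f_\lambda}$ plus a remainder vector whose first block is $\mathbf{0}$ and whose $\kappa$- and $\lambda$-blocks are $\sqrt{k_n}[\bar{Q}_n(B_{\kappa,n})-\tfrac{2}{3}\bar{M}_n(\ip{B_{\kappa,n}}{c_n^{\tau}(\cdot,\alpha^\star,\Lambda_\star)})]$ and $\sqrt{k_n\Delta_n}\bar{D}_n(g_{\lambda,n})$. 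Since Theorem~\ref{thm751} delivers the convergence of the main part to $N(\mathbf{0},W(\tuborg{A_\kappa},\tuborg{f_\lambda}))$, by Slutsky's theorem it suffices to prove the remainder is $o_P(1)$. The usable input is that $B_{\kappa,n}$ and $g_{\lambda,n}$, together with their $x$-derivatives, are polynomial growth functions obeying $\sum_\kappa\norm{B_{\kappa,n}(x)}+\sum_\lambda\norm{g_{\lambda,n}(x)}\le v_n(1+\norm{x}^C)$ with $v_n\to0$.

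To treat the remainder I would rerun the computations of Theorem~\ref{thm751} with $A_\kappa,f_\lambda$ replaced by $B_{\kappa,n},g_{\lambda,n}$, recycling the decompositions $U_n(\kappa)=\sum_{l=1}^{7}U_n^{(l)}(\kappa)$ and $\sqrt{k_n\Delta_n}\bar{D}_n(f_\lambda)=\sum_{l=1}^{4}\bar{R}_n^{(l)}(\lambda)$ from that proof. The terms shown there to be $o_P(1)$ (namely $U_n^{(2)},U_n^{(5)},U_n^{(6)},U_n^{(7)}$ and $\bar{R}_n^{(2)},\bar{R}_n^{(3)},\bar{R}_n^{(4)}$) stay $o_P(1)$, because each of the $L^1$ and conditional-moment bounds that produced them passes through Lemma~\ref{lem739} and now carries at least one factor $\norm{B_{\kappa,n}(\lm{Y}{\cdot})}$ or $\norm{g_{\lambda,n}(\lm{Y}{\cdot})}$, hence a factor $v_n\le C$, which only shrinks the existing bounds. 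The remaining terms $U_n^{(1)},U_n^{(3)},U_n^{(4)}$ and $\bar{R}_n^{(1)}$, which carried the Gaussian limit in Theorem~\ref{thm751}, are — after the index-shifting of that proof, whose shift remainders also carry a factor $v_n$ and stay $o_P(1)$ — sums $\Sigma_n=\sum_j\sigma_{j,n}$ of martingale differences with $\CE{\sigma_{j,n}}{\mathcal{H}_j^n}=0$ and $\sigma_{j,n}$ being $\mathcal{H}_{j+1}^n$-measurable.

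For such a sum the key observation is that $\sigma_{j,n}$ is linear in the replaced function, so its conditional second moment is quadratic in it and is bounded, using $\norm{B_{\kappa,n}},\norm{g_{\lambda,n}}\le v_n(1+\norm{x}^C)$, by $v_n^2$ times exactly the conditional-second-moment expression appearing in Theorem~\ref{thm751}. By orthogonality of the martingale differences,
\begin{align*}
\E{\Sigma_n^2}=\sum_j\E{\sigma_{j,n}^2}=\E{\sum_j\CE{\sigma_{j,n}^2}{\mathcal{H}_j^n}}\le Cv_n^2\,\E{\frac{1}{k_n}\sum_j\parens{1+\norm{X_{j\Delta_n}}^{C}}},
\end{align*}
and the last expectation is $O(1)$ by (A3) (indeed the corresponding conditional-variance sums were shown in Theorem~\ref{thm751} to converge to finite limits). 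Hence $\E{\Sigma_n^2}=O(v_n^2)\to0$, so $\Sigma_n\to0$ in $L^2$ and thus in probability; note that, unlike in Theorem~\ref{thm751}, the exact ergodic limit of Lemma~\ref{lem721} is not needed here, only the upper bound $O(v_n^2)$. Collecting the blocks shows the remainder vector is $o_P(1)$, and Slutsky's theorem completes the argument.

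The main obstacle is the bookkeeping rather than any new idea: one must check, term by term in the seven- and fourfold decompositions, that the $n$-dependence of $B_{\kappa,n}$ and $g_{\lambda,n}$ is harmless — which it is, because every estimate in the proof of Theorem~\ref{thm751} rests on the uniform polynomial-growth bounds of Lemma~\ref{lem739} and never on the functions being fixed in $n$ — and that each conditional-moment estimate genuinely picks up the factor $v_n$ (first moments) or $v_n^2$ (second moments). Once that is granted, the $L^2$ estimate above disposes of every contributing term.
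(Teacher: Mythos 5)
Your proposal is correct and follows essentially the same route as the paper: reduce by linearity and Slutsky to showing that the difference statistic built from $A_{\kappa,n}-A_{\kappa}$ and $f_{\lambda,n}-f_{\lambda}$ is $o_P(1)$, reuse the seven- and four-term decompositions (and index shifts) from Theorem \ref{thm751}, and note that every moment estimate now carries a factor $v_n$ (or $v_n^2$), so the previously negligible terms stay negligible and the former martingale-limit terms vanish. The only inessential deviation is the final step, where you conclude by martingale-difference orthogonality and an $L^2$ bound of order $v_n^2$, while the paper checks zero conditional first moments and $L^1$-vanishing sums of conditional second moments and invokes Lemma 9 of \citep{GeJ93}; both arguments rest on exactly the same estimates.
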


\begin{proof}
	It is enough to check
	\begin{align*}
		&\crotchet{\begin{matrix}
			\sqrt{k_n}\crotchet{\bar{Q}_n\parens{\parens{A_{\kappa,n}-A_{\kappa}}(\cdot)}
				-\frac{2}{3}\bar{M}_n\parens{\ip{\parens{A_{\kappa,n}-A_{\kappa}}(\cdot)}{c_n^{\tau}\parens{\cdot,\alpha^{\star},\Lambda_{\star}}}}}_{\kappa}\\
			\sqrt{k_n\Delta_n}\crotchet{\bar{D}_n\parens{\parens{f_{\lambda,n}-f_{\lambda}}(\cdot)}}_\lambda
			\end{matrix}}=o_P(1).
	\end{align*}
	We prepare the following notation such that
	\begin{align*}
		u_{j,n}^{(1)}((\kappa,n),\kappa)
		&:=\ip{\parens{A_{\kappa,n}-A_{\kappa}}(\lm{Y}{j-1})}{a(X_{j\Delta_n})\parens{\frac{1}{\Delta_n}\parens{\zeta_{j+1,n}+\zeta_{j+2,n}'}^{\otimes 2}
				-\frac{2}{3}I_r}a(X_{j\Delta_n})^{T}},\\
		u_{j,n}^{(2)}((\kappa,n),\kappa)
		&:=\frac{2}{3}\ip{\parens{A_{\kappa,n}-A_{\kappa}}(\lm{Y}{j-1})}{c(X_{j\Delta_n})-c(\lm{Y}{j-1})},\\
		u_{j,n}^{(3)}((\kappa,n),\kappa)
		&:=\ip{\parens{A_{\kappa,n}-A_{\kappa}}(\lm{Y}{j-1})}{\Lambda_{\star}^{1/2}
			\parens{\frac{1}{\Delta_n}\parens{\lm{\epsilon}{j+1}-\lm{\epsilon}{j}}^{\otimes 2}
				-2\Delta_n^{\frac{2-\tau}{\tau-1}}I_d}\Lambda_{\star}^{1/2}},\\
		u_{j,n}^{(4)}((\kappa,n),\kappa)
		&:=\frac{2}{\Delta_n}\ip{\parens{\bar{A}_{\kappa,n}-\bar{A}_{\kappa}}(\lm{Y}{j-1})}{
			\parens{a(X_{j\Delta_n})\parens{\zeta_{j+1,n}+\zeta_{j+2,n}'}}
			\parens{\Lambda_{\star}^{1/2}\parens{\lm{\epsilon}{j+1}-\lm{\epsilon}{j}}}^T},\\
		u_{j,n}^{(5)}((\kappa,n),\kappa)
		&:=\frac{1}{\Delta_n}\ip{\parens{\bar{A}_{\kappa,n}-\bar{A}_{\kappa}}(\lm{Y}{j-1})}{\parens{\Delta_nb(X_{j\Delta_n})+e_{j,n}}^{\otimes 2}},\\
		u_{j,n}^{(6)}((\kappa,n),\kappa)
		&:=\frac{2}{\Delta_n}\ip{\parens{\bar{A}_{\kappa,n}-\bar{A}_{\kappa}}(\lm{Y}{j-1})}{
			\parens{\Delta_nb(X_{j\Delta_n})+e_{j,n}}\parens{a(X_{j\Delta_n})\parens{\zeta_{j+1,n}+\zeta_{j+2,n}'}}^T},\\
		u_{j,n}^{(7)}((\kappa,n),\kappa)
		&:=\frac{2}{\Delta_n}
		\ip{\parens{\bar{A}_{\kappa,n}-\bar{A}_{\kappa}}(\lm{Y}{j-1})}{
			\parens{\Delta_nb(X_{j\Delta_n})+e_{j,n}}
			\parens{\Lambda_{\star}^{1/2}\parens{\lm{\epsilon}{j+1}-\lm{\epsilon}{j}}}^T},\\
		s_{j,n}^{(1)}((\kappa,n),\kappa)&:=\ip{\parens{A_{\kappa,n}-A_{\kappa}}(\lm{Y}{j-1})}{
			a(X_{j\Delta_n})\parens{\frac{1}{\Delta_n}\parens{\zeta_{j+1,n}}^{\otimes 2}-m_nI_r}a(X_{j\Delta_n})^{T}}\\
		&\qquad+\ip{\parens{A_{\kappa,n}-A_{\kappa}}(\lm{Y}{j-2})}{
		a(X_{(j-1)\Delta_n})\parens{\frac{1}{\Delta_n}\parens{\zeta_{j+1,n}'}^{\otimes 2}-m_n'I_r}a(X_{(j-1)\Delta_n})^{T}}\\
		&\qquad+2\ip{\parens{\bar{A}_{\kappa,n}-\bar{A}_{\kappa}}(\lm{Y}{j-2})}{a(X_{(j-1)\Delta_n})
		\parens{\frac{1}{\Delta_n}\parens{\zeta_{j,n}\parens{\zeta_{j+1,n}'}^T}}a(X_{(j-1)\Delta_n})^{T}},\\
		s_{j,n}^{(3)}((\kappa,n),\kappa)&:=\ip{\parens{A_{\kappa,n}-A_{\kappa}}(\lm{Y}{j-2})}{\Lambda_{\star}^{1/2}
		\parens{\frac{1}{\Delta_n}\parens{\lm{\epsilon}{j}}^{\otimes 2}-\Delta_n^{\frac{2-\tau}{\tau-1}}I_d}\Lambda_{\star}^{1/2}}\\
		&\qquad+\ip{\parens{A_{\kappa,n}-A_{\kappa}}(\lm{Y}{j-1})}{\Lambda_{\star}^{1/2}
			\parens{\frac{1}{\Delta_n}\parens{\lm{\epsilon}{j}}^{\otimes 2}-\Delta_n^{\frac{2-\tau}{\tau-1}}I_d}\Lambda_{\star}^{1/2}}\\
		&\qquad+\ip{\parens{A_{\kappa,n}-A_{\kappa}}(\lm{Y}{j-2})}{\Lambda_{\star}^{1/2}
			\parens{-\frac{2}{\Delta_n}\parens{\lm{\epsilon}{j-1}}\parens{\lm{\epsilon}{j}}^T}\Lambda_{\star}^{1/2}},\\
		s_{j,n}^{(4)}((\kappa,n),\kappa)&:=\frac{2}{\Delta_n}\ip{\parens{\bar{A}_{\kappa,n}-\bar{A}_{\kappa}}(\lm{Y}{j-2})}{
			a(X_{(j-1)\Delta_n})\parens{\zeta_{j,n}}
			\parens{\lm{\epsilon}{j}}^T\Lambda_{\star}^{1/2}}\\
		&\qquad-\frac{2}{\Delta_n}\ip{\parens{\bar{A}_{\kappa,n}-\bar{A}_{\kappa}}(\lm{Y}{j-1})}{
			a(X_{j\Delta_n})\parens{\zeta_{j+1,n}}
			\parens{\lm{\epsilon}{j}}^T\Lambda_{\star}^{1/2}}\\
		&\qquad+\frac{2}{\Delta_n}\ip{\parens{\bar{A}_{\kappa,n}-\bar{A}_{\kappa}}(\lm{Y}{j-2})}{
			a(X_{(j-1)\Delta_n})\parens{\zeta_{j+1,n}'}
			\parens{\lm{\epsilon}{j}}^T\Lambda_{\star}^{1/2}}\\
		&\qquad-\frac{2}{\Delta_n}\ip{\parens{\bar{A}_{\kappa,n}-\bar{A}_{\kappa}}(\lm{Y}{j-2})}{
			a(X_{(j-1)\Delta_n})\parens{\zeta_{j+1,n}'}
			\parens{\lm{\epsilon}{j-1}}^T\Lambda_{\star}^{1/2}},\\
		r_{j,n}^{(1)}((\lambda,n),\lambda)&:=\frac{1}{\sqrt{k_n\Delta_n}}\parens{f_{\lambda,n}-f_{\lambda}}(\lm{Y}{j-1})a(X_{j\Delta_n})\zeta_{j+1,n}\\
		&\qquad+\frac{1}{\sqrt{k_n\Delta_n}}\parens{f_{\lambda,n}-f_{\lambda}}(\lm{Y}{j-2})a(X_{(j-1)\Delta_n})\zeta_{j+1,n}',
	\end{align*}
	and
	\begin{align*}
	U_n^{(l)}((\kappa,n),\kappa)&:=\frac{1}{\sqrt{k_n}}\sum_{j=1}^{k_n-2}u_{j,n}^{(l)}((\kappa,n),\kappa),\\
	\bar{R}_n^{(1)}((\lambda,n),\lambda)&:=\frac{1}{\sqrt{k_n\Delta_n}}\sum_{j=1}^{k_n-2}\parens{f_{\lambda,n}-f_{\lambda}}(\lm{Y}{j-1})a(X_{j\Delta_n})\parens{\zeta_{j+1,n}+\zeta_{j+2,n}'},\\
	\bar{R}_n^{(2)}((\lambda,n),\lambda)&:=\frac{1}{\sqrt{k_n\Delta_n}}\sum_{j=1}^{k_n-2}\parens{f_{\lambda,n}-f_{\lambda}}(\lm{Y}{j-1})\Lambda_{\star}^{1/2}\parens{\lm{\epsilon}{j+1}-\lm{\epsilon}{j}},\\
	\bar{R}_n^{(3)}((\lambda,n),\lambda)&:=\frac{1}{\sqrt{k_n\Delta_n}}\sum_{j=1}^{k_n-2}\parens{f_{\lambda,n}-f_{\lambda}}(\lm{Y}{j-1})e_{j,n},\\
	\bar{R}_n^{(4)}((\lambda,n),\lambda)&:=\frac{1}{\sqrt{k_n\Delta_n}}\sum_{j=1}^{k_n-2}\parens{f_{\lambda,n}-f_{\lambda}}(\lm{Y}{j-1})
	\Delta_n\parens{b(X_{j\Delta_n})-b(\lm{Y}{j-1})}.
	\end{align*} We can obtain
	\begin{align*}
		U_n^{(l_1)}((\kappa,n),\kappa)&=o_P(1)\ l_1=2,5,6,7,\\
		\bar{R}_n^{(l_2)}((\lambda,n),\lambda)&=o_P(1),\ l_2=2,3,4
	\end{align*}
	with the same way as proof of Theorem \ref*{thm751}.
	Thus it is sufficient to check the asymptotics of $U_{n}^{(1)}((\kappa,n),\kappa)$, $U_{n}^{(3)}((\kappa,n),\kappa)$, $U_{n}^{(4)}((\kappa,n),\kappa)$, $\bar{R}_{n}^{(1)}((\kappa,n),\kappa)$. Using the evaluation of the conditional first moments for $s_{j,n}^{(l_1)}((\kappa,n),\kappa)$ for $l_1=1,3,4$ and $r_{j,n}^{(1)}((\lambda,n),\lambda)$, it is enough to check the conditional second moments of them. We obtain
	\begin{align*}
		\E{\abs{\frac{1}{k_n}\sum_{j=1}^{k_n-2}\CE{\abs{s_{j,n}^{(1)}((\kappa,n),\kappa)}^2}{\mathcal{H}_j^n}}}
		\to 0.
	\end{align*}
	Hence $U_n^{(1)}((\kappa,n),\kappa)=o_P(1)$ by Lemma 9 in \citep{GeJ93}. Similarly,
	\begin{align*}
		\E{\abs{\frac{1}{k_n}\sum_{j=1}^{k_n-2}\CE{\abs{s_{j,n}^{(3)}((\kappa,n),\kappa)}^2}{\mathcal{H}_j^n}}}
		&\to 0,\\
		\E{\abs{\frac{1}{k_n}\sum_{j=1}^{k_n-2}\CE{\abs{s_{j,n}^{(4)}((\kappa,n),\kappa)}^2}{\mathcal{H}_j^n}}}
		&\to 0.
	\end{align*}
	Finally,
	\begin{align*}
		\E{\abs{\sum_{j=1}^{k_n-2}\CE{\abs{r_{j,n}^{(1)}((\kappa,n),\kappa)}^2}{\mathcal{H}_j^n}}}
		\to 0.
	\end{align*}
	Here we obtain the proof.
\end{proof}

\subsection{Proofs of results in Section 3.1}
\begin{proof}[Proof of \ref*{lem311}]
	We obtain
	\begin{align*}
		\hat{\Lambda}_n-\Lambda_{\star}
		&=\frac{1}{2n}\sum_{i=0}^{n-1}\parens{X_{(i+1)h_n}-X_{ih_n}}^{\otimes 2}
		+\frac{1}{2n}\sum_{i=0}^{n-1}\Lambda_{\star}^{1/2}\parens{\parens{\epsilon_{(i+1)h_n}-\epsilon_{ih_n}}^{\otimes 2}-2I_d}\Lambda_{\star}^{1/2}\\
		&\qquad+\frac{1}{2n}\sum_{i=0}^{n-1}\parens{X_{(i+1)h_n}-X_{ih_n}}\parens{\epsilon_{(i+1)h_n}-\epsilon_{ih_n}}^T\Lambda_{\star}^{1/2}\\
		&\qquad+\frac{1}{2n}\sum_{i=0}^{n-1}\Lambda_{\star}^{1/2}\parens{\epsilon_{(i+1)h_n}-\epsilon_{ih_n}}\parens{X_{(i+1)h_n}-X_{ih_n}}^T.
	\end{align*}
	Note the following evaluation: the first term in the right hand side can be evaluated with Lemma \ref*{lem739},
	\begin{align*}
		\E{\norm{\frac{1}{2n}\sum_{i=0}^{n-1}\parens{X_{(i+1)h_n}-X_{ih_n}}^{\otimes 2}}}
		\to0.
	\end{align*}
	For the second term, we obtain
	\begin{align*}
		&\frac{1}{2n}\sum_{i=0}^{n-1}\Lambda_{\star}^{1/2}\parens{\parens{\epsilon_{(i+1)h_n}}^{\otimes 2}+\parens{\epsilon_{ih_n}}^{\otimes 2}-2I_d}\Lambda_{\star}^{1/2}\\
		&\qquad+\frac{1}{2n}\sum_{i=0}^{n-1}\Lambda_{\star}^{1/2}\parens{\parens{\epsilon_{(i+1)h_n}}\parens{\epsilon_{ih_n}}^T+\parens{\epsilon_{ih_n}}\parens{\epsilon_{(i+1)h_n}}^T}
		\Lambda_{\star}^{1/2}\\
		&=\Lambda_{\star}^{1/2}\parens{\frac{1}{2n}\sum_{i=0}^{n-1}\parens{\epsilon_{(i+1)h_n}}\parens{\epsilon_{ih_n}}^T+\frac{1}{2n}\sum_{i=0}^{n-1}\parens{\epsilon_{ih_n}}\parens{\epsilon_{(i+1)h_n}}^T
		}\Lambda_{\star}^{1/2}+o_P(1)
	\end{align*}
	because of law of large number and 
	\begin{align*}
		\E{\norm{\frac{1}{2n}\sum_{i=0}^{n-1}\parens{\epsilon_{(i+1)h_n}}\parens{\epsilon_{ih_n}}^T}^2}
		\to0
	\end{align*}
	hence the second term is $o_P(1)$.
	For the third term,
	\begin{align*}
		\E{\norm{\frac{1}{2n}\sum_{i=0}^{n-1}\parens{X_{(i+1)h_n}-X_{ih_n}}\parens{\epsilon_{(i+1)h_n}-\epsilon_{ih_n}}^T\Lambda_{\star}^{1/2}}}
		\to0
	\end{align*}
	because of Lemma \ref*{lem739}; and the fourth term can be evaluated as same as the third term. Hence we obtain the consistency of $\hat{\Lambda}_n$.
\end{proof}

Let us define the following quasi-likelihood functions such that
	\begin{align*}
	\ell_{1,n}(\alpha)&:=-\frac{1}{2}\sum_{j=1}^{k_n-2}
	\parens{\ip{\parens{\frac{2}{3}\Delta_n c(\lm{Y}{j-1},\alpha)}^{-1}}{\parens{\lm{Y}{j+1}-\lm{Y}{j}}^{\otimes 2}}
		+\log\det \parens{ c(\lm{Y}{j-1},\alpha)}},\\
	\tilde{\ell}_{1,n}(\alpha|\Lambda)&:=-\frac{1}{2}\sum_{j=1}^{k_n-2}
	\parens{\ip{\parens{\frac{2}{3}\Delta_n c^\dagger(\lm{Y}{j-1},\alpha,\Lambda)}^{-1}}{\parens{\lm{Y}{j+1}-\lm{Y}{j}}^{\otimes 2}}
		+\log\det \parens{ c^\dagger(\lm{Y}{j-1},\alpha,\Lambda)}},\\
	\ell_{2,n}(\beta|\alpha)&:=-\frac{1}{2}\sum_{j=1}^{k_n-2}
	\ip{\parens{\Delta_nc(\lm{Y}{j-1},\alpha)}^{-1}}{\parens{\lm{Y}{j+1}-\lm{Y}{j}-\Delta_nb(\lm{Y}{j-1},\beta)}^{\otimes 2}},\\
	\tilde{\ell}_{2,n}(\beta|\Lambda,\alpha)
	&:=-\frac{1}{2}\sum_{j=1}^{k_n-2}
	\ip{\parens{\Delta_nc^\dagger(\lm{Y}{j-1},\alpha,\Lambda)}^{-1}}{\parens{\lm{Y}{j+1}-\lm{Y}{j}-\Delta_nb(\lm{Y}{j-1},\beta)}^{\otimes 2}},
	\end{align*}
	and corresponding adaptive ML-type estimators $\check{\alpha}_n$, $\tilde{\alpha}_n$, $\check{\beta}_n$ and $\tilde{\beta}_n$, where
	\begin{align*}
	\ell_{1,n}(\check{\alpha}_n)&=\sup_{\alpha\in\Theta_1}\ell_{1,n}(\alpha),\\
	\tilde{\ell}_{1,n}(\tilde{\alpha}_n|\hat{\Lambda}_n)&=\sup_{\alpha\in\Theta_1}\tilde{\ell}_{1,n}(\alpha|\hat{\Lambda}_n),\\
	\ell_{2,n}(\check{\beta}_n|\check{\alpha}_n)&=\sup_{\beta\in\Theta_2}\ell_{2,n}(\beta|\check{\alpha}_n),\\
	\tilde{\ell}_{2,n}(\tilde{\beta}_n|\hat{\Lambda}_n,\tilde{\alpha}_n)&=\sup_{\beta\in\Theta_2}\tilde{\ell}_{2,n}(\beta|\hat{\Lambda}_n,\tilde{\alpha}_n).
	\end{align*}

\begin{lemma}\label{lem761}
	Under (A1)-(A7) and (AH), $\check{\alpha}_n$ and $\check{\beta}_n$ are consistent when $\tau\in(1,2)$, and $\tilde{\alpha}_n$ and $\tilde{\beta}_n$ are consistent when $\tau=2$. 
\end{lemma}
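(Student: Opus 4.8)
The plan is to prove consistency by the standard argmax argument for $M$-estimators: for each criterion I will show that an appropriately normalised quasi-likelihood ratio converges in probability, uniformly over the parameter, to the relevant contrast function, and then exploit the quantitative well-separation guaranteed by (A6). Concretely, if $G_n^{(1)}(\alpha):=k_n^{-1}\parens{\ell_{1,n}(\alpha)-\ell_{1,n}(\alpha^\star)}\cp\mathbb{Y}_1(\alpha)$ uniformly in $\alpha$, then since $\check\alpha_n$ maximises $G_n^{(1)}$ we get $\mathbb{Y}_1(\check\alpha_n)\ge G_n^{(1)}(\check\alpha_n)-\sup_\alpha\abs{G_n^{(1)}(\alpha)-\mathbb{Y}_1(\alpha)}\ge-\sup_\alpha\abs{G_n^{(1)}(\alpha)-\mathbb{Y}_1(\alpha)}$, whose right-hand side tends to $0$ in probability, and the bound $\mathbb{Y}_1(\alpha)\le-\chi\norm{\alpha-\alpha^\star}^2$ from (A6) then forces $\check\alpha_n\cp\alpha^\star$. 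The same scheme applied to $\mathbb{Y}_2,\tilde{\mathbb{Y}}_1,\tilde{\mathbb{Y}}_2$ yields the other three cases, so the whole proof reduces to identifying the uniform limits.

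For $\check\alpha_n$ (the case $\tau\in(1,2)$) I would normalise by $k_n$ and, using $\parens{\frac23\Delta_nc}^{-1}=\frac{3}{2\Delta_n}c^{-1}$ and discarding the $O(1)$ boundary terms, write
\begin{align*}
\frac{1}{k_n}\ell_{1,n}(\alpha)=-\frac34\bar{Q}_n\parens{\parens{c(\cdot,\alpha)}^{-1}}-\frac12\bar{M}_n\parens{\log\det c(\cdot,\alpha)}+o_P(1).
\end{align*}
By Theorem \ref*{thm743} (case $\tau\in(1,2)$) the first term converges uniformly to $-\frac34\cdot\frac23\nu_0\parens{\tr\parens{\parens{c(\cdot,\alpha)}^{-1}c(\cdot,\alpha^\star)}}$, and by Proposition \ref*{pro741} the second converges uniformly to $-\frac12\nu_0\parens{\log\det c(\cdot,\alpha)}$; subtracting the value at $\alpha^\star$ gives exactly $\mathbb{Y}_1(\alpha)$. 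Both applications are legitimate because (A5) together with (A1) and (A7) make $\parens{c(\cdot,\alpha)}^{-1}$ and $\log\det c(\cdot,\alpha)$, along with the needed $x$- and $\alpha$-derivatives, polynomial-growth functions uniformly in the parameter. For $\tilde\alpha_n$ (the case $\tau=2$) one has $c_n^\tau=c^\dagger$ and Theorem \ref*{thm743} delivers the limit with $c^\dagger(\cdot,\alpha^\star,\Lambda_\star)$ in the second slot, so the identical computation with $c^\dagger$ in place of $c$ produces $\tilde{\mathbb{Y}}_1$.

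For the drift the correct scaling is $k_n\Delta_n$. Writing $u=\lm{Y}{j+1}-\lm{Y}{j}-\Delta_nb(\lm{Y}{j-1},\beta)$ and $v$ for the same object at $\beta^\star$, the identity $uu^T-vv^T=-\Delta_n\parens{\Delta_j w^T+w\Delta_j^T}+\Delta_n^2\parens{b(\cdot,\beta)^{\otimes2}-b(\cdot,\beta^\star)^{\otimes2}}$, with $\Delta_j:=\lm{Y}{j+1}-\lm{Y}{j}$ and $w:=b(\lm{Y}{j-1},\beta)-b(\lm{Y}{j-1},\beta^\star)$, lets me expand $(k_n\Delta_n)^{-1}\parens{\ell_{2,n}(\beta|\alpha)-\ell_{2,n}(\beta^\star|\alpha)}$ into three pieces: a cross term which, after substituting $\Delta_j=\parens{\Delta_j-\Delta_nb(\lm{Y}{j-1},\beta^\star)}+\Delta_nb(\lm{Y}{j-1},\beta^\star)$, becomes a $\bar{D}_n$-type functional (hence $o_P(1)$ by Theorem \ref*{thm742}) plus a $\bar{M}_n$ average converging to $\nu_0\parens{b(\cdot,\beta^\star)^{T}\parens{c(\cdot,\alpha)}^{-1}w}$; and a term $-\frac{1}{2k_n}\sum_j\ip{\parens{c(\cdot,\alpha)}^{-1}}{b(\cdot,\beta)^{\otimes2}-b(\cdot,\beta^\star)^{\otimes2}}$ converging by Proposition \ref*{pro741}. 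Expanding $b(\cdot,\beta)=b(\cdot,\beta^\star)+w$ shows the two surviving limits cancel the linear-in-$w$ contributions, leaving only $-\frac12\nu_0\parens{\ip{\parens{c(\cdot,\alpha)}^{-1}}{\parens{b(\cdot,\beta)-b(\cdot,\beta^\star)}^{\otimes2}}}$; at $\alpha=\alpha^\star$ this is $\mathbb{Y}_2(\beta)$, and the $\tau=2$ version with $c^\dagger$ gives $\tilde{\mathbb{Y}}_2(\beta)$.

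The step I expect to require the most care is that none of these criteria is evaluated at the true nuisance values: the scale criterion for $\tau=2$ carries the plug-in $\hat\Lambda_n$ (consistent by Lemma \ref*{lem311}), and both drift criteria carry $\check\alpha_n$ or $\tilde\alpha_n$ (and $\hat\Lambda_n$). I would absorb this by noting that Theorems \ref*{thm742}, \ref*{thm743} and Proposition \ref*{pro741} hold uniformly over all of $\vartheta\in\Xi$, so the convergences above are in fact joint and uniform in $(\alpha,\Lambda)$ and in $(\alpha,\beta,\Lambda)$ simultaneously; combined with continuity of the limiting contrasts in $\alpha$ and $\Lambda$ and with $\hat\Lambda_n\cp\Lambda_\star$, $\check\alpha_n\cp\alpha^\star$, this allows the random arguments to be substituted without altering the limits (the first-stage consistency of $\check\alpha_n,\tilde\alpha_n$ having been obtained in the previous paragraph). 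The residual obstacles are thus essentially bookkeeping: checking that the integrands built from $c^{-1}$, $\parens{c^\dagger}^{-1}$, $\log\det$ and the drift products satisfy the polynomial-growth and $\mathcal{C}^2$ hypotheses of the uniform laws (which rests on (A5), (A1), (A7)), and carrying through the drift cancellation cleanly; once uniform convergence to the (A6)-separated contrasts is secured, the argmax conclusion is immediate.
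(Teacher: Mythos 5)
Your proposal is correct and takes essentially the same route as the paper's proof: the uniform limits of the normalised quasi-likelihoods are identified through Proposition \ref{pro741}, Theorem \ref{thm742} and Theorem \ref{thm743} (with the same decomposition and drift cancellation the paper leaves implicit), the plug-in of $\hat{\Lambda}_n$ and of the first-stage estimator of $\alpha$ is absorbed exactly as in the paper by uniformity of those limits in all parameters together with continuity of the contrast functions and consistency of the plug-ins, and (A6) closes the argument. The only divergence is cosmetic: you exploit (A6) quantitatively, via $\|\check{\alpha}_n-\alpha^\star\|^2\le\chi^{-1}\sup_{\alpha}|G_n^{(1)}(\alpha)-\mathbb{Y}_1(\alpha)|$, whereas the paper extracts almost surely convergent subsequences and identifies their limit points by compactness and continuity---two interchangeable executions of the same argmax step.
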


\begin{proof}
	First of all, we consider the case $\tau\in(1,2)$. Then we can evaluate $\ell_{1,n}$ as
	\begin{align*}
		\frac{1}{k_n}\ell_{1,n}\parens{\alpha}
		&\cp -\frac{1}{2}\nu_0\parens{\parens{c(\cdot,\alpha)}^{-1}c(\cdot,\alpha^\star)+\log \det c(\cdot,\alpha)}\text{ uniformly in }\theta,
	\end{align*}
	and $\ell_{2,n}$ as
	\begin{align*}
		\frac{1}{k_n\Delta_n}\parens{\ell_{2,n}(\beta|\alpha)-\ell_{2,n}(\beta^\star|\alpha)}
		\cp -\frac{1}{2}\nu_0\parens{\ip{\parens{c(\cdot,\alpha)}^{-1}}{b(\cdot,\beta)-b(\cdot,\beta^\star)}}\text{ uniformly in }\theta.
	\end{align*}
	We define 
	\begin{align*}
		\mathbb{Y}_2(\beta|\alpha):=-\frac{1}{2}\nu_0\parens{\ip{\parens{c(\cdot,\alpha)}^{-1}}{b(\cdot,\beta)-b(\cdot,\beta^\star)}}
	\end{align*}
	and obviously $\mathbb{Y}_2(\beta|\alpha^\star)=\mathbb{Y}_2(\beta)$.
	
	Note that convergence in probability is equivalent to the existence of the subsequence $\{n^{(1)}_k\}\subset\{n_k\}$ converging almost surely 
	 for any subsequence $\{n_k\}\subset \mathbf{N}$. Using this fact, for any subsequence $\{n_k\}\subset \mathbf{N}$,
	  there exists a subsequence $\{n_k^{(1)}\}$ such that
	 \begin{align*}
	  	&P\parens{A_{1}\cap B_{1}}=1,\\
	  	& \text{ where }A_{1}:=\left\{\omega\in \Omega\left|\sup_{\alpha\in\Theta_1}\abs{\parens{\frac{1}{k_{n_k^{(1)}}}\ell_{1,n_k^{(1)}}(\alpha)-d-\nu_0(\log\det c(\cdot,\alpha^\star))}-\mathbb{Y}_1(\alpha)}(\omega)\to 0\right.\right\},\\
	  	&\qquad B_1:=\left\{\omega\in\Omega\left|\sup_{\theta\in\Theta}
	  	\abs{\frac{1}{k_{n_k^{(1)}}\Delta_{n_k^{(1)}}}\parens{\ell_{2,{n_k^{(1)}}}(\beta|\alpha)-\ell_{2,{n_k^{(1)}}}(\beta^\star|\alpha)}-\mathbb{Y}_2(\beta|\alpha)}(\omega)\to0\right.\right\}.
	 \end{align*}
	We take $\omega\in A_1\cap B_1$ and then for compactness of $\Theta_1$, we have a subsequence $n_k^{(2)}\subset n_k^{(1)}$ and an element $\alpha_\infty\in\Theta_1$ such that 
	 $\check{\alpha}_{n_k^{(2)}}(\omega)\to\alpha_\infty$. The continuity of $\mathbb{Y}_1(\alpha)$ leads to
	\begin{align*}
	 	\parens{\frac{1}{k_{n_k^{(2)}}}\ell_{1,n_k^{(2)}}\parens{\check{\alpha}_{n_k^{(2)}}}(\omega)
	 		-d-\nu_0(\log\det c(\cdot,\alpha^\star))}\to \mathbb{Y}_1(\alpha_\infty).
	\end{align*}
	The definition of $\check{\alpha}_n$ also leads to
	\begin{align*}
		\frac{1}{k_{n_k^{(2)}}}\ell_{1,n_k^{(2)}}\parens{\check{\alpha}_{n_k^{(2)}}}(\omega)\ge \frac{1}{k_{n_k^{(2)}}}\ell_{1,n_k^{(2)}}\parens{\alpha^\star}(\omega).
	\end{align*}
	Then we obtain
	\begin{align*}
		\mathbb{Y}_1(\alpha_\infty)\ge \mathbb{Y}_1(\alpha^\star),
	\end{align*}
	and the assumption (A6) leads to $\alpha_\infty=\alpha^\star$. Hence $\check{\alpha}_n\cp\alpha^\star$. With respect to $\beta$, for the compactness of $\Theta_2$, we have a subsequence $n_k^{(3)}\subset n_{k}^{(2)}$ and $\beta_\infty\in\Theta_2$ such that $\check{\beta}_{n_k^{(3)}}(\omega)\to\beta_\infty$. The continuity of $\mathbb{Y}_2(\beta|\alpha)$ and the convergence of $\hat{\alpha}_{n_k^{(2)}}(\omega)$ leads to
	\begin{align*}
		\sup_{\theta\in\Theta}
		\abs{\frac{1}{k_{n_k^{(3)}}\Delta_{n_k^{(3)}}}\parens{\ell_{2,{n_k^{(3)}}}(\beta|\check{\alpha}_{n_k^{(3)}})-\ell_{2,{n_k^{(3)}}}(\beta^\star|\check{\alpha}_{n_k^{(3)}})}-\mathbb{Y}_2(\beta)}(\omega)\to0
	\end{align*}
	and
	\begin{align*}
		\parens{\frac{1}{k_{n_k^{(3)}}\Delta_{n_k^{(3)}}}\parens{\ell_{2,{n_k^{(3)}}}\parens{\check{\beta}_{n_k^{(3)}}|\check{\alpha}_{n_k^{(3)}}}
			-\ell_{2,{n_k^{(3)}}}\parens{\beta^\star|\check{\alpha}_{n_k^{(3)}}}}}(\omega)\to \mathbb{Y}_2(\beta_\infty).
	\end{align*}
	The definition of $\check{\beta}_n$ leads to
	\begin{align*}
		\parens{\frac{1}{k_{n_k^{(3)}}\Delta_{n_k^{(3)}}}\parens{\ell_{2,{n_k^{(3)}}}\parens{\check{\beta}_{n_k^{(3)}}|\check{\alpha}_{n_k^{(3)}}}
			-\ell_{2,{n_k^{(3)}}}\parens{\beta^\star|\check{\alpha}_{n_k^{(3)}}}}}(\omega)\ge 0
	\end{align*}
	and then the assumption (A6) verifies $\beta_\infty=\beta^\star$. Here we obtain the result.\\
	
	Secondly, for $\tau=2$, we evaluate $\tilde{\ell}_{1,n}$ as
	\begin{align*}
		\frac{1}{k_n}\tilde{\ell}_{1,n}\parens{\alpha|\Lambda}
		\cp -\frac{1}{2}\nu_0\parens{\parens{c^\dagger(\cdot,\alpha,\Lambda)}^{-1}c^\dagger(\cdot,\alpha^\star,\Lambda^\star)+\log \det c^\dagger(\cdot,\alpha,\Lambda)}\text{ uniformly in }\vartheta,
	\end{align*}
	and $\tilde{\ell}_{2,n}$ as
	\begin{align*}
		&\frac{1}{k_n\Delta_n}\parens{\tilde{\ell}_{2,n}(\beta|\Lambda,\alpha)-\tilde{\ell}_{2,n}(\beta^\star|\Lambda,\alpha)}\\
		&\cp -\frac{1}{2}\nu_0\parens{\ip{\parens{c^\dagger(\cdot,\alpha,\Lambda)}^{-1}}{b(\cdot,\beta)-b(\cdot,\beta^\star)}}
		\text{ uniformly in }\vartheta.
	\end{align*}
	As above we define
	\begin{align*}
		\tilde{\mathbb{Y}}_1(\alpha|\Lambda)&:=-\frac{1}{2}\nu_0\parens{\mathrm{tr}\parens{
				\parens{c^\dagger(\cdot,\alpha,\Lambda)}^{-1}c^\dagger(\cdot,\alpha,\Lambda_{\star})-I_d}
			+\log\frac{\det c^\dagger(\cdot,\alpha,\Lambda)}{\det c^\dagger(\cdot,\alpha^\star,\Lambda_{\star})}}\\
		\tilde{\mathbb{Y}}_2(\beta|\Lambda,\alpha)&:=-\frac{1}{2}\nu_0\parens{\ip{\parens{c^\dagger(\cdot,\alpha,\Lambda)}^{-1}}{b(\cdot,\beta)-b(\cdot,\beta^\star)}}.
	\end{align*}
	The way of proof is almost identical to the previous one. For any subsequence $\{n_k\}\subset\mathbf{N}$, there exists a subsequence $\{n_k^{(1)}\}$ such that
	\begin{align*}
		&P\parens{S_2\cap A_2\cap B_2}=1,\text{ where }\\
		&S_2:=\left\{\omega\in\Omega\left|\hat{\Lambda}_{n_k^{(1)}}(\omega)\to \Lambda\right.\right\},\\
		&A_2:=\left\{\omega\in\Omega\left|\sup_{\vartheta\in\Xi}
		\abs{\parens{\frac{1}{k_{n_k^{(1)}}}\tilde{\ell}_{1,n_k^{(1)}}(\alpha|\Lambda)-d-\nu_0(\log\det c^\dagger(\cdot,\alpha^\star,\Lambda_{\star}))}-\tilde{\mathbb{Y}}_1(\alpha|\Lambda)}(\omega)\to 0\right.\right\},\\
		&B_2:=\left\{\omega\in\Omega\left|\sup_{\vartheta\in\Xi}
		\abs{\frac{1}{k_{n_k^{(1)}}\Delta_{n_k^{(1)}}}\parens{\tilde{\ell}_{2,{n_k^{(1)}}}(\beta|\Lambda,\alpha)-\tilde{\ell}_{2,{n_k^{(1)}}}(\beta^\star|\Lambda,\alpha)}-\tilde{\mathbb{Y}}_2(\beta|\Lambda,\alpha)}(\omega)\to0\right.\right\}.
	\end{align*}
	We take $\omega\in S_2\cap A_2\cap B_2$. The compactness of $\Theta_1$ leads to the existence of a subsequence 
	$\{n_k^{(2)}\}\subset\{n_k^{(1)}\}$ and an element $\alpha_\infty'\in \Theta_1$ such that $\tilde{\alpha}_{n_k^{(1)}}(\omega)\to \alpha_\infty'$. The continuity of $\tilde{\mathbb{Y}}_1(\alpha|\Lambda)$ verifies
	\begin{align*}
		\parens{\frac{1}{k_{n_k^{(2)}}}\tilde{\ell}_{1,n_k^{(2)}}(\tilde{\alpha}_{n_k^{(2)}}|\hat{\Lambda}_{n_k^{(2)}})
			-d-\nu_0(\log\det c^\dagger(\cdot,\alpha^\star,\Lambda_{\star}))}(\omega)\to
		\tilde{\mathbb{Y}}_1(\alpha_\infty'|\Lambda_\star)
	\end{align*}
	and the definition $\tilde{\alpha}_n$ leads to
	\begin{align*}
		\frac{1}{k_{n_k^{(2)}}}\tilde{\ell}_{1,n_k^{(2)}}(\tilde{\alpha}_{n_k^{(2)}}|\hat{\Lambda}_{n_k^{(2)}})(\omega)\ge \frac{1}{k_{n_k^{(2)}}}\tilde{\ell}_{1,n_k^{(2)}}(\alpha^\star|\hat{\Lambda}_{n_k^{(2)}})
	\end{align*}
	and (A6) gives $\alpha_\infty'=\alpha^\star$. Furthermore, the compactness of $\Theta_2$ leads to the existence of a subsequence 
	$n_k^{(3)}\subset n_k^{(2)}$ and an element $\beta_\infty'\in\Theta_2$ such that $\tilde{\beta}_{n_k^{(3)}}(\omega)\to\beta_\infty'$. 
	The continuity of $\tilde{\mathbb{Y}}_2(\beta|\Lambda,\alpha)$ gives
	\begin{align*}
		\frac{1}{k_{n_k^{(3)}}\Delta_{n_k^{(3)}}}\parens{\tilde{\ell}_{2,{n_k^{(3)}}}(\tilde{\beta}_{n_k^{(3)}}(\omega)|\hat{\Lambda}_{n_k^{(3)}},\tilde{\alpha}_{n_k^{(3)}})
			-\tilde{\ell}_{2,{n_k^{(3)}}}(\beta^\star|\hat{\Lambda}_{n_k^{(3)}},\tilde{\alpha}_{n_k^{(3)}})}(\omega)
		&\to\tilde{\mathbb{Y}}_2(\beta_\infty'|\Lambda_{\star},\alpha^\star)\\
		&=\tilde{\mathbb{Y}}_2(\beta_\infty')
	\end{align*}
	and the definition $\tilde{\beta}_n$ verifies
	\begin{align*}
		\frac{1}{k_{n_k^{(3)}}\Delta_{n_k^{(3)}}}\parens{\tilde{\ell}_{2,{n_k^{(3)}}}(\tilde{\beta}_{n_k^{(3)}}(\omega)|\hat{\Lambda}_{n_k^{(3)}},\tilde{\alpha}_{n_k^{(3)}})
			-\tilde{\ell}_{2,{n_k^{(3)}}}(\beta^\star|\hat{\Lambda}_{n_k^{(3)}},\tilde{\alpha}_{n_k^{(3)}})}(\omega)\ge 0;
	\end{align*}
	therefore, $\beta_\infty'=\beta^\star$ for (A6). Hence we obtain the result.
\end{proof}

\begin{proof}[Proof of Theorem \ref*{thm312}]
	It is enough to consider the case $\tau\in(1,2)$ and show 
	\begin{align*}
		&\frac{1}{k_n}\sum_{j=1}^{k_n-2}
		\parens{\ip{\parens{\frac{2}{3}\Delta_n c_n^\tau(\lm{Y}{j-1},\alpha,\hat{\Lambda}_n)}^{-1}}{\parens{\lm{Y}{j+1}-\lm{Y}{j}}^{\otimes 2}}
			+\log\det \parens{ c_n^\tau(\lm{Y}{j-1},\alpha,\hat{\Lambda}_n)}}\\
		&\quad\qquad\qquad-\frac{1}{k_n}\sum_{j=1}^{k_n-2}
		\parens{\ip{\parens{\frac{2}{3}\Delta_n c(\lm{Y}{j-1},\alpha)}^{-1}}{\parens{\lm{Y}{j+1}-\lm{Y}{j}}^{\otimes 2}}
		+\log\det \parens{ c(\lm{Y}{j-1},\alpha)}}\\
		&\cp 0 \text{ uniformly in }\theta
	\end{align*}
	and
	\begin{align*}
		&\frac{1}{k_n\Delta_n}\sum_{j=1}^{k_n-2}
			\ip{\parens{\Delta_nc_n^\tau(\lm{Y}{j-1},\alpha,\hat{\Lambda}_n)}^{-1}}{\parens{\lm{Y}{j+1}-\lm{Y}{j}-\Delta_nb(\lm{Y}{j-1},\beta)}^{\otimes 2}}\\
		&\qquad-\frac{1}{k_n\Delta_n}\sum_{j=1}^{k_n-2}
		\ip{\parens{\Delta_nc_n^\tau(\lm{Y}{j-1},\alpha,\hat{\Lambda}_n)}^{-1}}{\parens{\lm{Y}{j+1}-\lm{Y}{j}-\Delta_nb(\lm{Y}{j-1},\beta^\star)}^{\otimes 2}}\\
		&\qquad-\frac{1}{k_n\Delta_n}\sum_{j=1}^{k_n-2}
			\ip{\parens{\Delta_nc(\lm{Y}{j-1},\alpha)}^{-1}}{\parens{\lm{Y}{j+1}-\lm{Y}{j}-\Delta_nb(\lm{Y}{j-1},\beta)}^{\otimes 2}}\\
		&\qquad+\frac{1}{k_n\Delta_n}\sum_{j=1}^{k_n-2}
			\ip{\parens{\Delta_nc(\lm{Y}{j-1},\alpha)}^{-1}}{\parens{\lm{Y}{j+1}-\lm{Y}{j}-\Delta_nb(\lm{Y}{j-1},\beta^{\star})}^{\otimes 2}}\\
		&\cp 0 \text{ uniformly in }\theta
	\end{align*}
	because of Lemma \ref*{lem761}. We have
	\begin{align*}
		&\frac{1}{k_n\Delta_n}\sum_{j=1}^{k_n-2}
		\ip{\parens{\Delta_nc_n^\tau(\lm{Y}{j-1},\alpha,\hat{\Lambda}_n)}^{-1}}{\parens{\lm{Y}{j+1}-\lm{Y}{j}-\Delta_nb(\lm{Y}{j-1},\beta)}^{\otimes 2}}\\
		&\qquad-\frac{1}{k_n\Delta_n}\sum_{j=1}^{k_n-2}
		\ip{\parens{\Delta_nc_n^\tau(\lm{Y}{j-1},\alpha,\hat{\Lambda}_n)}^{-1}}{\parens{\lm{Y}{j+1}-\lm{Y}{j}-\Delta_nb(\lm{Y}{j-1},\beta^\star)}^{\otimes 2}}\\
		&\qquad-\frac{1}{k_n\Delta_n}\sum_{j=1}^{k_n-2}
		\ip{\parens{\Delta_nc(\lm{Y}{j-1},\alpha)}^{-1}}{\parens{\lm{Y}{j+1}-\lm{Y}{j}-\Delta_nb(\lm{Y}{j-1},\beta)}^{\otimes 2}}\\
		&\qquad+\frac{1}{k_n\Delta_n}\sum_{j=1}^{k_n-2}
		\ip{\parens{\Delta_nc(\lm{Y}{j-1},\alpha)}^{-1}}{\parens{\lm{Y}{j+1}-\lm{Y}{j}-\Delta_nb(\lm{Y}{j-1},\beta^{\star})}^{\otimes 2}}\\
		&=\frac{1}{k_n\Delta_n}\sum_{j=1}^{k_n-2}
		\ip{\parens{\parens{c_n^\tau(\lm{Y}{j-1},\alpha,\hat{\Lambda}_n)}^{-1}-\parens{c(\lm{Y}{j-1},\alpha)}^{-1}}}{\parens{\lm{Y}{j+1}-\lm{Y}{j}}\parens{-b(\lm{Y}{j-1},\beta)}^T}\\
		&\qquad+\frac{1}{k_n}\sum_{j=1}^{k_n-2}
		\ip{\parens{\parens{c_n^\tau(\lm{Y}{j-1},\alpha,\hat{\Lambda}_n)}^{-1}-\parens{c(\lm{Y}{j-1},\alpha)}^{-1}}}{\parens{b(\lm{Y}{j-1},\beta)}^{\otimes 2}}\\
		&\qquad-\frac{1}{k_n\Delta_n}\sum_{j=1}^{k_n-2}
		\ip{\parens{\parens{c_n^\tau(\lm{Y}{j-1},\alpha,\hat{\Lambda}_n)}^{-1}-\parens{c(\lm{Y}{j-1},\alpha)}^{-1}}}{\parens{\lm{Y}{j+1}-\lm{Y}{j}}\parens{-b(\lm{Y}{j-1},\beta^\star)}^T}\\
		&\qquad-\frac{1}{k_n}\sum_{j=1}^{k_n-2}
		\ip{\parens{\parens{c_n^\tau(\lm{Y}{j-1},\alpha,\hat{\Lambda}_n)}^{-1}-\parens{c(\lm{Y}{j-1},\alpha)}^{-1}}}{\parens{b(\lm{Y}{j-1},\beta^\star)}^{\otimes 2}}.
	\end{align*}
	Hence it is sufficient to check
	\begin{align*}
		&\frac{1}{k_n}\sum_{j=1}^{k_n-2}
			\ip{\parens{\frac{2}{3}\Delta_n}^{-1}\parens{\parens{c_n^\tau(\lm{Y}{j-1},\alpha,\hat{\Lambda}_n)}^{-1}-\parens{c(\lm{Y}{j-1},\alpha)}^{-1}}}
			{\parens{\lm{Y}{j+1}-\lm{Y}{j}}^{\otimes 2}}\\
		&\qquad\cp 0 \text{ uniformly in }\theta,\\
		&\frac{1}{k_n}\sum_{j=1}^{k_n-2}\parens{\log\det \parens{ c_n^\tau(\lm{Y}{j-1},\alpha,\hat{\Lambda}_n)}-\log\det \parens{ c(\lm{Y}{j-1},\alpha)}}\\
		&\qquad\cp 0 \text{ uniformly in }\theta,\\
		&\frac{1}{k_n\Delta_n}\sum_{j=1}^{k_n-2}
		\ip{\parens{\parens{c_n^\tau(\lm{Y}{j-1},\alpha,\hat{\Lambda}_n)}^{-1}-\parens{c(\lm{Y}{j-1},\alpha)}^{-1}}}{\parens{\lm{Y}{j+1}-\lm{Y}{j}}\parens{-b(\lm{Y}{j-1},\beta)}^T}\\
		&\qquad\cp 0 \text{ uniformly in }\theta\\
		&\frac{1}{k_n}\sum_{j=1}^{k_n-2}
		\ip{\parens{\parens{c_n^\tau(\lm{Y}{j-1},\alpha,\hat{\Lambda}_n)}^{-1}-\parens{c(\lm{Y}{j-1},\alpha)}^{-1}}}{\parens{b(\lm{Y}{j-1},\beta)}^{\otimes 2}}\\
		&\qquad\cp 0 \text{ uniformly in }\theta.
	\end{align*}
	Note that
	\begin{align*}
		&\sup_{\theta\in\Theta}\norm{\parens{c_n^\tau(x,\alpha,\hat{\Lambda}_n)}^{-1}-\parens{c(x,\alpha)}^{-1}}\le C\Delta_n^{\frac{2-\tau}{\tau-1}}\parens{1+\norm{x}^C}
	\end{align*}
	and
	\begin{align*}
		&\sup_{\theta\in\Theta}\abs{\log\det\parens{c_n^\tau(x,\alpha,\hat{\Lambda}_n)}-\log\det\parens{c(x,\alpha)}}\le C\Delta_n^{\frac{2-\tau}{\tau-1}}\parens{1+\norm{x}^C}.
	\end{align*}
	Hence with respect to $\mathbb{L}_{1,n}$,
	\begin{align*}
		&\E{\sup_{\theta\in\Theta}\abs{\frac{1}{k_n}\sum_{j=1}^{k_n-2}
				\ip{\parens{\frac{2}{3}\Delta_n}^{-1}\parens{\parens{c_n^\tau(\lm{Y}{j-1},\alpha,\hat{\Lambda}_n)}^{-1}-\parens{c(\lm{Y}{j-1},\alpha)}^{-1}}}
				{\parens{\lm{Y}{j+1}-\lm{Y}{j}}^{\otimes 2}}}}\\
		&\to 0
	\end{align*}
	and
	\begin{align*}
		\E{\sup_{\theta\in\Theta}\abs{\frac{1}{k_n}\sum_{j=1}^{k_n-2}\parens{\log\det \parens{ c_n^\tau(\lm{Y}{j-1},\alpha,\hat{\Lambda}_n)}-\log\det \parens{ c(\lm{Y}{j-1},\alpha)}}}}
		\to 0.
	\end{align*}
	Next we consider $\mathbb{L}_{2,n}$. Let 
	\begin{align*}
		G_{j,n}=\parens{b(\lm{Y}{j-1},\beta)}^T\parens{\parens{c_n^\tau(\lm{Y}{j-1},\alpha,\hat{\Lambda}_n)}^{-1}-\parens{c(\lm{Y}{j-1},\alpha)}^{-1}}
			\parens{\lm{Y}{j+1}-\lm{Y}{j}}.
	\end{align*}
	We have
	\begin{align*}
		\E{\sup_{\theta\in\Theta}\abs{\frac{1}{k_n\Delta_n}\sum_{j=1}^{k_n-2}
		\CE{G_{j,n}}{\mathcal{H}_j^n}}}\to 0
	\end{align*}
	and 
	\begin{align*}
		\E{\sup_{\theta\in\Theta}\frac{1}{k_n^2\Delta_n^2}\sum_{j=1}^{k_n-2}\CE{\abs{G_{j,n}}^2}{\mathcal{H}_j^n}
				}\to 0.
	\end{align*}
	Hence we obtain
	\begin{align*}
		&\frac{1}{k_n\Delta_n}\sum_{j=1}^{k_n-2}
		\ip{\parens{\parens{c_n^\tau(\lm{Y}{j-1},\alpha,\hat{\Lambda}_n)}^{-1}-\parens{c(\lm{Y}{j-1},\alpha)}^{-1}}}{\parens{\lm{Y}{j+1}-\lm{Y}{j}}\parens{-b(\lm{Y}{j-1},\beta)}^T}\\
		&\qquad\cp 0 \text{ uniformly in }\theta
	\end{align*}
	because of Lemma 9 in \citep{GeJ93}. Finally
	\begin{align*}
		\E{\sup_{\theta\in\Theta}\abs{\frac{1}{k_n}\sum_{j=1}^{k_n-2}
				\ip{\parens{\parens{c_n^\tau(\lm{Y}{j-1},\alpha,\hat{\Lambda}_n)}^{-1}-\parens{c(\lm{Y}{j-1},\alpha)}^{-1}}}{\parens{b(\lm{Y}{j-1},\beta)}^{\otimes 2}}}}
		\to 0.
	\end{align*}
	Hence we obtain the result.
\end{proof}

\begin{proof}[Proof of Theorem \ref*{thm313}]
	Firstly we prepare the notation
	\begin{align*}
		\hat{J}_n^{(1,2)}(\hat{\vartheta}_n)&:=-\int_{0}^{1}\frac{1}{\sqrt{nk_n}}\partial_{\theta_\epsilon}\partial_{\alpha}\mathbb{L}_{1,n}(\hat{\alpha}_n|\theta_{\epsilon}^{\star}+u(\hat{\theta}_{\epsilon,n}-\theta_{\epsilon}^{\star}))\dop u,\\
		\hat{J}_n^{(2,2)}(\hat{\vartheta}_n)&:=-\int_{0}^{1}\frac{1}{k_n}\partial_{\alpha}^2\mathbb{L}_{1,n}(\alpha^\star+u(\hat{\alpha}_n-\alpha^\star)|\theta_{\epsilon}^{\star})\dop u,\\
		\hat{J}_n^{(1,3)}(\hat{\vartheta}_n)&:=-\int_{0}^{1}\frac{1}{\sqrt{nk_n\Delta_n}}\partial_{\theta_{\epsilon}}\partial_{\beta}
		\mathbb{L}_{2,n}(\hat{\beta}_n|\theta_{\epsilon}^{\star}+u(\hat{\theta}_{\epsilon,n}-\theta_{\epsilon}^{\star}),\hat{\alpha}_n))\dop u,\\
		\hat{J}_n^{(2,3)}(\hat{\vartheta}_n)&:=-\int_{0}^{1}\frac{1}{k_n\sqrt{\Delta_n}}\partial_{\alpha}\partial_{\beta}
		\mathbb{L}_{2,n}(\hat{\beta}_n|\theta_{\epsilon}^{\star},\alpha^\star+u(\hat{\alpha}_n-\alpha^\star))\dop u,\\
		\hat{J}_n^{(3,3)}(\hat{\vartheta}_n)&:=-\int_{0}^{1}\frac{1}{k_n\Delta_n}\partial_{\beta}^2
		\mathbb{L}_{2,n}(\beta^\star+u(\hat{\beta}_n-\beta^\star)|\theta_{\epsilon}^{\star},\alpha^\star)\dop u,\\
		\hat{J}_n(\hat{\vartheta}_n)&:=\crotchet{\begin{matrix}
			I & O & O \\
			\hat{J}_n^{(1,2)}(\hat{\vartheta}_n) & \hat{J}_n^{(2,2)}(\hat{\vartheta}_n) & O\\
			\hat{J}_n^{(1,3)}(\hat{\vartheta}_n) & \hat{J}_n^{(2,3)}(\hat{\vartheta}_n) & \hat{J}_n^{(3,3)}(\hat{\vartheta}_n)
			\end{matrix}}.
	\end{align*}
	Taylor's theorem gives
	\begin{align*}
		&\parens{\frac{1}{\sqrt{k_n}}\partial_{\alpha}\mathbb{L}_{1,n}(\hat{\alpha}_n|\hat{\theta}_{\epsilon,n})-\frac{1}{\sqrt{k_n}}\partial_{\alpha}\mathbb{L}_{1,n}(\alpha^\star|\theta_{\epsilon}^{\star})}^T\\
		&=\parens{\int_{0}^{1}\frac{1}{\sqrt{nk_n}}\partial_{\theta_\epsilon}\partial_{\alpha}\mathbb{L}_{1,n}(\hat{\alpha}_n|\theta_{\epsilon}^{\star}+u(\hat{\theta}_{\epsilon,n}-\theta_{\epsilon}^{\star}))\dop u }\sqrt{n}\parens{\hat{\theta}_{\epsilon,n}-\theta_{\epsilon}^{\star}}\\
		&\qquad+\parens{\int_{0}^{1}\frac{1}{k_n}\partial_{\alpha}^2\mathbb{L}_{1,n}(\alpha^\star+u(\hat{\alpha}_n-\alpha^\star)|\theta_{\epsilon}^{\star})\dop u 
		}\sqrt{k_n}\parens{\hat{\alpha}_n-\alpha^\star}
	\end{align*}
	and the definition of $\hat{\alpha}_n$ leads to
	\begin{align*}
		\parens{-\frac{1}{\sqrt{k_n}}\partial_{\alpha}\mathbb{L}_{1,n}(\alpha^\star|\theta_{\epsilon}^{\star})}^T
		&=-\hat{J}_n^{(1,2)}(\hat{\vartheta}_n)\sqrt{n}\parens{\hat{\theta}_{\epsilon,n}-\theta_{\epsilon}^{\star}}-\hat{J}_n^{(2,2)}(\hat{\vartheta}_n)
		\sqrt{k_n}\parens{\hat{\alpha}_n-\alpha^\star}.
	\end{align*}
	Similarly we have
	\begin{align*}
		&\frac{1}{\sqrt{k_n\Delta_n}}
		\parens{\partial_{\beta}\mathbb{L}_{2,n}(\hat{\beta}_n|\hat{\theta}_{\epsilon,n},\hat{\alpha}_n)
			-\partial_{\beta}\mathbb{L}_{2,n}(\beta^\star|\theta_{\epsilon}^{\star},\alpha^\star)}^T\\
		&=\parens{\int_{0}^{1}\frac{1}{\sqrt{nk_n\Delta_n}}\partial_{\theta_{\epsilon}}\partial_{\beta}
			\mathbb{L}_{2,n}(\hat{\beta}_n|\theta_{\epsilon}^{\star}+u(\hat{\theta}_{\epsilon,n}-\theta_{\epsilon}^{\star}),\hat{\alpha}_n))\dop u}\sqrt{n}
		\parens{\hat{\theta}_{\epsilon,n}-\theta_{\epsilon}^{\star}}\\
		&\qquad+\parens{\int_{0}^{1}\frac{1}{k_n\sqrt{\Delta_n}}\partial_{\alpha}\partial_{\beta}
			\mathbb{L}_{2,n}(\hat{\beta}_n|\theta_{\epsilon}^{\star},\alpha^\star+u(\hat{\alpha}_n-\alpha^\star))\dop u}\sqrt{k_n}
		\parens{\hat{\alpha}_n-\alpha^\star}\\
		&\qquad+\parens{\int_{0}^{1}\frac{1}{k_n\Delta_n}\partial_{\beta}^2
			\mathbb{L}_{2,n}(\beta^\star+u(\hat{\beta}_n-\beta^\star)|\theta_{\epsilon}^{\star},\alpha^\star)\dop u}\sqrt{k_n\Delta_n}
		\parens{\hat{\beta}_n-\beta^\star}
	\end{align*}
	and hence
	\begin{align*}
		\parens{-\frac{1}{\sqrt{k_n\Delta_n}}\partial_{\beta}\mathbb{L}_{2,n}(\beta^\star|\theta_{\epsilon}^{\star},\alpha^\star)}^T
		&=-\hat{J}_n^{(1,3)}(\hat{\vartheta}_n)\sqrt{n}
		\parens{\hat{\theta}_{\epsilon,n}-\theta_{\epsilon}^{\star}}-\hat{J}_n^{(2,3)}(\hat{\vartheta}_n)\sqrt{k_n}
		\parens{\hat{\alpha}_n-\alpha^\star}\\
		&\qquad-\hat{J}_n^{(3,3)}(\hat{\vartheta}_n)\sqrt{k_n\Delta_n}
		\parens{\hat{\beta}_n-\beta^\star}.
	\end{align*}
	Here we obtain
	\begin{align*}
		&\crotchet{\begin{matrix}
			-\sqrt{n}D_n\\
			-\frac{1}{\sqrt{k_n}}\crotchet{\partial_{\alpha^{i_1}}\mathbb{L}_{1,n}(\alpha^{\star}|\theta_{\epsilon}^{\star})}_{i_1=1,\cdots,m_1}\\
			-\frac{1}{\sqrt{k_n\Delta_n}}\crotchet{\partial_{\beta^{i_2}}\mathbb{L}_{2,n}(\beta^{\star}|\theta_{\epsilon}^{\star},\alpha^{\star})}_{i_2=1,\cdots,m_2}
			\end{matrix}}
			=-\hat{J}_n(\hat{\vartheta}_n)\crotchet{\begin{matrix}
			\sqrt{n}
			\parens{\hat{\theta}_{\epsilon,n}-\theta_{\epsilon}^{\star}}\\
			\sqrt{k_n}
			\parens{\hat{\alpha}_n-\alpha^\star}\\
			\sqrt{k_n\Delta_n}
			\parens{\hat{\beta}_n-\beta^\star}
			\end{matrix}}
	\end{align*}
	and we check the asymptotics of the left hand side and the right one.
	
	\noindent\textbf{(Step 1): } For $i=1,\cdots,m_1$, we can evaluate
	\begin{align*}
		&-\frac{1}{\sqrt{k_n}}\partial_{\alpha^i}\mathbb{L}_{1,n}(\alpha^\star|\theta_{\epsilon}^{\star})\\
		&=-\frac{\sqrt{k_n}}{2}\bar{Q}_n\parens{\parens{\frac{2}{3}}^{-1}\parens{ c_{n}^{\tau}(\cdot,\alpha^\star,\Lambda_{\star})}^{-1}
			\partial_{\alpha^i}c(\cdot,\alpha^\star)\parens{ c_{n}^{\tau}(\cdot,\alpha^\star,\Lambda_{\star})}^{-1}}\\
		&\qquad+\frac{\sqrt{k_n}}{2}\bar{M}_n\parens{\ip{\parens{\parens{c_{n}^{\tau}(\cdot,\alpha^\star,\Lambda_{\star})}^{-1}
			\partial_{\alpha^i}c(\cdot,\alpha^\star)\parens{ c_{n}^{\tau}(\cdot,\alpha^\star,\Lambda_{\star})}^{-1}}}{c_{n}^{\tau}(\cdot,\alpha^\star,\Lambda_{\star})}}.
	\end{align*}
	For $i=1,\cdots,m_2$, we have
	\begin{align*}
		&-\frac{1}{\sqrt{k_n\Delta_n}}\partial_{\beta^i}\mathbb{L}_{2,n}(\beta^\star|\theta_{\epsilon}^{\star},\alpha^\star)\\
		&=\sqrt{k_n\Delta_n}\bar{D}_n\parens{\parens{\partial_{\beta^i}b(\cdot,\beta^\star)}^T\parens{c_{n}^{\tau}(\cdot,\alpha^\star,\Lambda_{\star})}^{-1}}.
	\end{align*}
	As shown in the proof of Theorem 3.1.2, if $\tau\in(1,2)$
	\begin{align*}
		\norm{\parens{c_n^\tau(x,\alpha^{\star},\Lambda_{\star})}^{-1}-\parens{c(x,\alpha^{\star})}^{-1}}\le C\Delta_n^{\frac{2-\tau}{\tau-1}}\parens{1+\norm{x}^C}
	\end{align*}
	and if $\tau=2$,
	\begin{align*}
		c_n^\tau(x,\alpha^{\star},\Lambda_{\star})=c(x,\alpha^{\star})+3\Lambda_{\star}=c^{\dagger}(x,\alpha^{\star},\Lambda_{\star}).
	\end{align*}
	Therefore, by Theorem \ref*{thm751} and Corollary \ref*{cor752}, we obtain
	\begin{align*}
		\crotchet{\begin{matrix}
			-\sqrt{n}D_n\\
			-\frac{1}{\sqrt{k_n}}\crotchet{\partial_{\alpha^{i_1}}\mathbb{L}_{1,n}(\alpha^{\star}|\theta_{\epsilon}^{\star})}_{i_1=1,\cdots,m_1}\\
			-\frac{1}{\sqrt{k_n\Delta_n}}\crotchet{\partial_{\beta^{i_2}}\mathbb{L}_{2,n}(\beta^{\star}|\theta_{\epsilon}^{\star},\alpha^{\star})}_{i_2=1,\cdots,m_2}
			\end{matrix}}\cl N(\mathbf{0},I^{\tau}(\vartheta^{\star})).
	\end{align*}
	
	\noindent\textbf{(Step 2): } We can compute 
	\begin{align*}
		\E{\norm{\int_{0}^{1}\frac{1}{\sqrt{nk_n}}\partial_{\theta_\epsilon}\partial_{\alpha}\mathbb{L}_{1,n}(\hat{\alpha}_n|\theta_{\epsilon}^{\star}+u(\hat{\theta}_{\epsilon,n}-\theta_{\epsilon}^{\star}))\dop u}} 
		&\to 0,
	\end{align*}
	and
	\begin{align*}
		\frac{1}{\sqrt{nk_n\Delta_n}}\partial_{\theta_{\epsilon}}\partial_{\beta}\mathbb{L}_{2,n}(\beta|\theta_{\epsilon},\alpha)
		&\cp 0 \text{ uniformly in }\vartheta,\\
		\frac{1}{k_n\sqrt{\Delta_n}}\partial_{\alpha}\partial_{\beta}
		\mathbb{L}_{2,n}(\beta|\theta_{\epsilon},\alpha)&\cp 0 \text{ uniformly in }\vartheta.
	\end{align*}
	We also have for $i_1,i_2\in\tuborg{1,\cdots,m_1}$
	\begin{align*}
		&-\frac{1}{k_n}\partial_{\alpha^{i_1}}\partial_{\alpha^{i_2}}\mathbb{L}_{1,n}(\alpha|\theta_{\epsilon}^{\star})\\
		&\cp \begin{cases}
		\crotchet{\frac{1}{2}\nu_0\parens{\tr\tuborg{\parens{c(\cdot,\alpha)}^{-1}\partial_{\alpha^{i_1}}c(\cdot,\alpha)\parens{c(\cdot,\alpha)}^{-1}\partial_{\alpha^{i_2}}c(\cdot,\alpha)}}}_{i_1,i_2} & \text{ if }\tau\in(1,2)\\
		\crotchet{\frac{1}{2}\nu_0\parens{\tr\tuborg{\parens{c^{\dagger}(\cdot,\alpha,\Lambda_{\star})}^{-1}\partial_{\alpha^{i_1}}c(\cdot,\alpha)\parens{c^{\dagger}(\cdot,\alpha,\Lambda_{\star})}^{-1}\partial_{\alpha^{i_2}}c(\cdot,\alpha)}}}_{i_1,i_2} & \text{ if }\tau = 2
		\end{cases}
	\end{align*}
	uniformly in $\alpha$ because of Proposition \ref*{pro741}, Theorem \ref*{thm743} and
	\begin{align*}
		\norm{\parens{c_n^\tau(x,\alpha^{\star},\Lambda_{\star})}^{-1}-\parens{c(x,\alpha^{\star})}^{-1}}\le C\Delta_n^{\frac{2-\tau}{\tau-1}}\parens{1+\norm{x}^C}
	\end{align*}
	for $\tau\in(1,2)$. Similarly, for $j_1,j_2\in\tuborg{1,\cdots,m_2}$
	\begin{align*}
		&-\frac{1}{k_n\Delta_n}\partial_{\beta^{j_1}}\partial_{\beta^{j_2}}
		\mathbb{L}_{2,n}(\beta|\theta_{\epsilon}^{\star},\alpha^\star)\\
		&\cp \begin{cases}
		\crotchet{\nu_0\parens{\ip{\parens{c(\cdot,\alpha^\star)}^{-1}}{\parens{\partial_{\beta^{j_1}}\partial_{\beta^{j_2}}b(\cdot,\beta)}\parens{b(\cdot,\beta)-b(\cdot,\beta^{\star})}^T
				+\parens{\partial_{\beta^{j_1}}b}\parens{\partial_{\beta^{j_2}}b}^T(\cdot,\beta)}}}_{j_1,j_2}\\
			\qquad\text{ if }\tau\in(1,2)\\
		\crotchet{\nu_0\parens{\ip{\parens{c^{\dagger}(\cdot,\alpha^\star,\Lambda_{\star})}^{-1}}{\parens{\partial_{\beta^{j_1}}\partial_{\beta^{j_2}}b(\cdot,\beta)}\parens{b(\cdot,\beta)-b(\cdot,\beta^{\star})}^T
					+\parens{\partial_{\beta^{j_1}}b}\parens{\partial_{\beta^{j_2}}b}^T(\cdot,\beta)}}}_{j_1,j_2}\\
		\qquad\text{ if }\tau=2.\\
		\end{cases}
	\end{align*}
	Hence
	\begin{align*}
		-\int_{0}^{1}\frac{1}{k_n}\partial_{\alpha}^2\mathbb{L}_{1,n}(\alpha^\star+u(\hat{\alpha}_n-\alpha^\star)|\theta_{\epsilon}^{\star})\dop u&\cp J^{(2,2),\tau}(\vartheta^{\star}),\\
		-\int_{0}^{1}\frac{1}{k_n\Delta_n}\partial_{\beta}^2
		\mathbb{L}_{2,n}(\beta^\star+u(\hat{\beta}_n-\beta^\star)|\theta_{\epsilon}^{\star},\alpha^\star)\dop u&\cp J^{(3,3),\tau}(\vartheta^{\star}),
	\end{align*}
	and $\hat{J}_n(\hat{\vartheta}_n)\cp J^{\tau}(\vartheta^{\star})$
\end{proof}

\subsection{Proofs of results in Section 3.2}
First of all, we define
\begin{align*}
	c_S(x):=\sum_{\ell_1=1}^{d}\sum_{\ell_2=1}^{d}c^{\ell_1,\ell_2}(x).
\end{align*}
\begin{proposition}\label{pro771} Under (A1)-(A4) and $nh_n^2\to0$,
	\begin{align*}
	\sqrt{n}\parens{\frac{1}{nh_n}\sum_{i=0}^{n-1}\parens{S_{(i+1)h_n}-S_{ih_n}}^2
		-\frac{1}{nh_n}\sum_{0\le 2i\le n-2}\parens{S_{(2i+2)h_n}-S_{2ih_n}}^2}\cl N\parens{0,2\nu_0\parens{c_S^2(\cdot)}}.
	\end{align*}
\end{proposition}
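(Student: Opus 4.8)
The plan is to reduce the statement to a central limit theorem for a sum of products of consecutive increments of $S$, and then run a martingale central limit theorem along the thinned (even) grid. Writing $\Delta_iS:=S_{(i+1)h_n}-S_{ih_n}$ and assuming $n$ even for simplicity (an odd $n$ leaves one boundary term that is $o_P(1)$), the two Riemann-type sums rearrange as
\begin{align*}
\sum_{i=0}^{n-1}\parens{\Delta_iS}^2-\sum_{0\le 2j\le n-2}\parens{\Delta_{2j}S+\Delta_{2j+1}S}^2=-2\sum_{j}\Delta_{2j}S\,\Delta_{2j+1}S,
\end{align*}
so the quantity in question equals $\tfrac{-2}{\sqrt{n}h_n}\sum_{j}\Delta_{2j}S\,\Delta_{2j+1}S$. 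Here $S$ is a one-dimensional ergodic diffusion with $\dop S_t=b_S(X_t)\dop t+a_S(X_t)\dop w_t$, where $b_S:=\sum_{l=1}^db^l$ and $a_S:=\sum_{l=1}^da^{l,\cdot}$, whose diffusion coefficient is precisely $a_Sa_S^T=c_S$.

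Next I would split each increment into its martingale and drift parts, $\Delta_iS=M_i+D_i$ with $M_i:=\int_{ih_n}^{(i+1)h_n}a_S(X_s)\dop w_s$ and $D_i:=\int_{ih_n}^{(i+1)h_n}b_S(X_s)\dop s$, and expand $\Delta_{2j}S\,\Delta_{2j+1}S$ into four products. The contributions containing at least one drift factor are negligible after normalisation: by Propositions \ref*{pro712}, \ref*{pro713} and the moment bounds of Lemma \ref*{lem739}, the mixed terms $\tfrac{1}{\sqrt{n}h_n}\sum_jD_{2j}M_{2j+1}$ and $\tfrac{1}{\sqrt{n}h_n}\sum_jM_{2j}D_{2j+1}$ are conditionally centred with $L^2$-norm of order $\sqrt{h_n}$, while the pure-drift term obeys $\tfrac{1}{\sqrt{n}h_n}\sum_jD_{2j}D_{2j+1}=O_P(\sqrt{n}h_n)=o_P(1)$; this last estimate is exactly where the hypothesis $nh_n^2\to0$ (equivalently $\sqrt{n}h_n\to0$) is used. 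Hence it suffices to prove the CLT for $\eta_{j,n}:=\tfrac{-2}{\sqrt{n}h_n}M_{2j}M_{2j+1}$.

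I would then apply a martingale central limit theorem with respect to the coarse filtration $\tuborg{\mathcal{G}_{2jh_n}}_j$. Since $M_{2j+1}$ is a stochastic integral over $[(2j+1)h_n,(2j+2)h_n]$, the tower property gives $\CE{\eta_{j,n}}{\mathcal{G}_{2jh_n}}=0$ exactly, so the $\eta_{j,n}$ are martingale differences. Using $\CE{M_i^2}{\mathcal{G}_{ih_n}}=\CE{\int_{ih_n}^{(i+1)h_n}c_S(X_s)\dop s}{\mathcal{G}_{ih_n}}$ together with Proposition \ref*{pro712} to replace $c_S(X_s)$ by $c_S(X_{2jh_n})$ up to higher order, a second tower step yields
\begin{align*}
\CE{\eta_{j,n}^2}{\mathcal{G}_{2jh_n}}=\frac{4}{nh_n^2}\CE{M_{2j}^2M_{2j+1}^2}{\mathcal{G}_{2jh_n}}=\frac{4}{n}c_S\parens{X_{2jh_n}}^2+o_P\parens{\tfrac1n}.
\end{align*}
Summing over the $n/2$ even indices and invoking the argument of Lemma \ref*{lem721} on the subgrid $\tuborg{2jh_n}_j$ (whose spacing $2h_n\to0$ and range $nh_n\to\infty$ still exhaust $\nu_0$) gives $\sum_j\CE{\eta_{j,n}^2}{\mathcal{G}_{2jh_n}}\cp 2\nu_0(c_S^2)$. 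The Lyapunov condition follows from $\CE{\eta_{j,n}^4}{\mathcal{G}_{2jh_n}}=O(n^{-2})$, whence $\sum_j\CE{\eta_{j,n}^4}{\mathcal{G}_{2jh_n}}=O(n^{-1})\to0$. The martingale CLT (as used through Lemma 9 of \citep{GeJ93}) then delivers $\sum_j\eta_{j,n}\cl N(0,2\nu_0(c_S^2))$, completing the proof.

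The conceptual route is standard, so the main obstacle is the error accounting of the reduction step rather than the limit theorem itself: one must verify, uniformly in $j$ and with the sharp powers of $h_n$, that freezing the diffusion coefficient and discarding the drift leaves only $o_P(1)$, and that the conditional-variance limit is computed along the thinned grid $\tuborg{2jh_n}_j$ rather than the full mesh. The bookkeeping is delicate because the normalisation $\sqrt{n}h_n$ is exactly critical for the pure-drift cross term, so any term of size $\sqrt{n}h_n$ survives unless it is genuinely conditionally centred.
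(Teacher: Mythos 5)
Your overall route is the same as the paper's: the rearrangement identity reducing the statistic to $\tfrac{-2}{\sqrt{n}h_n}\sum_{j}\parens{S_{(2j+2)h_n}-S_{(2j+1)h_n}}\parens{S_{(2j+1)h_n}-S_{2jh_n}}$, a martingale-type CLT along the coarse filtration $\tuborg{\mathcal{G}_{2jh_n}}_j$, the conditional-variance limit $2\nu_0\parens{c_S^2(\cdot)}$ obtained from an ergodic average on the thinned grid, and a fourth-moment Lyapunov bound. The only structural difference is that the paper never splits the increments into martingale and drift parts: it works with the raw products and absorbs \emph{all} drift effects into the single Ito--Taylor estimate $\CE{\parens{S_{(2i+2)h_n}-S_{(2i+1)h_n}}\parens{S_{(2i+1)h_n}-S_{2ih_n}}}{\mathcal{G}_{2ih_n}^n}=R(\theta,h_n^2,X_{2ih_n})$, whose normalised sum is $O(\sqrt{n}h_n)\to0$.

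There is, however, one genuine error in your error accounting, and it sits exactly where you warn the bookkeeping is delicate. The mixed term $\sum_j M_{2j}D_{2j+1}$ is \emph{not} conditionally centred: $D_{2j+1}$ is not decoupled from $M_{2j}$ given $\mathcal{G}_{2jh_n}$, because both depend on $X_{(2j+1)h_n}$. By the tower property,
\begin{align*}
\CE{M_{2j}D_{2j+1}}{\mathcal{G}_{2jh_n}}
=h_n\,\CE{M_{2j}\,b_S(X_{(2j+1)h_n})}{\mathcal{G}_{2jh_n}}
+O\parens{h_n^{5/2}}\parens{1+\norm{X_{2jh_n}}^{C}},
\end{align*}
and the Ito cross-variation between $M_{2j}$ and $b_S(X_{(2j+1)h_n})$ over $[2jh_n,(2j+1)h_n]$ makes $\CE{M_{2j}\,b_S(X_{(2j+1)h_n})}{\mathcal{G}_{2jh_n}}$ of order $h_n$ times polynomial growth, so the conditional mean of each summand is of order $h_n^2$ --- the same order as the pure-drift bias. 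After normalisation this bias sums to $O(\sqrt{n}h_n)$, so it does vanish, but only because $nh_n^2\to0$; your claim that the hypothesis is used solely for $\sum_jD_{2j}D_{2j+1}$ is therefore wrong, and your stated justification for discarding this mixed term (exact centring plus a small $L^2$ norm) fails. Only $\sum_jD_{2j}M_{2j+1}$, where the martingale factor postdates the drift factor, is exactly centred. The fix is routine --- bound the conditional mean of $M_{2j}D_{2j+1}$ as above and invoke $nh_n^2\to0$, or avoid the splitting altogether and estimate the conditional mean of the raw product as the paper does --- so the proposal is repairable, but as written this step does not go through.
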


\begin{proof}
	We have
	\begin{align*}
	\sqrt{n}\parens{\frac{1}{nh_n}\sum_{i=0}^{n-1}\parens{S_{(i+1)h_n}-S_{ih_n}}^2
		-\frac{1}{nh_n}\sum_{0\le 2i\le n-2}\parens{S_{(2i+2)h_n}-S_{2ih_n}}^2}
	&=\sum_{0\le 2i\le n-2}A_{2i}^n+o_P(1),
	\end{align*}
	where
	\begin{align*}
	A_{2i}^n=\frac{\parens{-2}}{\sqrt{n}h_n}\parens{S_{(2i+2)h_n}-S_{(2i+1)h_n}}\parens{S_{(2i+1)h_n}-S_{2ih_n}}.
	\end{align*}
	With Ito-Taylor expansion and $g_1^{l}\parens{x,y}=x^{l}-y^{l}$ for all $x$ and $y$ in $\Re^d$,
	\begin{align*}
	\CE{S_{(2i+2)h_n}-S_{(2i+1)h_n}}{\mathcal{G}_{(2i+1)h_n}}=\sum_{l=1}^{d}h_nb^l(X_{(2i+1)h_n})+R(\theta,h_n^2,X_{(2i+1)h_n})
	\end{align*}
	and 
	\begin{align*}
	\CE{\parens{S_{(2i+2)h_n}-S_{(2i+1)h_n}}\parens{S_{(2i+1)h_n}-S_{2ih_n}}}{\mathcal{G}_{2ih_n}}=R(\theta,h_n^2,X_{2ih_n}).
	\end{align*}
	We obtain
	\begin{align*}
	\E{\abs{\sum_{0\le 2i\le n-2}\CE{A_{2i}^n}{\mathcal{G}_{2ih_n}^n}}}
	\to 0.
	\end{align*}
	In next, we check
	\begin{align*}
	&\CE{\parens{S_{(2i+2)h_n}-S_{(2i+1)h_n}}^2\parens{S_{(2i+1)h_n}-S_{2ih_n}}^2}{\mathcal{G}_{2ih_n}^n}\\
	&=\sum_{\ell_1, \ell_2, \ell_3, \ell_4}
	\CE{\parens{g_1^{\ell_1}\cdot g_1^{\ell_2}}(X_{(2i+2)h_n},X_{(2i+1)h_n})
		\parens{g_1^{\ell_3}\cdot g_1^{\ell_4}}(X_{(2i+1)h_n},X_{2ih_n})}{\mathcal{G}_{2ih_n}^n}.
	\end{align*}
	For each $\ell_1, \ell_2, \ell_3, \ell_4$,
	\begin{align*}
	\CE{\parens{g_1^{\ell_1}\cdot g_1^{\ell_2}}(X_{(2i+2)h_n},X_{(2i+1)h_n})
	}{\mathcal{G}_{(2i+1)h_n}^n}
	=h_nc^{\ell_1,\ell_2}(X_{(2i+1)h_n})+R(\theta,h_n^2,X_{(2i+1)h_n})
	\end{align*}
	and
	\begin{align*}
	&\CE{\parens{g_1^{\ell_1}\cdot g_1^{\ell_2}}(X_{(2i+2)h_n},X_{(2i+1)h_n})
		\parens{g_1^{\ell_3}\cdot g_1^{\ell_4}}(X_{(2i+1)h_n},X_{2ih_n})}{\mathcal{G}_{2ih_n}^n}\\
	&=h_n^2\parens{c^{\ell_1,\ell_2}\cdot c^{\ell_3,\ell_4}}(X_{2ih_n})+R(\theta,h_n^3,X_{2ih_n}).
	\end{align*}
	Therefore,
	\begin{align*}
	\CE{\parens{S_{(2i+2)h_n}-S_{(2i+1)h_n}}^2\parens{S_{(2i+1)h_n}-S_{2ih_n}}^2}{\mathcal{G}_{2ih_n}^n}
	&=h_n^2c_S^2(X_{2ih_n})+R(\theta,h_n^3,X_{2ih_n})
	\end{align*}
	and
	\begin{align*}
	\sum_{0\le 2i\le n-2}\CE{\parens{A_{2i}^n}^2}{\mathcal{G}_{2ih_n}^n}\cp 2\nu_0\parens{c_S^2(\cdot)}.
	\end{align*}
	Also we have
	\begin{align*}
	\E{\abs{\sum_{0\le 2i\le n-2}\CE{A_{2i}^n}{\mathcal{G}_{2ih_n}^n}^2}}\to 0.
	\end{align*}
	Finally we can see
	\begin{align*}
	\CE{\parens{S_{(2i+2)h_n}-S_{(2i+1)h_n}}^4\parens{S_{(2i+1)h_n}-S_{2ih_n}}^4}{\mathcal{G}_{2ih_n}^n}=R(\theta,h_n^4,X_{2ih_n})
	\end{align*}
	and hence
	\begin{align*}
	\E{\abs{\sum_{0\le 2i\le n-2}\CE{\parens{A_{2i}^n}^4}{\mathcal{G}_{2ih_n}^n}}}\to 0.
	\end{align*}
\end{proof}

\begin{lemma}\label{lem772}
	Under (A1)-(A4) and (AH),
	\begin{align*}
		&\frac{1}{k_n\Delta_n^2}\sum_{j=1}^{k_n-2}\parens{\lm{\mathscr{S}}{j+1}-\lm{\mathscr{S}}{j}}^4\\
		&\cp \begin{cases}
		\frac{4}{3}\nu_0\parens{c_S^2(\cdot)} &\text{ if }\tau\in(1,2)\\
		\frac{4}{3}\nu_0\parens{c_S^2(\cdot)}+\frac{2}{3}\nu_0\parens{c_S(\cdot)}\parens{\sum_{i_1=1}^{d}\sum_{i_2=1}^{d}\Lambda_{\star}^{i_1,i_2}}\\
		\qquad+\parens{2d^2+10d}\parens{\sum_{i_1=1}^{d}\sum_{i_2=1}^{d}\Lambda_{\star}^{i_1,i_2}}^2&\text{ if }\tau=2.
		\end{cases}
	\end{align*}
\end{lemma}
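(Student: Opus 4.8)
The plan is to reduce the statement to a one–dimensional computation for the scalar ergodic diffusion $S_t=\sum_{l=1}^d X_t^l$ and then to expand the fourth power of the local–mean increment, retaining only the summands that survive the normalisation $\parens{k_n\Delta_n^2}^{-1}$. Writing $\mathbf{1}=\parens{1,\dots,1}^T\in\Re^d$ and $\mathscr{S}_{ih_n}=S_{ih_n}+\mathbf{1}^T\Lambda_{\star}^{1/2}\epsilon_{ih_n}$, the scalar analogue of Corollary \ref{cor738} applied to $S$ furnishes
\begin{align*}
\lm{\mathscr{S}}{j+1}-\lm{\mathscr{S}}{j}
=\mathbf{1}^Ta(X_{j\Delta_n})\parens{\zeta_{j+1,n}+\zeta_{j+2,n}'}
+\mathbf{1}^T\Lambda_{\star}^{1/2}\parens{\lm{\epsilon}{j+1}-\lm{\epsilon}{j}}
+\Delta_n\mathbf{1}^Tb(X_{j\Delta_n})+e_{j,n},
\end{align*}
with $e_{j,n}$ obeying the conditional bounds of Corollary \ref{cor738}. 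I denote the diffusion part by $\mathfrak{d}_{j,n}:=\mathbf{1}^Ta(X_{j\Delta_n})\parens{\zeta_{j+1,n}+\zeta_{j+2,n}'}$ and the noise part by $\mathfrak{n}_{j,n}:=\mathbf{1}^T\Lambda_{\star}^{1/2}\parens{\lm{\epsilon}{j+1}-\lm{\epsilon}{j}}$. By Lemma \ref{lem732}, $\mathfrak{d}_{j,n}$ is $\mathcal{H}_j^n$–conditionally centred Gaussian with $\CE{\mathfrak{d}_{j,n}^2}{\mathcal{H}_j^n}=\parens{m_n+m_n'}\Delta_n c_S(X_{j\Delta_n})$ where $m_n+m_n'\to\tfrac23$, while $\mathfrak{n}_{j,n}$ is independent of $\mathcal{H}_j^n$ with variance $2p_n^{-1}\sum_{i_1,i_2}\Lambda_{\star}^{i_1,i_2}$.

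Next I would expand $\parens{\lm{\mathscr{S}}{j+1}-\lm{\mathscr{S}}{j}}^4$ by the multinomial theorem. Because $\tuborg{\epsilon_{ih_n}}$ is symmetric and independent of $\tuborg{w_t}$, and the increments of $w$ are independent and centred, every summand with an odd total power of $\mathfrak{d}_{j,n}$ or of $\mathfrak{n}_{j,n}$, as well as every summand involving the drift or $e_{j,n}$, is negligible after normalisation; this must be checked termwise with the bounds of Lemma \ref{lem739} and Corollary \ref{cor738}. The three terms that can contribute are $\mathfrak{d}_{j,n}^4$, $\mathfrak{d}_{j,n}^2\mathfrak{n}_{j,n}^2$ (weight $6$) and $\mathfrak{n}_{j,n}^4$. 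Comparing orders through $h_n=p_n^{-\tau}$, $\Delta_n=p_n^{1-\tau}$, one has $\mathfrak{d}_{j,n}^2=O_P(\Delta_n)$ and $\mathfrak{n}_{j,n}^2=O_P(p_n^{-1})$, so the noise contributions carry the extra factor $p_n^{\tau-2}$. Hence for $\tau\in(1,2)$ only $\mathfrak{d}_{j,n}^4$ persists, while for $\tau=2$ all three terms are of the common order $\Delta_n^2$ and all contribute, which is precisely the dichotomy in the statement.

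The limits then follow from the martingale law of large numbers (Lemma 9 in \citep{GeJ93}) together with the ergodic averaging of Lemma \ref{lem721} and Proposition \ref{pro741}. For the diffusion term, conditional Gaussianity gives $\CE{\mathfrak{d}_{j,n}^4}{\mathcal{H}_j^n}\sim 3\parens{\tfrac23\Delta_n c_S(X_{j\Delta_n})}^2=\tfrac43\Delta_n^2 c_S^2(X_{j\Delta_n})$, whence $\parens{k_n\Delta_n^2}^{-1}\sum_j\mathfrak{d}_{j,n}^4\cp\tfrac43\nu_0\parens{c_S^2(\cdot)}$, the term common to both cases. For $\tau=2$ the cross term, by independence, has conditional mean $6\,\CE{\mathfrak{d}_{j,n}^2}{\mathcal{H}_j^n}\CE{\mathfrak{n}_{j,n}^2}{\mathcal{H}_j^n}$ and produces a multiple of $\nu_0\parens{c_S(\cdot)}\sum_{i_1,i_2}\Lambda_{\star}^{i_1,i_2}$, while $\CE{\mathfrak{n}_{j,n}^4}{\mathcal{H}_j^n}$ is a deterministic fourth moment of a local mean of the symmetric noise, producing the term proportional to $\parens{\sum_{i_1,i_2}\Lambda_{\star}^{i_1,i_2}}^2$.

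I anticipate two places that demand care, and these are the real obstacle. First, the local means of consecutive blocks overlap through the shared $\lm{\epsilon}{j}$ and through the $\mathcal{G}$–measurability of the Wiener integrals, so before invoking the martingale law of large numbers I would rewrite the telescoping sums as single–index martingale differences adapted to $\tuborg{\mathcal{H}_j^n}$ (exactly as in the treatment of $U_n^{(3)}$ and $U_n^{(4)}$ in Theorem \ref{thm751}) and verify that the normalised conditional second moments vanish. Second, the exact constants in the $\tau=2$ case hinge on the fourth–moment structure of the vector local mean $\lm{\epsilon}{j}$: one must use the identity for $\E{\parens{\lm{\epsilon}{j}}^{\otimes2}A\parens{\lm{\epsilon}{j}}^{\otimes2}}$ recorded in Theorem \ref{thm751}, isolating the part that survives at order $\Delta_n^2$ from the component–wise kurtosis part that is $O\parens{p_n^{-3}}$ and hence negligible. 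The bookkeeping of which sub-terms are of exact order $\Delta_n^2$ versus strictly smaller, and the exact numerical coefficients they carry, is where the computation is most delicate and error-prone.
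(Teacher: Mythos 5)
Your outline follows the paper's own proof essentially step for step: the same reduction to $\mathscr{S}$, the same decomposition of $\lm{\mathscr{S}}{j+1}-\lm{\mathscr{S}}{j}$ through Corollary \ref{cor738}, the same binomial expansion retaining the three even terms, the same appeal to Lemma 9 of \citep{GeJ93} together with the ergodic averaging of Lemma \ref{lem721} (the paper blocks the sum over the residues of $j$ modulo $3$ rather than re-indexing as in Theorem \ref{thm751}, but this is immaterial), and the same dichotomy through $p_n\Delta_n=p_n^{2-\tau}$. The diffusion term and its limit $\frac{4}{3}\nu_0\parens{c_S^2(\cdot)}$ are treated identically and correctly.

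The genuine gap is that you defer exactly what the lemma asserts in the case $\tau=2$ — the numerical coefficients — and the intermediate formulas you do commit to cannot be completed to the stated ones. Write $\sigma^2:=\sum_{i_1,i_2}\Lambda_{\star}^{i_1,i_2}$, $\eta_j:=\lm{\epsilon}{j+1}-\lm{\epsilon}{j}$ and $v:=\Lambda_{\star}^{1/2}\mathbf{1}$. You correctly record $\CE{\mathfrak{n}_{j,n}^2}{\mathcal{H}_j^n}=\frac{2}{p_n}\sigma^2$ (both $\lm{\epsilon}{j}$ and $\lm{\epsilon}{j+1}$ are independent of $\mathcal{H}_j^n$, each with covariance $\frac{1}{p_n}I_d$) and the binomial weight $6$ on the cross term; since $\CE{\mathfrak{d}_{j,n}^2}{\mathcal{H}_j^n}=\parens{m_n+m_n'}\Delta_n c_S(X_{j\Delta_n})$ with $m_n+m_n'\to\frac{2}{3}$ and $p_n\Delta_n=1$ when $\tau=2$, the cross term then contributes $6\cdot\frac{2}{3}\cdot 2\cdot\nu_0\parens{c_S(\cdot)}\sigma^2=8\,\nu_0\parens{c_S(\cdot)}\sigma^2$, not $\frac{2}{3}\nu_0\parens{c_S(\cdot)}\sigma^2$. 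Likewise, the fourth-moment identity of Theorem \ref{thm751} that you propose to use gives $\E{\parens{v^T\eta_j}^4}=\frac{12}{p_n^2}\sigma^4+O\parens{p_n^{-3}}$, so the last term contributes $12\sigma^4$, which agrees with the stated $\parens{2d^2+10d}\sigma^4$ only for $d=1$. (The paper's own proof reaches the printed constants by dropping the weight $6$ in its final assembly, by using $\frac{1}{p_n}$ in place of $\frac{2}{p_n}$ for the conditional variance of $\eta_j$, and by evaluating $\tr\tuborg{\eta_j^{\otimes2}\eta_j^{\otimes2}}=\norm{\eta_j}^4$ where the scalar moment $\parens{v^T\eta_j}^4$ is what actually enters.) Thus completing your bookkeeping faithfully yields, for $\tau=2$, the limit $\frac{4}{3}\nu_0\parens{c_S^2(\cdot)}+8\,\nu_0\parens{c_S(\cdot)}\sigma^2+12\sigma^4$; you must either locate an error in your own intermediate formulas (I cannot find one) or conclude that the $\tau=2$ constants in the statement require correction. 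Note the discrepancy is harmless downstream: Theorem \ref{thm321} uses the lemma only under $H_0$, where $\Lambda_{\star}=O$ and both versions reduce to $\frac{4}{3}\nu_0\parens{c_S^2(\cdot)}$, and Theorem \ref{thm322} needs only that the limit is some positive constant.
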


\begin{proof} We prove with the identical way as Theorem \ref*{thm743}. Note the evaluation
	\begin{align*}
		&\parens{\lm{\mathscr{S}}{j+1}-\lm{\mathscr{S}}{j}}^4\\
		&=\parens{\sum_{i=1}^{d}\parens{a^{i,\cdot}(X_{j\Delta_n})\parens{\zeta_{j+1,n}+\zeta_{j+2,n}'}+\parens{\Lambda_{\star}^{1/2}}^{i,\cdot}\parens{\lm{\epsilon}{j+1}-\lm{\epsilon}{j}}}}^4\\
		&\qquad+4\parens{\sum_{i=1}^{d}\parens{a^{i,\cdot}(X_{j\Delta_n})\parens{\zeta_{j+1,n}+\zeta_{j+2,n}'}+\parens{\Lambda_{\star}^{1/2}}^{i,\cdot}\parens{\lm{\epsilon}{j+1}-\lm{\epsilon}{j}}}}^3\parens{\sum_{i=1}^{d}e_{j,n}^{i}}\\
		&\qquad+6\parens{\sum_{i=1}^{d}\parens{a^{i,\cdot}(X_{j\Delta_n})\parens{\zeta_{j+1,n}+\zeta_{j+2,n}'}+\parens{\Lambda_{\star}^{1/2}}^{i,\cdot}\parens{\lm{\epsilon}{j+1}-\lm{\epsilon}{j}}}}^2\parens{\sum_{i=1}^{d}e_{j,n}^{i}}^2\\
		&\qquad+4\parens{\sum_{i=1}^{d}\parens{a^{i,\cdot}(X_{j\Delta_n})\parens{\zeta_{j+1,n}+\zeta_{j+2,n}'}+\parens{\Lambda_{\star}^{1/2}}^{i,\cdot}\parens{\lm{\epsilon}{j+1}-\lm{\epsilon}{j}}}}\parens{\sum_{i=1}^{d}e_{j,n}^{i}}^3\\
		&\qquad+\parens{\sum_{i=1}^{d}e_{j,n}^{i}}^4,\\
	\end{align*}
	and
	\begin{align*}
		&\parens{\sum_{i=1}^{d}\parens{a^{i,\cdot}(X_{j\Delta_n})\parens{\zeta_{j+1,n}+\zeta_{j+2,n}'}+\parens{\Lambda_{\star}^{1/2}}^{i,\cdot}\parens{\lm{\epsilon}{j+1}-\lm{\epsilon}{j}}}}^4\\
		&=\parens{\sum_{i=1}^{d}a^{i,\cdot}(X_{j\Delta_n})\parens{\zeta_{j+1,n}+\zeta_{j+2,n}'}}^4\\
		&\qquad+4\parens{\sum_{i=1}^{d}a^{i,\cdot}(X_{j\Delta_n})\parens{\zeta_{j+1,n}+\zeta_{j+2,n}'}}^3\parens{\sum_{i=1}^{d}\parens{\Lambda_{\star}^{1/2}}^{i,\cdot}\parens{\lm{\epsilon}{j+1}-\lm{\epsilon}{j}}}\\
		&\qquad+6\parens{\sum_{i=1}^{d}a^{i,\cdot}(X_{j\Delta_n})\parens{\zeta_{j+1,n}+\zeta_{j+2,n}'}}^2\parens{\sum_{i=1}^{d}\parens{\Lambda_{\star}^{1/2}}^{i,\cdot}\parens{\lm{\epsilon}{j+1}-\lm{\epsilon}{j}}}^2\\
		&\qquad+4\parens{\sum_{i=1}^{d}a^{i,\cdot}(X_{j\Delta_n})\parens{\zeta_{j+1,n}+\zeta_{j+2,n}'}}\parens{\sum_{i=1}^{d}\parens{\Lambda_{\star}^{1/2}}^{i,\cdot}\parens{\lm{\epsilon}{j+1}-\lm{\epsilon}{j}}}^3\\
		&\qquad+\parens{\sum_{i=1}^{d}\parens{\Lambda_{\star}^{1/2}}^{i,\cdot}\parens{\lm{\epsilon}{j+1}-\lm{\epsilon}{j}}}^4,
	\end{align*}
	and
	\begin{align*}
		\CE{\parens{\sum_{i=1}^{d}a^{i,\cdot}(X_{j\Delta_n})\parens{\zeta_{j+1,n}+\zeta_{j+2,n}'}}^4}{\mathcal{H}_j^n}
		&=3\parens{m_n+m_n'}^2\Delta_n^2c_S^2(X_{j\Delta_n}).
	\end{align*}
	It leads to
	\begin{align*}
		\frac{1}{k_n\Delta_n^2}\sum_{1\le 3j\le k_n-2}\CE{\parens{\sum_{i=1}^{d}a^{i,\cdot}(X_{3j\Delta_n})\parens{\zeta_{3j+1,n}+\zeta_{3j+2,n}'}}^4}{\mathcal{H}_{3j}^n}\cp \frac{4}{9}\nu_0\parens{c_S^2(\cdot)}.
	\end{align*}
	It is obvious that
	\begin{align*}
		\CE{\parens{\sum_{i=1}^{d}a^{i,\cdot}(X_{j\Delta_n})\parens{\zeta_{j+1,n}+\zeta_{j+2,n}'}}^3\parens{\sum_{i=1}^{d}\parens{\Lambda_{\star}^{1/2}}^{i,\cdot}\parens{\lm{\epsilon}{j+1}-\lm{\epsilon}{j}}}}{\mathcal{H}_j^n}=0
	\end{align*}
	and
	\begin{align*}
		\CE{\parens{\sum_{i=1}^{d}a^{i,\cdot}(X_{j\Delta_n})\parens{\zeta_{j+1,n}+\zeta_{j+2,n}'}}\parens{\sum_{i=1}^{d}\parens{\Lambda_{\star}^{1/2}}^{i,\cdot}\parens{\lm{\epsilon}{j+1}-\lm{\epsilon}{j}}}^3}
		{\mathcal{H}_j^n}=0.
	\end{align*}
	We can evaluate
	\begin{align*}
		&\CE{\parens{\sum_{i=1}^{d}a^{i,\cdot}(X_{j\Delta_n})\parens{\zeta_{j+1,n}+\zeta_{j+2,n}'}}^2
			\parens{\sum_{i=1}^{d}\parens{\Lambda_{\star}^{1/2}}^{i,\cdot}\parens{\lm{\epsilon}{j+1}-\lm{\epsilon}{j}}}^2}{\mathcal{H}_j^n}\\
		&=\parens{m_n+m_n'}\Delta_nc_S(X_{j\Delta_n})\frac{1}{p_n}\parens{\sum_{i_1=1}^{d}\sum_{i_2=1}^{d}\Lambda_{\star}^{i_1,i_2}}
	\end{align*}
	and hence
	\begin{align*}
		&\frac{1}{k_n\Delta_n^2}\sum_{1\le 3j\le k_n-2}\CE{\parens{\sum_{i=1}^{d}a^{i,\cdot}(X_{3j\Delta_n})\parens{\zeta_{3j+1,n}+\zeta_{3j+2,n}'}}^2
			\parens{\sum_{i=1}^{d}\parens{\Lambda_{\star}^{1/2}}^{i,\cdot}\parens{\lm{\epsilon}{3j+1}-\lm{\epsilon}{3j}}}^2}{\mathcal{H}_{3j}^n}\\
		&\cp \begin{cases}
		0 & \text{ if }\tau \in(1,2)\\
		\frac{2}{9}\nu_0\parens{c_S(\cdot)}\sum_{i_1=1}^{d}\sum_{i_2=1}^{d}\Lambda_{\star}^{i_1,i_2} & \text{ if }\tau=2.
		\end{cases}
	\end{align*}
	We also have
	\begin{align*}
	\tr\tuborg{\CE{\parens{\lm{\epsilon}{j+1}-\lm{\epsilon}{j}}^{\otimes2}
			\parens{\lm{\epsilon}{j+1}-\lm{\epsilon}{j}}^{\otimes 2}}{\mathcal{H}_j^n}}
	=\frac{2d^2+10d}{p_n^2}+\frac{2}{p_n^3}\parens{\sum_{i=1}^{d}\E{\parens{\epsilon_{0}^i}^4}-3d}
	\end{align*}
	and
	\begin{align*}
		&\CE{\parens{\sum_{i=1}^{d}\parens{\Lambda_{\star}^{1/2}}^{i,\cdot}\parens{\lm{\epsilon}{j+1}-\lm{\epsilon}{j}}}^4}{\mathcal{H}_j^n}\\
		&=\frac{2d^2+10d}{p_n^2}+\frac{2}{p_n^3}\parens{\sum_{i=1}^{d}\E{\parens{\epsilon_{0}^i}^4}-3d}\parens{\sum_{i_1=1}^{d}\sum_{i_2=1}^{d}\Lambda_{\star}^{i_1,i_2}}^2.
	\end{align*}
	It leads to
	\begin{align*}
		&\frac{1}{k_n\Delta_n^2}\sum_{1\le 3j\le k_n-2}\CE{\parens{\sum_{i=1}^{d}\parens{\Lambda_{\star}^{1/2}}^{i,\cdot}\parens{\lm{\epsilon}{3j+1}-\lm{\epsilon}{3j}}}^4}{\mathcal{H}_{3j}^n}\\
		 &\cp\begin{cases}
		0 &\text{ if }\tau\in(1,2)\\
		\parens{2d^2+10d}\parens{\sum_{i_1=1}^{d}\sum_{i_2=1}^{d}\Lambda_{\star}^{i_1,i_2}}^2&\text{ if }\tau=2.
		\end{cases}
	\end{align*}
	Since the following evaluation
	\begin{align*}
		\CE{\norm{e_{j,n}}^{l}}{\mathcal{H}_j^n}&\le C\Delta_n^l\parens{1+\norm{X_{j\Delta_n}}^C}\\
		\CE{\norm{\zeta_{j+1,n}+\zeta_{j+2,n}}^{l}}{\mathcal{H}_j^n}&\le C\Delta_n^{l/2}\\
		\CE{\norm{\lm{\epsilon}{j+1}-\lm{\epsilon}{j}}^{l}}{\mathcal{H}_j^n}&\le C\Delta_n^{l/2}
	\end{align*}
	for all $l\in\mathbf{N}$, we have
	\begin{align*}
		&\frac{1}{k_n\Delta_n^2}\sum_{1\le 3j\le k_n-2}
		\CE{\parens{\lm{\mathscr{S}}{3j+1}-\lm{\mathscr{S}}{3j}}^4}{\mathcal{H}_{3j}^n}\\
		&\cp\begin{cases}
		\frac{4}{9}\nu_0\parens{c_S^2(\cdot)} &\text{ if }\tau\in(1,2)\\
		\frac{4}{9}\nu_0\parens{c_S^2(\cdot)}+\frac{2}{9}\nu_0\parens{c_S(\cdot)}\parens{\sum_{i_1=1}^{d}\sum_{i_2=1}^{d}\Lambda_{\star}^{i_1,i_2}}\\
		\qquad+\frac{2d^2+10d}{3}\parens{\sum_{i_1=1}^{d}\sum_{i_2=1}^{d}\Lambda_{\star}^{i_1,i_2}}^2&\text{ if }\tau=2.
		\end{cases}
	\end{align*}
	Finally we obtain
	\begin{align*}
		&\E{\abs{\frac{1}{k_n^2\Delta_n^4}\sum_{1\le 3j\le k_n-2}\CE{\parens{\lm{\mathscr{S}}{3j+1}-\lm{\mathscr{S}}{3j}}^8}{\mathcal{H}_{3j}^n}}}\\
		&\le \frac{C}{k_n}\\
		&\to 0.
	\end{align*}
	and the proof is obtained because of the Lemma 9 in \citep{GeJ93}.
\end{proof}

\begin{proof}[Proof of Theorem \ref*{thm321}]
	Under $H_0$, the result of Lemma \ref*{lem772} is equivalent to
	\begin{align*}
		\frac{3}{4k_n\Delta_n^2}\sum_{j=1}^{k_n-2}\parens{\lm{\mathscr{S}}{j+1}-\lm{\mathscr{S}}{j}}^4 \cp
		\nu_0\parens{c_S^2(\cdot)}.
	\end{align*}
	for any $\tau\in(1,2]$. Therefore, Proposition \ref*{pro771}, Lemma \ref*{lem772} and Slutsky's theorem verify the proof.
\end{proof}

\begin{proof}[Proof of Theorem \ref*{thm322}]
	Assumption (T1) verifies
	\begin{align*}
		&\sum_{l_1}\sum_{l_2}\Lambda_{\star}^{l_1,l_2}>0.
	\end{align*}
	We firstly show 
	\begin{align*}
		\frac{1}{2n}\sum_{i=0}^{n-1}\parens{\mathscr{S}_{(i+1)h_n}- \mathscr{S}_{ih_n}}^2
		\cp\sum_{l_1}\sum_{l_2}\Lambda_{\star}^{l_1,l_2}
	\end{align*}
	under $H_1$ and (T1).
	We can decompose
	\begin{align*}
		&\frac{1}{2n}\sum_{i=0}^{n-1}\parens{\mathscr{S}_{(i+1)h_n}-\mathscr{S}_{ih_n}}^2
		-\sum_{l_1}\sum_{l_2}\Lambda_{\star}^{l_1,l_2}\\
		&=\frac{1}{2n}\sum_{i=0}^{n-1}\parens{\sum_{l=1}^{d}\parens{X_{(i+1)h_n}^{l}-X_{ih_n}^{l}}}^2\\
		&\qquad+\frac{1}{2n}\sum_{i=0}^{n-1}\parens{\sum_{l=1}^{d}\parens{\Lambda_{\star}^{1/2}}^{l,\cdot}\parens{\epsilon_{(i+1)h_n}-\epsilon_{ih_n}}}^2-\sum_{l_1}\sum_{l_2}\Lambda_{\star}^{l_1,l_2}\\
		&\qquad+\frac{1}{n}\sum_{i=0}^{n-1}\parens{\sum_{l=1}^{d}\parens{X_{(i+1)h_n}^{l}-X_{ih_n}^{l}}}
		\parens{\sum_{l=1}^{d}\parens{\Lambda_{\star}^{1/2}}^{l,\cdot}\parens{\epsilon_{(i+1)h_n}-\epsilon_{ih_n}}}.
	\end{align*}
	The first and fourth term of the right hand side is $o_P(1)$ since
	\begin{align*}
		\E{\abs{\frac{1}{2n}\sum_{i=0}^{n-1}\parens{\sum_{l=1}^{d}\parens{X_{(i+1)h_n}^{l}-X_{ih_n}^{l}}}^2}}\to 0.
	\end{align*}
	The fourth term is also $o_P(1)$ since
	\begin{align*}
		\E{\abs{\frac{1}{n}\sum_{i=0}^{n-1}\parens{\sum_{l=1}^{d}\parens{X_{(i+1)h_n}^{l}-X_{ih_n}^{l}}}
				\parens{\sum_{l=1}^{d}\parens{\Lambda_{\star}^{1/2}}^{l,\cdot}\parens{\epsilon_{(i+1)h_n}-\epsilon_{ih_n}}}}}\to 0.
	\end{align*}
	We also can evaluate the second and third term of the right hand side as
	\begin{align*}
		&\frac{1}{2n}\sum_{i=0}^{n-1}\parens{\sum_{l=1}^{d}\parens{\Lambda_{\star}^{1/2}}^{l,\cdot}\parens{\epsilon_{(i+1)h_n}-\epsilon_{ih_n}}}^2-\sum_{l_1}\sum_{l_2}\Lambda_{\star}^{l_1,l_2}\\
		&=-\frac{1}{n}\sum_{i=0}^{n-1}\sum_{l_1=1}^{d}\sum_{l_2=1}^{d}\parens{\Lambda_{\star}^{1/2}}^{l_1,\cdot}\parens{\epsilon_{(i+1)h_n}}
		\parens{\epsilon_{ih_n}}^T\parens{\Lambda_{\star}^{1/2}}^{\cdot,l_2}
		+o_P(1)
	\end{align*}
	because of law of large numbers. The first term can be evaluated as
	\begin{align*}
		\E{\abs{\frac{1}{n}\sum_{i=0}^{n-1}\sum_{l_1=1}^{d}\sum_{l_2=1}^{d}\parens{\Lambda_{\star}^{1/2}}^{l_1,\cdot}\parens{\epsilon_{(i+1)h_n}}
		\parens{\epsilon_{ih_n}}^T\parens{\Lambda_{\star}^{1/2}}^{\cdot,l_2}}^2}\to 0.
	\end{align*}
	With identical computation, we obtain
	\begin{align*}
		\frac{1}{n}\sum_{0\le 2i\le n-2}\parens{\mathscr{S}_{(2i+2)h_n}-\mathscr{S}_{2ih_n}}^2&\cp \sum_{l_1,l_2}\Lambda_{\star}^{l_1,l_2}
	\end{align*}
	There exists a constant $C_1$ such that
	\begin{align*}
		\frac{3}{4k_n\Delta_n^2}\sum_{j=1}^{k_n-2}\parens{\lm{\mathscr{S}}{j+1}-\lm{\mathscr{S}}{j}}^4 &\cp C_1.
	\end{align*}
	These convergences in probability and some computations verifies the result.
\end{proof}

{
	\section*{Acknowledgement}
	This work 
	was partially supported by 
	Overseas Study Program of MMDS,
	JST CREST,
	JSPS KAKENHI Grant Number 
	JP17H01100 
	and Cooperative Research Program
	of the Institute of Statistical Mathematics.
}

\bibliography{bibliography}

\end{document}